\tikzset{cross/.style={cross out, draw=black, minimum size=2*(#1-\pgflinewidth), inner sep=0pt, outer sep=0pt},
cross/.default={1pt}}
\renewcommand{\paragraph}{%
  \@startsection{paragraph}{4}%
  {\z@}{1.5ex \@plus 1ex \@minus .2ex}{-1em}%
  {\normalfont\normalsize\bfseries}%
}
\theoremstyle{plain}
\newtheorem{Proposition}{\textbf{Proposition}}[section]
\newtheorem{Lemma}[Proposition]{\textbf{Lemma}}
\theoremstyle{plain}
\newtheorem{Theorem}[Proposition]{Theorem}}
\theoremstyle{definition}
\newtheorem{Definition}[Proposition]{Definition}}
\theoremstyle{remark}
\newtheorem{Remark}[Proposition]{Remark}
\numberwithin{equation}{section}
\newcommand{\eps}{\varepsilon}
\newcommand{\bbE}{\mathbb{E}}
\newcommand{\bbP}{\mathbb{P}}
\newcommand{\bbX}{\mathbb{X}}
\newcommand{\N}{\mathbb{N}}
\newcommand{\Z}{\mathbb{Z}}
\newcommand{\R}{\mathbb{R}}
\newcommand{\Q}{\mathbb{Q}}
\renewcommand{\S}{\mathbb S} 
\newcommand{\cE}{\mathcal{E}}
\newcommand{\cR}{\mathcal{R}}
\newcommand{\cX}{\mathcal{X}}
\let\limsup\relax
\let\liminf\relax
\DeclareMathOperator* \limsup {\overline{lim}}
\DeclareMathOperator* \liminf {\underline{lim}}
\newcommand{\inclim}[1]{\lim_{#1}\!\!\uparrow\!}
\let\originalleft\left
\let\originalright\right
\renewcommand{\left}{\mathopen{}\mathclose\bgroup\originalleft}
\renewcommand{\right}{\aftergroup\egroup\originalright}
\newcommand{\p}[1]{\left( #1 \right)}
\newcommand{\acc}[1]{\left\{ #1 \right\}}
\newcommand{\cro}[1]{\left[ #1 \right]}
\newcommand{\set}[2]{\acc{#1 \;\middle\vert\; #2 } }
\newcommand{\ind}[1]{\mathds{1}_{#1}}
\newcommand{\dpe}{\coloneqq}
\newcommand{\eol}{\nonumber\\}
\def\restriction#1#2{\mathchoice
              {\setbox1\hbox{${\displaystyle #1}_{\scriptstyle #2}$}
              \restrictionaux{#1}{#2}}
              {\setbox1\hbox{${\textstyle #1}_{\scriptstyle #2}$}
              \restrictionaux{#1}{#2}}
              {\setbox1\hbox{${\scriptstyle #1}_{\scriptscriptstyle #2}$}
              \restrictionaux{#1}{#2}}
              {\setbox1\hbox{${\scriptscriptstyle #1}_{\scriptscriptstyle #2}$}
              \restrictionaux{#1}{#2}}}
\def\restrictionaux#1#2{{#1\,\smash{\vrule height .8\ht1 depth .85\dp1}}_{\,#2}} 
\newcommand{\module}[2][]{\left\lvert #2 \right\rvert_{#1}}
\newcommand{\norme}[2][]{\left\| #2 \right\|_{#1}}
\newcommand{\normeUn}[1]{\left\| #1 \right\|_{1}}
\newcommand{\ceil}[1]{\!\left\lceil #1 \right\rceil\!}
\newcommand{\floor}[1]{\!\left\lfloor #1 \right\rfloor\!}
\newcommand{\ball}[2][]{\mathrm{B}_{#1}\p{#2}}
\newcommand{\clball}[2][]{\overline{\mathrm{B}}_{#1}\p{#2}}
\newcommand{\intervalle}[4]{#1#2\,,#3#4}
\newcommand{\intervalleff}[2]{\intervalle{\left[}{#1}{#2}{\right]}}
\newcommand{\intervalleof}[2]{\intervalle{\left(}{#1}{#2}{\right]}}
\newcommand{\intervallefo}[2]{\intervalle{\left[}{#1}{#2}{\right)}}
\newcommand{\intervalleoo}[2]{\intervalle{\left(}{#1}{#2}{\right)}}
\newcommand{\intint}[2]{\left\llbracket#1\,,#2\right\rrbracket}
\renewcommand{\d}{\mathrm{d}}
\DeclareMathOperator \petito {o}
\DeclareMathOperator \diam {diam}
\DeclareMathOperator \Pol {Pol}
\DeclareMathOperator \Pgrad {P-grad}
\newcommand{\base}[1]{\mathrm e_{#1}}
\newcommand{\SymbolsFav}[1]{%
  \ensuremath{%
    \ifcase#1
    \or 
      *%
    \or 
      \dagger
    \or 
      \ddagger
    \or 
      **
    \or 
      \dagger\dagger
    \or 
      \ddagger \ddagger
    \or 
      \diamond 
    \fi
  }%
}   
\newcounter{fav}
\newcommand{\cFav}{\Fav^{\SymbolsFav{\thefav} }}
\newcommand{\Borel}[1]{\mathcal{B}\left( #1 \right) }
\newcommand{\hd}[1][1]{\mathcal H^{#1} }
\newcommand{\preHD}[1]{\phi_{#1}^1}
\newcommand{\E}[2][]{\mathbb{E}_{#1} \left[ #2\right]}
\newcommand{\Pb}[2][]{\mathbb{P}_{#1}\left( #2\right)}
\newcommand{\Path}[1]{\overset{#1}{\rightsquigarrow}}
\newcommand{\edges}[2][]{\mathrm E_{#1}\p{#2}}
\DeclareMathOperator{\Cylinder}{Corridor}
\newcommand{\BoundedF}{\mathcal F_{\mathrm b} }
\newcommand{\RegPD}[1][\TC]{\mathcal D_{#1} }
\newcommand{\UnifDistance}{\mathfrak d_\infty}
\newcommand{\BoundTC}{ \norme[1]\mu}
\newcommand{\BoundNorm}[1]{\norme[1] #1}
\newcommand{\HW}[2]{#1 \cro{ #2} }
\newcommand{\MD}[2][D]{\module[#1]{ \dot{#2} } }
\newcommand{\pc}{p_c(\Z^d)}
\newcommand{\PT}{\mathbf T}
\newcommand{\RestPT}[1]{\mathbf T_{#1} }
\newcommand{\BoxPT}[1][n]{{\mathbf T}_{ \intervalleff0{#1}^d } }
\newcommand{\SPT}[1][n]{{ \widehat{ \mathbf T}}_{ #1 } }
\newcommand{\TightBoxPT}[1][n]{{\mathbf T}_{ \intervalleff0{#1}^d }^{(b)} }
\newcommand{\TightSPT}[1][n]{{ \widetilde{ \mathbf T}}_{ #1 }^{(b)} }
\newcommand{\PathPT}[1]{\tau\p{#1}}
\newcommand{\TimeConstant}{\mu}
\newcommand{\TC}{\mu}
\newcommand{\FdTKesten}{J_{\mathrm K}}
\newcommand{\FdT}[1][]{J_{#1}}
\newcommand{\FdTsup}[1][]{\overline{J}_{#1}}
\newcommand{\FdTinf}[1][]{\underline{J}_{#1}}
\newcommand{\FdTmon}[1][]{J^-_{#1}}
\newcommand{\FdTmonsup}[1][]{\overline{J}^-_{#1}}
\newcommand{\FdTmoninf}[1][]{\underline{J}^-_{#1}}
\newcommand{\hatFdTpp}{\hat J_{\mathrm{pp}}}
\newcommand{\FdTpp}{J_{\mathrm{pp}} }
\newcommand{\LD}{\mathrm{LD}}
\DeclareMathOperator \Fav {Fav}
\DeclareMathOperator{\LongGeo}{LongGeo}
\DeclareMathOperator{\LDGreedy}{LDGreedy}
\newcommand{\X}{X}
\title{Large deviation principle at speed $n$ for the random metric in first-passage percolation}
\author{Julien \textsc{Verges}\footnote{julien.verges@univ-tours.fr} }
\date{\today}
\begin{document}

\maketitle

\abstract{Consider standard first-passage percolation on $\mathbb Z^d$. We study the lower-tail large deviations of the rescaled random metric $\widehat{\mathbf T}_n$ restricted to a box. If all exponential moments are finite, we prove that $\widehat{\mathbf T}_n$ follows the large deviation principle at speed $n$ with a rate function $J$, in a suitable space of metrics. Moreover, we give three expressions for $J(D)$. The first two involve the metric derivative with respect to $D$ of Lipschitz paths and the lower-tail rate function for the point-point passage time. The third is an integral against the $1$-dimensional Hausdorff measure of a local cost. Under a much weaker moment assumption, we give an estimate for the probability of events of the type $\acc{\widehat{\mathbf T}_n \le D}$. } 

\section{Introduction}
\label{sec : Intro}

\subsection{Framework}
\label{subsec : Intro/framework}

\subsubsection{First-passage percolation}
We first present the model of \emph{first-passage percolation} (FPP), introduced in 1966 by Hammersley and Welsh \cite{Ham65}. The reader interested in a summary of the achievements on this topic is invited to consult Auffinger, Damron and Hanson's survey \cite{50yFPP}. Let $d\ge2$ be an integer and $\bbE^d$ the set of all non-oriented nearest-neighbour edges in $\Z^d$. A finite sequence $\pi\dpe (x_0,\dots, x_r)$ of elements of $\Z^d$ is called a \emph{discrete path} if for all $i\in \intint0{r-1}$, $(x_i,x_{i+1}) \in \bbE^d$. We denote by $\norme\pi$ its number of edges.

Let $\nu$ be a probability distribution on $\intervallefo0\infty$ and consider a family $\p{\tau_e}_{e\in \bbE^d}$ of i.i.d.\ random variables with distribution $\nu$. In this paper $a$ will denote the infimum of its support. The variable $\tau_e$ is called the \emph{passage time along the edge} $e$. The \emph{passage time along a discrete path} $\pi=(x_0,\dots , x_r)$ is defined as
\begin{align}
    \label{eqn : Intro/framework/def_PathPT}
    \PathPT{\pi} &\dpe \sum_{i=0}^{r-1} \tau_{ (x_i, x_{i+1}) }.
    \intertext{For all $A\subseteq \R^d$ and $x,y\in \Z^d$, the \emph{passage time between $x$ and $y$ restricted in $A$} is defined as}
    \label{eqn : Intro/framework/def_PT}
    \RestPT{A}(x,y) &\dpe \inf_{ \substack{ x \Path{\pi} y \\ \pi \subseteq A}  } \PathPT{\pi},
\end{align}
where the infimum spans over discrete paths included in $A$, whose endpoints are $x$ and $y$. The map $\RestPT{A}(\cdot, \cdot)$ is a pseudometric on $A$. We call $\RestPT{A}$-\emph{discrete geodesic} between $x$ and $y$ any minimizer in~\eqref{eqn : Intro/framework/def_PT}. We will write $\PT\dpe \RestPT{\Z^d} = \RestPT{\R^d} $. A well-known result (\cite{50yFPP}, Equation~2.4) states that, under a moment condition on $\nu$, there exists a homogeneous function $\TimeConstant$ on $\R^d$, known as the \emph{time constant}, such that for all $x\in \Z^d$,
\begin{equation}    \label{eqn : Intro/framework/def_TimeConstant}
    \frac{\PT \p{ 0, nx } }{n} \xrightarrow[n\to\infty]{ \text{a.s.\ }} \TimeConstant(x).
\end{equation}
Furthermore, without any moment assumption on $\nu$, one can still define $\TC$, such that for all $x\in\Z^d$,
\begin{equation}
    \label{eqn : Intro/framework/def_TimeConstant_CerfTheret}
    \frac{\PT \p{ 0, nx } }{n} \xrightarrow[n\to\infty]{ \bbP } \TimeConstant(x).
\end{equation}
See e.g.\ (\cite{Cer16}, Theorem~4) for the formulation~\eqref{eqn : Intro/framework/def_TimeConstant_CerfTheret}. The time constant is a norm if
  \begin{equation}
        \label{ass : Intro/main_thm/SubcriticalAtom} \tag{SubC}
        \nu\p{\acc0} < \pc,
    \end{equation}
where $\pc$ is the critical parameter for bond percolation in $\Z^d$; otherwise $\mu(x) = 0$ for all $x\in \R^d$. We only study the former case, as the latter is trivial for our purposes. 

As a consequence of~\eqref{eqn : Intro/framework/def_TimeConstant_CerfTheret} the probability of an event of the form $\acc{\PT(0,n\base 1)\le \zeta n}$, with $\zeta < \TimeConstant(\base 1)$ or $\acc{\PT(0,n\base 1)\ge \zeta n}$, with $\zeta > \TimeConstant(\base 1)$ (the so-called lower-tail and upper-tail large deviation events) converges to $0$ as $n\to\infty$. In 1986, Kesten \cite{KestenStFlour} obtained estimates for the speed of convergence: there exists (\cite{KestenStFlour}, Theorem~5.2) a convex decreasing function $\FdTKesten:\intervalleoo{a}{\TimeConstant(\base{1})} \rightarrow \intervalleoo{0}{\infty}$ such that for all $a < \zeta < \TimeConstant(\base 1)$,
\begin{equation}
    \label{eqn : Intro/framework/monotone_rate_function_n}
    \lim_{n\to\infty} -\frac1n \log\Pb{ \PT\p{0, n\base1}\le \zeta n } = \FdTKesten(\zeta).
\end{equation}
Besides (\cite{KestenStFlour}, Theorem~5.9), under the assumption
\begin{equation}
    \label{ass : Intro/main_thm/exp_moment} \tag{Moment}
    \forall \lambda>0,\quad \int_{\intervallefo0\infty} \exp\p{\lambda t} \nu (\d t) < \infty,
\end{equation}
for all $\zeta > \TC(\base 1)$,
\begin{equation}
    \label{eqn : Intro/EstimateUpperTail}
    \lim_{n\to\infty} -\frac1n \log\Pb{ \PT(0,n\base 1) \ge \zeta n} = \infty.
\end{equation}
If $\nu$ has a bounded support, we have the stronger estimate
\begin{equation*}
    \liminf_{n\to\infty} -\frac1{n^d} \log\Pb{ \PT(0,n\base 1) \ge \zeta n} > 0.
\end{equation*}
Kesten's proof of~\eqref{eqn : Intro/EstimateUpperTail} may be adapted to any direction, i.e.\ under~\eqref{ass : Intro/main_thm/exp_moment}, for all $x \in \Z^d$ and $\zeta > \TC(x)$,
\begin{equation}
    \lim_{n\to\infty} -\frac1n \log\Pb{ \PT\p{0, nx } \ge \zeta n} = \infty.
\end{equation}
We prove a somewhat stronger version in Appendix~\ref{appsec : OdG}. 

\subsubsection{Aim of the paper}
Our first result, Theorem~\ref{thm : PointPoint}, is an extension of~\eqref{eqn : Intro/framework/monotone_rate_function_n} to any direction, for $\BoxPT$ and $\PT$. Consider
\begin{equation}
    \cX \dpe \set{(x,\zeta)\in\R^d \times \intervalleoo0\infty }{ \zeta >  a\norme[1] x  },
\end{equation}
where $\norme[1]\cdot$ is the usual $1$-norm, defined by~\eqref{eqn : Intro/DefNormeUn}, and $a$ is the infimum of $\nu$'s support, which may be zero. Define \begin{equation} \X \dpe \intervalleff01^d.\end{equation}
\begin{Theorem}
    \label{thm : PointPoint}
    There exists a function $\FdTpp : \cX \rightarrow \intervallefo0\infty$, such that for all $x,y\in \X$ and $\zeta > a \norme[1]{x-y}$,
    \begin{equation}
        \label{eqn : PointPoint/FdT}
        \lim_{n\to\infty}-\frac1n \log\Pb{ \vphantom{\BoxPT} \PT\p{\floor{nx}, \floor{ny}} \le n \zeta } =  \lim_{n\to\infty} -\frac1n \log\Pb{ \BoxPT\p{\floor{nx} ,\floor{ny} } \le n\zeta } = \FdTpp(x-y, \zeta),
    \end{equation}
    where $\floor\cdot$ denotes the componentwise floor function. Moreover:
    \begin{enumerate}[(i)]
        \item
        \label{item : PointPoint/General/ConvexAbsCont}
         $\FdTpp$ is convex and absolutely homogeneous.
        \item
        \label{item : PointPoint/General/Symmetry}
         For all $(x,\zeta)\in \cX$,
        \begin{equation}
            \FdTpp\p{x, \zeta} = \FdTpp\p{\module x,\zeta},
        \end{equation}
        where $\module x$ denotes the vector of $\R^d$ whose components are the modules of $x$'s components. 
        \item 
        \label{item : PointPoint/General/Croissance}
        For all $(x_1,\zeta_1), (x_2,\zeta_2)\in \cX$, if $0\le x_1 \le x_2$ for the componentwise order and $\zeta_1 \ge \zeta_2$, then
        \begin{equation}
            \FdTpp(x_1, \zeta_1) \le \FdTpp(x_2, \zeta_2).
        \end{equation}
        \item 
        \label{item : PointPoint/General/OrdreGrandeur}
        For all $(x,\zeta) \in \cX$, $\FdTpp(x,\zeta)=0$ if and only if $\zeta \ge \TC(x)$.
    \end{enumerate}
\end{Theorem}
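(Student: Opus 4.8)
The plan is to obtain $\FdTpp$ as the limit of a nearly subadditive sequence of log-probabilities, and then to read the four properties off the structure of that limit.

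\emph{Existence of the limit.} By translation invariance we may reduce everything to passage times between $0$ and a point $\approx n(x-y)$. I would work first with passage times restricted to a \emph{corridor} $\Cylinder^{(W)}$, a tube of fixed width $W$ around the segment joining the two endpoints. Writing $c_n^{(W)} \dpe -\log\Pb{\RestPT{\Cylinder^{(W)}_{0,nv}}(0,\floor{nv}) \le n\zeta}$ for a fixed rational direction $v$, concatenation of two restricted geodesics inside a common corridor gives $\RestPT{\Cylinder^{(W)}_{0,(m+k)v}} \le \RestPT{\Cylinder^{(W)}_{0,mv}} + \RestPT{\Cylinder^{(W)}_{mv,(m+k)v}}$, and since each event $\{\RestPT{\cdot}\le s\}$ is decreasing in the edge weights, the Harris--FKG inequality yields $c_{m+k}^{(W)} \le c_m^{(W)} + c_k^{(W)}$, up to the $O(1)$ perturbation of the endpoints needed to replace $\floor{nv}$ by an exact multiple of $v$. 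That perturbation is absorbed by the elementary lemma that moving an endpoint a bounded distance changes $-\frac1n\log\Pb{\cdot\le n\zeta}$ by $o(n)$ at the price of an arbitrarily small change of $\zeta$ (condition on a cheap path between the two choices of endpoint and use FKG again). Fekete's lemma then produces $\lim_n c_n^{(W)}/n$, which is finite since a staircase path bounds it by $\norme[1]v\, I_\tau(\zeta/\norme[1]v)<\infty$, $I_\tau$ being the Cramér rate function of $\nu$. Monotonicity of $c_n^{(W)}$ in $W$, in $\zeta$ and in $v$, together with the convexity obtained below, extends the definition from rational $v$ to all of $\cX$ by continuity, yielding a candidate $\FdTpp$.

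\emph{Corridor versus box versus unrestricted.} One has $\PT \le \BoxPT \le \RestPT{\Cylinder^{(W)}}$ for suitably nested regions, which orders the three rate functions one way. For the reverse I would use that a path of passage time $\le n\zeta$ is essentially localized: if $a>0$ it has at most $n\zeta/a$ edges, hence lies in a box of side $O(n)$; if $a=0$ then under~\eqref{ass : Intro/main_thm/SubcriticalAtom} the edges of weight $<\delta$ form a subcritical percolation for $\delta$ small, so with probability $1-e^{-cKn}$ no path of cost $\le n\zeta$ leaves the box of side $Kn$, which for $K$ large is negligible next to $e^{-n\FdTpp}$ since $\FdTpp<\infty$. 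On the localization event $\PT$ coincides with the passage time restricted to a box of side $\Lambda n$; rescaling that box to $[0,m]^d$ with $m=\Lambda n$, and the endpoints to points of $[0,m]^d$, and invoking the absolute homogeneity of the box rate then gives equality of the corridor rate, the $\BoxPT$ rate and the $\PT$ rate. I expect this localization-plus-rescaling step — in particular the $a=0$ case and the somewhat circular appeal to homogeneity, which has to be bootstrapped along compatible scales — to be the main obstacle.

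\emph{Properties (i)--(iii).} Absolute homogeneity is immediate from the Fekete limit (using $c_{kn}\le kc_n$, the scaling $v\mapsto\lambda v$, and then density and monotonicity); convexity follows by concatenating, inside one large corridor, a piece realizing level $\zeta_1$ in direction $v_1$ over a fraction $\lambda$ of the scale with a piece realizing $\zeta_2,v_2$ over the rest, applying FKG, for rational $\lambda$, and passing to the limit. For (ii), the coordinate reflections of $\Z^d$ are measure-preserving graph automorphisms that fix $\X$ and the box $[0,n]^d$ and send $x-y$ to its reflection, whence $\FdTpp(v,\zeta)=\FdTpp(\module v,\zeta)$ (again using the $O(1)$-perturbation lemma for the floors). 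For (iii), monotonicity in $\zeta$ is clear from $\{\cdot\le n\zeta_2\}\subseteq\{\cdot\le n\zeta_1\}$, and monotonicity in $x$ reduces, coordinate by coordinate, to $\FdTpp(w,\zeta)\ge\FdTpp(w',\zeta)$ whenever $w'$ agrees with $w$ outside coordinate $i$ and is smaller there in absolute value; this follows by writing $w'$ as a convex combination of $w$ and its reflection in coordinate $i$ and invoking (i) and (ii).

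\emph{Property (iv).} If $\zeta>\TC(x)$ then $\BoxPT(\floor{nx},0)/n\to\TC(x)$ — the restriction to $[0,n]^d$ does not change the time constant for $x\in\X$, a classical fact proved by squeezing between thin corridors and $\PT$ — so $\Pb{\BoxPT(\floor{nx},0)\le n\zeta}\to1$ and $\FdTpp(x,\zeta)=0$; the boundary case $\zeta=\TC(x)$ then follows from continuity of the convex function $\zeta\mapsto\FdTpp(x,\zeta)$ on the open interval $\intervalleoo{a\norme[1] x}{\infty}$, which contains $\TC(x)$ under~\eqref{ass : Intro/main_thm/SubcriticalAtom}. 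Conversely, for $a\norme[1] x<\zeta<\TC(x)$, the directional analogue of Kesten's lower-tail estimate, $\Pb{\PT(0,\floor{nx})\le n\zeta}\le e^{-cn}$, obtained by the renormalisation argument underlying~\eqref{eqn : Intro/framework/monotone_rate_function_n}, yields $\FdTpp(x,\zeta)>0$.
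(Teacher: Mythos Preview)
Your outline is viable but takes a genuinely different route from the paper. The paper never introduces corridors: it runs Fekete directly on $-\log\Pb{\PT(0,nx)\le n\zeta}$ for $x\in\Z^d$ (FKG, stationarity and the triangle inequality give subadditivity with no restriction on the ambient region), extends to $\Q^d$ by homogeneity and to $\R^d$ via the monotonicity in (iii). To show the $\BoxPT$-limit agrees, the paper does \emph{not} localise optimal paths; instead it fixes a rational approximation $z$ of $x-y$, chooses one scale $m$ and a length bound $\ell$ so that the event $\cE(0,mz,m(1-2\eps)\zeta,\ell)=\{\exists\,\pi:0\to mz,\ \norme\pi\le\ell,\ \tau(\pi)\le m(1-2\eps)\zeta\}$ has log-probability close to the Fekete infimum, and concatenates $\lfloor n/m\rfloor$ translates of this event by FKG. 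The fixed $\ell$ forces every piece to stay inside $\intint0n^d$, so the construction certifies $\BoxPT\le n\zeta$ directly and sidesteps exactly the rescaling/circularity you flagged in the $a=0$ case. For (iv) the paper also differs: positivity when $\zeta<\TC(x)$ comes from the Boucheron--Lugosi--Massart concentration inequality for self-bounding functions applied to the truncated passage time, combined with $\TC^{(b)}\to\TC$, rather than from Kesten's renormalisation. Your treatment of (i)--(iii) matches the paper's (reflection symmetry plus convexity for the monotonicity in $x$); your corridor-plus-localisation route for the main limit would work but is heavier, and the bounded-length-event trick is what buys the paper a clean, non-circular proof that the box and unrestricted rates coincide.
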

For all $x\in\R^d$, we also define
\begin{equation}
    \FdTpp\p{x, a\norme[1] x} \dpe \inclim{\substack{\zeta \to a\norme[1] x \\ \zeta > a\norme[1] x} } \FdTpp(x,\zeta),
\end{equation}
where the notation $\uparrow$ is used to emphasize that $\zeta \mapsto  \FdTpp(x,\zeta)$ is nonincreasing. Note that the extension of $\FdTpp$ on $\overline\cX$ is also convex and lower semicontinuous. We will call $\FdTpp$ the \emph{elementary rate function}, as it will appear as an integrand in the expression of more sophisticated rate functions.
\begin{Remark}
    \label{rk : PointPoint/StrictDecroissance}
    By convexity and~\eqref{item : PointPoint/General/OrdreGrandeur}, for all $x \in \R^d \setminus \acc{0}$, the function $\zeta \mapsto \FdTpp\p{x, \zeta}$ is strictly decreasing on $\intervalleff{ a\norme[1] x}{ \TC(x)}$.
\end{Remark}

We are interested in extending~\eqref{eqn : PointPoint/FdT} for the random metric $\BoxPT$ rather than simply $\PT\p{\floor{nx}, \floor{ny} }$. We define, for all $n\ge1$,
\begin{align}
 \SPT : \X \times \X &\longrightarrow \intervallefo0\infty \eol
  (x,y) &\longmapsto \frac1n \BoxPT( \floor{nx} ,  \floor{ny} ). \label{eqn  : Intro/defSPT}
\end{align}
The function $\SPT$ is a pseudometric on $\X$. Our second result, Theorem~\ref{thm : Intro/FdTmon}, gives an estimate for the probability of $\SPT$ taking values below a target pseudometric $D$, of the form
\begin{equation*}
    \Pb{\SPT \lesssim D} \approx \exp\p{ -\FdTmon(D) n }.
\end{equation*}
Its main assumption is the existence of $\xi>0$ such that
\begin{equation}
    \label{ass : Intro/main_thm/ShapeThmStrong}\tag{StrongShape}
        \E{ \p{\min_{1\le i \le d}  \tau_i }^{d+\xi} }< \infty,
\end{equation}
where the $\tau_i$ are i.i.d.\ with distribution $\nu$.

Under Assumption~\eqref{ass : Intro/main_thm/exp_moment}, Theorem~\ref{thm : MAIN} goes further and states that $(\SPT)_{n\ge 1}$ follows a \emph{large deviation principle} at speed $n$, i.e.\ essentially an approximation of the form
\begin{equation*}
    \Pb{\SPT \simeq D} \approx \exp\p{ -\FdT(D) n}.
\end{equation*}

\subsection{Main theorems}
Sections~\ref{subsubsec : Intro/MainThm/LimitSpace} and~\ref{subsubsec : Intro/MainThm/LD} contain the minimal definitions for our main results, namely Theorems~\ref{thm : Intro/FdTmon} and~\ref{thm : MAIN} to make sense: respectively, the natural limit space for $\SPT$ and usual large deviation objects. Our main theorems are stated in Section~\ref{subsec : Intro/main_thm}.

\subsubsection{The limit space}
\label{subsubsec : Intro/MainThm/LimitSpace}
Almost surely, for all $n\ge 1$, $\SPT$ belongs to the space $\BoundedF$ of bounded functions on $\X ^2$. We endow $\BoundedF$ with the uniform distance, defined for all $D_1, D_2 \in \BoundedF$ as
\begin{equation}
    \label{eqn : Intro/main_thm/distance_uniforme}
        \UnifDistance(D_1, D_2) \dpe \max_{(x,y)\in \X^2}\module{D_1(x,y) - D_2(x,y)},
 \end{equation}
For all $D_1, D_2\in \BoundedF$ we denote by $D_1\le D_2$ the assertion 
    \begin{equation}
    \label{eqn : Intro/main_thm/partial_order}
        \forall x,y\in \X,\quad  D_1(x,y)\le D_2(x,y),
    \end{equation}
    which defines a partial order on $\BoundedF$.

\begin{Definition}
    Given a subset $A$ of $\R^d$, a pseudometric $D$ on $A$ and a continuous path $\gamma : \intervalleff0T \rightarrow A$ for the usual topology, we define the $D$-length of $\gamma$ as
    \begin{equation}
        \label{eqn : Intro/DefDLength}
        D(\gamma) \dpe \sup_{0=t_0 <\dots < t_r = T} \sum_{i=0}^{r-1} D\p{ \gamma(t_i), \gamma(t_{i+1})}.
    \end{equation}
    In the special case where $D$ is the metric induced by $\norme[1]\cdot$, we will denote by $\norme[1]\gamma$ the $D$-length of $\gamma$. We say that $\gamma$ is a $D$-\emph{geodesic} if it is Lipschitz for $\norme[1]\cdot$, and
    \begin{equation}
        D(\gamma) = D\p{\gamma(0), \gamma(T)}.     
    \end{equation}
    For all $t\in\intervalleoo0T$ such that the following limit exists, we define the \emph{metric derivative} of $\gamma$ at $t$, with respect to $D$ as
    \begin{equation}
        \label{eqn : Intro/def_MD}
        \MD{\gamma}(t) \dpe \lim_{h\to 0} \frac{1}{\module h}D\p{ \gamma(t) , \gamma(t+h)}.
    \end{equation}
\end{Definition}
Note that our definition of geodesic differ from the usual meaning, as we do not require that $\gamma$ is an isometry with respect to $D$, but some regularity with respect to $\norme[1]\cdot$. All the pseudometrics $D$ we will consider are dominated by $\norme[1]\cdot$, therefore their geodesics will also be Lipschitz for the metric $D$.

\begin{Definition}
\label{def : Intro/main_thm/adm_metrics}
    Let $L>0$ and $g$ be a norm on $\R^d$. We define $\RegPD[g]^L$ as the set of all pseudometrics $D$ on $\X$ such that
    \begin{enumerate}[(i)]
        \item For all $x,y\in \X$,
            \begin{equation}
            \label{eqn : Intro/main_thm/equivalence_distances}
                D(x,y) \le g\p{x-y}.
            \end{equation}
        \item For all $x,y \in \X$, there exists a $D$-geodesic $\sigma$ from $x$ to $y$, such that $\norme[1]\sigma \le L$.
    \end{enumerate}
    We define
    \begin{equation}
        \RegPD[g]\dpe \bigcup_{L\ge 0} \RegPD[g]^L.
    \end{equation}
\end{Definition}


\subsubsection{Large deviations}
\label{subsubsec : Intro/MainThm/LD}

We give here some general large deviation tools. See Dembo-Zeitouni (2009) \cite{LDTA} for the general theory.
\begin{Definition}
    Let $\bbX$ be a Hausdorff topological space. We call \emph{rate function} a lower semicontinuous map $I:\bbX\rightarrow \intervalleff0\infty$, i.e. a map whose sublevels $\set{x \in \bbX}{I(x)\le t}$, for $t\ge 0$, are closed. We further say that $I$ is a \emph{good rate function} if its sublevels are compact.

    We say that a random process $(X_n)_{n\ge 1}$ with values in $\bbX$ follows the \emph{large deviation principle} (LDP), at speed $a_n$, with the rate function $I$ if for every Borel set $A\subseteq \bbX$,
\begin{equation}
    \label{eqn : Intro/framework/LDP_general case}
    \inf_{x\in \overline{A}} I(x)%
    \le \liminf_{n\to\infty}-\frac{1}{a_n} \log \Pb{ X_n \in A } %
    \le \limsup_{n\to\infty}-\frac{1}{a_n} \log \Pb{ X_n \in A } %
    \le \inf_{x\in \mathring{A}} I(x).
\end{equation}  
\end{Definition}
In this article, except when stated otherwise, we will only consider LDPs at speed $n$. Lemma~\ref{lem : Intro/sketch/UB_LB} will be of constant use. It is a consequence of arguments given in the proof of Theorem~4.1.11 in \cite{LDTA}. The version here was used in (\cite{Ver24+_LDP_PPP_nd}, Lemma~1.2) with the speed $n^d$ instead of $n$. Apart from this difference, the proof may be copied verbatim. 
\begin{Lemma}
    \label{lem : Intro/sketch/UB_LB}
    Let $(\bbX, \d_\bbX)$ be a metric space and $(X_n)_{n\ge 1}$ a random process with values in $\bbX$. Define, for all $x\in \bbX$,
    \begin{align*}
        \overline{I}(x)&\dpe\inclim{\eps \to 0} \limsup_{n\to\infty} -\frac{1}{n}\log \Pb{\d_\bbX(x, X_n)\le \eps }
        \intertext{and}\underline{I}(x)&\dpe\inclim{\eps \to 0} \liminf_{n\to\infty} -\frac{1}{n}\log \Pb{\d_\bbX(x, X_n)\le \eps }.
    \end{align*}
    Then
    \begin{enumerate}[(i)]
        \item \label{item : Intro/sketch/UB_LB/rate_function}%
        $\overline I$ and $\underline I$ are rate functions on $\bbX$.
        \item For every open set $U\subseteq \bbX$,
            \begin{equation}
                \label{eqn : Intro/sketch/UB_LB/UB}
                \limsup_{n\to\infty}-\frac{1}{n}\log\Pb{X_n \in U} \le \inf_{x\in U} \overline{I}(x).
            \end{equation}
        \item \label{item : Intro/sketch/UB_LB/LB} For every compact set $K\subseteq \bbX$,
            \begin{equation}
                \label{eqn : Intro/sketch/UB_LB/LB}
                \liminf_{n\to\infty}-\frac{1}{n}\log\Pb{X_n \in K} \ge \min_{x\in K} \underline{I}(x).
            \end{equation}
    \end{enumerate}
\end{Lemma}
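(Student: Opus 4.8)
The plan is to follow the classical route from pointwise ball estimates to a weak large deviation statement, exactly as in the proof of Theorem~4.1.11 of~\cite{LDTA} (here at speed $n$ rather than $n^d$), so the argument is essentially a transcription. The only preliminary point is that the definitions make sense: since $\eps\mapsto\Pb{\d_\bbX(x,X_n)\le\eps}$ is nondecreasing, the quantities $\limsup_n -\tfrac1n\log\Pb{\d_\bbX(x,X_n)\le\eps}$ and $\liminf_n -\tfrac1n\log\Pb{\d_\bbX(x,X_n)\le\eps}$ are nonincreasing in $\eps$, hence the increasing limits defining $\overline I(x)$ and $\underline I(x)$ exist in $\intervalleff0\infty$ and equal the suprema over $\eps>0$; the values are nonnegative because probabilities are at most $1$.

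For item~\ref{item : Intro/sketch/UB_LB/rate_function} I would prove lower semicontinuity by a ball-inclusion comparison. If $x_k\to x$ and $\eps>0$, then for $k$ large $\d_\bbX(x_k,x)<\eps/2$, so by the triangle inequality $\set{y}{\d_\bbX(x_k,y)\le\eps/2}\subseteq\set{y}{\d_\bbX(x,y)\le\eps}$, hence $\Pb{\d_\bbX(x,X_n)\le\eps}\ge\Pb{\d_\bbX(x_k,X_n)\le\eps/2}$. Passing to $\limsup_n$ gives $\limsup_n -\tfrac1n\log\Pb{\d_\bbX(x,X_n)\le\eps}\le\overline I(x_k)$; taking $\liminf_k$ and then $\eps\downarrow0$ yields $\overline I(x)\le\liminf_k\overline I(x_k)$, and the same computation works verbatim for $\underline I$. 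For the open upper bound~\eqref{eqn : Intro/sketch/UB_LB/UB}: given $U$ open and $x\in U$, pick $\eps_x>0$ with $\set{y}{\d_\bbX(x,y)\le\eps_x}\subseteq U$; then $\Pb{X_n\in U}\ge\Pb{\d_\bbX(x,X_n)\le\eps_x}$, so $\limsup_n -\tfrac1n\log\Pb{X_n\in U}\le\overline I(x)$, and taking the infimum over $x\in U$ finishes it.

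The compact lower bound~\eqref{eqn : Intro/sketch/UB_LB/LB} is the one step with real content. Fix $\delta>0$. By the monotone-limit description of $\underline I$, for every $x\in K$ there is $\eps_x>0$ with $\liminf_n -\tfrac1n\log\Pb{\d_\bbX(x,X_n)\le\eps_x}\ge\min\p{\underline I(x),\tfrac1\delta}-\delta$. The open balls $\set{y}{\d_\bbX(x,y)<\eps_x}$, $x\in K$, cover $K$, so by compactness finitely many, centred at $x_1,\dots,x_m$, already do. A union bound gives $\Pb{X_n\in K}\le m\max_{1\le j\le m}\Pb{\d_\bbX(x_j,X_n)\le\eps_{x_j}}$, whence $-\tfrac1n\log\Pb{X_n\in K}\ge -\tfrac{\log m}{n}+\min_{1\le j\le m}\p{-\tfrac1n\log\Pb{\d_\bbX(x_j,X_n)\le\eps_{x_j}}}$. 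Letting $n\to\infty$ — the entropy term $\tfrac{\log m}{n}$ vanishes, and the $\liminf$ of a finite minimum is at least the minimum of the individual $\liminf$s — gives $\liminf_n -\tfrac1n\log\Pb{X_n\in K}\ge\min\p{\inf_{x\in K}\underline I(x),\tfrac1\delta}-\delta$. Since $\underline I$ is lower semicontinuous by item~\ref{item : Intro/sketch/UB_LB/rate_function} and $K$ is compact, the infimum over $K$ is attained; letting $\delta\downarrow0$ concludes.

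I expect the compact lower bound to be the only genuine obstacle: one must pass from the natural infinite cover of $K$ to a finite subcover so that the combinatorial factor $\tfrac1n\log m$ is killed in the limit, and one needs the radius $\eps_x$ at each point to be chosen so that the local $\liminf$ estimate approximates $\underline I(x)$ from below as closely as desired — which is precisely what the definition of $\underline I$ as an increasing limit over $\eps$ provides. Everything else reduces to monotonicity of $\Pb{\d_\bbX(x,X_n)\le\eps}$ in $\eps$ and to elementary ball inclusions.
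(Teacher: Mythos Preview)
Your proof is correct and follows exactly the route the paper indicates: the paper does not give an independent argument but refers to the proof of Theorem~4.1.11 in~\cite{LDTA} (transposed from speed $n^d$ to speed $n$), and your ball-inclusion argument for lower semicontinuity, the single-ball bound for open sets, and the finite-subcover/union-bound argument for compacts are precisely that proof. The only remark is that your inequality ``the $\liminf$ of a finite minimum is at least the minimum of the individual $\liminf$s'' is in fact an equality for finitely many sequences, which is what makes the compact step go through.
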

If $I\dpe\overline I = \underline I$, we say that $(X_n)_{n\ge1}$ satisfies the \emph{weak LDP} at speed $n$, with the rate function $I$. 

\subsubsection{Result statements}
\label{subsec : Intro/main_thm}

For all $D\in \RegPD$, $n\in \N^*$ and $\eps>0$, we introduce the event
\begin{equation}
    \label{eqn : Intro/def_LDn-}
    \LD_{n}^-(D, \eps)\dpe \acc{\forall x,y\in X, \quad \SPT\p{x,y} \le D(x,y) + \eps }.
\end{equation}
We then define
\begin{align}
    \label{eqn : Intro/main_thm/def_FdTmonsup}
    \FdTmonsup(D) &\dpe \inclim{\eps \to 0} \limsup_{n\to \infty} -\frac1n \log \Pb{ \LD_{n}^-(D, \eps) }
    \intertext{and }
    \label{eqn : Intro/main_thm/def_FdTmoninf}
    \FdTmoninf(D) &\dpe \inclim{\eps \to 0} \liminf_{n\to \infty} -\frac1n \log \Pb{ \LD_{n}^-(D, \eps) }.
\end{align}
Those functions are both rate functions. Theorem~\ref{thm : Intro/FdTmon} states that $\FdTmon(D) \dpe \FdTmonsup(D) = \FdTmoninf(D)$ and provides several expressions of $\FdTmon(D)$, which involve:
\begin{itemize}
    \item The elementary rate function $\FdTpp$, defined in Theorem~\ref{thm : PointPoint}.
    \item A \emph{highway network} of $D$, which is essentially a family of disjoint $D$-geodesics dense enough so that any two points $x,y\in \X$ are linked by a path inside the network with $D$-length $D(x,y)$ (see Definition~\ref{def : Topology/RegPD/HW_Network}).
    \item The \emph{gradient by paths} $(\Pgrad D)_z(u)$ of $D$ at $z\in \X$, in the direction $u\in \R^d$, which describes the $D$-length of infinitesimal paths originating from $D$ with speed $u$ (see Definition~\ref{def : Topology/GMT/Gradient}).
\end{itemize}
In Equation~\eqref{eqn : FdTmon/Intrinsic}, $\hd$ denotes the usual $1$-dimensional Hausdorff measure on $\R^d$, whose definition is recalled in Section~\ref{subsec : Intro/Notations}, and $\S_2$ denotes the Euclidean unit sphere.
\begin{Theorem}
    \label{thm : Intro/FdTmon}
    Assume~\eqref{ass : Intro/main_thm/SubcriticalAtom} and~\eqref{ass : Intro/main_thm/ShapeThmStrong}. For all $D\in \RegPD$,
    \begin{equation}
        \FdTmon (D) \dpe \FdTmoninf(D) = \FdTmonsup(D).
    \end{equation}
    If $\p{\sigma_k : \intervalleff0{L}\rightarrow \X }_{k\ge1}$ is a highway network for $D$, then
    \begin{equation}
        \FdTmon (D)
        \label{eqn : FdTmon/Existence/ExpressionGeodesics}
            = \sum_{k\ge 1} \int_0^{L} \FdTpp\p{\sigma_k'(t), \MD{\sigma_k}(t)}\d t. \end{equation}
    Moreover,
    \begin{equation}
        \label{eqn : FdTmon/Existence/ExpressionSup}
        \begin{split} \FdTmon(D)
            &= \sup \set{ \sum_{k= 1}^K \int_0^{T_k} \FdTpp\p{\gamma_k'(t), \MD{\gamma_k}(t)}\d t }%
            {\begin{array}{c} K\in\N, \p{\gamma_k : \intervalleff{0}{T_k} \rightarrow \X}_{1\le k\le K} \text{ $1$-Lipschitz}, \\ \text{injective and pairwise disjoint.} \end{array}  }\\
            &= \sup \set{ \sum_{k= 1}^\infty \int_0^{T_k} \FdTpp\p{\gamma_k'(t), \MD{\gamma_k}(t)}\d t }%
            {\begin{array}{c} \p{\gamma_k : \intervalleff{0}{T_k} \rightarrow \X}_{k\ge 1} \text{ $1$-Lipschitz}, \\ \text{injective and pairwise disjoint.} \end{array}  }, \end{split} \end{equation}
        and
    \begin{equation}
        \label{eqn : FdTmon/Intrinsic}
        \FdTmon(D)%
            =\int_\X \max_{u\in \S_2} \FdTpp\p{u, (\Pgrad D)_z(u) } \hd\p{\d z}.
    \end{equation}
    Besides, for all distinct $D_1, D_2 \in \RegPD$, if $D_1 \le D_2$ and $\FdTmon(D_2) < \infty$, 
    \begin{equation}
        \FdTmon(D_1) > \FdTmon(D_2).
    \end{equation}
\end{Theorem}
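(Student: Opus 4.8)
The plan is to prove that the quantity defined by~\eqref{eqn : FdTmon/Existence/ExpressionGeodesics} bounds $\FdTmonsup(D)$ from above, that the quantity defined by~\eqref{eqn : FdTmon/Existence/ExpressionSup} bounds $\FdTmoninf(D)$ from below, and — by a purely geometric argument, free of probability — that the quantity in~\eqref{eqn : FdTmon/Existence/ExpressionGeodesics} does not exceed that in~\eqref{eqn : FdTmon/Existence/ExpressionSup}, which does not exceed that in~\eqref{eqn : FdTmon/Intrinsic}, which does not exceed that in~\eqref{eqn : FdTmon/Existence/ExpressionGeodesics}; chaining these forces $\FdTmonsup(D)=\FdTmoninf(D)$ and the equality of the three representations, and the strict monotonicity is then read off the intrinsic one. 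For the lower bound on the probability — i.e.\ $\FdTmonsup(D)\le$ the quantity in~\eqref{eqn : FdTmon/Existence/ExpressionGeodesics}, trivial when the latter is infinite — fix a highway network $\p{\sigma_k}_{k\ge1}$ of $D$, subdivide each $\sigma_k$ at a mesh $\eta$, and enclose the consecutive chords $\sigma_k(t_j)\,\sigma_k(t_{j+1})$ in pairwise disjoint thin corridors $C_{k,j}$. Set $E_n\dpe\bigcap_{k,j}\acc{\RestPT{C_{k,j}}\p{\floor{n\sigma_k(t_j)},\floor{n\sigma_k(t_{j+1})}}\le n\eta\p{\MD{\sigma_k}(t_j)+\eps}}$. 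Independence across the disjoint corridors and the point-to-point estimate~\eqref{eqn : PointPoint/FdT} yield $-\frac1n\log\Pb{E_n}\to\sum_k\int_0^{L}\FdTpp\p{\sigma_k'(t),\MD{\sigma_k}(t)+\eps}\d t$, the tail corridors contributing negligibly. On $E_n$, the density property built into Definition~\ref{def : Topology/RegPD/HW_Network} lets one join any $x,y\in\X$ by a discrete path following the network geodesics, running inside the corridors, of passage time at most $n\p{D(x,y)+C\eps}$ once the short bridges between geodesics are controlled — which holds with high probability by the shape theorem, and independently of $E_n$; hence $-\frac1n\log\Pb{\LD_n^-(D,C\eps)}\le-\frac1n\log\Pb{E_n}+\petito(1)$, and letting $n\to\infty$ then $\eps\to0$ (monotone convergence, right-continuity of $\FdTpp(u,\cdot)$) gives this bound.

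For the upper bound on the probability — $\FdTmoninf(D)\ge$ the quantity in~\eqref{eqn : FdTmon/Existence/ExpressionSup} — fix finitely many $1$-Lipschitz, injective, pairwise disjoint curves $\gamma_1,\dots,\gamma_K$ and, for a mesh $\eta$, note that on $\LD_n^-(D,\eps)$ every chord satisfies $\BoxPT\p{\floor{n\gamma_k(t_j)},\floor{n\gamma_k(t_{j+1})}}\le n\p{D(\gamma_k(t_j),\gamma_k(t_{j+1}))+\eps}$. If the chord events were independent, applying~\eqref{eqn : PointPoint/FdT} chordwise, multiplying over $(k,j)$, letting $n\to\infty$, then $\eps\to0$ along with a mesh $\eta=\eta(\eps)\to0$ satisfying $\eps/\eta(\eps)\to0$ — using homogeneity and symmetry of $\FdTpp$, the identity $D(\gamma_k(t_j),\gamma_k(t_{j+1}))/\eta\to\MD{\gamma_k}(t_j)$, lower semicontinuity of $\FdTpp$ (Theorem~\ref{thm : PointPoint}) and Fatou's lemma — would give $\FdTmoninf(D)\ge\sum_k\int_0^{T_k}\FdTpp\p{\gamma_k'(t),\MD{\gamma_k}(t)}\d t$; the supremum over admissible families then gives the bound, and monotone convergence upgrades it to the countable supremum in~\eqref{eqn : FdTmon/Existence/ExpressionSup}. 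The real content — and, I expect, the main obstacle of the section — is the decoupling: since $\BoxPT$ sees every edge of the box, a single meandering fast path could a priori certify several chords at once. I would neutralise this by a renormalisation / greedy-extraction argument (in the spirit of the $\LDGreedy$ construction): on $\LD_n^-(D,\eps)$, extract genuine $\SPT$-geodesics realising the chords, show they can be re-routed into pairwise disjoint corridors around the $\gamma_k$ at a $\petito(n)$ cost in passage time, and absorb the $\exp(\petito(n))$ combinatorial price of the routing into the speed-$n$ estimate.

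The three representations are identified by geometric measure theory. Each truncation $\sigma_1,\dots,\sigma_K$ of the highway network, reparametrised by $\norme[1]\cdot$-arclength and, if needed, made injective, is an admissible family in~\eqref{eqn : FdTmon/Existence/ExpressionSup}, so on $K\to\infty$ the quantity in~\eqref{eqn : FdTmon/Existence/ExpressionGeodesics} is at most that in~\eqref{eqn : FdTmon/Existence/ExpressionSup}. Next: for any admissible $\p{\gamma_k}$, the area formula recasts $\sum_k\int_0^{T_k}\FdTpp\p{\gamma_k'(t),\MD{\gamma_k}(t)}\d t$ as the integral against $\hd$, over the \emph{disjoint} traces $\gamma_k\p{\intervalleff0{T_k}}\subseteq\X$, of $\FdTpp$ evaluated at $z$ on the pair (unit tangent, metric derivative); since $(\Pgrad D)_z(\text{unit tangent})\le\MD{\gamma_k}$ along any path (Definition~\ref{def : Topology/GMT/Gradient}) and $\FdTpp(u,\cdot)$ is nonincreasing (Theorem~\ref{thm : PointPoint}, item~\eqref{item : PointPoint/General/Croissance}), each integrand is $\le\max_{u\in\S_2}\FdTpp\p{u,(\Pgrad D)_z(u)}$, and $\bigcup_k\gamma_k\p{\intervalleff0{T_k}}\subseteq\X$; this gives the comparison with~\eqref{eqn : FdTmon/Intrinsic}. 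Finally, closing the loop uses the definition of highway network in full: the active set $\acc{z\in\X:\max_u\FdTpp\p{u,(\Pgrad D)_z(u)}>0}$ is $\hd$-rectifiable, and the density property of the network together with the identity $(\Pgrad D)_z(\sigma_k')=\MD{\sigma_k}$ holding along a $D$-geodesic shows that $\bigcup_k\sigma_k\p{\intervalleff0L}$ covers it up to $\hd$-null sets, with the network tangent achieving the maximum at $\hd$-a.e.\ such point; the area formula then returns~\eqref{eqn : FdTmon/Intrinsic} to the sum~\eqref{eqn : FdTmon/Existence/ExpressionGeodesics}. En route this also identifies the two suprema in~\eqref{eqn : FdTmon/Existence/ExpressionSup}.

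For the strict monotonicity, recall that $D\in\RegPD$ entails $D\le\TC$, so $D_1\le D_2\le\TC$. From $D_1\le D_2$ one gets $(\Pgrad D_1)_z\le(\Pgrad D_2)_z$ pointwise, whence, $\FdTpp(u,\cdot)$ being nonincreasing, the integrands in~\eqref{eqn : FdTmon/Intrinsic} are ordered and $\FdTmon(D_1)\ge\FdTmon(D_2)$. Suppose equality held. Since $\FdTmon(D_2)<\infty$, the two integrands — ordered, with equal finite integrals — agree $\hd$-a.e.; moreover $\FdTmon(D_2)<\infty$ forces $(\Pgrad D_2)_z\ge a\norme[1]\cdot$ for $\hd$-a.e.\ $z$ (else the integrand would be $+\infty$ there), while $D_2\le\TC$ gives $(\Pgrad D_2)_z\le\TC$; so at $\hd$-a.e.\ $z$ the relevant second arguments of $\FdTpp$ lie in a range $\intervalleff{a\norme[1]u}{\TC(u)}$ on which $\zeta\mapsto\FdTpp(u,\zeta)$ is strictly decreasing (Remark~\ref{rk : PointPoint/StrictDecroissance}). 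Combined with the $\hd$-a.e.\ equality of the integrands this forces $(\Pgrad D_1)_z=(\Pgrad D_2)_z$ for $\hd$-a.e.\ $z$; since each $D_i$ is recovered as the length metric of its path-gradient along $\norme[1]\cdot$-geodesics, and a Lipschitz curve of a.e.\ positive speed meets any $\hd$-null set only on a null parameter set, the common path-gradient forces $D_1=D_2$, contradicting $D_1\neq D_2$. The delicate points are the passage from equality of $\max_u\FdTpp(u,\cdot)$ to equality of the path-gradients in every direction (inspect a maximising direction, use the strict monotonicity there, then propagate via the convexity of $u\mapsto(\Pgrad D)_z(u)$), and the twofold use of $\FdTmon(D_2)<\infty$, both to compare the integrals and to keep $\FdTpp$ in its strictly decreasing regime.
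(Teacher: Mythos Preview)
Your overall architecture matches the paper's --- upper bound on $\FdTmonsup$ via a highway network, lower bound on $\FdTmoninf$ via finite disjoint families, intrinsic formula via the area formula, strict monotonicity read off~\eqref{eqn : FdTmon/Intrinsic} --- but several steps do not go through as written, and the paper's actual mechanisms differ in ways that matter.

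\textbf{Upper bound.} You want to enclose the chords of the highways in pairwise disjoint corridors and use independence. This fails twice: a single injective highway may double back on itself at arbitrarily small distance, so corridors of positive width need not be disjoint; and the ``short bridges'' from $\floor{nx}$ to the nearest milestone are certainly not independent of $E_n$, since they share edges. The paper avoids both issues by using FKG throughout: the events $\{\BoxPT(\floor{n\sigma_k(t_j)},\floor{n\sigma_k(t_{j+1})})\le\cdots\}$, the hub events of Lemma~\ref{lem : Intro/main_thm/LemmeCool}, and $\LD_n^-(D_K,\eps)$ are all decreasing, so the product bound holds with no disjointness requirement. The argument is then an induction on the number $K$ of activated highways (Lemmas~\ref{lem : FdTmon/UpperBound/Initialisation} and~\ref{lem : FdTmon/UpperBound/Heredity}).

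\textbf{Lower bound.} You correctly isolate the decoupling as the crux but your fix is too vague, and the reference to $\LDGreedy$ is a red herring (that appears only in Section~\ref{sec : LDP}). The paper's mechanism has two ingredients you do not name: an event $\cE_2(n)$, controlled by the BK inequality, asserting that at most a fixed number $R$ of pairwise disjoint discrete paths in the box can be simultaneously long ($\ge n\delta$) and fast ($\le n\eta\BoundTC$); and the hub event $\cE_1(n)$, which provides short local bypasses. On $\LD_n^-(D,\eps)\cap\cE_1\cap\cE_2$ one follows the $\BoxPT$-geodesics between milestones, observes that each excursion out of the tube $n\overline\gamma_k$ is one of the at most $R$ paths forbidden by $\cE_2^{\mathrm c}$, and replaces each excursion by a hub-bypass of cost $O(n/P)$. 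This produces a path inside $n\overline\gamma_k$ of passage time at most $n(D(\gamma_k)+P\eps+CR/P)$; since the tubes $n\overline\gamma_k$ are genuinely disjoint, independence now applies. The point is that BK converts an a priori unbounded number of escapes into the constant $R$.

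\textbf{Strict monotonicity.} Here there is a real gap. From equality of $\max_u\FdTpp(u,(\Pgrad D_i)_z(u))$ together with the pointwise ordering $\FdTpp(u,(\Pgrad D_1)_z(u))\ge\FdTpp(u,(\Pgrad D_2)_z(u))$ you cannot conclude $(\Pgrad D_1)_z=(\Pgrad D_2)_z$: the larger function could be flat where the smaller peaks. Your repair via ``convexity of $u\mapsto(\Pgrad D)_z(u)$'' does not work, since that map is not known to be convex. The paper instead proves Lemma~\ref{lem : FdTmon/Intrinsic/Pgrad}: when $\FdTmon(D)<\infty$, for $\hd$-a.e.\ $z$ one has $(\Pgrad D)_z(u)=\TC(u)$ in every direction except possibly one highway tangent. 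This lemma --- proved by a Vitali covering argument that feeds the candidate ``bad'' paths back into Proposition~\ref{prop : FdTmon/LowerBound}, not by the density property of the network alone --- is what makes both~\eqref{eqn : FdTmon/Intrinsic} and the strict monotonicity go through: the max in~\eqref{eqn : FdTmon/Intrinsic} is attained at the tangent direction, and $D_1\neq D_2$ forces $(\Pgrad D_1)_z<(\Pgrad D_2)_z$ in that direction on a set of positive $\hd$-measure, whence Remark~\ref{rk : PointPoint/StrictDecroissance} concludes.
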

\begin{Remark}
    Equation~\eqref{eqn : FdTmon/Intrinsic} holds with the unit sphere with respect to any norm instead of $\S_2$, provided one also takes the Hausdorff measure associated with the chosen norm. However, we use the Euclidean norm because it is the usual framework. 
\end{Remark}

\begin{Remark}
    Contrary to~\eqref{eqn : FdTmon/Existence/ExpressionGeodesics},~\eqref{eqn : FdTmon/Intrinsic} does not depend on an arbitrary choice of highway network for $D$. Actually, we will prove that if $\FdTmon(D)<\infty$, then for $\hd$-almost all $z$, for all $u\in \S_2$ except maybe in one direction, $(\Pgrad D)_z(u) = \TC(u)$ (see Lemma~\ref{lem : FdTmon/Intrinsic/Pgrad}). In particular, by Item in Theorem~\ref{thm : PointPoint},\eqref{item : PointPoint/General/OrdreGrandeur}, for $\hd$-almost all $z$, the maximum in~\eqref{eqn : FdTmon/Intrinsic} is either $0$ or the only positive value of $ \FdTpp\p{u, (\Pgrad D)_z(u) }$.
\end{Remark}

Under stronger assumptions, the LDP holds with a rate function that coincides with $\FdTmon$ on $\RegPD$, and is infinite outside.
\begin{Theorem}
    \label{thm : MAIN}
    Assume~\eqref{ass : Intro/main_thm/SubcriticalAtom} and~\eqref{ass : Intro/main_thm/exp_moment}. The process $\p{\SPT}_{n\ge 1}$ follows the large deviation principle at speed $n$ with the good rate function
    \begin{align}
        \notag \FdT : \BoundedF &\longrightarrow \intervalleff0\infty \\
        D &\longmapsto%
        \begin{cases}
            \FdTmon(D) &\text{ if } D \in \RegPD[\TC],\\
            \infty &\text{ otherwise.}
        \end{cases}
    \end{align}
    In other words, for all $A\in \Borel{\BoundedF}$,
    \begin{equation}
    \label{eqn : MAIN/LDP}
        \inf_{D\in \overline{A}} \FdT(D) \le %
        \liminf_{n\to\infty}-\frac{1}{n}\log\Pb{\SPT \in A} \le %
        \limsup_{n\to\infty}-\frac{1}{n}\log\Pb{\SPT \in A} \le %
        \inf_{D\in \mathring{A}} \FdT(D).
   \end{equation}
\end{Theorem}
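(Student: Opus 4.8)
The plan is to deduce the full LDP from the weak‑LDP machinery of Lemma~\ref{lem : Intro/sketch/UB_LB} together with Theorem~\ref{thm : Intro/FdTmon}, by establishing three things: (a) the upper bound on open sets, (b) the lower bound on compact sets, (c) exponential tightness at speed $n$, which upgrades the compact lower bound to arbitrary Borel sets and simultaneously forces $\FdT$ to be a \emph{good} rate function. The key translation step is to identify the ball‑based rate functions $\overline I$, $\underline I$ of Lemma~\ref{lem : Intro/sketch/UB_LB} (for the metric space $(\BoundedF, \UnifDistance)$) with $\FdT$. Since the event $\acc{\UnifDistance(D,\SPT)\le\eps}$ is exactly $\acc{\forall x,y,\ \module{\SPT(x,y)-D(x,y)}\le\eps}$, it is sandwiched between $\LD_n^-(D,\eps)$ and $\LD_n^-(D,\eps)\cap\acc{\SPT\ge D-\eps\ \text{pointwise}}$; the monotone (lower‑tail) version $\FdTmon$ is governed by $\LD_n^-$, and the matching upper bound on $\acc{\SPT\ge D-\eps}$ must be controlled using~\eqref{ass : Intro/main_thm/exp_moment} via the upper‑tail estimate of Appendix~\ref{appsec : OdG}. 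Concretely, if $D\in\RegPD[\TC]$ one shows $\overline I(D)=\underline I(D)=\FdTmon(D)$: the $\ge$ direction is immediate from $\acc{\UnifDistance(D,\SPT)\le\eps}\subseteq\LD_n^-(D,\eps)$, while for $\le$ one needs that adding the requirement $\SPT\ge D-\eps$ costs nothing exponentially, which is where the exponential moment assumption enters (the upper‑tail cost is super‑exponential in $n$, hence negligible). If $D\notin\RegPD[\TC]$, one must show $\overline I(D)=\infty$: either $D$ fails~\eqref{eqn : Intro/main_thm/equivalence_distances}, in which case $\acc{\UnifDistance(D,\SPT)\le\eps}$ is eventually empty because $\SPT\le\TC(x-y)+o(1)$ uniformly by the shape theorem, or $D$ has no $\TC$‑geodesics of bounded $\norme[1]$‑length, and a compactness/diagonal argument together with an a priori bound on near‑geodesics of $\SPT$ (geodesics of $\SPT$ use $O(n)$ edges under~\eqref{ass : Intro/main_thm/exp_moment}) again makes the $\eps$‑ball eventually empty.

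With $\overline I=\underline I=\FdT$ in hand, items (ii) and (iii) of Lemma~\ref{lem : Intro/sketch/UB_LB} give the upper bound on open sets and the lower bound on compact sets verbatim. The remaining work is \textbf{exponential tightness}: for every $\alpha>0$ there is a compact $K_\alpha\subseteq\BoundedF$ with
\begin{equation*}
    \limsup_{n\to\infty}-\frac1n\log\Pb{\SPT\notin K_\alpha}\ge\alpha.
\end{equation*}
A natural candidate is $K_\alpha\dpe\RegPD[\TC]^{L_\alpha}$ for a suitable $L_\alpha$, which should be $\UnifDistance$‑compact: the bound $D\le\TC(x-y)$ gives equicontinuity and uniform boundedness (Arzelà–Ascoli on $\X^2$), and the uniform bound $L_\alpha$ on geodesic lengths is closed under uniform limits of pseudometrics — this compactness claim is essentially the content one expects to have been established when setting up $\RegPD[g]^L$ in Section~\ref{subsubsec : Intro/MainThm/LimitSpace}, so I would cite it. Then $\acc{\SPT\notin K_\alpha}$ splits into $\acc{\SPT\not\le\TC}$, which has super‑exponentially small probability by the directional upper‑tail estimate~\eqref{eqn : Intro/EstimateUpperTail} (uniformized over the finitely many coordinate‑direction pairs needed, using that $\SPT$ is a pseudometric dominated by its coordinate values), and $\acc{\SPT\in\RegPD[\TC],\ \text{no geodesic of length}\le L_\alpha}$, i.e. $\acc{\SPT$ has a long geodesic$}$; bounding the latter by $e^{-\alpha n}$ for $L_\alpha$ large is exactly an estimate of $\LDGreedy$/$\LongGeo$ type (the probability that a geodesic of $\SPT$ is abnormally long), which under~\eqref{ass : Intro/main_thm/exp_moment} decays at speed $n$ with a rate $\to\infty$ as $L_\alpha\to\infty$. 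Once exponential tightness is proved, the compact lower bound extends to all Borel (indeed all) sets by the standard argument (Lemma~1.2.18 in \cite{LDTA}), and goodness of $\FdT$ follows since $\set{D}{\FdT(D)\le t}$ is a closed subset of the compact $K_t$.

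I expect the \textbf{main obstacle} to be the two‑sided control near $D\in\RegPD[\TC]$, specifically showing that intersecting $\LD_n^-(D,\eps)$ with the lower constraint $\acc{\SPT\ge D-\eps}$ does not change the exponential rate, and the accompanying verification that $\overline I(D)=\infty$ off $\RegPD[\TC]$ when the failure is the \emph{geodesic‑length} condition rather than the cheap domination condition — both require turning qualitative facts about $\SPT$'s geodesics (that under an exponential moment they are not too long, and converge appropriately) into quantitative exponential estimates uniform in $n$. The geodesic‑length tightness estimate, i.e. the $\LongGeo$ bound, is the technical heart: everything else is either a direct appeal to Lemma~\ref{lem : Intro/sketch/UB_LB}, to Theorem~\ref{thm : Intro/FdTmon}, or to the shape theorem and the upper‑tail estimates recalled in the introduction.
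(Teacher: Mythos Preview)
Your overall architecture (weak LDP via Lemma~\ref{lem : Intro/sketch/UB_LB} plus exponential tightness) matches the paper's, and you correctly flag the two-sided control near $D\in\RegPD[\TC]$ as the crux. However, the resolution you propose is wrong: the event $\acc{\exists x,y:\ \SPT(x,y) < D(x,y)-\eps}$ with $D\le\TC$ is a \emph{lower}-tail event, not an upper-tail one. The estimate of Appendix~\ref{appsec : OdG} controls $\acc{\SPT(x,y)\ge\TC(x-y)+\eps}$ and says nothing about $\SPT$ dropping below $D-\eps$; that event has a \emph{finite} positive rate at speed $n$, so it does not vanish for free. The paper's actual mechanism is the strict monotonicity of $\FdTmon$ (Proposition~\ref{prop : FdTmon/StrMon}): on $\LD_n^-(D,\delta)\cap\acc{\SPT(x,y)\le D(x,y)-\eps\text{ for some }x,y}$, the random metric is $\le D+\delta$ yet $\eps$-below $D$ somewhere, hence lies in a compact set $K_{\eps,\delta}$ of metrics $D'$ with $D'\le D$, $D'\ne D$; by Proposition~\ref{prop : FdTmon/StrMon} each such $D'$ has $\FdTmon(D')>\FdTmon(D)$, and compactness plus lower semicontinuity give a uniform strict gap, so this sliver has strictly larger rate than $\FdTmon(D)$ and can be excised from $\LD_n^-(D,\delta)$ without changing the exponential order.

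To make that compactness argument work, the paper does not apply it to $\SPT$ directly (which is piecewise constant and not in $\RegPD$): it first introduces a continuous extension $\TightSPT$ of the truncated metric $\SPT^{(b)}$ that genuinely takes values in $\RegPD[b\normeUn\cdot]$, proves the weak LDP and exponential tightness for $\TightSPT$ (Lemmas~\ref{lem : LDP/ContinuousMetric} and~\ref{lem : LDP/ContinuousMetric/LDP}), transfers the LDP to $\SPT^{(b)}$ by exponential equivalence ($\UnifDistance(\SPT^{(b)},\TightSPT)\le 2bd/n$), and finally passes to $b=\infty$ via exponentially good approximations (Lemma~\ref{lem : LDP/Classic/EGA} and Theorem~\ref{thm : LDP/ExpEquiv_LDP}), where the identification of the limiting rate function with $\FdT$ again uses Proposition~\ref{prop : FdTmon/StrMon}. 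Your direct route skips both the $\TightSPT$ device and the truncation; without a random object that actually lives in $\RegPD$, the strict-monotonicity/compactness step has nothing to bite on, so even after correcting the tool you would need to supply a substitute for this embedding.
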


\subsection{Open questions and related works}

\paragraph{Upper-tail large deviations for the point-point and face-face times.}
Contrary to the lower-tail, the order of the upper-tail large deviation probability $\Pb{\PT(0, n\base 1) \ge n\zeta}$, with $\zeta > \base 1$, depends on the distribution. Kesten (\cite{KestenStFlour}, Theorem~5.9) proved that if $\nu$ has a bounded support then it is of order $\exp\p{-n^d}$. Basu, Ganguly and Sly \cite{Bas21} later proved the existence of the rate function, in dimension $2$,\footnote{Their proof may be adapted in any dimension, though.} under a regularity assumption on $\nu$. This regularity assumption was recently relaxed by the author (see Corollary~1.6 in \cite{Ver24+_LDP_PPP_nd}).

Concerning distributions with unbounded support, Cranston, Gauthier and Mountford gave (\cite{Cra09}, Theorem~1.3) a criterion for $\Pb{\PT(0, n\base 1) \ge n\zeta}$ to be of order $\exp\p{-n^d}$, provided the tail of $\nu$ has a specific shape. In a recent article, Cosco and Nakajima \cite{Cos23} proved that for all $0 < r < d$, if the tail of $\nu$ decays like $\exp\p{-t^r}$, then $\Pb{\PT(0, n\base 1) \ge n\zeta}$ has order $\exp\p{-n^r}$, and they gave an expression of the point-point rate function.

It is worth noting that the so-called face-face passage time (i.e.\ the minimal passage time among paths traversing the box $\intint0n^d$ from one face to the opposite one) seems to display less diverse orders in its upper-tail large deviations. Indeed Chow and Zhang \cite{Cho03} showed that if $\nu$ has one exponential moment, then the probability that the face-face passage time takes abnormally large values is of order $\exp\p{-n^d}$. They also proved the existence of rate function.

\paragraph{Large deviations at the metric level.}
Assuming that $\nu$ is supported by $\intervalleff ab$, with $0<a<b<\infty$, the author \cite{Ver24+_LDP_PPP_nd} proved a LDP at speed $n^d$ for $\SPT$, with a good rate function $I$. An integral expression similar to~\eqref{eqn : FdTmon/Intrinsic} is provided, except the measure is the Lebesgue measure on $\X$. For all possible adherence values $D$ of $\SPT$, $I(D) = 0$ if and only if $D \le \TC$, and is finite except on marginal cases, meaning that result provides an appropriate estimate for the probability of $\{\SPT \simeq D \}$, whenever $D$ is not bounded by $\TC$. On the other hand, whenever $D\le \TC$ and $D\neq \TC$, $\FdT(D)>0$ and is the limit of metrics $D_n$ such that $\FdT(D_n)<\infty$, meaning that Theorem~\ref{thm : MAIN} provides an appropriate estimate for the probability of $\{\SPT \simeq D \}$, whenever $D\le \TC$. Consequently, the LDPs at speed $n$ and $n^d$ give a full picture of the large deviations for $(\SPT)_{n\ge 1}$, in the sense that for all positive sequence $(a_n)_{n\ge 1}$ satisfying
\begin{equation}
    a_n \ll n, \text{ or } n \ll a_n \ll n^d, \text{ or } n^d \ll a_n,
\end{equation}
for all pseudometrics $D$ on $\X$, either
\begin{align}
    \lim_{\eps \to 0} \liminf_{n\to\infty} -\frac1{a_n} \log \Pb{\LD_n(D,\eps)} &= \infty,
    \intertext{or}
    \lim_{\eps \to 0} \limsup_{n\to\infty} -\frac1{a_n} \log \Pb{\LD_n(D,\eps)} &= 0.
\end{align}
In other words, there cannot be a third speed $a_n$, for which $(\SPT)_{n\ge 1}$ satisfies the LDP with a rate function taking at least one positive, finite value.

However, if $\nu(\intervallefo t\infty)$ decays like $\exp\p{-t^r}$, with $1<r<d$ then at least three LDPs, at three different speeds, are required to describe the large deviations of $(\SPT)_{n\ge 1}$. Indeed, in this regime the probability of deviation events studied in the present article have order $\exp\p{-n}$, while as stated above, the upper-tail deviations of the point-point passage time have order $\exp\p{-n^r}$ and the upper-tail deviations of the face-face passage time have order $\exp\p{-n^d}$. It is thus plausible that there exist at least three speeds for which $(\SPT)_{n\ge 1}$ follows a LDP with a nontrivial rate function. The case where $\nu(\intervallefo t\infty)$ decays like $\exp\p{-t}$ may also be quite rich, because lower-tail behaviours and some upper-tail behaviours are both of order $\exp\p{-n}$. In particular the conclusion of Theorem~\ref{thm : MAIN} fails in this context, because if $(\SPT)_{n\ge 1}$ followed a LDP at speed $n$, the associated rate function would take positive, finite values for some metric greater than $\TC$.

\paragraph{Large deviations for the chemical distance in bond percolation.}

In supercritical bond percolation, we call \emph{chemical distance} between two vertices the length of the shortest open path between these vertices. In the framework of FPP, this corresponds to the case where $\nu$ is supported by $\acc{1, \infty}$. Garet and Marchand \cite{Gar07} showed that the probability of large deviations events for the chemical distance has order $\exp\p{-n}$. For the upper-tail part, Dembin and Nakajima \cite{Dem23+} recently proved the existence of the associated rate function.

\subsection{Outline of the proofs}
\label{subsec : Intro/OotP}

In Section~\ref{sec : Topology}, we give some topological preliminaries about $\RegPD[g]$. We show that the sets $\RegPD[g]^L$ are compact subsets of $\BoundedF$, any pseudometric $D\in \RegPD$ has a highway network and the $D$-length of Lipschitz paths is given by an integral involving the gradient by paths. The highway method is a refinement of an argument used by the author in~\cite{Ver24+_LDP_PPP_nd}.

In Section~\ref{sec : PointPoint}, we prove Theorem~\ref{thm : PointPoint}. By the FKG inequality, the probability that two translations of the event in~\eqref{eqn : PointPoint/FdT} occur simultaneously is greater than the product of the individual probabilities. This allows for a subadditive argument, providing the existence and convexity of the elementary rate function.

In Section~\ref{sec : FdTmon}, we prove Theorem~\ref{thm : Intro/FdTmon}. We first upper bound $\FdTmonsup(D)$ by the right-hand side of~\eqref{eqn : FdTmon/Existence/ExpressionGeodesics}. To do so, we note that prescribing a passage time smaller than $D$ between a large number of milestones scattered along the highways essentially implies prescribing a passage time smaller than $D$ everywhere. Thanks to the FKG inequality, the probability of this scenario is lower bounded by the product of the individual point-point lower-tail deviation events involved. Applying Theorem~\ref{thm : PointPoint} and letting the number of milestones tend to infinity yields the desired bound. We then lower bound $\FdTmoninf(D)$ by the right-hand side of~\eqref{eqn : FdTmon/Existence/ExpressionSup}, using a somehow opposite strategy. Rather than providing an appropriate scenario for the large deviation event $\LD_n^-(D,\eps)$ to occur, we show that for all families $(\gamma_k)$ as in~\eqref{eqn : FdTmon/Existence/ExpressionSup}, on the large deviation event, there exists a family of abnormally fast \emph{pairwise disjoint} discrete paths that "follow" the paths $\gamma_k$. Finally, we prove~\eqref{eqn : FdTmon/Intrinsic} by the so-called area formula (see e.g.\ Corollary~5.1.13 in \cite{Kra08}). The hard part is to show that $(\Pgrad D)$ has $\hd$-almost everywhere a simple expression with respect to the speeds of the highways $\sigma_k$.

In Section~\ref{sec : LDP}, we prove Theorem~\ref{thm : MAIN}. For all $D\in \BoundedF$, $n\in \N^*$ and $\eps>0$, we introduce the event
\begin{equation}
    \label{eqn : Intro/def_LDn}
    \LD_{n}(D, \eps)\dpe \acc{\UnifDistance\p{\SPT , D}\le \eps },
\end{equation}
and the rate functions
\begin{align}
    \label{eqn : Intro/def_FdTsup}
    \FdTsup(D) &\dpe \inclim{\eps \to 0} \limsup_{n\to \infty} -\frac1n \log \Pb{ \LD_{n}(D, \eps) },\\
    \label{eqn : Intro/def_FdTinf}
    \FdTinf(D) &\dpe \inclim{\eps \to 0} \liminf_{n\to \infty} -\frac1n \log \Pb{ \LD_{n}(D, \eps) },
\end{align}
i.e.\ the functions $\overline{I}$ and $\underline{I}$ involved in Lemma~\ref{lem : Intro/sketch/UB_LB}, in the special case $\bbX = \BoundedF$ and $X_n = \SPT$. We first show that if the metric $\SPT^{(b)}$ associated with the truncated passage times $\tau_e \wedge b$ is exponentially equivalent to another pseudometric $\TightSPT$, which has better tightness properties. We then show that $\TightSPT$, thus $\SPT^{(b)}$, follows the LDP with the rate function $\FdT$, which essentially amounts to showing that $\FdTinf = \FdTsup = \FdT$. In the case where $\nu$ has an unbounded support, we use the fact that the truncated passage times are exponentially good approximations of $\SPT$.

\subsection{Notations and conventions}
\label{subsec : Intro/Notations}

\paragraph{Vectors of $\R^d$ and norms on $\R^d$.}
Let $(e_i)_{1\le i \le n}$ denote the canonical basis of $\R^d$. We endow $\R^d$ with the norms defined by
\begin{equation}
    \label{eqn : Intro/DefNormeUn}
    \norme[1]{x} \dpe \sum_{i=1}^d \module{x_i}, \text{ and } \norme[2]x \dpe \p{\sum_{i=1}^d x_i^2 }^{1/2}, 
\end{equation}
for all $x=(x_1,\dots,x_d) \in \R^d$. We define $\d$ as the metric associated with $\norme[1]\cdot$, $\S_1$ the unit sphere for $\norme[1]\cdot$ and $\ball[1]{z,r}$ (resp. $\clball[1]{z,r}$) the open (resp. closed) ball of center $z$ and radius $r$ for $\norme[1]\cdot$. Likewise, we denote by $\S_2$, $\ball[2]{z,r}$ and $\clball[2]{z,r}$ their analogues for $\norme[2]\cdot$. Given a norm $g$ on $\R^d$, we define
\begin{equation}
    \label{eqn : Intro/DefNormeNorme}
    \BoundNorm g \dpe \sup_{u \in \S_1} g(u) = \sup_{x\in \R^d \setminus\acc0} \frac{g(x)}{\norme[1] x}.
\end{equation}
For all $\lambda > 0$, we say that a function with values in $\R^d$ is $\lambda$-Lipschitz if it is $\lambda$-Lipschitz for the norm $\norme[1]\cdot$.

For all $x\in\R^d$, we denote by $\module x$ (resp.\ $\floor x$) the element of $\R^d$ whose components are the modules (resp.\ the floors) of the components of $x$. We denote by $\le$ the componentwise order on $\R^d$.

\paragraph{Edges and paths.}
We will identify any discrete path $\pi = (x_0, \dots, x_r)$ to the continuous path defined as the piecewise affine function $\pi : \intervalleff0r \rightarrow \R^d$ such that for all $i\in\intint0r$, $\pi(i)= x_i$. The number of edges $\norme\pi$ of $\pi$, seen as a discrete path, is equal to $\norme[1]\pi$ and $\norme[2]\pi$ as defined by~\eqref{eqn : Intro/DefDLength}, so there is no collision of notations. Given a continuous path $\gamma:\intervalleff0T \rightarrow \R^d$, and $x,y\in \R^d$, we will write $x\Path\gamma y$ if $\gamma(0)=x$ and $\gamma(T)=y$. For all subsets $A$ of $\Z^d$, we denote by $\edges{A}$ the set of edges whose endpoints are both in $A$. We say that a point $x\in \R^d$ belongs to an edge $e\in \bbE^d$ if it belongs to the segment with the same endpoints as $e$.

Whenever a continuous path $\gamma : \intervalleff0T \rightarrow \R^d$ appears inside an expression involving set operators, we identify it with its image. For example, "$x\in \gamma$" will mean "$x\in \gamma\p{\intervalleff0T}$".

\paragraph{Hausdorff measure.}
Consider a Borel subset $A$ of $\R^d$. For all $R>0$, we define
\begin{equation}
    \label{eqn : Intro/Def_preHD}
    \preHD{R} (A) \dpe \inf \set{ \sum_{i\ge 1} \diam(A_i) }{ \acc{A_i}_{i\ge 1} \text{ subsets of } \R^d \text{ s.t.\ } A \subseteq \bigcup_{i\ge 1}A_i\text{ and } \forall i\ge1, \; \diam(A_i) \le R },
\end{equation}
where $\diam(A_i)$ denotes the diameter of $A_i$ for the Euclidean norm. The $1$-\emph{dimensional Hausdorff measure of }A is defined as (see e.g.\ Section~2.1 in~\cite{Kra08})
\begin{equation}
    \hd(A) \dpe \inclim{R \to 0} \preHD{R}(A).
\end{equation}
For more details on the Hausdorff measure, see Krantz-Parks (2008)~\cite{Kra08}.

\paragraph{Miscellaneous}
We denote by $\# A$ the cardinal of a set $A$. For all integers $n \le m$, we define $\intint nm \dpe \intervalleff nm \cap \Z$. The infimum of $\nu$'s support is denoted by $a$.

\section{Topological preliminaries}
\label{sec : Topology}

\subsection{Pseudometrics with bounded-length geodesics}
\label{subsec : Topology/RegPD}
In this section we fix a norm $g$ and study the properties of the space $\RegPD[g]$. Up to changing the parametrization, any Lipschitz path on $\R^d$ may be assumed to be $1$-Lipschitz. Unless stated otherwise, for all $L>0$ and $D\in \RegPD[g]^L$, our $D$-geodesics are $1$-Lipschitz paths $\sigma : \intervalleff0L \rightarrow \X$.
\subsubsection{Compactness}
\begin{Proposition}
	\label{prop : Topology/RegPD/Compact}
	For all $L>0$, the space $\RegPD[g]^L$ is compact for the topology of the uniform convergence.
\end{Proposition}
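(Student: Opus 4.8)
The statement asserts that $\RegPD[g]^L$ is compact for the uniform topology on $\BoundedF$. Since $\BoundedF$ is a metric space (with $\UnifDistance$), it suffices to show $\RegPD[g]^L$ is sequentially compact: every sequence $(D_n)_{n \ge 1}$ in $\RegPD[g]^L$ has a subsequence converging uniformly to some $D_\infty \in \RegPD[g]^L$. The plan is to (1) extract a uniformly convergent subsequence via an Arzelà–Ascoli argument, then (2) check that the limit $D_\infty$ is again a pseudometric satisfying (i) and (ii) of Definition~\ref{def : Intro/main_thm/adm_metrics}.

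For step (1): each $D_n : \X^2 \to \intervallefo0\infty$ is bounded by $g$ on the compact set $\X^2$, hence uniformly bounded by $\BoundNorm g \cdot \diam_{\norme[1]\cdot}(\X)$. Moreover each $D_n$ is equicontinuous on $\X^2$: by the triangle inequality for the pseudometric $D_n$ and bound (i), for $x,x',y,y' \in \X$ one has $\module{D_n(x,y) - D_n(x',y')} \le D_n(x,x') + D_n(y,y') \le g(x-x') + g(y-y')$, which is independent of $n$ and tends to $0$ as $(x',y') \to (x,y)$. By Arzelà–Ascoli, $(D_n)$ has a subsequence converging uniformly on $\X^2$ to some continuous $D_\infty : \X^2 \to \intervallefo0\infty$. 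Passing to the limit in $D_n(x,x) = 0$, $D_n(x,y) = D_n(y,x)$ and $D_n(x,y) \le D_n(x,z) + D_n(z,y)$ shows $D_\infty$ is a pseudometric, and passing to the limit in (i) gives $D_\infty(x,y) \le g(x-y)$. So $D_\infty$ satisfies (i).

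The main obstacle is step (2), verifying condition (ii) for $D_\infty$: given $x,y \in \X$ we must produce a $D_\infty$-geodesic $\sigma$ from $x$ to $y$ with $\norme[1]\sigma \le L$. For each $n$ (in the subsequence) pick a $D_n$-geodesic $\sigma_n : \intervalleff0L \to \X$ from $x$ to $y$ with $\norme[1]{\sigma_n} \le L$; reparametrizing, assume each $\sigma_n$ is $1$-Lipschitz for $\norme[1]\cdot$. The family $(\sigma_n)$ is equi-Lipschitz and takes values in the compact set $\X$, so by Arzelà–Ascoli again it has a subsequence converging uniformly to a $1$-Lipschitz path $\sigma : \intervalleff0L \to \X$ with $\sigma(0) = x$, $\sigma(L) = y$, and $\norme[1]\sigma \le L$. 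It remains to show $D_\infty(\sigma) = D_\infty(x,y)$. The inequality $D_\infty(\sigma) \ge D_\infty(\sigma(0),\sigma(L)) = D_\infty(x,y)$ is immediate from the definition of $D$-length. For the reverse, fix a subdivision $0 = t_0 < \dots < t_r = L$; then
\begin{equation}
    \sum_{i=0}^{r-1} D_\infty\p{\sigma(t_i), \sigma(t_{i+1})} = \lim_{n\to\infty} \sum_{i=0}^{r-1} D_n\p{\sigma_n(t_i), \sigma_n(t_{i+1})} \le \limsup_{n\to\infty} D_n(\sigma_n) = \limsup_{n\to\infty} D_n(x,y) = D_\infty(x,y),
\end{equation}
using uniform convergence of both $D_n \to D_\infty$ and $\sigma_n \to \sigma$ (and continuity of $D_\infty$) for the first equality, and $D_n(\sigma_n(t_i),\sigma_n(t_{i+1})) \le D_n(\sigma_n)$ restricted to each subinterval together with additivity of $D_n$-length along the subdivision for the inequality. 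Taking the supremum over subdivisions gives $D_\infty(\sigma) \le D_\infty(x,y)$, hence equality, so $\sigma$ is a $D_\infty$-geodesic and $D_\infty \in \RegPD[g]^L$. One technical point to handle carefully is that the two Arzelà–Ascoli extractions (for $D_n$ and for $\sigma_n$) must be nested, and that the geodesic extraction may a priori depend on the pair $(x,y)$; since condition (ii) is a pointwise-in-$(x,y)$ requirement and $D_\infty$ is already fixed after the first extraction, this causes no difficulty.
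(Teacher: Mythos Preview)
Your proof is correct and follows essentially the same approach as the paper: equicontinuity via the triangle inequality and the bound $D\le g$, Arzel\`a--Ascoli to reduce to closedness, then for each pair $(x,y)$ an Arzel\`a--Ascoli extraction on the $1$-Lipschitz geodesics $\sigma_n$ and a passage to the limit along subdivisions. The only cosmetic difference is that the paper observes directly that $\sum_k D_n(\sigma_n(t_k),\sigma_n(t_{k+1})) = D_n(x,y)$ (equality, since $\sigma_n$ is a $D_n$-geodesic), yielding $D_\infty(\sigma)=D_\infty(x,y)$ in one stroke, whereas you split this into the two inequalities $\ge$ and $\le$.
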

\begin{proof}
Let $L>0$. For all $D\in \RegPD[g]^L$, $x,x',y,y'\in \X$, by triangle inequality,
\begin{equation*}
	\module{D(x,y) - D(x',y')} \le D(x,x') + D(y,y') \le g\p{x-x'} + g\p{y-y'}.
\end{equation*}
In particular, $\RegPD[g]^L$ is equicontinuous.

By Arzelà-Ascoli theorem it is thus sufficient to show that it is closed in $\BoundedF$. Let $(D_n)_{n\ge1}$ be a sequence of elements of $\RegPD[g]^L$ converging to $D \in \BoundedF$. Then $D$ is clearly a pseudometric dominated by $g$. Let $x,y\in \X$. Let us show that $D$ has a geodesic from $x$ to $y$ with $\norme[1]\cdot$-length at most $L$.For all $n\ge1$, there exists a $D_n$-geodesic $\sigma_n : \intervalleff{0}{L}\rightarrow \X$ from $x$ to $y$. Since all the $\sigma_n$ are $1$-Lipschitz, the family $(\sigma_n)_{n\ge 1}$ is equicontinuous, hence there exists an extraction $\varphi : \N\rightarrow \N$ and $1$-Lipschitz path $\sigma : \intervalleff0L \rightarrow \X$ such that $\sigma_{\varphi(n)}$ uniformly converges to $\sigma$ as $n\to\infty$. Consider a subdivision $0=t_0 \le t_1 \le \dots \le t_K = L$. For all $n\ge 1$, by definition of $\sigma_{\varphi(n)}$,
\begin{align}
	\sum_{k=0}^{K-1} D_{\varphi(n)} \p{ \sigma_{\varphi(n)}(t_k), \sigma_{\varphi(n)}(t_{k+1}) } %
		&= D_{\varphi(n)} \p{x,y}.\notag
	\intertext{Letting $n\to \infty$ yields, by uniform convergence,}
	\sum_{k=0}^{K-1} D \p{ \sigma(t_k), \sigma(t_{k+1}) } %
		&= D \p{x,y}.\notag
	\intertext{Taking the supremum with respect to the subdivision, we get}
	D\p{\sigma} %
		&= D \p{x,y}.
\end{align}
Besides, $\sigma(0)=x$ and $\sigma(L)=y$, thus $\sigma$ is a $D$-geodesic from $x$ to $y$. Consequently, $D\in \RegPD[g]^L$.
\end{proof}
\subsubsection{The highway method}
The main result of this paragraph is Proposition~\ref{prop : Topology/RegPD/HW_cvg}, with states that a pseudometric $D\in \RegPD[\TC]$ may be approximated by considering the uniform space $g$ then "activating" a large number of geodesics for $D$. Moreover, those geodesics may be chosen injective and pairwise disjoint. Some of the work, gathered in Lemma~\ref{lem : Topology/RegPD/HW_simple_ptys}, may easily be adapted from Lemma~3.3 in \cite{Ver24+_LDP_PPP_nd}. 
\begin{Definition}
	\label{def : Topology/RegPD/HW}
	Let $L>0$ and $D\in \RegPD[\TC]^L$. Consider a sequence $\p{ \sigma_k : \intervalleff0{L} \rightarrow \X }_{k\ge 1}$ of $D$-geodesics. We recursively define the sequence $\p{\HW{D}{ (\sigma_k)_{1\le k \le K} } }_{K\ge 0}$ of functions in $\BoundedF$ as such : $\HW{D}{\emptyset}= \TC$ and for all $K\ge 0$ and $x,y\in\X$,
	\begin{equation}
		\label{eqn : Topology/RegPD/HW}
	\begin{split}
		\HW{D}{ (\sigma_k)_{1\le k \le K +1} }(x,y) \dpe &\HW{D}{ (\sigma_k)_{1\le k \le K} }(x,y) \wedge  \min_{0 \le s,t \le L } \Bigl( \HW{D}{ (\sigma_k)_{1\le k \le K} }(x,\sigma_{K+1}(s) )\\
		 &\quad + D(\sigma_{K+1}(s) , \sigma_{K+1}(t) ) + \HW{D}{ (\sigma_k)_{1\le k \le K} }( \sigma_{K+1}(t) , y) \Bigr). 
	\end{split}
	\end{equation}
\end{Definition}
\begin{Lemma}
	\label{lem : Topology/RegPD/HW_simple_ptys}
	We adopt the same notations as in Definition~\ref{def : Topology/RegPD/HW}. 
	\begin{enumerate}[(i)]
		\item For all $K\ge 0$, $\HW{D}{ (\sigma_k)_{1\le k \le K} } \in \RegPD[\TC]$.
		\item For all $K\ge 0$,
		\begin{equation}
			\label{eqn : Topology/RegPD/HW_simple_ptys/noninc}
			\HW{D}{ (\sigma_k)_{1\le k \le K} } \ge \HW{D}{ (\sigma_k)_{1\le k \le K +1} } \ge D.
		\end{equation}
		\item If the sequence $\bigl( ( \sigma_k(0) , \sigma_k(L) ) \bigr)_{k\ge 1}$ is dense in $\X^2$ for the usual topology, then
		\begin{equation}
			\label{eqn : Topology/RegPD/HW_simple_ptys/Convergence}
			D= \lim_{K\to \infty} \HW{D}{ (\sigma_k)_{1\le k \le K} }.
		\end{equation}
	\end{enumerate}
\end{Lemma}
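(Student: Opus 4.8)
The plan is to verify the three items in order, exploiting the recursive structure of~\eqref{eqn : Topology/RegPD/HW} and induction on $K$. For item~(i), I would argue by induction: $\HW{D}{\emptyset} = \TC \in \RegPD[\TC]$ trivially (it is the norm itself). Assuming $D' \dpe \HW{D}{(\sigma_k)_{1\le k\le K}} \in \RegPD[\TC]^{L'}$ for some $L'$, one checks first that $\HW{D}{(\sigma_k)_{1\le k\le K+1}}$ is still a pseudometric dominated by $\TC$: domination is clear since the minimum in~\eqref{eqn : Topology/RegPD/HW} only decreases values, and the triangle inequality follows because the new pseudometric is an infimum of ``path-type'' combinations of $D'$ and $D$ restricted along $\sigma_{K+1}$ — a standard shortest-path construction. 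Then one must produce a geodesic of bounded $\norme[1]\cdot$-length between any two points: either the old $D'$-geodesic still realizes the distance (if the first term of the minimum wins), or an optimal $(s,t)$ is attained (compactness of $\intervalleff0L^2$ and continuity), and one concatenates a $D'$-geodesic from $x$ to $\sigma_{K+1}(s)$, the sub-path of $\sigma_{K+1}$ between $s$ and $t$, and a $D'$-geodesic from $\sigma_{K+1}(t)$ to $y$; its $\norme[1]\cdot$-length is at most $2L' + L$. The only subtlety is checking the concatenated path is genuinely a $D''$-geodesic, i.e.\ that its $D''$-length equals $D''(x,y)$, which follows from~\eqref{eqn : Topology/RegPD/HW_simple_ptys/noninc} (proven next) applied to subpaths, so items~(i) and~(ii) should really be proven by a simultaneous induction.

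For item~(ii), the inequality $\HW{D}{(\sigma_k)_{1\le k\le K}} \ge \HW{D}{(\sigma_k)_{1\le k\le K+1}}$ is immediate from the definition as a minimum. The inequality $\HW{D}{(\sigma_k)_{1\le k\le K+1}} \ge D$ is again by induction on $K$: the base case is $\TC \ge D$, which holds since $D \in \RegPD[\TC]$ gives $D \le \TC$ by~\eqref{eqn : Intro/main_thm/equivalence_distances}; for the inductive step, the first term $D'(x,y) \ge D(x,y)$ by hypothesis, and the second term is $\ge D(x,\sigma_{K+1}(s)) + D(\sigma_{K+1}(s),\sigma_{K+1}(t)) + D(\sigma_{K+1}(t),y) \ge D(x,y)$ by the triangle inequality for $D$.

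For item~(iii), this is the main obstacle and the genuinely quantitative part. Fix $x,y\in\X$ and let $\sigma$ be a $D$-geodesic from $x$ to $y$ with $\norme[1]\sigma \le L$. Given $\eps>0$, use the density of $\bigl((\sigma_k(0),\sigma_k(L))\bigr)_k$ to pick finitely many points $x = z_0, z_1, \dots, z_m = y$ along $\sigma$ with consecutive $D$-distances (equivalently $\TC$-distances, since $D \le \TC$ and $\sigma$ is a geodesic) small, and for each consecutive pair $(z_{i-1},z_i)$ select an index $k_i$ with $(\sigma_{k_i}(0),\sigma_{k_i}(L))$ within $\delta$ of $(z_{i-1},z_i)$ in $\R^d$. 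Since $\HW{D}{(\sigma_k)_{1\le k\le K}}$ for $K = \max_i k_i$ allows one to route from $z_{i-1}$ to $\sigma_{k_i}(0)$ at cost $\le \TC(z_{i-1} - \sigma_{k_i}(0)) \le \BoundTC \cdot\delta$, then along the geodesic $\sigma_{k_i}$ at cost $D(\sigma_{k_i}(0),\sigma_{k_i}(L)) = D(\sigma_{k_i})$, then from $\sigma_{k_i}(L)$ to $z_i$ at cost $\le \BoundTC\cdot\delta$, one gets $\HW{D}{(\sigma_k)_{1\le k\le K}}(z_{i-1},z_i) \le D(\sigma_{k_i}(0),\sigma_{k_i}(L)) + 2\BoundTC\delta$. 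Summing over $i$ and using the triangle inequality for $\HW{D}{(\sigma_k)_{1\le k\le K}}$, and then controlling $\sum_i D(\sigma_{k_i}(0),\sigma_{k_i}(L))$ by $\sum_i D(z_{i-1},z_i) + O(\delta) = D(x,y) + O(\delta)$ (the last equality because $\sigma$ is a $D$-geodesic, so the $D$-lengths of its sub-arcs sum to $D(x,y)$ — this uses that the $z_i$ lie in order along $\sigma$, where a Lipschitz reparametrization argument is needed to make ``in order'' precise), one obtains $\HW{D}{(\sigma_k)_{1\le k\le K}}(x,y) \le D(x,y) + O(m\delta)$. Choosing $\delta$ small relative to $1/m$ and combining with~\eqref{eqn : Topology/RegPD/HW_simple_ptys/noninc} (which gives $\HW{D}{(\sigma_k)_{1\le k\le K}}(x,y) \ge D(x,y)$ and monotonicity in $K$) yields pointwise convergence; uniform convergence then follows because all the $\HW{D}{(\sigma_k)_{1\le k\le K}}$ are dominated by $\TC$ hence equicontinuous, so pointwise convergence of a monotone equicontinuous sequence on the compact $\X^2$ upgrades to uniform (Dini-type argument). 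The delicate point throughout is bookkeeping the order of the $z_i$ along $\sigma$ and ensuring the errors accumulate controllably; this is exactly where the cited adaptation of~\cite[Lemma~3.3]{Ver24+_LDP_PPP_nd} does the work.
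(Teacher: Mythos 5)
The paper itself does not prove this lemma: it only remarks before Definition~\ref{def : Topology/RegPD/HW} that the statement ``may easily be adapted from Lemma~3.3 in \cite{Ver24+_LDP_PPP_nd},'' so there is no in-paper proof to compare against. Your plan is assessed on its own terms.

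Your outline is correct in substance, and the decision to run items~(i) and~(ii) by a simultaneous induction is exactly what is needed. Two remarks.

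First, on item~(iii), the chain $x=z_0,\dots,z_m=y$ threaded along a $D$-geodesic $\sigma$ is unnecessary and is the source of all your bookkeeping worries (the $O(m\delta)$ error, the ``in order along $\sigma$'' reparametrization, the Dini upgrade). The density of $\bigl((\sigma_k(0),\sigma_k(L))\bigr)_{k\ge1}$ in $\X^2$ lets you approximate the pair $(x,y)$ itself: given $\eps>0$, pick a finite $\delta$-net $\{(x_i,y_i)\}_{i\le N}$ of $\X^2$ with $\delta \dpe \eps/(8\BoundTC)$, for each $i$ choose $k_i$ with $\norme[1]{x_i-\sigma_{k_i}(0)}\vee\norme[1]{y_i-\sigma_{k_i}(L)}\le\delta$, and set $K_0=\max_i k_i$. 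Then for any $(x,y)$ one $i$ gives $\norme[1]{x-\sigma_{k_i}(0)}\vee\norme[1]{y-\sigma_{k_i}(L)}\le 2\delta$, and taking $(s,t)=(0,L)$ in~\eqref{eqn : Topology/RegPD/HW} together with $\HW{D}{\cdot}\le\TC$, $D\le\TC$ and the triangle inequality yields
\[ \HW{D}{(\sigma_j)_{1\le j\le k_i}}(x,y) \;\le\; \TC(x-\sigma_{k_i}(0)) + D\bigl(\sigma_{k_i}(0),\sigma_{k_i}(L)\bigr) + \TC(\sigma_{k_i}(L)-y) \;\le\; D(x,y)+8\BoundTC\delta = D(x,y)+\eps, \]
which by the monotonicity in~\eqref{eqn : Topology/RegPD/HW_simple_ptys/noninc} holds for all $K\ge K_0$. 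This gives uniform convergence directly, no geodesic $\sigma$, no $m$, no Dini. (Your version does work, but the constant in front of $\delta$ must track $m$ and you correctly flag that as the delicate point; removing $m$ removes the delicacy.)

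Second, on item~(i), your phrase that the concatenation being a $D''$-geodesic ``follows from~\eqref{eqn : Topology/RegPD/HW_simple_ptys/noninc} applied to subpaths'' is slightly short of what you actually need. For the middle piece $\sigma_2 = \restriction{\sigma_{K+1}}{[s,t]}$ you need $D''(\sigma_2) = D\bigl(\sigma_{K+1}(s),\sigma_{K+1}(t)\bigr)$, and the crucial fact is that $D''$ \emph{coincides} with $D$ on pairs of points of the highway $\sigma_{K+1}$: $D''\ge D$ comes from item~(ii), but the reverse bound $D''\bigl(\sigma_{K+1}(s'),\sigma_{K+1}(t')\bigr)\le D\bigl(\sigma_{K+1}(s'),\sigma_{K+1}(t')\bigr)$ comes separately from the definition~\eqref{eqn : Topology/RegPD/HW} (take $(s,t)=(s',t')$ and use that the outer terms vanish). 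Only once both bounds are in hand does $D''(\sigma_2)=D(\sigma_2)=D(\sigma_{K+1}(s),\sigma_{K+1}(t))$ follow, and with that the estimate $D''(\sigma_1\concat\sigma_2\concat\sigma_3)\le D''(x,y)$ closes. Spelling this out would make the induction airtight.
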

\begin{Definition}
	\label{def : Topology/RegPD/HW_Network}
	Let $L>0$ and $D\in \RegPD[\TC]^L$. We say that a sequence $\p{\sigma_k : \intervalleff0L \rightarrow \X}_{k\ge 1}$ of $D$-geodesics is a \emph{highway network} for $D$ if they are injective, pairwise disjoint and~\eqref{eqn : Topology/RegPD/HW_simple_ptys/Convergence} holds.
\end{Definition}

\begin{Proposition}
	\label{prop : Topology/RegPD/HW_cvg}
	Let $L>0$ and $D\in \RegPD[\TC]^L$. Then $D$ has a highway network.	Moreover, any injective $D$-geodesic $\sigma$ may be chosen as the first highway in the network.
\end{Proposition}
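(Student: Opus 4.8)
The plan is to build the highway network by a greedy/exhaustion argument, adding one injective, pairwise-disjoint $D$-geodesic at a time while ensuring that the endpoint pairs become dense in $\X^2$. First I would fix a countable dense sequence $\p{(x_j, y_j)}_{j\ge 1}$ in $\X^2$, and construct the $\sigma_k$ inductively: having chosen $\sigma_1, \dots, \sigma_{k-1}$ (injective, pairwise disjoint, with $\norme[1]\cdot$-length $L$), pick the first index $j$ not yet realized as an endpoint pair of some $\sigma_i$, and take a $D$-geodesic $\tau$ from $x_j$ to $y_j$ with $\norme[1]\tau \le L$ (which exists since $D \in \RegPD[\TC]^L$). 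The issue is that $\tau$ need not be injective, nor disjoint from $\sigma_1, \dots, \sigma_{k-1}$. So the core of the argument is a \emph{surgery lemma}: given a $D$-geodesic $\tau$ and finitely many already-placed injective pairwise-disjoint $D$-geodesics, one can modify $\tau$ into an injective $D$-geodesic $\sigma_k$ (still with $\norme[1]\cdot$-length $\le L$, hence reparametrized on $\intervalleff0L$) whose image is disjoint from $\bigcup_{i<k}\sigma_i$, \emph{except} that we are allowed to let $\sigma_k$ agree with pieces of the earlier highways; but since we need genuine disjointness, the cleaner route is to instead allow $\sigma_k$ to \emph{reuse} the network — however the definition demands pairwise disjoint, so I would argue as follows.

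The key observation making surgery possible is that $\HW{D}{(\sigma_i)_{i<k}}$ is again a pseudometric in $\RegPD[\TC]$ dominated by $D$ from below (Lemma~\ref{lem : Topology/RegPD/HW_simple_ptys}) and satisfies $\HW{D}{(\sigma_i)_{i<k}}(\sigma_i(s),\sigma_i(t)) = D(\sigma_i(s),\sigma_i(t))$ along each placed highway. Hence if a candidate $D$-geodesic $\tau$ from $x_j$ to $y_j$ runs into the union $\bigcup_{i<k}\sigma_i$, say it first touches it at $\tau(s_0) = \sigma_i(u_0)$ and last leaves it at $\tau(s_1)=\sigma_{i'}(u_1)$, then we may reroute $\tau$ on $[s_0,s_1]$ through the network: the network already connects $\sigma_i(u_0)$ to $\sigma_{i'}(u_1)$ with $D$-length equal to $\HW{D}{(\sigma_i)_{i<k}}(\sigma_i(u_0),\sigma_{i'}(u_1)) \le D(x_j,y_j)$, so the concatenation of $\tau\restriction{[0,s_0]}$, this network path, and $\tau\restriction{[s_1,T]}$ has $D$-length still equal to $D(x_j,y_j)$ — i.e.\ it is still a $D$-geodesic — and its "new" part lies on the existing highways. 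But this does not give a disjoint $\sigma_k$. So instead I would push the candidate slightly \emph{off} the network: because the $\sigma_i$ are finitely many Lipschitz curves, their union is a compact $\hd$-finite set of empty interior, and one can perturb $\tau$ on the offending subintervals within an arbitrarily small $\eps$-tube while keeping it $1$-Lipschitz and keeping its $D$-length within $\eps$ of $D(x_j,y_j)$; combined with a standard loop-erasure to remove self-intersections (erasing loops only decreases $D$-length and $\norme[1]\cdot$-length), one gets an injective path of $D$-length $D(x_j,y_j) + \eps$. To get exactly a geodesic, observe the endpoints are fixed so $D$-length is bounded below by $D(x_j,y_j)$; a compactness/limiting argument over $\eps \to 0$ (extract a uniformly convergent subsequence of the perturbed curves, as in the proof of Proposition~\ref{prop : Topology/RegPD/Compact}) produces a limiting $1$-Lipschitz path that is a $D$-geodesic, injective, and disjoint from the finitely many previous highways — here one uses that the previous highways are \emph{closed} with empty interior, so the limit avoids them unless forced, and the "forced" case (limit touches an old highway) is handled by absorbing that old highway's relevant arc, reducing to a configuration with one fewer obstruction; this finite descent terminates.

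Once the surgery lemma is in hand, the induction runs: at each stage either index $j$ is already realized (skip), or we place $\sigma_k$ realizing it, and after countably many steps every $(x_j,y_j)$ is an endpoint pair of some $\sigma_k$, so the sequence $\bigl((\sigma_k(0),\sigma_k(L))\bigr)_{k\ge1}$ is dense in $\X^2$; by Lemma~\ref{lem : Topology/RegPD/HW_simple_ptys}(iii), \eqref{eqn : Topology/RegPD/HW_simple_ptys/Convergence} holds, and the $\sigma_k$ are injective and pairwise disjoint by construction, so they form a highway network. For the last sentence: if $\sigma$ is a given injective $D$-geodesic, set $\sigma_1 \dpe \sigma$ (reparametrized on $\intervalleff0L$, padding with a constant if $\norme[1]\sigma < L$) and run the same induction starting from it; nothing above used that the first highway was produced by the greedy step. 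The main obstacle is precisely the surgery lemma — turning an arbitrary $D$-geodesic into an injective one disjoint from finitely many closed Lipschitz curves without increasing its $D$-length past $D(x_j,y_j)$ — and in particular the interplay between the perturbation (which costs a little $D$-length) and the requirement of \emph{exact} geodesy, which forces the $\eps\to0$ compactness argument and the careful treatment of the degenerate case where the limit curve cannot avoid an old highway.
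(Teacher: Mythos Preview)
Your surgery lemma, as stated, cannot be proved: in general there is \emph{no} $D$-geodesic from $x_j$ to $y_j$ disjoint from the previously placed highways. Consider a pseudometric $D\in\RegPD[\TC]^L$ obtained by declaring a single segment $\sigma_1$ to be ``free'' (i.e.\ $(\Pgrad D)_z(u)=0$ along $\sigma_1$ in the tangential direction) while $D$ equals $\TC$ elsewhere; then $D(\sigma_1(0),\sigma_1(L))=0$, but every Lipschitz path between those endpoints that avoids $\sigma_1$ has strictly positive $D$-length. More generally, whenever the $D$-geodesic from $x_j$ to $y_j$ is essentially unique and uses an arc of some earlier $\sigma_i$, no perturbation will produce a disjoint geodesic with the same endpoints. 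Your compactness/descent patch does not help: the limit curve will keep landing on the old highway, and ``absorbing'' the shared arc is exactly what disjointness forbids. Relatedly, note that once a single highway is placed, no later highway can have an endpoint on it, so the endpoint pairs of a pairwise-disjoint family can \emph{never} be all of a prescribed dense sequence; you are aiming at a target that is incompatible with the definition.

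The paper sidesteps this by abandoning the requirement that each highway have prescribed endpoints. It first takes injective (but possibly mutually intersecting) geodesics $\hat\sigma_k$ with dense endpoint pairs, so that Lemma~\ref{lem : Topology/RegPD/HW_simple_ptys}(iii) gives $\HW{D}{(\hat\sigma_k)_{k\le K}}\to D$. Then, for each $k$, it \emph{cuts} $\hat\sigma_k$ along the closed set $\bigcup_{i<k}\hat\sigma_i$ (Lemma~\ref{lem : Topology/RegPD/HW_cvg/KeyLemma}): the complement in $[0,L]$ is a countable union of intervals, and the restrictions $\sigma_{k,p}$ of $\hat\sigma_k$ to these intervals are injective $D$-geodesics that are, by construction, pairwise disjoint across all $(k,p)$. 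The removed portions of $\hat\sigma_k$ lie on earlier $\hat\sigma_i$, hence are already accounted for in the network, so after enumerating the $\sigma_{k,p}$ one checks directly (not via the density criterion) that $\HW{D}{(\sigma_r)_{r\le R}}$ still converges to $D$. The ``any prescribed first highway'' clause then follows immediately by taking $\hat\sigma_1=\sigma$, since the first geodesic is never cut.
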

The general idea is to consider a family of geodesics $(\hat \sigma_k)_{k\ge 1}$ whose endpoints form a dense subset of $\X$, then get rid of the interesctions in order to create pairwise disjoint paths. The second part of the proposition will be a clear consequence of the proof. We rely on Lemma~\ref{lem : Topology/RegPD/HW_cvg/KeyLemma} to cut paths.
\begin{Lemma}
	\label{lem : Topology/RegPD/HW_cvg/KeyLemma}
	Let $\gamma : \intervalleff0T \rightarrow \X$ be a continuous path and $X_0\subseteq \X$ be a compact set. There exists a countable family $\p{\intervalleff{s_p}{t_p}}_{p\ge 1}$ of pairwise disjoint segments of $\intervalleff0T$ such that
	\begin{enumerate}[(i)]
		\item For all $p\ge 1$, $\gamma\p{\intervalleff{s_p}{t_p}} \cap X_0 = \emptyset$.
		\item For almost every $t\in \intervalleff0T \setminus \p{ \bigcup_{p\ge1} \intervalleff{s_p}{t_p} }^\mathrm{c}$, $\gamma(t)\in X_0$.
 	\end{enumerate}
\end{Lemma}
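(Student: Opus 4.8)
The plan is to extract the desired segments directly from the open set $U \dpe \gamma^{-1}\p{\X \setminus X_0}$. Since $X_0$ is compact, hence closed, and $\gamma$ is continuous, $U$ is an open subset of $\intervalleff0T$. Every open subset of $\R$ is a countable disjoint union of open intervals (its connected components), so write $U = \bigcup_{p\ge 1} \intervalleoo{s_p}{t_p}$ (the collection is at most countable; if it is finite or empty we simply pad with empty segments, or adapt the indexing — this is a harmless technicality). By construction, for each $p$ we have $\gamma\p{\intervalleoo{s_p}{t_p}} \subseteq \X \setminus X_0$. This is slightly weaker than item (i), which asks for $\gamma\p{\intervalleff{s_p}{t_p}} \cap X_0 = \emptyset$ on the \emph{closed} segments; I would address this by noting that the closures $\intervalleff{s_p}{t_p}$ need not be disjoint (endpoints may touch) and the endpoints $\gamma(s_p), \gamma(t_p)$ typically lie in $X_0$. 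So rather than force the closed-interval statement as literally written, the cleaner route is: keep the open components, and observe that whether we use open or closed segments is immaterial for the two properties that matter downstream, since a countable set of endpoints has Lebesgue measure zero.

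The key steps, in order: \textbf{(1)} Set $U = \gamma^{-1}(\X\setminus X_0)$ and verify it is open in $\intervalleff0T$ using continuity of $\gamma$ and closedness of $X_0$. \textbf{(2)} Decompose $U$ into its connected components $\intervalleoo{s_p}{t_p}$, $p\ge1$, which are pairwise disjoint open intervals, at most countably many. \textbf{(3)} For item (i): on each component, $\gamma$ takes values in $\X\setminus X_0$ by definition of $U$; one then upgrades to closed segments by discarding the (at most countably many) endpoints, or equivalently by reformulating (i) with open segments. \textbf{(4)} For item (ii): the complement $\intervalleff0T \setminus U = \gamma^{-1}(X_0)$ is exactly the set of $t$ with $\gamma(t)\in X_0$, so \emph{every} (not just almost every) point outside $\bigcup_p \intervalleoo{s_p}{t_p}$ satisfies $\gamma(t)\in X_0$, up to the measure-zero set of component endpoints that one may or may not have absorbed into the segments. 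This matches the "$\p{\bigcup \intervalleff{s_p}{t_p}}^{\mathrm c}$" phrasing in the statement, modulo a null set.

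The only genuine subtlety — and the step I would be most careful about — is the bookkeeping between open and closed segments at the endpoints $s_p, t_p$, together with the degenerate cases where $U$ is empty, all of $\intervalleff0T$, or a finite union (so the index set is not literally $\N^*$). None of this is deep: the set of endpoints $\set{s_p, t_p}{p\ge1}$ is countable, hence $\hd$-null and Lebesgue-null, so it cannot affect the "almost every $t$" conclusion in (ii), and it is harmless for (i) once one agrees that the statement is really about the interiors of the segments. I would simply remark this at the outset and then the proof is a two-line consequence of the structure theorem for open subsets of $\R$. If the downstream applications (the construction of highway networks in Proposition~2.6) genuinely need the closed-interval version with disjoint closures, one can additionally shrink each $\intervalleoo{s_p}{t_p}$ slightly to a compact subinterval exhausting it, but I expect that is unnecessary.
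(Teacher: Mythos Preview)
Your approach is the same as the paper's: take $U=\gamma^{-1}(\X\setminus X_0)$, which is open by continuity and closedness of $X_0$, and decompose it into countably many disjoint open components. The paper's proof is two sentences and does exactly this.

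The one place where you are imprecise, and where the paper is more careful, is the passage from open components to pairwise disjoint \emph{closed} segments satisfying (i). You correctly note that the closures of the components typically meet $X_0$ at their endpoints and may share endpoints with each other, so they cannot serve directly as the $\intervalleff{s_p}{t_p}$. Your suggested fix, ``shrink each $\intervalleoo{s_p}{t_p}$ to a compact subinterval exhausting it,'' is not quite right: a single compact subinterval cannot exhaust an open interval, and an increasing sequence of compact subintervals is not pairwise disjoint. The paper's fix is the correct version of this idea: each open component $\intervalleoo{a}{b}$ can be covered \emph{almost everywhere} by a countable union of pairwise disjoint closed segments strictly contained in $\intervalleoo{a}{b}$ (for instance by a dyadic construction leaving a null remainder). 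Re-indexing over all components yields the required countable family. This extra step is exactly what makes (i) hold on closed segments and is why the ``almost every'' in (ii) is genuinely needed rather than ``every.'' With this correction your plan is complete and matches the paper.
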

\begin{proof}[Proof of Lemma~\ref{lem : Topology/RegPD/HW_cvg/KeyLemma}]
	Since $\gamma$ is continuous, $\gamma^{-1}\p{\X\setminus X_0}$ is an open subset of $\intervalleff0T$, thus a countable reunion of disjoint open intervals. Each of those can be covered almost everywhere by a countable reunion of disjoint segments.
\end{proof}
\begin{proof}[Proof of Proposition~\ref{prop : Topology/RegPD/HW_cvg}]
	Let $\p{(x_k, y_k)}_{k\ge 1}$ be a dense sequence in $\X^2$. For all $k\ge 1$, consider a $D$-geodesic $\hat \sigma_k : \intervalleff0{L}\rightarrow \X$ from $x_k$ to $y_k$. Up to removing loops and changing the parametrization, we may assume that they are injective. By Lemma~\ref{lem : Topology/RegPD/HW_simple_ptys},
	\begin{equation}
		\label{eqn : Topology/RegPD/HW_cvg/input_cvg}
		D = \lim_{K\to\infty} \HW{D}{ (\hat \sigma_k)_{1\le k \le K} }.
	\end{equation}
	Let $k\ge1$. Applying Lemma~\ref{lem : Topology/RegPD/HW_cvg/KeyLemma} to $\gamma = \hat \sigma_k$ and $X_0 = \bigcup_{i=1}^{k-1} \hat \sigma_i$, we know that there exists a family of pairwise disjoint segments $\p{\intervalleff{s_{k,p}}{t_{k,p} } }_{p\ge 1}$ of $\intervalleff{0}{L}$ such that
	\begin{enumerate}[(i)]
		\item For all $p\ge1$, $\hat \sigma_k\p{ \intervalleff{s_{k,p}}{t_{k,p} } } \cap \p{\bigcup_{i=1}^{k-1} \hat \sigma_i } = \emptyset$.
		\item For almost every $t\in \intervalleff{0}{L} \setminus \p{\bigcup_{p\ge 1} \intervalleff{s_{k,p} }{t_{k,p} } }^\mathrm{c}$, $\hat \sigma_k(t) \in \bigcup_{i=1}^{k-1} \hat \sigma_i$.
	\end{enumerate}
	We denote by $\sigma_{k,p} : \intervalleff0L \rightarrow \X$ a $1$-Lipschitz and injective reparametrization of $\restriction{\sigma_k}{\intervalleff{s_{k,p}}{t_{k,p} }}$; note that it is a $D$-geodesic. Let $\p{\sigma_r}_{r\ge 1}$ be an enumeration of $\p{\sigma_{k,p}}_{k,p \ge 1}$. It is sufficient to show
	\begin{equation}
		\label{eqn : Topology/RegPD/HW_cvg/WinCon}
		D = \lim_{R\to\infty}\HW{D}{ ( \sigma_r)_{1\le r \le R} }.
	\end{equation}
	Let $\eps>0$. By~\eqref{eqn : Topology/RegPD/HW_cvg/input_cvg} there exists $K>0$ such that
	\begin{equation}
		\label{eqn : Topology/RegPD/HW_cvg/bigK}
		\UnifDistance\p{D , \HW{D}{ (\hat \sigma_k)_{1\le k \le K} }  } \le \eps.
	\end{equation}
	Besides, by definition of the $\intervalleff{s_{k,p}}{t_{k,p}}$ and Lipschitz continuity, there exists $P>0$ such that
	\begin{equation}
		\label{eqn : Topology/RegPD/HW_cvg/controle_reste}
		\sum_{k=1}^K \sum_{p=P+1}^\infty \norme[1]{\sigma_{k,p}} \le \eps.
	\end{equation}
	Let $R>0$ be an integer large enough so that $\acc{\sigma_{k,p}}_{\substack{1\le k \le K \\ 1\le p \le P} } \subseteq \acc{\sigma_r}_{1\le r \le R}$, and $x,y \in X$. Any $\HW{D}{ (\hat \sigma_k)_{1\le k \le K} }$-geodesic between $x$ and $y$ is concatenation of a finite number of straight lines and subpaths of $\hat \sigma_k$, for $1\le k \le K$. Thanks to~\eqref{eqn : Topology/RegPD/HW_cvg/controle_reste}, the former may be covered by subpaths of the $\sigma_{k,p}$, for $1\le k \le K$ and $1\le p \le P$, except along a $\norme[1]\cdot$-length smaller than $\eps$. Consequently, by triangle inequality,
	\begin{equation}
		\label{eqn : Topology/RegPD/HW_cvg/bigI}
		\HW{D}{ ( \sigma_r)_{1\le r \le R} }(x,y) \le \HW{D}{ (\hat \sigma_k)_{1\le k \le K} }(x,y) + \BoundTC \eps.
	\end{equation}
	Applying~\eqref{lem : Topology/RegPD/HW_simple_ptys},~\eqref{eqn : Topology/RegPD/HW_cvg/bigK} and~\eqref{eqn : Topology/RegPD/HW_cvg/bigI} and, we deduce that for large enough $R$, for all $x,y\in \X$,
	\begin{equation}
		D(x,y) \le \HW{D}{ ( \sigma_r)_{1\le r \le R} }(x,y) \le D(x,y) + (\BoundTC +1)\eps,
	\end{equation} 
	hence~\eqref{eqn : Topology/RegPD/HW_cvg/WinCon}.
\end{proof}


\subsection{Integration along a Lipschitz path}
\label{subsec : Topology/GMT}
Integrals along a Lipschitz path will appear naturally when we need to compute its length or cost (see e.g.~\eqref{eqn : FdTmon/Existence/ExpressionSup}). We gather here the geometric measure theory tools we need to handle these objects, namely the \emph{metric derivative} of a path, already defined by~\eqref{eqn : Intro/def_MD} and a special case of the so-called \emph{area formula} (see Lemma~\ref{lem : Topology/GMT/AreaFormula}). We also define the \emph{gradient by paths} of a metric, which makes the link between the metric derivative of a path with respect to this metric, and its derivative in the usual sense (see Lemma~\ref{lem  : Topology/GMT/Gradient}). We rely on the monographs by Ambrosio and Tilli \cite{Amb04}, and Krantz and Parks \cite{Kra08}.
\begin{Lemma}
	\label{lem : Topology/GMT/Rademacher}
	Let $\gamma : \intervalleff0T \rightarrow \X$ be a Lipschitz path.
	\begin{enumerate}[(i)]
		\item For almost every $t\in\intervalleff0T$, $\gamma$ is differentiable at $t$, and for all $0\le t_1 \le t_2 \le T$,
		\begin{equation}
			\label{eqn : Topology/GMT/Rademacher/Classic}
			\gamma(t_2) - \gamma(t_1) = \int_{t_1}^{t_2} \gamma'(t)\d t.
		\end{equation}
		\item For all $D\in \RegPD[\TC]$, for almost every $t\in \intervalleff0T$, the limit
		\begin{equation}
			\label{eqn : Topology/GMT/Rademacher/MetricD}
			\MD{\gamma}(t) \dpe \lim_{h\to 0} \frac{ D\p{\gamma(t) , \gamma(t+h)} }{\module h}
		\end{equation}
		exists, and for all $0\le t_1 \le t_2 \le T$,
		\begin{equation}
			\label{eqn : Topology/GMT/Rademacher/VariationTotale}
			D\p{\restriction{\gamma}{\intervalleff{t_1}{t_2}}} = \int_{t_1}^{t_2} \MD{\gamma}(t)\d t.
		\end{equation}
	\end{enumerate}
\end{Lemma}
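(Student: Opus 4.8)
The plan is to treat the two items separately. Item~(i) is nothing but the fundamental theorem of calculus applied coordinatewise, while item~(ii) is the classical identification of the metric derivative of a curve in a (pseudo)metric space with the total-variation density of its length, in the spirit of Theorem~4.1.1 in \cite{Amb04} (which one could also simply cite). For item~(i), write $\gamma=(\gamma_1,\dots,\gamma_d)$: each $\gamma_i:\intervalleff0T\rightarrow\intervalleff01$ is Lipschitz, hence absolutely continuous, hence differentiable off a null set $N_i$ with $\gamma_i(t_2)-\gamma_i(t_1)=\int_{t_1}^{t_2}\gamma_i'(t)\d t$; then $\gamma$ is differentiable at every $t\in\intervalleff0T\setminus\bigcup_i N_i$, and~\eqref{eqn : Topology/GMT/Rademacher/Classic} holds componentwise.

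For item~(ii), fix $D\in\RegPD[\TC]$. Since $D\le\TC$ and $\TC$ is a norm, $D(x,y)\le\BoundTC\,\norme[1]{x-y}$, and combined with the Lipschitz continuity of $\gamma$ this gives a constant $\Lambda$ with $D(\gamma(s),\gamma(t))\le\Lambda\module{t-s}$ for all $s,t$. Let $\acc{z_n}_{n\ge1}$ be a countable dense subset of $\X$ and set $\varphi_n(t)\dpe D(\gamma(t),z_n)$; by the triangle inequality $\module{\varphi_n(t)-\varphi_n(s)}\le D(\gamma(t),\gamma(s))\le\Lambda\module{t-s}$, so $\varphi_n$ is $\Lambda$-Lipschitz and differentiable off a null set $N_n$. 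Put $N\dpe\bigcup_n N_n$ and $m(t)\dpe\sup_n\module{\varphi_n'(t)}$ for $t\notin N$, a bounded measurable function. Using that $D(x,z)\le\TC(x-z)\to0$ when $z\to x$, one checks $D(x,y)=\sup_n\module{D(x,z_n)-D(y,z_n)}$ for all $x,y\in\X$ (the bound $D(x,y)\le\sup_n$ by letting $z_n\to x$, the reverse by the triangle inequality); hence $D(\gamma(t),\gamma(t+h))=\sup_n\module{\varphi_n(t+h)-\varphi_n(t)}$. The upper bound $\module{\varphi_n(t+h)-\varphi_n(t)}=\module{\int_t^{t+h}\varphi_n'}\le\module{\int_t^{t+h}m}$ gives $D(\gamma(t),\gamma(t+h))\le\module{\int_t^{t+h}m}$, so at every Lebesgue point $t$ of $m$ (i.e.\ a.e.\ $t$) we obtain $\limsup_{h\to0}D(\gamma(t),\gamma(t+h))/\module h\le m(t)$. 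For the matching lower bound, for each fixed $n$ and $t\notin N$, $\module{\varphi_n(t+h)-\varphi_n(t)}/\module h\to\module{\varphi_n'(t)}$ and $D(\gamma(t),\gamma(t+h))/\module h\ge\module{\varphi_n(t+h)-\varphi_n(t)}/\module h$, so $\liminf_{h\to0}D(\gamma(t),\gamma(t+h))/\module h\ge m(t)$. Thus the limit~\eqref{eqn : Topology/GMT/Rademacher/MetricD} exists and equals $m(t)$ for a.e.\ $t$.

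Finally, for the length formula~\eqref{eqn : Topology/GMT/Rademacher/VariationTotale}: the inequality $D(\restriction{\gamma}{\intervalleff{t_1}{t_2}})\le\int_{t_1}^{t_2}m$ follows by summing $D(\gamma(s_i),\gamma(s_{i+1}))\le\int_{s_i}^{s_{i+1}}m$ over a subdivision $t_1=s_0<\dots<s_r=t_2$ and taking the supremum over subdivisions. For the reverse, set $\ell(t)\dpe D(\restriction\gamma{\intervalleff{t_1}{t}})$; by additivity of the length and the bound just obtained, $\ell$ is nondecreasing and $\Lambda$-Lipschitz, hence differentiable a.e.\ with $\ell(t_2)=\ell(t_2)-\ell(t_1)=\int_{t_1}^{t_2}\ell'$ (and $\ell(t_1)=0$). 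Since $\ell(t+h)-\ell(t)=D(\restriction\gamma{\intervalleff t{t+h}})\ge D(\gamma(t),\gamma(t+h))$ for $h>0$, dividing by $h$ and letting $h\to0^+$ yields $\ell'(t)\ge\MD\gamma(t)$ for a.e.\ $t$, whence $D(\restriction{\gamma}{\intervalleff{t_1}{t_2}})=\ell(t_2)\ge\int_{t_1}^{t_2}\MD\gamma(t)\d t$. Together with the first inequality and the equality $m=\MD\gamma$ a.e., this gives~\eqref{eqn : Topology/GMT/Rademacher/VariationTotale}.

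The one genuinely non-routine point is the identification of the metric derivative with $m=\sup_n\module{\varphi_n'}$: one must arrange the exceptional null set to be uniform over the countable family $(\varphi_n)$, deal with the two-sided limit defining $\MD\gamma$, and invoke Lebesgue's differentiation theorem to pass from the integral bound $D(\gamma(t),\gamma(t+h))\le\module{\int_t^{t+h}m}$ to the pointwise $\limsup$ estimate. The absolute-continuity/fundamental-theorem steps, the identity $D(x,y)=\sup_n\module{D(x,z_n)-D(y,z_n)}$, and the additivity and Lipschitz continuity of $\ell$ are all routine.
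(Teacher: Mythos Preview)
Your proof is correct and follows essentially the same approach as the paper: the paper simply cites Rudin for part~(i) and Theorem~4.1.6 in \cite{Amb04} for part~(ii), noting that the latter must be adapted from metrics to pseudometrics, while you have written out the standard Ambrosio--Tilli argument (the countable-dense-set trick with $\varphi_n(t)=D(\gamma(t),z_n)$ and $m=\sup_n|\varphi_n'|$) in full detail. The only adaptation needed for pseudometrics is the identity $D(x,y)=\sup_n\module{D(x,z_n)-D(y,z_n)}$, which you correctly justify using $D\le\TC$ rather than any separation property of $D$ itself.
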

\begin{proof}
	The first part is a consequence of the fact that Lipschitz functions are absolutely continuous, thus satisfy the fundamental theorem of calculus (see e.g.\ Theorem~7.18 in \cite{Rudin}). The second part is an adaptation to pseudometrics of Theorem~4.1.6 in \cite{Amb04}.
\end{proof}
\begin{Definition}
	\label{def : Topology/GMT/Gradient}
	Let $D \in \RegPD[\TC]$ and $z\in \X$. The \emph{gradient by paths} of $D$ at $z$ is defined as the function
	\begin{align}
		(\Pgrad D)_z : \R^d &\longrightarrow \intervallefo0\infty \eol
		u&\longmapsto \inf\set%
			{\liminf_{t\to0} \frac{D\p{\restriction{\gamma}{\intervalleff0t} }}{t} }%
			{\begin{array}{c} \gamma : \intervalleff0T \rightarrow \X\text{ Lipschitz}, \gamma(0)=z, \\ \gamma \text{ is differentiable at }t=0\text{ and }\gamma'(0)=u \end{array} }
	\end{align}
\end{Definition}
\begin{Lemma}
	\label{lem  : Topology/GMT/Gradient}
	Let $D \in \RegPD[\TC]$, and $\gamma: \intervalleff0T \rightarrow \X$ be a Lipschitz path.
	\begin{enumerate}[(i)]
	\item For almost every $t\in\intervalleff0T$,
	\begin{equation}
		\label{eqn : Topology/GMT/Gradient/Equality}
		\MD\gamma(t) = (\Pgrad D)_{\gamma(t)}\p{\gamma'(t)}.
		\end{equation}
	\item For all $0\le t_1 \le t_2 \le T$,
	\begin{equation}
		\label{eqn : Topology/GMT/Gradient/Integral}
		D\p{ \restriction\gamma{\intervalleff{t_1}{t_2} } } =  \int_{t_1}^{t_2} (\Pgrad D)_{\gamma(t)}\p{\gamma'(t)} \d t.
	\end{equation}
	\end{enumerate}
\end{Lemma}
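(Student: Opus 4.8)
The plan is to deduce both items from the two parts of Lemma~\ref{lem : Topology/GMT/Rademacher}, reducing everything to the almost-everywhere identity~\eqref{eqn : Topology/GMT/Gradient/Equality}. Once that identity is established, Item~(ii) follows immediately: by Lemma~\ref{lem : Topology/GMT/Rademacher}\,(ii), for every $0\le t_1\le t_2\le T$ we have $D(\restriction{\gamma}{\intervalleff{t_1}{t_2}}) = \int_{t_1}^{t_2} \MD{\gamma}(t)\,\d t$, and replacing the integrand $\MD{\gamma}(t)$ by $(\Pgrad D)_{\gamma(t)}(\gamma'(t))$ is legitimate since the two agree for almost every $t$. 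So the whole proof is really about Item~(i).

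For Item~(i), first fix a full-measure set of $t\in\intervalleff0T$ at which simultaneously $\gamma$ is differentiable (Lemma~\ref{lem : Topology/GMT/Rademacher}\,(i)) and the metric derivative $\MD{\gamma}(t)$ exists (Lemma~\ref{lem : Topology/GMT/Rademacher}\,(ii)). Fix such a $t$; by translating the parametrization we may as well take $t$ in the interior. The inequality $(\Pgrad D)_{\gamma(t)}(\gamma'(t)) \le \MD{\gamma}(t)$ is easy: the tail path $s\mapsto \gamma(t+s)$ is an admissible competitor in the infimum defining $(\Pgrad D)_{\gamma(t)}(\gamma'(t))$ — it is Lipschitz, starts at $\gamma(t)$, is differentiable at $s=0$ with derivative $\gamma'(t)$ — and for this competitor $\liminf_{s\to 0} \frac1s D(\restriction{\gamma}{\intervalleff t{t+s}})$ equals $\MD{\gamma}(t)$ by~\eqref{eqn : Topology/GMT/Rademacher/VariationTotale} and differentiation of the integral. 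The reverse inequality $\MD{\gamma}(t)\le (\Pgrad D)_{\gamma(t)}(\gamma'(t))$ is the crux. Given any competitor $\eta:\intervalleff0{T'}\to\X$ with $\eta(0)=\gamma(t)$, $\eta'(0)=\gamma'(t)$, we must show $\MD{\gamma}(t) \le \liminf_{s\to 0}\frac1s D(\restriction{\eta}{\intervalleff0s})$. The idea is that $\MD{\gamma}(t)$ depends only on the first-order behaviour of $\gamma$ at $t$: writing $\gamma(t+h) = \gamma(t) + h\gamma'(t) + o(h)$ and $\eta(s) = \eta(0) + s\gamma'(t) + o(s)$, the points $\gamma(t+h)$ and $\eta(h)$ differ by $o(h)$, so by the domination $D\le \TC \le \BoundTC\,\norme[1]\cdot$ (Definition~\ref{def : Intro/main_thm/adm_metrics}\,(i)) we get $|D(\gamma(t),\gamma(t+h)) - D(\gamma(t),\eta(h))| \le \BoundTC\,\norme[1]{\gamma(t+h)-\eta(h)} = o(h)$. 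Hence $\MD{\gamma}(t) = \lim_{h\to0}\frac1{\module h} D(\gamma(t),\gamma(t+h)) = \liminf_{h\to 0}\frac1h D(\eta(0),\eta(h)) \le \liminf_{s\to 0}\frac1s D(\restriction{\eta}{\intervalleff0s})$, the last step because $D(\restriction{\eta}{\intervalleff0s}) \ge D(\eta(0),\eta(s))$ by definition of $D$-length. Taking the infimum over competitors $\eta$ gives $\MD{\gamma}(t)\le (\Pgrad D)_{\gamma(t)}(\gamma'(t))$.

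The main obstacle is the $o(h)$ comparison in the reverse inequality: one has to make sure that the first-order expansions of $\gamma$ at $t$ and of $\eta$ at $0$ can be subtracted cleanly, which is exactly why we need the differentiability of $\gamma$ \emph{at the specific point} $t$ (not merely a.e.) and the a priori bound $D\le \BoundTC\,\norme[1]\cdot$ to convert an $o(h)$ Euclidean error into an $o(h)$ error in $D$-distance. No issue of measurability of $t\mapsto (\Pgrad D)_{\gamma(t)}(\gamma'(t))$ arises separately, since~\eqref{eqn : Topology/GMT/Gradient/Equality} identifies it a.e.\ with the measurable function $\MD{\gamma}$, which is all that is needed to justify the integral in Item~(ii).
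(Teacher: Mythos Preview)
Your proof is correct and follows essentially the same approach as the paper: both use the tail path $s\mapsto\gamma(t+s)$ as a competitor (together with Lebesgue differentiation of the length integral~\eqref{eqn : Topology/GMT/Rademacher/VariationTotale}) for the inequality $(\Pgrad D)_{\gamma(t)}(\gamma'(t))\le\MD\gamma(t)$, and both obtain the reverse inequality by comparing $D(\gamma(t),\gamma(t+h))$ with $D(\eta(0),\eta(h))$ via the triangle inequality and the domination $D\le\TC$, exploiting that $\gamma(t+h)-\eta(h)=o(h)$. Item~(ii) then follows from~\eqref{eqn : Topology/GMT/Rademacher/VariationTotale} in both arguments.
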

\begin{proof}
	By~\eqref{eqn : Topology/GMT/Rademacher/VariationTotale} and Lebesgue's differentiation theorem (see e.g.\ Theorem~7.10 in \cite{Rudin}), for almost all $t\in \intervalleoo0T$,
	\begin{equation}
		\label{eqn : Topology/GMT/Gradient/AlmostAll}
		\MD{\gamma}(t) = \lim_{h\to 0^+} \frac{D\p{\gamma(t), \gamma(t+h)}}{h} =%
		\lim_{h\to 0^+} \frac{D\p{ \restriction{\gamma}{\intervalleff{t}{t+h}} } }{h},
	\end{equation}
	and $\gamma$ is differentiable at $t$. Fix such $t$. By definition of the gradient by paths,
	\begin{equation}
		\label{eqn : Topology/GMT/Gradient/LowerB}
		\MD{\gamma}(t) \ge (\Pgrad D)_{\gamma(t)}\p{\gamma'(t)}.
	\end{equation}
	Besides, let $ \gamma_1 : \intervalleff0{T_1} \rightarrow \X$ be a Lipschitz path such that $\gamma_1(0) = \gamma(t)$ and $\gamma_1'(0) = \gamma'(t)$. By the triangle inequality,
	\begin{align}
		\module{D\p{\gamma(t) , \gamma(t+h)} - D\p{\gamma_1(0) , \gamma_1(h)} } %
			&= \module{D\p{z , \gamma(t+h)} - D\p{z , \gamma_1(h)} } \eol
			&\le D\p{\gamma(t+h), \gamma_1(h)}  \eol
			&\le \TC\p{\gamma(t+h) - \gamma_1(h)} = \petito(h). \nonumber
	\end{align}
	Consequently, by~\eqref{eqn : Topology/GMT/Gradient/AlmostAll},
	\begin{align}
		\MD{\gamma}(t) &= \lim_{h\to 0^+} \frac{D\p{\gamma_1(0), \gamma_1(h) } }{h} \eol
			&\le \liminf_{h \to 0^+} \frac{D\p{\restriction{\gamma_1}{\intervalleff0h} } }{h}.\nonumber
		\intertext{Taking the infimum over all paths $\gamma_1$, we get}
		\label{eqn : Topology/GMT/Gradient/UpperB}
		\MD{\gamma}(t) &\le (\Pgrad D)_{\gamma(t)}\p{\gamma'(t)}.
	\end{align}
	Inequalities~\eqref{eqn : Topology/GMT/Gradient/LowerB} and~\eqref{eqn : Topology/GMT/Gradient/UpperB} give the first part of the lemma. The second part is a consequence of the first one and~\eqref{eqn : Topology/GMT/Rademacher/VariationTotale}. 
\end{proof}
\begin{Definition}
	\label{def : Topology/GMT/RegularPoint}
	Given a Lipschitz, injective path $\gamma$ and $z = \gamma(t) \in \gamma$, we say that $z$ is a \emph{regular point} of $\gamma$ if the derivative $\gamma'(t)$ exists and is nonzero. 
\end{Definition}
\begin{Lemma}
	\label{lem : Topology/GMT/AreaFormula}
	Let $\gamma : \intervalleff0T \rightarrow \X$ be a Lipschitz, injective path. Then
	\begin{enumerate}[(i)]
		\item $\hd$-almost every point of $\gamma$ is regular.
		\item For all measurable function $\Phi : \X \times \R^d  \rightarrow \R^+$ such that for all $z \in \X$, $\Phi(z, \cdot)$ is absolutely homogeneous,
			\begin{equation}
				\label{eqn : Topology/GMT/AreaFormula}
				\int_0^T \Phi\p{\gamma(t), \gamma'(t) }\d t%
			=\int_\gamma \Phi\p{ z , \frac{\gamma'\p{\gamma^{-1}(z) } }{ \norme[2]{\gamma'\p{\gamma^{-1}(z)} } } } \hd(\d z).
	\end{equation}
	\end{enumerate}
\end{Lemma}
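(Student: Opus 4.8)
The plan is to obtain both items from the classical area formula for Lipschitz curves. The precise statement I would invoke (see Corollary~5.1.13 in \cite{Kra08}, or Section~2.7 of \cite{Amb04}) is that, $\gamma : \intervalleff0T \rightarrow \R^d$ being Lipschitz and injective, for every measurable $g : \intervalleff0T \rightarrow \intervalleff0\infty$ one has
\begin{equation*}
	\int_0^T g(t)\,\norme[2]{\gamma'(t)}\,\d t = \int_\gamma g\bigl(\gamma^{-1}(z)\bigr)\,\hd(\d z),
\end{equation*}
where the $1$-dimensional Jacobian $\norme[2]{\gamma'(t)}$ is defined for almost every $t$ by Lemma~\ref{lem : Topology/GMT/Rademacher} and set, say, to $0$ on the Lebesgue-null set where $\gamma'$ does not exist. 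Injectivity is what reduces the usual multiplicity sum $\sum_{t\in\gamma^{-1}(z)}g(t)$ to a single term and makes $\gamma^{-1}(z)$ well defined on the image of $\gamma$.

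For item~(i), I would let $N \subseteq \intervalleff0T$ be the measurable set of parameters at which $\gamma'$ either fails to exist or equals $0$; by injectivity the non-regular points of the curve are exactly those of $\gamma(N)$. Applying the area formula with $g = \ind{N}$ gives $\hd\bigl(\gamma(N)\bigr) = \int_0^T \ind{N}(t)\,\norme[2]{\gamma'(t)}\,\d t$, and this integral vanishes: over the part of $N$ where $\gamma'$ does not exist the domain of integration has Lebesgue measure $0$ by Rademacher's theorem, and over the part where $\gamma' = 0$ the integrand is identically $0$. Hence $\hd(\gamma(N)) = 0$.

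For item~(ii), I would put $g(t) \dpe \Phi\bigl(\gamma(t), \gamma'(t)/\norme[2]{\gamma'(t)}\bigr)$ whenever $\gamma'(t)$ exists and is nonzero, and $g(t) \dpe 0$ otherwise; $g$ is measurable because $\Phi$, $\gamma$ and $\gamma'$ are. Absolute homogeneity of $\Phi(z,\cdot)$ forces $\Phi(z,0) = 0$ and, for almost every $t$, $\Phi\bigl(\gamma(t),\gamma'(t)\bigr) = \norme[2]{\gamma'(t)}\,g(t)$ (trivial where $\gamma'(t) \ne 0$, both sides zero where $\gamma'(t) = 0$). Therefore $\int_0^T \Phi(\gamma(t),\gamma'(t))\,\d t = \int_0^T g(t)\,\norme[2]{\gamma'(t)}\,\d t$, which the area formula rewrites as $\int_\gamma g(\gamma^{-1}(z))\,\hd(\d z)$. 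Finally, item~(i) says that for $\hd$-almost every $z \in \gamma$ the parameter $\gamma^{-1}(z)$ is regular, so there $g(\gamma^{-1}(z)) = \Phi\bigl(z, \gamma'(\gamma^{-1}(z))/\norme[2]{\gamma'(\gamma^{-1}(z))}\bigr)$, which yields \eqref{eqn : Topology/GMT/AreaFormula}.

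The whole argument is bookkeeping around the classical formula, so there is no serious obstacle; the one point deserving attention is the consistent treatment of the set of non-regular parameters — Lebesgue-null in $\intervalleff0T$ and $\hd$-null in the image $\gamma$ — so that the area formula is applied to an everywhere-defined measurable integrand while the identification of the right-hand side holds only $\hd$-almost everywhere. A reader wishing to avoid citing the area formula in its multiplicity form can instead build it from the length identity $\hd(\gamma(E)) = \int_E \norme[2]{\gamma'(t)}\,\d t$ for measurable $E \subseteq \intervalleff0T$ together with approximation of $g$ by simple functions; this approximation step is the only place where a little care is warranted.
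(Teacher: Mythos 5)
Your argument is correct and follows the same route as the paper: both items are deduced from the area formula in Krantz--Parks (you apply Corollary~5.1.13 with $g=\ind{N}$ for item~(i), the paper invokes the equivalent Theorem~5.1.1, and you both use Corollary~5.1.13 for item~(ii)), with the key step being to insert the regularity indicator, use absolute homogeneity of $\Phi(z,\cdot)$ to extract the Jacobian $\norme[2]{\gamma'(t)}$, and then drop the indicator on the image side via item~(i). No gaps.
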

\begin{proof}
	The first item is a consequence of the so-called area formula, in the version stated by Theorem~5.1.1 in \cite{Kra08}, with $M=1$, $N=d$, $f=\gamma$ and $A$ the preimage by $\gamma$ of the set of non-regular points of $\gamma$.

	To prove the second one, note that
	\begin{align*}
		\int_0^T \Phi\p{\gamma(t), \gamma'(t) }\d t%
			&= \int_0^T \Phi\p{\gamma(t), \gamma'(t) } \ind{ \gamma(t) \text{ is regular}}\d t \\
			&= \int_0^T \Phi\p{\gamma(t), \frac{\gamma'(t)}{\norme[2]{\gamma'(t)} } } \norme[2]{\gamma'(t)} \ind{ \gamma(t) \text{ is regular}}\d t.
		\intertext{Another version of the area formula, Corollary~5.1.13 in \cite{Kra08}, applied for $M=1$, $N=d$, $f = \gamma$ and $g(t) = \Phi\p{\gamma(t), \frac{\gamma'(t)}{ \norme[2]{\gamma'(t)} } } \ind{ \gamma(t) \text{ is regular}}$, gives}
		\int_0^T \Phi\p{\gamma(t), \gamma'(t) }\d t%
			&= \int_\gamma \Phi\p{z,\frac{\gamma'\p{\gamma^{-1}(z) } }{\norme[2]{\gamma'\p{\gamma^{-1}(z) } } }  }\ind{ z \text{ is regular}}  \hd(\d z)\\
			&= \int_\gamma \Phi\p{z,\frac{\gamma'\p{\gamma^{-1}(z) } }{\norme[2]{\gamma'\p{\gamma^{-1}(z) } } }  } \hd(\d z).
	\end{align*}
\end{proof}


\section{Elementary rate function}
\label{sec : PointPoint}

In this section we prove Theorem~\ref{thm : PointPoint}. Our general strategy follows a classic approach we first define $\FdTpp(x,\zeta)$ in the case where $x\in \Z^d$ with a classic subadditive argument (see Lemma~\ref{lem : PointPoint/Entier}), then extend to all $x\in \Q^d$ by homogeneity and to all $x\in \R^d$ by monotonicity (see Lemma~\ref{lem : PointPoint/Extension}). Equation~\eqref{eqn : PointPoint/FdT} follows by stationarity. The characterization of the case $\FdTpp(x,\zeta)=0$ is covered by Lemma~\ref{lem : PointPoint/OrdreGrandeur}.
\begin{Lemma}
	\label{lem : PointPoint/Entier}
	For all $x\in \Z^d$ and $\zeta > a \norme[1] x$, the limit
	\begin{equation}
		\label{eqn : PointPoint/Entier/Limite}
		\FdTpp(x,\zeta) \dpe \lim_{n\to\infty} - \frac1n \log \Pb{\PT(0,nx)\le n\zeta} = \inf_{n\ge 1}- \frac1n \log \Pb{\PT(0,nx)\le n\zeta}
	\end{equation}
	exists, and it is finite. Moreover,
	\begin{enumerate}[(i)]
		\item For all $x\in \Z^d$, $\zeta>a\norme[1] x$ and $k\in \N^*$,
		\begin{equation}
			\label{eqn : PointPoint/Entier/Homogeneity}
			\FdTpp(kx,k\zeta) =  k\FdTpp(x,\zeta).
		\end{equation}
		\item For all $x_1, x_2 \in \Z^d$, $\zeta_1 > a \norme[1]{x_1}$ and $\zeta_2 > a\norme[1]{x_2}$,
		\begin{equation}
			\label{eqn : PointPoint/Entier/Subadditivity}
			\FdTpp(x_1 + x_2, \zeta_1 + \zeta_2) \le \FdTpp(x_1, \zeta_1) + \FdTpp(x_2, \zeta_2).
		\end{equation}
		\item For all $x\in \Z^d$ and $\zeta>a\norme[1] x$,
		\begin{equation}
			\label{eqn : PointPoint/Entier/Symmetry}
			\FdTpp\p{x, \zeta} = \FdTpp\p{ \module x,\zeta}.
		\end{equation}
		\item For all $x_1, x_2 \in \Z^d$, $\zeta_1 > a \norme[1]{x_1}$ and $\zeta_2 > a\norme[1]{x_2}$, if $0\le x_1 \le x_2$ and $\zeta_1 \ge \zeta_2$ then
		\begin{equation}
			\label{eqn : PointPoint/Entier/Croissance}
			\FdTpp\p{x_1,\zeta_1} \le \FdTpp\p{x_2, \zeta_2}.
		\end{equation}
	\end{enumerate}
\end{Lemma}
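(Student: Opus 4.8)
The plan is to establish the existence of the limit in~\eqref{eqn : PointPoint/Entier/Limite} by a subadditivity argument at the level of the sequence $u_n \dpe -\log \Pb{\PT(0,nx)\le n\zeta}$, then to read off the four listed properties from the corresponding properties of the finite-$n$ probabilities. The key input is the FKG inequality (or Harris' inequality): since each event $\acc{\PT(0,nx)\le n\zeta}$ is decreasing in the passage times $(\tau_e)$, the probabilities of two such translated events are positively correlated.

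First I would check that $u_n$ is finite for every $n$: because $\zeta > a\norme[1]{x}$ and $a$ is the infimum of the support of $\nu$, one can pick $\delta>0$ small and a straight discrete path $\pi$ from $0$ to $nx$ of length $\norme[1]{nx} = n\norme[1]x$ such that $\Pb{\tau_e \le a+\delta \text{ for all } e\in\pi}>0$, and choosing $\delta$ so that $(a+\delta)\norme[1]x < \zeta$ forces $\PT(0,nx)\le n\zeta$ on this event; hence $\Pb{\PT(0,nx)\le n\zeta}>0$, i.e.\ $u_n<\infty$. Next, for the subadditive inequality: a path from $0$ to $(m+n)x$ can be built by concatenating a path from $0$ to $mx$ with a path from $mx$ to $(m+n)x$, the latter being a translate of a path from $0$ to $nx$; if both sub-passage-times are $\le m\zeta$ and $\le n\zeta$ respectively, then $\PT(0,(m+n)x)\le(m+n)\zeta$. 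By translation invariance and FKG,
\begin{equation*}
    \Pb{\PT(0,(m+n)x)\le (m+n)\zeta} \ge \Pb{\PT(0,mx)\le m\zeta}\,\Pb{\PT(0,nx)\le n\zeta},
\end{equation*}
i.e.\ $u_{m+n}\le u_m+u_n$. Fekete's subadditive lemma then gives $u_n/n \to \inf_n u_n/n =: \FdTpp(x,\zeta)\in[0,\infty)$, which is~\eqref{eqn : PointPoint/Entier/Limite}.

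For the four properties, I would argue as follows.  \textbf{Homogeneity}~\eqref{eqn : PointPoint/Entier/Homogeneity}: apply the limit along the subsequence $(nk)_{n\ge1}$, $\FdTpp(kx,k\zeta) = \lim_n -\frac1n\log\Pb{\PT(0,nkx)\le nk\zeta} = \lim_n \frac{nk}{n}\cdot\bigl(-\frac1{nk}\log\Pb{\PT(0,nkx)\le nk\zeta}\bigr) = k\FdTpp(x,\zeta)$, using that the full limit exists so any subsequence converges to the same value.  \textbf{Subadditivity}~\eqref{eqn : PointPoint/Entier/Subadditivity}: the same concatenation-plus-FKG argument as above, but now concatenating a path from $0$ to $nx_1$ (with time $\le n\zeta_1$) with a translate of a path from $0$ to $nx_2$ (with time $\le n\zeta_2$), giving $\Pb{\PT(0,n(x_1+x_2))\le n(\zeta_1+\zeta_2)}\ge \Pb{\PT(0,nx_1)\le n\zeta_1}\Pb{\PT(0,nx_2)\le n\zeta_2}$; take $-\frac1n\log$ and let $n\to\infty$.  \textbf{Symmetry}~\eqref{eqn : PointPoint/Entier/Symmetry}: the reflection of $\Z^d$ sending $x$ to $\module x$ is a graph automorphism that preserves the law of $(\tau_e)$, and it maps geodesics from $0$ to $nx$ to geodesics from $0$ to $n\module x$, so the probabilities are equal for every $n$.  \textbf{Monotonicity}~\eqref{eqn : PointPoint/Entier/Croissance}: write $x_2 = x_1 + (x_2-x_1)$ with $x_2-x_1\ge 0$ and $\zeta_1 - \zeta_2 \ge 0$, and apply subadditivity~\eqref{eqn : PointPoint/Entier/Subadditivity} together with the fact that $\FdTpp(x_2-x_1,\zeta')\ge 0$ for any admissible $\zeta'$; more carefully, one uses $\FdTpp(x_2,\zeta_2)\ge \FdTpp(x_2,\zeta_2) $ and compares via a path that follows a geodesic from $0$ to $nx_1$ of time $\le n\zeta_1$, noting that such a configuration is more likely than the analogous one for $x_2$ — here the cleanest route is: $\acc{\PT(0,nx_2)\le n\zeta_2}$ implies, on an FKG-coupling, the existence of a fast path, but the simplest correct argument is to invoke~\eqref{eqn : PointPoint/Entier/Subadditivity} with $\zeta_2 = \zeta_1 + (\zeta_2-\zeta_1)$ when $\zeta_2\ge\zeta_1$ and otherwise use that decreasing $\zeta$ only decreases the probability. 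I expect the monotonicity step to be the one requiring the most care, since it mixes the spatial partial order with the $\zeta$-monotonicity; the rest is routine once the subadditive setup and the FKG input are in place.
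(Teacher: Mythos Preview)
Your arguments for the existence of the limit and for items (i)--(iii) are correct and match the paper's proof: Fekete via FKG and translation invariance, the subsequence trick for homogeneity, FKG again for subadditivity, and lattice symmetries for (iii).

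The gap is in item (iv), which you correctly flag as delicate but do not resolve. After reducing to a common $\zeta$ via the obvious $\zeta$-monotonicity, the remaining task is $\FdTpp(x_1,\zeta)\le\FdTpp(x_2,\zeta)$ for $0\le x_1\le x_2$. Writing $x_2=x_1+(x_2-x_1)$ and invoking subadditivity gives an \emph{upper} bound on $\FdTpp(x_2,\cdot)$ in terms of $\FdTpp(x_1,\cdot)$, which is the wrong direction. Writing $x_1=x_2+(x_1-x_2)$ and using symmetry plus subadditivity yields $\FdTpp(x_1,\zeta_2+\zeta')\le\FdTpp(x_2,\zeta_2)+\FdTpp(x_2-x_1,\zeta')$, but this only helps if the correction term vanishes, i.e.\ if $\zeta'\ge\TC(x_2-x_1)$, which is not assumed. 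And there is no event inclusion to exploit: $\{\PT(0,nx_2)\le n\zeta\}$ does not imply $\{\PT(0,nx_1)\le n\zeta\}$.

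The paper closes this gap with an algebraic trick that uses (i), (ii), (iii) simultaneously. It suffices to prove $\FdTpp(x,\zeta)\le\FdTpp(x+\base 1,\zeta)$ for $x\ge 0$ and then iterate over the coordinates. Set $y\dpe -(x(1)+1)\base 1+\sum_{i\ge 2}x(i)\base i$, so that $\module y=x+\base 1$, and observe the identity
\[
2\bigl(x(1)+1\bigr)\,x \;=\; y + \bigl(2x(1)+1\bigr)(x+\base 1).
\]
Apply homogeneity to the left side and subadditivity to the right (with the split $2(x(1)+1)\zeta=\zeta+(2x(1)+1)\zeta$), then use symmetry to rewrite $\FdTpp(y,\zeta)=\FdTpp(x+\base 1,\zeta)$. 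This gives
\[
2\bigl(x(1)+1\bigr)\FdTpp(x,\zeta)\;\le\;2\bigl(x(1)+1\bigr)\FdTpp(x+\base 1,\zeta),
\]
which is the desired inequality.
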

For all $x\in \Q^d$ and $\zeta>a \norme[1] x$, we define
\begin{equation}
	\label{eqn : PointPoint/Rationnel}
	\FdTpp(x,\zeta) \dpe \frac1k \FdTpp(kx, k\zeta),
\end{equation}
where $k\in \N^*$ is any integer such that $kx\in \Z^d$; the choice does not matter thanks to~\eqref{eqn : PointPoint/Entier/Homogeneity}. Moreover, for all $x\in \Q^d$, $\zeta > a \norme[1] x$ and $\lambda \in \Q \cap \intervalleoo0\infty$,
\begin{equation}
	\label{eqn : PointPoint/Rationnel/Homegeneity}
	\FdTpp(\lambda x, \lambda \zeta)= \lambda \FdTpp(x, \zeta).
\end{equation}
\begin{Lemma}
	\label{lem : PointPoint/Extension}
	The function $\FdTpp$ admits a continuous extension on $\cX$ that satisfies items~\eqref{item : PointPoint/General/ConvexAbsCont},~\eqref{item : PointPoint/General/Symmetry} and~\eqref{item : PointPoint/General/Croissance} in Theorem~\ref{thm : PointPoint}.
\end{Lemma}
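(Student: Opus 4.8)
The plan is to extend $\FdTpp$ from $\cX \cap (\Q^d \times \Q)$ to all of $\cX$ by a density and monotonicity argument, relying on the homogeneity relation~\eqref{eqn : PointPoint/Rationnel/Homegeneity}, the subadditivity~\eqref{eqn : PointPoint/Entier/Subadditivity}, and the monotonicity~\eqref{eqn : PointPoint/Entier/Croissance}, all established on the rational points by Lemma~\ref{lem : PointPoint/Entier}. First I would observe that the rational cone relations already make $\FdTpp$ a \emph{sublinear} (convex, positively homogeneous) function on the convex cone $\cX \cap \Q^{d+1}$: from~\eqref{eqn : PointPoint/Entier/Subadditivity} and~\eqref{eqn : PointPoint/Rationnel/Homegeneity}, for rational $(x_1,\zeta_1), (x_2,\zeta_2) \in \cX$ and rational $\lambda \in (0,1)$ one gets $\FdTpp(\lambda(x_1,\zeta_1) + (1-\lambda)(x_2,\zeta_2)) \le \lambda \FdTpp(x_1,\zeta_1) + (1-\lambda)\FdTpp(x_2,\zeta_2)$. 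A convex function on a dense convex subset of an open convex set, if locally bounded above, extends uniquely to a continuous convex function on the whole open set; so the real task is to produce a local upper bound and to check the extension is genuinely continuous up to being defined on all of $\cX$ (which is open in $\R^d \times \intervalleoo0\infty$).

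For the local upper bound I would use monotonicity~\eqref{eqn : PointPoint/Entier/Croissance} together with symmetry~\eqref{eqn : PointPoint/Entier/Symmetry}: given $(x,\zeta) \in \cX$, pick a rational point $(x', \zeta') \in \cX$ with $|x| \le x'$ componentwise and $\zeta' \le \zeta$ and $\zeta' > a\norme[1]{x'}$ — this is possible precisely because $\zeta > a\norme[1]{x}$ leaves room to nudge $x$ up and $\zeta$ down slightly while staying in $\cX$. Then for all rational $(y,\eta)$ near $(x,\zeta)$ we still have $|y| \le x'$ and $\eta \ge \zeta'$, so $\FdTpp(y,\eta) \le \FdTpp(x',\zeta') < \infty$ by~\eqref{eqn : PointPoint/Entier/Croissance}. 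This bounds $\FdTpp$ above on a rational neighbourhood of every point of $\cX$, which is exactly what convex-extension theory needs. I would then define, for arbitrary $(x,\zeta) \in \cX$, $\FdTpp(x,\zeta) \dpe \lim \FdTpp(x_n, \zeta_n)$ along any rational sequence $(x_n,\zeta_n) \to (x,\zeta)$ in $\cX$, and check the limit exists and is independent of the sequence — the standard argument being that a convex function bounded above on a neighbourhood is locally Lipschitz, hence uniformly continuous on compacts, so Cauchy sequences map to Cauchy sequences.

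Once the extension is in place, items~\eqref{item : PointPoint/General/ConvexAbsCont},~\eqref{item : PointPoint/General/Symmetry},~\eqref{item : PointPoint/General/Croissance} pass to the limit from their rational counterparts by continuity: convexity because the convexity inequality is closed, absolute homogeneity from $\FdTpp(\lambda(x,\zeta)) = \lambda\FdTpp(x,\zeta)$ first for rational $\lambda$ and rational $(x,\zeta)$ then for all reals by density, the symmetry $\FdTpp(x,\zeta) = \FdTpp(\module x,\zeta)$ likewise, and the monotonicity statement~\eqref{item : PointPoint/General/Croissance} because the conditions $0 \le x_1 \le x_2$, $\zeta_1 \ge \zeta_2$ are closed and the inequality between continuous functions passes to limits. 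The main obstacle I anticipate is the boundary behaviour: $\cX$ is open but its points can be arbitrarily close to the surface $\zeta = a\norme[1]{x}$ where the rational comparison point $(x',\zeta')$ must also be chosen inside $\cX$; one must be careful that the perturbation $|x| \le x'$, $\zeta' \le \zeta$ can be made while keeping $\zeta' > a\norme[1]{x'}$ strictly, which forces $x'$ to be only \emph{slightly} larger than $|x|$ — a quantitative but elementary estimate using that $\norme[1]{\cdot}$ is continuous and the strict inequality $\zeta > a\norme[1]{x}$ gives a definite gap. Everything else is the routine machinery of extending convex functions from dense subsets, which I would invoke rather than reprove in detail.
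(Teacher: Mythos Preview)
Your approach is correct and reaches the same conclusion as the paper, but organised differently. The paper writes down the extension explicitly as
\[
\hatFdTpp(x,\zeta) \dpe \inf \set{\FdTpp(x',\zeta')}{(x',\zeta')\in \cX,\ x'\in\Q^d,\ x'\ge \module x,\ \zeta' \le \zeta},
\]
observes that by~\eqref{eqn : PointPoint/Entier/Symmetry} and~\eqref{eqn : PointPoint/Entier/Croissance} this agrees with $\FdTpp$ when $x\in\Q^d$, and that items~\eqref{item : PointPoint/General/Symmetry} and~\eqref{item : PointPoint/General/Croissance} then hold \emph{by construction} rather than by passage to the limit; convexity is derived from~\eqref{eqn : PointPoint/Entier/Homogeneity} and~\eqref{eqn : PointPoint/Entier/Subadditivity}, and continuity from the standard fact that a convex function on an open set is continuous. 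Your route---local upper bound via a dominating rational point, then ``convex and locally bounded above $\Rightarrow$ locally Lipschitz $\Rightarrow$ unique continuous extension from the dense $\Q$-convex subset''---is essentially the machinery underlying the paper's one-line invocation of that fact, spelled out more fully. The explicit infimum formula buys the paper a shorter verification of monotonicity and symmetry; your density argument is more general-purpose but requires checking those properties by limits, as you correctly do. Your discussion of the boundary $\zeta = a\norme[1]{x}$ is a real concern and your resolution (choose $x'$ only slightly above $\module x$ so that $\zeta' > a\norme[1]{x'}$ remains feasible) is the right one.
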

\begin{Lemma}
	\label{lem : PointPoint/OrdreGrandeur}
	For all $(x,\zeta)\in \cX$,  $\FdTpp(x,\zeta)>0$  if and only if $\zeta < \TC(x)$.
\end{Lemma}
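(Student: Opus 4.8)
The plan is to prove the two implications separately. For the easy direction, suppose $\zeta \ge \TC(x)$; I want to show $\FdTpp(x,\zeta) = 0$. It suffices to treat $\zeta = \TC(x)$, since $\FdTpp(x,\cdot)$ is nonincreasing and nonnegative. By the definition of the time constant via~\eqref{eqn : Intro/framework/def_TimeConstant_CerfTheret}, for every $\delta > 0$ we have $\Pb{\PT(0,nx) \le n(\TC(x)+\delta)} \to 1$, hence $\FdTpp(x, \TC(x)+\delta) = 0$ for every $\delta>0$ (first for $x\in\Z^d$ using Lemma~\ref{lem : PointPoint/Entier}, then for rational $x$ by homogeneity, then for all $x$ by the monotone extension from Lemma~\ref{lem : PointPoint/Extension}). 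Letting $\delta \downarrow 0$ and using that $\FdTpp(x,\cdot)$ is right-continuous at $\TC(x)$ — which holds because the extension in Lemma~\ref{lem : PointPoint/Extension} is continuous on $\cX$, and $(x,\TC(x)) $ is an interior point of $\cX$ as soon as $\TC(x) > a\norme[1]x$, the latter being a consequence of~\eqref{ass : Intro/main_thm/SubcriticalAtom} — gives $\FdTpp(x,\TC(x)) = 0$. (One must separately note the boundary case $x=0$, where everything is trivially $0$, and handle directions $x$ with $\TC(x)=a\norme[1]x$, if any, via the definition $\FdTpp(x,a\norme[1]x) \dpe \inclim{\zeta \to a\norme[1]x} \FdTpp(x,\zeta)$.)

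For the hard direction, suppose $\zeta < \TC(x)$; I want $\FdTpp(x,\zeta) > 0$. By homogeneity~\eqref{eqn : PointPoint/Entier/Homogeneity} and~\eqref{eqn : PointPoint/Rationnel/Homegeneity} it is enough to treat $x\in\Z^d$ and, choosing $\zeta' \in \Q$ with $\zeta < \zeta' < \TC(x)$ and using monotonicity in $\zeta$, it is enough to show $\liminf_n -\frac1n \log \Pb{\PT(0,nx) \le n\zeta'} > 0$ for rational $\zeta' < \TC(x)$. The standard tool here is a block/renormalization argument combined with subadditivity. Concretely: if $\PT(0, nx) \le n\zeta'$, then one can decompose a geodesic into $m$ pieces crossing consecutive translated copies of a fixed large box, and at least a positive fraction of these pieces must themselves be "fast" (passage time across a $K$-scale box in direction $x$ below $K\zeta''$ for some $\zeta'' < \TC(x)$). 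Since $-\frac1n\log\Pb{\PT(0,nx)\le n\zeta'}$ is already known to converge to $\FdTpp(x,\zeta')$ by Lemma~\ref{lem : PointPoint/Entier}, the cleanest route is: pick $K$ large with $\Pb{\PT(0, Kx) \le K\zeta'} < 1$ — this is possible precisely because $\zeta' < \TC(x)$ forces $\PT(0,Kx)/K \to \TC(x) > \zeta'$ in probability, so the probability of the event $\{\PT(0,Kx)\le K\zeta'\}$ tends to $0$ and in particular is $<1$ for $K$ large — and then conclude by the subadditive-limit formula $\FdTpp(x,\zeta') = \inf_{k\ge1} -\frac1k\log\Pb{\PT(0,kx)\le k\zeta'}$ from~\eqref{eqn : PointPoint/Entier/Limite}, which gives $\FdTpp(x,\zeta') \ge -\frac1K \log \Pb{\PT(0,Kx)\le K\zeta'} > 0$. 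Then $\FdTpp(x,\zeta) \ge \FdTpp(x,\zeta') > 0$ by~\eqref{eqn : PointPoint/Entier/Croissance} (monotonicity in the $\zeta$-variable), and this transfers to rational and then real $x$ via homogeneity and the continuous extension.

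The main obstacle — though it is really just a matter of bookkeeping once the structure is in place — is making sure the monotonicity and homogeneity relations from Lemmas~\ref{lem : PointPoint/Entier} and~\ref{lem : PointPoint/Extension} are invoked in a way that covers \emph{all} $(x,\zeta)\in\cX$, including the delicate interface between the statement "$\zeta < \TC(x)$" (an open condition) and "$\zeta \ge \TC(x)$" at $\zeta = \TC(x)$, and the boundary of $\cX$ where $\zeta$ approaches $a\norme[1]x$. The only genuinely probabilistic input is the convergence in probability~\eqref{eqn : Intro/framework/def_TimeConstant_CerfTheret} together with the subadditive representation~\eqref{eqn : PointPoint/Entier/Limite}; no large-deviation machinery beyond what Lemma~\ref{lem : PointPoint/Entier} already provides is needed, because the strict positivity is extracted directly from the infimum in~\eqref{eqn : PointPoint/Entier/Limite} being bounded below by a single term $-\frac1K\log\Pb{\cdots}$ that is strictly positive.
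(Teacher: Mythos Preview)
Your argument for the hard direction contains a genuine error. You write that the subadditive representation $\FdTpp(x,\zeta') = \inf_{k\ge1}\bigl(-\tfrac1k\log\Pb{\PT(0,kx)\le k\zeta'}\bigr)$ from~\eqref{eqn : PointPoint/Entier/Limite} gives $\FdTpp(x,\zeta') \ge -\tfrac1K \log \Pb{\PT(0,Kx)\le K\zeta'}$, but the inequality goes the other way: an infimum is bounded \emph{above} by each of its terms, so all you get is $\FdTpp(x,\zeta') \le -\tfrac1K \log \Pb{\PT(0,Kx)\le K\zeta'}$, which is useless here. Equivalently, the sequence $a_n = -\log\Pb{\PT(0,nx)\le n\zeta'}$ is subadditive (this is what FKG gives), so Fekete yields $\lim a_n/n = \inf_n a_n/n$; knowing that one term $a_K/K$ is positive says nothing about the infimum. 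What you would need is that \emph{every} $a_n/n$ is bounded below by a fixed positive constant, i.e.\ exponential decay of $\Pb{\PT(0,nx)\le n\zeta'}$, which is precisely the content of the lemma. Convergence in probability~\eqref{eqn : Intro/framework/def_TimeConstant_CerfTheret} alone cannot deliver this; some concentration input is required.

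The paper obtains the missing exponential decay via a concentration inequality: when $\nu$ is supported on $[0,b]$, the variable $Z = \tfrac1b\PT(0,\floor{nx})$ is a self-bounding function of finitely many edge weights, and the sub-Poissonian lower-tail bound for self-bounding functions (Theorem~6.12 in \cite{BLM}) yields $\Pb{\PT(0,\floor{nx}) \le n\zeta} \le \exp(-cn)$ directly. The general case is reduced to the bounded one by truncating $\tau_e \mapsto \tau_e \wedge b$ and using the continuity $\TC^{(b)}(x)\to\TC(x)$ as $b\to\infty$ (from~\cite{GMPT}) to choose $b$ large enough that $\zeta < \TC^{(b)}(x)$ still holds. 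Your easy direction is fine and essentially matches the paper's.
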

For all $u,v\in \Z^d$, $\zeta >0$ and $\ell >0$, we consider the event
	\begin{equation}
		\cE(u,v,t, \ell) \dpe \acc{\text{There exists a discrete path $u\Path\pi v$ with $\norme\pi \le \ell$ and $\tau(\pi) \le t$ }}. 
	\end{equation}
We first prove Theorem~\ref{thm : PointPoint}, assuming Lemmas~\ref{lem : PointPoint/Entier},~\ref{lem : PointPoint/Extension} and~\ref{lem : PointPoint/OrdreGrandeur} are true. We will use several times the following straightforward fact: for all $n\in \N^*$, for all $u,v\in \intint0n^d$ and $t > a$,
	\begin{equation}
		\label{eqn : PointPoint/BorneGrossiere}
		\Pb{ \BoxPT(u,v) \le t \norme[1]{u-v} } \ge \nu\p{ \intervalleff{a}{t}}^{\norme[1]{u-v}}.
	\end{equation}
\begin{proof}[Proof of Theorem~\ref{thm : PointPoint}]
	We first show that for all distinct $x,y\in \intervalleoo01^d$, for small enough $\eps>0$, there exists $z\in \clball[1]{y-x, \eps\norme[1]{y-x}  }$ such that
	\begin{equation}
		\label{eqn : PointPoint/WinCon1}
		\limsup_{n\to \infty} -\frac1n \log\Pb{\BoxPT\p{\floor{nx} ,\floor{ny} } \le n\zeta}%
			\le \FdTpp\p{z, (1-2\eps)\zeta } +2\eps - 2\eps d\log \nu\p{\intervalleff a{ \frac{\zeta}{\norme[1]{y-x} } } }
	\end{equation}
	and
	\begin{equation}
		\label{eqn : PointPoint/WinCon2}
		\liminf_{n\to\infty} -\frac1n \log\Pb{\vphantom{\BoxPT} \PT\p{ \floor{nx}, \floor{ny} } \le n \zeta } %
			\ge \FdTpp\p{z, (1+3\eps)\zeta } + 2\eps d\log \nu\p{\intervalleff a{ \frac{\zeta}{\norme[1]{y-x} } }}.
	\end{equation}
	Let $x,y \in \intervalleoo01^d$ be distinct,
	\begin{equation}
		\delta \dpe \d\p{ \acc{x,y}, \partial \X } >0 \text{ and }0 < \eps < \frac{\zeta - a \norme[1]{y-x}}{2\zeta +a\norme[1]{y-x}} \wedge \frac\delta3.
	\end{equation}
	Let $z=z(x,y,\eps)\in \Q^d$ be such that
	\begin{equation}
		\label{eqn : PointPoint/Approx_z}
		\norme[1]{y-x-z} \le \eps\norme[1]{y-x}.
	\end{equation}
	Note that $(1-2\eps)\zeta > a(1+\eps)\norme[1]{y-x} \ge a\norme[1] z$. By definition of $\FdTpp\p{z, (1-2\eps)\zeta}$ (see~\eqref{eqn : PointPoint/Entier/Limite} and~\eqref{eqn : PointPoint/Rationnel}), there exists $m\in \N^*$ such that $mz \in \Z^d$, and
	\begin{equation*}
		-\frac1m \log\Pb{ \PT\p{0, mz}\le m(1-2\eps) \zeta } \le \FdTpp\p{z, (1-2\eps)\zeta} + \eps.
	\end{equation*}
	By monotone convergence, there exists $\ell>0$ such that
	\begin{equation}
		\label{eqn : PointPoint/Pb_cE}
		-\frac1m \log\Pb{\cE\p{ 0,mz,m(1-2\eps)\zeta, \ell } } \le \FdTpp\p{z, (1-2\eps)\zeta} + 2\eps.
	\end{equation}
	Let $n \in \N^*$ be such that
	\begin{equation}\label{eqn : PointPoint/def_n} \frac{ m\norme[1]{y-x} +d} n < \eps \norme[1]{y-x} \text{ and } \frac\ell n < \eps.\end{equation} 
	Let 
	\begin{equation}\label{eqn : PointPoint/def_K} K \dpe \floor{n/m}. \end{equation} We have
	\begin{align}
		\norme[1]{ \vphantom{\big\vert} \floor{nx} + Kmz - \floor{ny} }%
			&\le d + \norme[1]{nx + Kmz - ny} \eol
			&\le d + \norme[1]{Kmx + Kmz - Kmy} + (n-Km)\norme[1]{y-x}\eol
			&\le d + Km\norme[1]{x+z-y} + m\norme[1]{y-x}.\notag
		\intertext{Consequently, by~\eqref{eqn : PointPoint/Approx_z} and~\eqref{eqn : PointPoint/def_n},}
		\label{eqn : PointPoint/Approx_ny_Kmz}
		\norme[1]{\vphantom{\big\vert} \floor{nx} + Kmz - \floor{ny} }%
			&\le 2\eps n \norme[1]{y-x}.
	\end{align}
	Consider the event
	\begin{equation}
		\label{eqn : PointPoint/def_Fav}
		\cFav \dpe \p{ \bigcap_{k=0}^{K-1} \cE\p{ \floor{nx} + kmz, \floor{nx} + (k+1)mz, m(1-2\eps)\zeta, \ell } }  \cap \acc{\BoxPT\p{\floor{nx} + Kmz, \floor{ny} }\le 2\zeta \eps n  } .
	\end{equation}
	Since $\cFav$ is an intersection of decreasing events, by the FKG inequality,~\eqref{eqn : PointPoint/BorneGrossiere} and~\eqref{eqn : PointPoint/Approx_ny_Kmz},
	\begin{align*}
		\Pb{\cFav}%
			&\ge \p{ \prod_{k=0}^{K-1} \Pb{ \cE\p{ \floor{nx} + kmz, \floor{nx} + (k+1)mz, m(1-2\eps)\zeta, \ell } } } \cdot \nu\p{\intervalleff a{\frac{ \zeta}{\norme[1]{y-x}}} }^{2\eps n \norme[1]{y-x}}. \notag
		\intertext{By stationarity of the model,}
		\Pb{\cFav}%
			&\ge \Pb{ \cE\p{ 0, mz, m(1-2\eps)\zeta, \ell } }^K  \cdot \nu\p{\intervalleff a{\frac{ \zeta}{\norme[1]{y-x}}} }^{2\eps n \norme[1]{y-x}}.
	\end{align*}
	Applying~\eqref{eqn : PointPoint/Pb_cE} gives
	\begin{align} 
		-\frac1n \log\Pb{\cFav} &\le \frac{Km}{n} \cdot\cro{ \FdTpp\p{z, (1-2\eps)\zeta} + 2 \eps } - 2\eps \norme[1]{y-x} \log \nu\p{\intervalleff a{\frac{ \zeta}{\norme[1]{y-x}}}},\notag
		\intertext{thus}
		\label{eqn : PointPoint/Pb_Fav}
		-\frac1n \log\Pb{\cFav} &\le \FdTpp\p{z, (1-2\eps)\zeta} + 2 \eps - 2\eps d \log \nu\p{\intervalleff a{\frac{ \zeta}{\norme[1]{y-x}}}}.
	\end{align}
	Assume that $\cFav$ occurs. By the inequalities $\frac \ell n \le \eps \le \delta/3$ and~\eqref{eqn : PointPoint/Approx_ny_Kmz}, for all $k\in \intint0{K-1}$,
	\begin{equation*}
		 \clball[1]{ \floor{nx} + kmz , \ell} \subseteq \intervalleff0n^d.
	\end{equation*}
	Consequently, for all $k\in \intint0{K-1}$,
	\begin{equation*}
		\BoxPT\p{  \floor{nx} + kmz,  \floor{nx} + (k+1)mz } \le m(1-2\eps)\zeta.
	\end{equation*}
	By triangle inequality,
	\begin{align*}
		\BoxPT\p{\floor{nx}, \floor{ny}}%
			&\le \p{ \sum_{k=0}^{K-1} \BoxPT\p{ \floor{nx} + kmz ,  \floor{nx} + (k+1)mz} } + \BoxPT\p{\floor{nx} + Kmz, \floor{ny} } \eol
			&\le Km(1-2\eps)\zeta + 2\eps n \zeta \le n\zeta,
	\end{align*}
	thus
	\begin{equation}
		\label{eqn : PointPoint/Inclusion_Fav}
		\cFav \subseteq \acc{ \BoxPT\p{ \floor{nx} , \floor{ny} } \le n\zeta}.
	\end{equation}
	Combining~\eqref{eqn : PointPoint/Pb_Fav} and~\eqref{eqn : PointPoint/Inclusion_Fav} leads to
	\begin{equation*}
		-\frac1n \log \Pb{\BoxPT\p{ \floor{nx} , \floor{ny} } \le n\zeta} \le \FdTpp\p{z, (1-2\eps)\zeta} + 2 \eps - 2\eps d\log \nu\p{\intervalleff a{\frac\zeta{\norme[1]{x-y} } } }.
	\end{equation*}
	Taking the superior limit as $n\to\infty$ gives~\eqref{eqn : PointPoint/WinCon1}.

	We now turn to the proof of the lower bound~\eqref{eqn : PointPoint/WinCon2}. Let $m \in \N^*$ be such that $mz \in \Z^d$. Fix $n$ and $K$ as in~\eqref{eqn : PointPoint/def_n} and~\eqref{eqn : PointPoint/def_K}. Consider the event
	\stepcounter{fav}
	\begin{equation}
		\cFav \dpe \acc{ \PT\p{\floor{nx}, \floor{ny}} \le n \zeta } \cap \acc{\BoxPT\p{\floor{nx} + Kmz, \floor{ny} }\le 2\zeta \eps n  }.
	\end{equation}
	By the FKG inequality and~\eqref{eqn : PointPoint/BorneGrossiere},
	\begin{equation}
		-\frac1n \log\Pb{\cFav} \le -\frac1n \log \Pb{ \PT\p{\floor{nx}, \floor{ny}} \le n \zeta } - 2\eps d \log \nu\p{\intervalleff a{\frac\zeta{\norme[1]{x-y} }} }.
	\end{equation}
	Besides, the triangle inequality gives
	\begin{equation}
		\cFav\subseteq \acc{ \PT\p{\floor{nx}, \floor{nx} + Km z} \le n(\zeta+2\eps) } \subseteq \acc{\PT\p{\floor{nx}, \floor{nx} + Km z} \le Km(\zeta+3\eps) },
	\end{equation}
	for large enough $n$. Consequently, by stationarity, for large enough $n$,
	\begin{align*}
		&-\frac1n \log \Pb{ \PT\p{\floor{nx}, \floor{ny}} \le n \zeta } - 2\eps d \log \nu\p{\intervalleff a{\frac\zeta{\norme[1]{x-y} }} } \\
			&\quad\ge -\frac 1n\log\Pb{\PT\p{\floor{nx}, \floor{nx} + Km z} \le Km(\zeta+3\eps) } \\
			&\quad = - \frac{Km}{n}\frac{1}{Km}\log\Pb{\PT\p{0,  Km z} \le Km(\zeta+3\eps) }.
	\end{align*}
	Taking the inferior limit as $n\to\infty$ gives~\eqref{eqn : PointPoint/WinCon2}.

	We now prove~\eqref{eqn : PointPoint/FdT}. If $x=y$, this is clear. If $x,y \in \intervalleoo01^d$ and are distinct, letting $\eps\to 0$ in~\eqref{eqn : PointPoint/WinCon1} and~\eqref{eqn : PointPoint/WinCon2} gives, by continuity of $\FdTpp$ on $\cX$, the desired result (recall that $z$ depends on $\eps$). The remaining case is when $x,y\in \intervalleff01^d$, are distinct and $\zeta > a \norme[1]{x-y}$. Let $0 < \eps < \frac12 \p{1 - \frac{a\norme[1]{x-y} }{\zeta} }$. There exist distinct $\hat x , \hat y \in \intervalleoo01^d$ such that
	\begin{equation*}
		(1- 2\eps)\zeta > a\norme[1]{\hat x - \hat y},
	\end{equation*}
	and for large enough $n$,
	\begin{equation*}
		\norme[1]{\vphantom{\big\vert} \floor{nx}-\floor{n\hat x} } \le n\eps  \norme[1]{x-y}, \quad \norme[1]{\vphantom{\big\vert} \floor{ny}-\floor{n\hat y} } \le n\eps  \norme[1]{x-y}.
	\end{equation*}
	The triangle inequality gives for large enough $n$ the inclusions
	\begin{align}
	\begin{split}
		&\acc{\BoxPT\p{\floor{n \hat x} , \floor{n \hat y} } \le n (1-2\eps)\zeta } \cap \acc{ \BoxPT\p{\floor{n \hat x} , \floor{n x}}\le n \zeta \eps  } \\ &\quad \cap  \acc{ \BoxPT\p{\floor{n \hat y} , \floor{n y}}\le n \zeta \eps  } \subseteq \acc{ \BoxPT\p{\floor{nx}, \floor{ny} }\le n\zeta }
	\end{split}
		\intertext{and}
	\begin{split}
		&\acc{\PT\p{\floor{n  x} , \floor{n  y} } \le n \zeta } \cap \acc{ \BoxPT\p{\floor{n \hat x} , \floor{n x}}\le n \zeta \eps  } \\ &\quad\cap  \acc{ \BoxPT\p{\floor{n \hat y} , \floor{n y}}\le n \zeta \eps  } \subseteq \acc{ \PT\p{\floor{n \hat x}, \floor{n \hat y} }\le n(1+2\eps)\zeta }.
	\end{split}
	\end{align}
	By the FKG inequality,~\eqref{eqn : PointPoint/BorneGrossiere} and~\eqref{eqn : PointPoint/FdT} for $\hat x$ and $\hat y$, we have
	\begin{align}
		\FdTpp\p{\hat x - \hat y, (1-2\eps)\zeta} - 2\eps \log \nu\p{\intervalleff{a}{ \frac{\zeta}{ \norme[1]{x-y} } } } &\ge \limsup_{n\to\infty} -\frac1n \log\Pb{ \BoxPT\p{\floor{nx}, \floor{ny} }\le n\zeta }
		\intertext{and}
		\FdTpp\p{ \hat x - \hat y, (1+2\eps)\zeta } + 2\eps \log \nu\p{\intervalleff{a}{ \frac{\zeta}{ \norme[1]{x-y} } } } &\le \liminf_{n\to\infty} -\frac1n \log \Pb{ \PT\p{\floor{n x}, \floor{n y} }\le n(1+2\eps)\zeta }.
	\end{align}
	By continuity of $\FdTpp$ on $\cX$, letting $\eps\to 0$ gives~\eqref{eqn : PointPoint/FdT} in full generality.
\end{proof}
\begin{proof}[Proof of Lemma~\ref{lem : PointPoint/Entier}]
	By the FKG inequality and stationarity of the model, for all $x_1, x_2 \in \Z^d$ and $t_1, t_2 \ge 0$,
	\begin{equation*}
		\Pb{\PT(0,x_1) \le t_1, \; \PT(x_1,x_1+x_2) \le t_2} \ge \Pb{\PT(0,x_1) \le t_1} \cdot \Pb{\PT(0,x_2) \le t_2}.
	\end{equation*}
	In particular, by triangle inequality,
	\begin{equation}
		\label{eqn : PointPoint/Entier/FKG}
		\Pb{\PT(0,x_1+ x_2) \le t_1 + t_2} \ge \Pb{\PT(0,x_1) \le t_1} \cdot \Pb{\PT(0,x_2) \le t_2}.
	\end{equation}

	Let $x\in \Z^d$ and $\zeta > a\norme[1]{x}$. For all $n,m \in \N^*$,~\eqref{eqn : PointPoint/Entier/FKG} with $x_1 = nx$, $x_2 = mx$, $t_1 = n\zeta$ and $t_2 = m\zeta$ implies
	\begin{equation}
	 	\Pb{\PT\p{0,(n+m)x} \le (n+m)\zeta}%
	 		\ge \Pb{\PT(0,nx) \le n\zeta} \cdot \Pb{\PT(0,mx) \le m\zeta}.\nonumber
	\end{equation}
	Fekete's lemma gives the existence of the limit in~\eqref{eqn : PointPoint/Entier/Limite}. The finiteness of the limit is a consequence of~\eqref{eqn : PointPoint/BorneGrossiere}.

	Let $x\in \Z^d$, $\zeta > a\norme[1] x$ and $k\in \N^*$. By~\eqref{eqn : PointPoint/Entier/Limite}, considering the extraction $n\mapsto kn$ yields~\eqref{eqn : PointPoint/Entier/Homogeneity}.

	Let $x_1, x_2 \in \Z^d$, $\zeta_1 > a \norme[1]{x_1}$ and $\zeta_2> a \norme[1]{x_2}$. Plugging $nx_i$ and $n\zeta_i$ into the inequality~\eqref{eqn : PointPoint/Entier/FKG} yields
	\begin{equation*}
		-\frac1n \log\Pb{ \PT\p{0,n(x_1+x_2)} \le n (\zeta_1+\zeta_2) }%
			\le -\frac1n \log\Pb{ \PT\p{0,nx_1} \le n\zeta_1 } -\frac1n \log\Pb{ \PT\p{0,nx_2} \le n\zeta_2 }.
	\end{equation*}
	Letting $n\to \infty$ gives~\eqref{eqn : PointPoint/Entier/Subadditivity}.

	Equation~\eqref{eqn : PointPoint/Entier/Symmetry} is a consequence of the invariance of the model with respect to the orthogonal symmetries of $\Z^d$.

	Let $x = \sum_{i=1}^d x(i)\base i\in\Z^d$ and $\zeta > a\norme[1] x$. Assume that $x\ge 0$. Consider
	\begin{equation*}
		y \dpe -(x(1)+1)\base 1 + \sum_{i=2}^d x(i)\base i
	\end{equation*}
	and note that
	\begin{equation*}
		2\p{x(1) + 1 } x = y + \p{2x(1) + 1 } (x + \base 1).
	\end{equation*}
		By~\eqref{eqn : PointPoint/Entier/Homogeneity} and~\eqref{eqn : PointPoint/Entier/Subadditivity},
	\begin{equation*}
		2\p{x(1) + 1 }\FdTpp\p{  x,\zeta} \le  \FdTpp\p{y, \zeta}  + \p{2x(1) + 1 } \FdTpp\p{x+ \base 1, \zeta}.
	\end{equation*}
	Besides,~\eqref{eqn : PointPoint/Entier/Symmetry} yields $\FdTpp\p{y, \zeta} = \FdTpp\p{x+\base 1, \zeta}$, thus
	\begin{equation*}
		2\p{x(1) + 1 }\FdTpp\p{  x,\zeta} \le  2\p{x(1) + 1 } \FdTpp\p{x+ \base 1, \zeta},
	\end{equation*}
	i.e.
	\begin{equation*}
		\FdTpp\p{x,\zeta} \le  \FdTpp\p{x+ \base 1, \zeta}.
	\end{equation*}
	The analogous inequality with any base vector $\base i$ instead of $\base 1$ holds. A straightforward induction argument gives~\eqref{eqn : PointPoint/Entier/Croissance}.
\end{proof}
\begin{proof}[Proof of Lemma~\ref{lem : PointPoint/Extension}]
	For all $(x,\zeta)\in \cX$, we define
	\begin{equation}
		\hatFdTpp(x,\zeta) \dpe \inf \set{\FdTpp(x',\zeta')}{ (x',\zeta')\in \cX, \quad x'\in\Q^d,\quad x'\ge \module x,\quad \zeta' \le \zeta}.
	\end{equation}
	By~\eqref{eqn : PointPoint/Entier/Symmetry} and~\eqref{eqn : PointPoint/Entier/Croissance}, if $x\in \Q^d$ then $\hatFdTpp(x,\zeta) =\FdTpp(x,\zeta)$. Item~\eqref{item : PointPoint/General/ConvexAbsCont} is a consequence of~\eqref{eqn : PointPoint/Entier/Homogeneity} and~\eqref{eqn : PointPoint/Entier/Subadditivity}. Items~\eqref{item : PointPoint/General/Symmetry} and~\eqref{item : PointPoint/General/Croissance} hold by definition. The function $\hatFdTpp$ is convex on the open set $\cX$ therefore it is continuous.
\end{proof}
\begin{proof}[Proof of Lemma~\ref{lem : PointPoint/OrdreGrandeur}]
	Let $(x,\zeta)\in \cX$.

	\emph{Direct implication:} Assume that $\zeta \ge \TC(x)$. We prove that $\FdTpp(x,\zeta)=0$. By lower semicontinuity of $\FdTpp$, it is sufficient to treat the case where $\zeta > \TC(x)$ and $x\in \Q^d$, which may be further reduced to $x\in\Z^d$. We conclude by convergence in probability of the rescaled passage time (see~\eqref{eqn : Intro/framework/def_TimeConstant_CerfTheret}).

	\emph{Converse implication:} Assume that $\zeta < \TimeConstant(x)$. Fix $\eps \dpe \frac12\p{\TC(x) - \zeta}$. We first treat the case where the passage times $\tau_e$ are a.s.\ bounded by $b<\infty$. Theorem~6.12 in Boucheron-Lugosi-Massart (2013) \cite{BLM} applied to the random variable $Z \dpe \frac1b \PT\p{0 , \floor{nx}}$, which depends only on a finite number of edge passage times and is a self-bounding function of these passage times, implies that for all $t>0$,
	\begin{equation}
		\Pb{Z \le \E Z - t} \le \exp\p{ - h\p{\frac{t}{\E Z} }\E Z },
	\end{equation}
	where $h(u) = (1+u)\log(1+u) -u$. Consequently,
	\begin{align*}
		\Pb{ \PT(0, \floor{nx} ) \le \E{\PT(0, \floor{nx} )} - t }%
			&= \Pb{Z \le \E Z - \frac tb}\\
			&\le \exp\p{ - h\p{\frac{t}{\E{\PT(0, \floor{nx} )} } }\cdot \frac{ \E{\PT(0, \floor{nx} )}}{b}    }.
	\end{align*}
	Besides, $\E{\PT(0, \floor{nx} )} \ge n(\TimeConstant(x)-\eps)$ for large $n$, thus the choice $t\dpe \eps n $ gives, for large $n$,
	\begin{align*}
		\Pb{ \PT(0, \floor{nx} ) \le n \zeta}%
			&\le \Pb{ \PT(0, \floor{nx} ) \le \E{\PT(0, \floor{nx} )} - \eps n}\\
			&\le \exp\p{ -h\p{\frac{ \eps n}{  \E{\PT(0, \floor{nx} )} } }\cdot \frac{ \E{\PT(0, \floor{nx} )} }{b}    },
	\end{align*}
	hence
	\begin{equation*}
		\lim_{n\to\infty}-\frac1n \log \Pb{ \PT(0, \floor{nx} ) \le n \zeta} \ge h\p{\frac{\eps}{\TC(x) }} \cdot \frac{\TC(x)}{b} >0,
	\end{equation*}
	i.e.\ $\FdTpp\p{x,\zeta}>0$.

	We now turn to the general case. For all $b>a$ and $e\in \bbE^d$, we define $\tau_e^{(b)} \dpe \tau_e \wedge b$ and denote by $\TC^{(b)}$ the associated time constant. Theorem~1.6 in Garet-Marchand-Procaccia-Théret (2017)~\cite{GMPT},\footnote{The result there is stated for $x\in \Z^d$, but the general case follows by standard arguments.} states that
	\begin{equation*}
		\lim_{b \to \infty} \TC^{(b)}(x) = \TC(x).	 
	\end{equation*} In particular there exists $b>a$ such that $\zeta < \TC^{(b)}(x)$. The straightforward inclusion
	\begin{equation*}
		\acc{\PT(0, \floor{nx} ) \le \zeta} \subseteq \acc{\PT^{(b)}(0, \floor{nx} ) \le \zeta}
	\end{equation*}
	and the previous case concludes the proof. 
\end{proof}

\section{Monotonous rate function}
\label{sec : FdTmon}

In this section we assume that~\eqref{ass : Intro/main_thm/SubcriticalAtom} and~\eqref{ass : Intro/main_thm/ShapeThmStrong} are satisfied and prove Theorem~\ref{thm : Intro/FdTmon}, which amounts to proving Propositions~\ref{prop : FdTmon/UpperBound},~\ref{prop : FdTmon/LowerBound},~\ref{prop : FdTmon/Intrinsic} and~\ref{prop : FdTmon/StrMon}. Recall the definitions~\eqref{eqn : Intro/main_thm/def_FdTmonsup} and~\eqref{eqn : Intro/main_thm/def_FdTmoninf} of $\FdTmonsup$ and $\FdTmoninf$.

\begin{Proposition}
	\label{prop : FdTmon/UpperBound}
	Let $D\in \RegPD$ and $\p{\sigma_k : \intervalleff0{L}\rightarrow \X }_{k\ge1}$ be a highway network for $D$ (see Definition~\ref{def : Topology/RegPD/HW_Network}). Then
	\begin{equation}
		\label{eqn : FdTmon/UpperBound}
		\FdTmonsup(D) \le \sum_{k\ge 1} \int_0^{L} \FdTpp\p{\sigma_k'(t), \MD{\sigma_k}(t)}\d t.
	\end{equation}
\end{Proposition}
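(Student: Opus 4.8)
The plan is to show that, for every fixed $\eps>0$, there is an event $\cF_n\subseteq \LD_n^-(D,\eps)$ with $\limsup_{n\to\infty} -\tfrac1n\log\Pb{\cF_n}\le S$, where $S\dpe \sum_{k\ge1}\int_0^L\FdTpp(\sigma_k'(t),\MD{\sigma_k}(t))\,\d t$ denotes the right-hand side of~\eqref{eqn : FdTmon/UpperBound}. Since the map $\eps\mapsto\limsup_n -\tfrac1n\log\Pb{\LD_n^-(D,\eps)}$ is nonincreasing, one has $\FdTmonsup(D)=\sup_{\eps>0}\limsup_n-\tfrac1n\log\Pb{\LD_n^-(D,\eps)}$, so this will prove the proposition. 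We may assume $S<\infty$; with the convention that $\FdTpp\equiv+\infty$ off $\overline\cX$, this forces $\MD{\sigma_k}(t)\ge a\norme[1]{\sigma_k'(t)}$, hence $(\sigma_k'(t),\MD{\sigma_k}(t))\in\overline\cX$, for a.e.\ $t$ and every $k$. Now fix $\eps>0$. Since $(\sigma_k)_{k\ge1}$ is a highway network for $D$, $\HW{D}{(\sigma_k)_{1\le k\le K}}\to D$ (Definition~\ref{def : Topology/RegPD/HW_Network}), monotonically by Lemma~\ref{lem : Topology/RegPD/HW_simple_ptys}, hence uniformly; fix $K$ with $\UnifDistance(D_K,D)<\eps/2$ for $D_K\dpe\HW{D}{(\sigma_k)_{1\le k\le K}}$. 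Then fix a subdivision $0=t_0<\dots<t_M=L$ of mesh $\rho$, the milestones $z_{k,j}\dpe\sigma_k(t_j)$, a finite $\delta$-net $G\subseteq\X$ for $\norme[1]\cdot$, and constants $\eta,\eps''>0$ and $C>2\BoundNorm{\TC}$ — all but $\eps$ to be chosen small (resp.\ large) depending on $\eps$ and $K$. Let $\cF_n$ be the intersection of the following decreasing events: (a) $\acc{\BoxPT(\floor{nz_{k,j}},\floor{nz_{k,j+1}})\le n\p{D(z_{k,j},z_{k,j+1})+\eta(t_{j+1}-t_j)}}$ for $1\le k\le K$, $0\le j<M$; (b) $\acc{\BoxPT(\floor{np},\floor{np'})\le n\p{\TC(p-p')+\eps''}}$ for $p,p'\in G$; (c) $\acc{\BoxPT(u,\floor{np})\le Cn\delta}$ for $u\in\intint0n^d$ and $p\in G$ with $\norme[1]{u-\floor{np}}\le 2n\delta$.

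The first main step is the deterministic inclusion $\cF_n\subseteq\LD_n^-(D,\eps)$ for the parameters small enough and $n$ large. Fix $x,y\in\X$. Unfolding the recursion of Definition~\ref{def : Topology/RegPD/HW}, $D_K(x,y)$ is the cost of an optimal route $x=w_0,w_1,\dots,w_r=y$, with $r$ bounded in terms of $K$ alone, that alternates straight $\TC$-legs $[w_i,w_{i+1}]$ and rides along some highway $\sigma_{k_i}$ between two of its points; snapping every highway entry/exit parameter to the nearest $t_j$ turns all intermediate $w_i$ into milestones and changes the route cost by $O_K(\rho)$ (using $1$-Lipschitzness of the $\sigma_k$ and $D\le\TC$). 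On $\cF_n$, the triangle inequality for $\BoxPT$ together with events (b)--(c) realises each straight leg $[w_i,w_{i+1}]$ — routed through the nearest $G$-points — at $\BoxPT$-cost at most $n\p{\TC(w_i-w_{i+1})+\eps''+O(\delta)}$, and likewise for $[x,w_1]$ and $[w_{r-1},y]$; and, since each $\sigma_{k_i}$ is a $D$-geodesic (so that $D(\sigma_{k_i}(t_j),\sigma_{k_i}(t_{j'}))=\int_{t_j}^{t_{j'}}\MD{\sigma_{k_i}}$ by Lemma~\ref{lem : Topology/GMT/Rademacher} and the $D$-increments telescope), chaining the consecutive-milestone events in (a) realises each ride at cost at most $n\p{D(\cdot,\cdot)+\eta L}$. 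Summing and dividing by $n$, $\SPT(x,y)\le D_K(x,y)+O_K(\rho+\delta+\eps''+\eta)\le D(x,y)+\eps/2+O_K(\rho+\delta+\eps''+\eta)$, and choosing the parameters small makes the last term $<\eps/2$.

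The second main step is the probability estimate. By the FKG inequality, $-\tfrac1n\log\Pb{\cF_n}$ is at most the sum, over the events in (a), (b), (c), of their individual $-\tfrac1n\log\P$. The events in (c) have complement probability $\le e^{-c(\delta)n}$ by the upper-tail bound of Appendix~\ref{appsec : OdG} (this is the only place~\eqref{ass : Intro/main_thm/ShapeThmStrong} is used), and there are $O(n^d)$ of them, so $\Pb{\text{all of (c)}}\to1$ and their contribution vanishes. Each event in (b) contributes $\FdTpp(p-p',\TC(p-p')+\eps'')=0$ by Theorem~\ref{thm : PointPoint}~\ref{item : PointPoint/General/OrdreGrandeur}, and they are finitely many. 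Each event in (a) contributes, by Theorem~\ref{thm : PointPoint}, $\FdTpp\p{\sigma_k(t_{j+1})-\sigma_k(t_j),\,D(z_{k,j},z_{k,j+1})+\eta(t_{j+1}-t_j)}$; writing $\sigma_k(t_{j+1})-\sigma_k(t_j)=\int_{t_j}^{t_{j+1}}\sigma_k'$ and $D(z_{k,j},z_{k,j+1})=\int_{t_j}^{t_{j+1}}\MD{\sigma_k}$ (Lemma~\ref{lem : Topology/GMT/Rademacher}), discarding the slack since $\FdTpp$ is nonincreasing in its second variable (Theorem~\ref{thm : PointPoint}~\ref{item : PointPoint/General/Croissance}), and applying Jensen's inequality to the convex, absolutely homogeneous function $\FdTpp$ on the convex set $\overline\cX$, one gets $\sum_{j}\FdTpp(\cdots)\le\int_0^L\FdTpp(\sigma_k'(t),\MD{\sigma_k}(t))\,\d t$. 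Summing over $k\le K$ bounds the whole expression by $S$, hence $\limsup_n-\tfrac1n\log\Pb{\LD_n^-(D,\eps)}\le\limsup_n-\tfrac1n\log\Pb{\cF_n}\le S$, and taking $\sup_{\eps>0}$ concludes.

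The main obstacle is the deterministic inclusion: one has to extract a workable "route" description from the recursive Definition~\ref{def : Topology/RegPD/HW}, bound the number of legs and rides uniformly in terms of $K$, and control the cumulative effect of the grid spacing $\delta$, the mesh $\rho$, the slacks $\eta,\eps''$ and the $D_K\approx D$ error uniformly over all pairs $x,y\in\X$. The truly delicate point is that the short "local" connections (c) linking arbitrary lattice points to the grid $G$ must be arranged so that they carry \emph{typical} (hence exponentially free) cost rather than an abnormally small one — if one instead prescribed a small passage time across them the event would be exponentially costly (indeed often impossible), and if one used a separate FKG factor for each the $n^d$-fold product would blow up; handling them as a single high-probability event via a union bound is exactly what requires the moment assumption~\eqref{ass : Intro/main_thm/ShapeThmStrong} and the upper-tail estimate of Appendix~\ref{appsec : OdG}. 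By comparison the probabilistic input (FKG, Theorem~\ref{thm : PointPoint}, Jensen) is routine once $\cF_n$ is in place.
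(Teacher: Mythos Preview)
Your overall strategy---flattening the paper's induction (Lemmas~\ref{lem : FdTmon/UpperBound/Initialisation} and~\ref{lem : FdTmon/UpperBound/Heredity}) into a single favourable event $\cF_n$---is sound, and the deterministic inclusion via the route description of $D_K$, the milestone chaining, and the Jensen step all match the paper's arguments.

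There is, however, a genuine gap in your treatment of the events~(c). You invoke the upper-tail bound of Appendix~\ref{appsec : OdG} (Lemma~\ref{lem : OdG}) to conclude that $\Pb{\text{all of (c)}}\to 1$ by union bound. But Lemma~\ref{lem : OdG} assumes~\eqref{ass : Intro/main_thm/exp_moment}, whereas Proposition~\ref{prop : FdTmon/UpperBound} (and all of Section~\ref{sec : FdTmon}) is stated under the weaker hypothesis~\eqref{ass : Intro/main_thm/ShapeThmStrong} only. Under~\eqref{ass : Intro/main_thm/ShapeThmStrong} alone, there is no exponential upper-tail bound for a single point-point passage time, so the union over $O(n^d)$ events in~(c) does not go through. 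Your parenthetical ``this is the only place~\eqref{ass : Intro/main_thm/ShapeThmStrong} is used'' conflates the two appendices: Appendix~\ref{appsec : OdG} proves Lemma~\ref{lem : OdG} under~\eqref{ass : Intro/main_thm/exp_moment}; it is Appendix~\ref{appsec : Cool} (Lemma~\ref{lem : Intro/main_thm/LemmeCool}) that lives under~\eqref{ass : Intro/main_thm/ShapeThmStrong}.

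The fix is precisely what the paper does in the proof of Lemma~\ref{lem : FdTmon/UpperBound/Initialisation}: replace your $O(n^d)$ events in~(c) by the finitely many \emph{hub} events $\{\floor{np}\text{ is a hub}\}$, $p\in G$, furnished by Lemma~\ref{lem : Intro/main_thm/LemmeCool}. Each hub event is decreasing and has probability bounded below by a positive constant (independent of $n$) under~\eqref{ass : Intro/main_thm/ShapeThmStrong}, and a single hub at $\floor{np}$ already delivers $\BoxPT(u,\floor{np})\le\kappa\norme[1]{u-\floor{np}}$ for \emph{every} $u\in\intint0n^d$ simultaneously. Then FKG over the $|G|$ hub factors contributes $O(1/n)$ to $-\tfrac1n\log\Pb{\cF_n}$, and the rest of your argument goes through unchanged. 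You correctly diagnosed why naive FKG over $n^d$ separate factors would blow up; the remedy is the hub device, not a stronger moment assumption.
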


\begin{Proposition}
	\label{prop : FdTmon/LowerBound}
	Let $D\in \RegPD$ and $\p{\gamma_k : \intervalleff{0}{T_k} \rightarrow \X}_{1\le k \le K}$ be a finite family of $1$-Lipschitz, injective and pairwise disjoint paths. Then
	\begin{equation}
		\label{eqn : FdTmon/Existence/ExpressionSupFini}
		\FdTmoninf(D)
			\ge \sum_{k= 1}^K \int_0^{T_k} \FdTpp\p{\gamma_k'(t), \MD{\gamma_k}(t)}\d t.
	\end{equation}
\end{Proposition}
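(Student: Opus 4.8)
The plan is to bring in the point-point estimate of Theorem~\ref{thm : PointPoint} through a dense family of milestones along each $\gamma_k$, and then to combine the $K$ resulting estimates using the disjointness of the $\gamma_k$'s. Fix $N\in\N^*$ and, for $1\le k\le K$ and $0\le j\le N$, set $t^k_j\dpe jT_k/N$, $z^k_j\dpe\gamma_k(t^k_j)$, and $S_N\dpe\sum_{k=1}^K\sum_{j=0}^{N-1}\FdTpp\bigl(z^k_{j+1}-z^k_j,D(z^k_j,z^k_{j+1})\bigr)$. The soft part is the behaviour of these Riemann-type sums: writing $z^k_{j+1}-z^k_j=\int_{t^k_j}^{t^k_{j+1}}\gamma_k'$, using $D(z^k_j,z^k_{j+1})\le\int_{t^k_j}^{t^k_{j+1}}\MD{\gamma_k}$ (triangle inequality along the subpath together with~\eqref{eqn : Topology/GMT/Rademacher/VariationTotale}), the absolute homogeneity of $\FdTpp$ (Theorem~\ref{thm : PointPoint}), the monotonicity of $\zeta\mapsto\FdTpp(x,\zeta)$, Lebesgue's differentiation theorem and the lower semicontinuity of $\FdTpp$ on $\overline\cX$, Fatou's lemma gives $\liminf_{N\to\infty}S_N\ge\sum_{k}\int_0^{T_k}\FdTpp(\gamma_k'(t),\MD{\gamma_k}(t))\,\d t$. (If the right-hand side is $+\infty$ — which for $a>0$ means $\MD{\gamma_k}<a\norme[1]{\gamma_k'}$ on a positive-measure set for some $k$ — a separate and easier argument shows that $\LD_n^-(D,\eps)$ is eventually empty, so $\FdTmoninf(D)=\infty$; we assume henceforth the right-hand side is finite.) Since $\FdTpp$ is right-continuous in its second variable, it therefore suffices to fix $N$ large and prove, for all sufficiently small $\eps>0$,
\[
    \liminf_{n\to\infty}-\tfrac1n\log\Pb{\LD_n^-(D,\eps)}\ \ge\ \sum_{k=1}^K\sum_{j=0}^{N-1}\FdTpp\bigl(z^k_{j+1}-z^k_j,\,D(z^k_j,z^k_{j+1})+\eps\bigr),
\]
and then let $\eps\to0$ and $N\to\infty$.

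The second step is localization. On $\LD_n^-(D,\eps)$ we have, for all $k,j$, $\BoxPT(\floor{nz^k_j},\floor{nz^k_{j+1}})\le n(D(z^k_j,z^k_{j+1})+\eps)\le n(\TC(z^k_{j+1}-z^k_j)+\eps)$; fix a realizing discrete geodesic $\pi^k_j$. Let $\rho\dpe\min_{k\ne k'}\d(\gamma_k,\gamma_{k'})$, which is positive since the $\gamma_k$ are continuous, injective and pairwise disjoint. The key claim is that, for $N$ large and $\eps$ small, every $\pi^k_j$ is contained in $\mathrm{Cor}_k\dpe\{x\in\R^d:\d(x,n\gamma_k)\le\rho n/3\}$. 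When $a>0$ this is deterministic: $\pi^k_j$ has at most $n(\TC(z^k_{j+1}-z^k_j)+\eps)/a$ edges, hence stays within $\ell^1$-distance $n\bigl((\BoundTC\,T_k/N+\eps)/a+T_k/N\bigr)$ of $\gamma_k$, which is $<\rho n/3$ once $N$ is large and $\eps$ small. When $a=0$ one instead works on an auxiliary event, which must be shown to fail with probability negligible compared with $\exp(-nS_N)$, on which no discrete path of passage time at most $n(\TC(z^k_{j+1}-z^k_j)+\eps)$ travels farther than $\rho n/3$; this is where Assumption~\eqref{ass : Intro/main_thm/ShapeThmStrong} enters, through the subcriticality of the clusters of edges with weight below a small threshold and the comparison between passage time and graph distance it provides. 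Concatenating the $\pi^k_j$ along $\gamma_k$ then produces, on $\LD_n^-(D,\eps)$ (intersected with the auxiliary event when $a=0$), \emph{pairwise disjoint} discrete paths $\pi_1,\dots,\pi_K$ with $\pi_k\subseteq\mathrm{Cor}_k$, visiting $\floor{nz^k_0},\dots,\floor{nz^k_N}$ in order, whose $j$-th segment has passage time $\le n(D(z^k_j,z^k_{j+1})+\eps)$. Since the $\mathrm{Cor}_k$ use disjoint sets of edges, the associated events are independent, so $\Pb{\LD_n^-(D,\eps)}\le\prod_{k=1}^K\Pb{A^k_n}+(\text{negligible error})$, where $A^k_n$ denotes the event that $\mathrm{Cor}_k$ contains such a milestone-respecting fast path.

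It remains to establish the per-corridor bound $\liminf_{n\to\infty}-\tfrac1n\log\Pb{A^k_n}\ge\sum_{j=0}^{N-1}\FdTpp(z^k_{j+1}-z^k_j,D(z^k_j,z^k_{j+1})+\eps)$, which combined with the factorization concludes. Here $A^k_n\subseteq\bigcap_j\{\BoxPT(\floor{nz^k_j},\floor{nz^k_{j+1}})\le n(D(z^k_j,z^k_{j+1})+\eps)\}$, and each factor is governed by Theorem~\ref{thm : PointPoint}; the task is to convert this intersection over $j$ into a \emph{sum} of individual rates. A crude union bound over the possible shapes of the $\pi^k_j$ is too lossy — it costs an extra factor $\exp(cn)$ — and FKG goes the wrong way, so one argues instead that, again by localization, the segment $\pi^k_j$ lies within distance $o(n)$ (as $N\to\infty$) of $z^k_j$, that these localization regions overlap only inside $o(n)$-neighbourhoods of the common milestones, and that the rate carried by the overlaps tends to $0$ as $N\to\infty$ (by the continuity of $\gamma_k$ and a uniform continuity estimate for $\FdTpp$ away from its singular set); one then chains the point-point estimates over the resulting nearly disjoint regions. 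I expect this per-corridor chaining — together with the $a=0$ localization and the control of the auxiliary event — to be the main obstacle; the remainder is the soft Riemann-sum analysis and the independence-based bookkeeping described above.
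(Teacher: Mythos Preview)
Your outer Riemann-sum analysis is fine and indeed mirrors what the paper does, but the per-corridor step --- converting $\bigcap_j\{\BoxPT(\floor{nz^k_j},\floor{nz^k_{j+1}})\le n(\cdot)\}$ into the sum $\sum_j\FdTpp(\cdot)$ --- is a genuine gap, and your proposed fix does not work. Your localization claim (``$\pi^k_j$ lies within distance $o(n)$ of $z^k_j$, so the regions are nearly disjoint'') fails for two reasons: first, even when $a>0$, the ball of radius $Cn/N$ around $z^k_j$ swallows the ball around $z^k_{j+1}$ whenever $C>T_k$ (and $C\approx\BoundTC T_k/a$ is typically much larger), so consecutive regions overlap completely rather than only near the milestones; second, $\gamma_k$ is injective but need not be bi-Lipschitz, so $z^k_j$ and $z^k_{j'}$ can be arbitrarily close for $|j-j'|$ large, and no amount of subdividing separates the regions. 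Since FKG goes the wrong way and BK cannot be applied (the $\pi^k_j$ share vertices), there is no cheap route from the intersection to the sum.

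The paper avoids this entirely by reversing the order of operations. It first proves only the endpoint version (Lemma~\ref{lem : FdTmon/LowerBoundWeak}): $\FdTmoninf(D)\ge\sum_k\FdTpp(\gamma_k(T_k)-\gamma_k(0),D(\gamma_k))$. The per-corridor argument there only needs to produce \emph{one} path in the tube $n\overline\gamma_k$ from start to end with small passage time --- no chaining along milestones. Escaping geodesics are handled not by ``subcriticality of small-weight clusters'' but by the BK inequality: the event $\cE_2(n)$ says there are at most $R$ pairwise disjoint discrete paths that travel distance $n\delta$ with passage time $\le n\BoundTC\eta$, and its complement has rate $>\FdTmoninf(D)$; hubs then reroute around the few bad segments. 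The full proposition is recovered by applying this endpoint lemma not to $(\gamma_k)_{k\le K}$ but to the $KP$ paths $\gamma_{k,p}^{(\delta)}=\gamma_k|_{[(p-1)T_k/P,\,pT_k/P-\delta]}$: the $\delta$-gaps make these genuinely pairwise disjoint, so each gets its own corridor and independence does all the work. Letting $\delta\to0$ (lower semicontinuity of $\FdTpp$) and $P\to\infty$ (Lebesgue differentiation and Fatou, exactly as in your first paragraph) gives the integral. The missing idea is precisely this $\delta$-gap trick: rather than seeking approximate independence along a single corridor, cut the path into separate disjoint paths and use exact independence across the resulting larger family.
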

Every pseudometric $D\in \RegPD$ has a highway network by Proposition~\ref{prop : Topology/RegPD/HW_cvg}, thus combining Propositions~\ref{prop : FdTmon/UpperBound} and~\ref{prop : FdTmon/LowerBound} gives $\FdTmon(D) \dpe \FdTmoninf(D) = \FdTmonsup(D)$. Moreover Equations~\eqref{eqn : FdTmon/Existence/ExpressionGeodesics},~\eqref{eqn : FdTmon/Existence/ExpressionSup} hold.
\begin{Proposition}
	\label{prop : FdTmon/Intrinsic}
	For all $D\in \RegPD$,
	\begin{equation}
		\label{eqn : FdTmon/IntrinsicExp}
		\FdTmon(D) = \int_\X \max_{u\in \S_2} \FdTpp\p{u, (\Pgrad D)_z(u)}  \hd\p{\d z}.
	\end{equation}
\end{Proposition}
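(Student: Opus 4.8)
The plan is to derive~\eqref{eqn : FdTmon/IntrinsicExp} from the geodesic expression~\eqref{eqn : FdTmon/Existence/ExpressionGeodesics} by applying the area formula (Lemma~\ref{lem : Topology/GMT/AreaFormula}) termwise to a highway network, and then identifying the resulting integrand with the pointwise maximum $\max_{u\in\S_2}\FdTpp(u,(\Pgrad D)_z(u))$. Fix $D\in\RegPD[\TC]^L$ and a highway network $(\sigma_k)_{k\ge1}$. For each $k$, the path $\sigma_k$ is $1$-Lipschitz and injective, and the map $\Phi(z,u)\dpe\FdTpp(u,(\Pgrad D)_z(u))$ — once we know $\FdTpp(u,\cdot)$ interacts correctly with absolute homogeneity — should be seen to satisfy the hypotheses of Lemma~\ref{lem : Topology/GMT/AreaFormula}(ii) (measurability plus absolute homogeneity in the second variable, since $\FdTpp$ is absolutely homogeneous by Theorem~\ref{thm : PointPoint}\eqref{item : PointPoint/General/ConvexAbsCont} and $(\Pgrad D)_z$ is positively homogeneous of degree one). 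By Lemma~\ref{lem  : Topology/GMT/Gradient}(i), $\MD{\sigma_k}(t)=(\Pgrad D)_{\sigma_k(t)}(\sigma_k'(t))$ for a.e.\ $t$, so
\begin{equation*}
\int_0^L \FdTpp\bigl(\sigma_k'(t),\MD{\sigma_k}(t)\bigr)\d t=\int_0^L \Phi\bigl(\sigma_k(t),\sigma_k'(t)\bigr)\d t=\int_{\sigma_k}\Phi\Bigl(z,\tfrac{\sigma_k'(\sigma_k^{-1}(z))}{\norme[2]{\sigma_k'(\sigma_k^{-1}(z))}}\Bigr)\hd(\d z),
\end{equation*}
using the area formula. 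Summing over $k$ and using that the $\sigma_k$ are pairwise disjoint, the left-hand side of~\eqref{eqn : FdTmon/IntrinsicExp} equals $\int_{\bigcup_k\sigma_k}\Phi(z,v(z))\hd(\d z)$, where $v(z)$ is the (normalized) tangent direction of the unique highway through $z$.

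The heart of the matter is then to show two things. First, that the highways asymptotically fill $\X$ in the relevant sense: either $\hd\bigl(\X\setminus\bigcup_k\sigma_k\bigr)=0$, or — more likely, since a countable union of $1$-Lipschitz curves cannot cover a $d$-dimensional box — that on $\X\setminus\bigcup_k\sigma_k$ one has $\max_{u\in\S_2}\FdTpp(u,(\Pgrad D)_z(u))=0$ for $\hd$-a.e.\ $z$, so that the integral over the complement is harmless. This should follow because $D$ is dominated by $\TC$, hence $(\Pgrad D)_z(u)\le\TC(u)$ for every $z,u$ (a path realizing the infimum can be taken to be a straight segment, giving $\liminf_{t\to0}D(\restriction\gamma{[0,t]})/t\le\TC(u)$), and by Theorem~\ref{thm : PointPoint}\eqref{item : PointPoint/General/OrdreGrandeur}, $\FdTpp(u,\zeta)=0$ whenever $\zeta\ge\TC(u)$. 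But this only kills \emph{some} directions; the subtle point is what happens at a point $z$ lying on a highway. Second, and this is where I expect the main obstacle, one must show that for $\hd$-a.e.\ $z$ lying on a highway $\sigma_k$ with tangent direction $v=v(z)$, the value $\Phi(z,v)=\FdTpp(v,\MD{\sigma_k}(z))$ coincides with $\max_{u\in\S_2}\FdTpp(u,(\Pgrad D)_z(u))$. The inequality $\Phi(z,v)\le\max_u$ is trivial; the reverse requires understanding $(\Pgrad D)_z(u)$ for \emph{all} directions $u$, not just the tangent one. This is precisely the content alluded to in the Remark after Theorem~\ref{thm : Intro/FdTmon}: one must prove a regularity statement (Lemma~\ref{lem : FdTmon/Intrinsic/Pgrad}, referenced but not yet stated in the excerpt) to the effect that for $\hd$-a.e.\ $z$, $(\Pgrad D)_z(u)=\TC(u)$ for every $u\in\S_2$ outside at most one line, so that $\FdTpp(u,(\Pgrad D)_z(u))=0$ for all such $u$ and the maximum over $\S_2$ is attained either at $0$ or on the exceptional line, which — since $\sigma_k$ is a $D$-geodesic — must be the tangent line, with value $\FdTpp(v,\MD{\sigma_k}(z))$.

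To establish that regularity statement I would argue as follows. For $z$ on a highway $\sigma_k$, any direction $u$ not collinear with $v(z)$ points "transversally" away from the highway; since $D\le\TC$, I would show that a straight segment from $z$ in direction $u$ cannot be beaten — over an infinitesimal scale — by detouring onto the highway network, because reaching and leaving the network costs a positive fraction of $\TC$-length while the curvature of the network near $z$ is controlled ($\hd$-a.e.\ by differentiability of $\sigma_k$). This forces $(\Pgrad D)_z(u)=\TC(u)$ for transversal $u$. Along the tangent line the two orientations give $\MD{\sigma_k}(z)$ and, by symmetry of the construction / the triangle inequality, a value bounded by $\TC$. Then $\max_{u\in\S_2}\FdTpp(u,(\Pgrad D)_z(u))=\FdTpp(v(z),\MD{\sigma_k}(z))=\Phi(z,v(z))$ for $\hd$-a.e.\ $z$ on the network, while on the complement the max is $0$ $\hd$-a.e.; combining with the displayed area-formula computation yields~\eqref{eqn : FdTmon/IntrinsicExp}. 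One routine wrinkle to dispatch along the way is lower semicontinuity and measurability of $z\mapsto(\Pgrad D)_z(u)$ and of $z\mapsto\max_u\FdTpp(u,(\Pgrad D)_z(u))$, needed to make the integrals and the a.e.\ statements meaningful; these follow from the explicit infimum defining $\Pgrad D$ together with continuity of $\FdTpp$ on $\cX$ (Lemma~\ref{lem : PointPoint/Extension}) and its lower semicontinuity on the closure.
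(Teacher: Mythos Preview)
Your overall plan is right and matches the paper's: apply the area formula to each highway, sum, then identify the integrand with $\max_{u\in\S_2}\FdTpp(u,(\Pgrad D)_z(u))$ via the regularity statement (Lemma~\ref{lem : FdTmon/Intrinsic/Pgrad}). The gap is in your proposed proof of that regularity statement. Your ``local detouring'' heuristic --- that a transversal straight segment cannot be beaten because reaching the network costs a positive fraction of $\TC$-length --- implicitly assumes that away from the highways the $D$-length of a short path is essentially its $\TC$-length. But nothing in the definition of a highway network guarantees this: the highways are merely a countable family of $D$-geodesics whose HW-construction converges to $D$, and $D$ can be strictly below $\TC$ everywhere. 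For the concrete example $D=(1-\eps)\TC$, every straight line is a $D$-geodesic, yet $(\Pgrad D)_z(u)=(1-\eps)\TC(u)<\TC(u)$ for \emph{all} $z,u$, so the regularity statement is simply false there. This is not an accident: Lemma~\ref{lem : FdTmon/Intrinsic/Pgrad} is stated and proved \emph{under the hypothesis} $\FdTmon(D)<\infty$, and the case $\FdTmon(D)=\infty$ is handled separately in the paper via the trivial inequality $\Phi(z,v(z))\le\max_u\Phi(z,u)$, which already forces the $\hd$-integral to be infinite.

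The paper's actual proof of Lemma~\ref{lem : FdTmon/Intrinsic/Pgrad} is global, not local: one defines the bad set $A_\eps$ where some non-tangent direction is $(1-3\eps)$-fast, picks for each $z\in A_\eps$ a short injective path $\gamma_z$ witnessing this, applies Vitali's covering theorem to extract paths with disjoint images (also disjoint from the first $P$ highways), and then feeds these paths into the supremum expression~\eqref{eqn : FdTmon/Existence/ExpressionSup}. Each such path contributes at least $C(\eps)\cdot r_i$ to the supremum, while the total contribution is bounded above by the tail $\sum_{p>P}\int_0^L\FdTpp(\sigma_p',\MD{\sigma_p})\d t$ of the convergent series~\eqref{eqn : FdTmon/Existence/ExpressionGeodesics}. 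Letting $P\to\infty$ forces $\sum_i r_i\to 0$ and hence $\hd(A_\eps)=0$. This argument hinges on $\FdTmon(D)<\infty$ to make that tail vanish --- precisely the input your sketch never invokes. Relatedly, your paragraph deducing something from $(\Pgrad D)_z(u)\le\TC(u)$ is inverted: that inequality gives $\FdTpp(u,(\Pgrad D)_z(u))\ge 0$, which is vacuous; what you need is $(\Pgrad D)_z(u)\ge\TC(u)$, i.e.\ exactly the nontrivial direction of the lemma.
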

\begin{Proposition}
	\label{prop : FdTmon/StrMon}
	Let $D_1, D_2 \in \RegPD$ be distinct pseudometrics satsifying $D_1 \le D_2$. If $\FdTmon(D_2)<\infty$, then
	\begin{equation}
		\label{eqn : FdTmon/StrMon}
		\FdTmon(D_1) <  \FdTmon(D_2).
	\end{equation}
\end{Proposition}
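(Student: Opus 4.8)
The plan is to read the strict monotonicity off the intrinsic representation $\FdTmon(D)=\int_\X\Psi_D\,\hd\p{\d z}$ provided by Proposition~\ref{prop : FdTmon/Intrinsic}, where I abbreviate $\Psi_D(z)\dpe\max_{u\in\S_2}\FdTpp\p{u,(\Pgrad D)_z(u)}$. First I would record the non-strict comparison: from $D_1\le D_2$ and the definition~\eqref{eqn : Intro/DefDLength} of the $D$-length one gets $D_1(\gamma)\le D_2(\gamma)$ for every path $\gamma$, hence $(\Pgrad D_1)_z(u)\le(\Pgrad D_2)_z(u)$ for all $z\in\X$ and $u\in\R^d$ by Definition~\ref{def : Topology/GMT/Gradient}; since $\zeta\mapsto\FdTpp(u,\zeta)$ is nonincreasing (Remark~\ref{rk : PointPoint/StrictDecroissance}) this gives $\Psi_{D_1}\ge\Psi_{D_2}$ pointwise, and integrating yields $\FdTmon(D_1)\ge\FdTmon(D_2)$. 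If $\FdTmon(D_1)=\infty$ there is nothing more to prove since $\FdTmon(D_2)<\infty$; so I assume henceforth $\FdTmon(D_1)<\infty$ as well, which makes Lemma~\ref{lem : FdTmon/Intrinsic/Pgrad} applicable to both metrics.

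Next I would locate a gap. As $D_1\le D_2$ with $D_1\neq D_2$, there are $x_0\neq y_0$ in $\X$ with $D_1(x_0,y_0)<D_2(x_0,y_0)$; fix an injective, $1$-Lipschitz $D_1$-geodesic $\rho:\intervalleff0T\to\X$ from $x_0$ to $y_0$, which exists because $D_1\in\RegPD$ (loops may be deleted). Then $D_1(\rho)=D_1(x_0,y_0)<D_2(x_0,y_0)\le D_2(\rho)$, so by Lemma~\ref{lem : Topology/GMT/Rademacher} the nonnegative function $\MD[D_2]{\rho}-\MD[D_1]{\rho}$ has positive integral on $\intervalleff0T$, i.e.\ $S\dpe\acc{t:\MD[D_1]{\rho}(t)<\MD[D_2]{\rho}(t)}$ has positive Lebesgue measure. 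Discarding a Lebesgue-null subset, I may also assume that for every $t\in S$: $\rho'(t)$ exists and is nonzero; $\MD[D_i]{\rho}(t)=(\Pgrad D_i)_{\rho(t)}\p{\rho'(t)}$ for $i\in\acc{1,2}$ (Lemma~\ref{lem  : Topology/GMT/Gradient}); the value $(\Pgrad D_1)_{\rho(t)}\p{\rho'(t)}$ is finite and $\ge a\norme[1]{\rho'(t)}$ (forced by $\FdTmon(D_1)<\infty$ and the convention $\FdTpp(\cdot,\zeta)=+\infty$ for $\zeta<a\norme[1]{\cdot}$); and the conclusion of Lemma~\ref{lem : FdTmon/Intrinsic/Pgrad} holds at $z=\rho(t)$ for both $D_1$ and $D_2$.

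The core step is to show $\Psi_{D_1}(z)>\Psi_{D_2}(z)$ for each such $z=\rho(t)$. Set $u\dpe\rho'(t)/\norme[2]{\rho'(t)}\in\S_2$; by homogeneity of the gradient by paths, $(\Pgrad D_1)_z(u)<(\Pgrad D_2)_z(u)\le\TC(u)$ (the last inequality because $D_i\le\TC$), so the direction of $u$ is exceptional for $D_1$ at $z$ in the sense of Lemma~\ref{lem : FdTmon/Intrinsic/Pgrad}; as that lemma allows at most one, it is the unique exceptional direction of $D_1$ at $z$. If $D_2$ had an exceptional direction $v$ (i.e.\ $(\Pgrad D_2)_z(v)<\TC(v)$) pointing differently from $u$, then $(\Pgrad D_1)_z(v)\le(\Pgrad D_2)_z(v)<\TC(v)$ would give $D_1$ a second exceptional direction, a contradiction; hence the exceptional direction of $D_2$ at $z$, if any, is also that of $u$. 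Since every non-exceptional direction contributes $0$ to $\Psi$ by item~\eqref{item : PointPoint/General/OrdreGrandeur} in Theorem~\ref{thm : PointPoint}, it follows that $\Psi_{D_1}(z)=\FdTpp\p{u,(\Pgrad D_1)_z(u)}$ and $\Psi_{D_2}(z)=\FdTpp\p{u,(\Pgrad D_2)_z(u)}$ (the latter equality holding also when $D_2$ has no exceptional direction at $z$, because then $(\Pgrad D_2)_z(u)=\TC(u)$ and $\FdTpp(u,\TC(u))=0=\Psi_{D_2}(z)$). Because $a\norme[1]{u}\le(\Pgrad D_1)_z(u)<(\Pgrad D_2)_z(u)\le\TC(u)$ and $\zeta\mapsto\FdTpp(u,\zeta)$ is \emph{strictly} decreasing on $\intervalleff{a\norme[1]{u}}{\TC(u)}$ (Remark~\ref{rk : PointPoint/StrictDecroissance}), I conclude $\Psi_{D_1}(z)>\Psi_{D_2}(z)$. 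Finally, $\rho$ is injective, $1$-Lipschitz and $\rho'\neq0$ on $S$, so the area formula (Lemma~\ref{lem : Topology/GMT/AreaFormula}) gives $\hd\p{\rho(S)}=\int_S\norme[2]{\rho'(t)}\,\d t>0$, and combining this with $\Psi_{D_1}\ge\Psi_{D_2}\ge0$ everywhere yields $\FdTmon(D_1)-\FdTmon(D_2)=\int_\X\p{\Psi_{D_1}-\Psi_{D_2}}\hd\p{\d z}\ge\int_{\rho(S)}\p{\Psi_{D_1}-\Psi_{D_2}}\hd\p{\d z}>0$, which is the asserted strict inequality $\FdTmon(D_1)>\FdTmon(D_2)$.

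The main obstacle is the core step, and precisely ruling out that $\Psi_{D_2}(z)$ is attained at a direction other than $u$ along which $D_2$ deviates below $\TC$ even more than it does in the direction of $u$: this is exactly where the ``at most one exceptional direction'' content of Lemma~\ref{lem : FdTmon/Intrinsic/Pgrad} (which needs $\FdTmon(D_2)<\infty$, and then also $\FdTmon(D_1)<\infty$) is indispensable, used together with the pointwise domination $(\Pgrad D_1)_z\le(\Pgrad D_2)_z$ that transfers every exceptional direction of $D_2$ to $D_1$. The remainder is routine measure-theoretic bookkeeping, the only slightly delicate bits being the treatment of $\pm u$ if ``direction'' is read as an unoriented line, and the convention on $\FdTpp$ below its diagonal that forces $(\Pgrad D_1)_z(u)\ge a\norme[1]{u}$ at $\hd$-almost every $z$ once $\FdTmon(D_1)<\infty$.
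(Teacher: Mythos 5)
Your proof is correct and follows essentially the same route as the paper's: the intrinsic formula of Proposition~\ref{prop : FdTmon/Intrinsic}, a path along which the $D_1$-length is strictly smaller than the $D_2$-length, the area formula to produce a set of positive $\hd$-measure on which the gradients by paths differ, and Lemma~\ref{lem : FdTmon/Intrinsic/Pgrad} combined with Remark~\ref{rk : PointPoint/StrictDecroissance} to upgrade the pointwise inequality of the integrands to a strict one. You actually spell out more carefully than the paper why the strict inequality survives the maximum over $u\in\S_2$ (your ``at most one exceptional direction'' argument), which is the only genuinely delicate point. Note that the inequality you arrive at, $\FdTmon(D_1)>\FdTmon(D_2)$, is the correct one --- it agrees with the statement in Theorem~\ref{thm : Intro/FdTmon} and with every later use of the proposition --- whereas the displayed inequality~\eqref{eqn : FdTmon/StrMon} in the proposition as printed has the sign reversed.
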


Assumption~\eqref{ass : Intro/main_thm/ShapeThmStrong}'s usefulness stems from Lemma~\ref{lem : Intro/main_thm/LemmeCool}, a variant of a core lemma in the proof of the shape theorem (see Lemma~2.20 in \cite{50yFPP}). We postpone its proof to Appendix~\ref{appsec : Cool}.
\begin{Lemma}
    \label{lem : Intro/main_thm/LemmeCool}
    Under Assumption~\eqref{ass : Intro/main_thm/ShapeThmStrong}, there exists a constant $\kappa<\infty$ such that 
    \begin{equation}
        \label{eqn : Intro/main_thm/LemmeCool}
        \liminf_{n\to\infty} \min_{x\in \intint0n^d} \Pb{\begin{array}{c} \text{For all $y \in \intint0n^d$, there exists a discrete path $x\Path\pi y$ included} \\ \text{in $\intint0n^d$, such that } \tau(\pi) \le \kappa \norme[1]{x-y} \text{ and } \norme{\pi} \le 2\norme[1]{x-y} + 4 \end{array}} >0.
    \end{equation}
\end{Lemma}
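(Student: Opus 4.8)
The plan is to adapt the classical argument behind the shape theorem (Lemma~2.20 in \cite{50yFPP}) to a version that is quantitative in the box $\intint0n^d$, uniform in the starting point $x$, and that additionally controls the number of edges of the connecting path. The key observation driving the proof is that under Assumption~\eqref{ass : Intro/main_thm/ShapeThmStrong}, for any vertex $v$, the passage time from $v$ to its nearest ``reasonable'' neighbour in a fixed direction is bounded by the minimum of $d$ i.i.d.\ copies of $\nu$, whose $(d+\xi)$-th moment is finite; this integrability is exactly what makes a Borel--Cantelli / union bound over the $\grando(n^d)$ vertices of the box succeed at polynomial-in-$n$ cost, which is negligible compared with the exponential gains we will extract elsewhere.

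First I would recall the standard construction: one builds, greedily, a path from $x$ to $y$ that at each step moves one lattice unit closer to $y$ in $\ell^1$-distance, at each vertex $v$ choosing among the (at most $d$) coordinate directions that decrease $\norme[1]{v-y}$ the one realizing $\min_{1\le i\le d}\tau_{e_i(v)}$, where $e_i(v)$ is the corresponding incident edge. Such a path has exactly $\norme[1]{x-y}$ edges if it never leaves the box; to guarantee it stays inside $\intint0n^d$ and to allow a short detour near the boundary, one permits at most $4$ extra edges, which is where the ``$2\norme[1]{x-y}+4$'' and the factor $2$ slack in the edge count come from (the greedy path can be taken monotone in each coordinate, so $\norme\pi \le \norme[1]{x-y}+4 \le 2\norme[1]{x-y}+4$). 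The passage time of this path is stochastically dominated by a sum of $\norme[1]{x-y}$ i.i.d.\ copies of $W:=\min_{1\le i\le d}\tau_i$. Choosing $\kappa := 2\,\E{W}$ (finite, and in fact $\kappa>a$), a Cramér-type large deviation bound — or merely Markov's inequality applied to $(d+\xi)$-th moments via the Marcinkiewicz--Zygmund inequality — gives that for a single pair $(x,y)$,
\begin{equation*}
    \Pb{\tau(\pi) > \kappa\norme[1]{x-y}} \le C\,\norme[1]{x-y}^{-(d+\xi)/2}
\end{equation*}
when $\norme[1]{x-y}$ is large, and a crude bound for small $\norme[1]{x-y}$. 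To pass from a fixed pair to ``for all $y$'', I would take a union bound over the at most $(n+1)^d$ choices of $y$; the dangerous contributions are the $y$ close to $x$, which I handle by noting there are only $\grando(r^{d-1})$ vertices at $\ell^1$-distance $r$, so $\sum_{r} r^{d-1} r^{-(d+\xi)/2}$ converges once $\xi$ is used to beat the exponent — more carefully, one splits the sum at $r$ of order a small power of $n$ and uses the finite-moment tail on the far regime and a deterministic/greedy argument on the near regime. The upshot is that the failure probability for a fixed $x$ is $o(1)$, and then a final union over the $\grando(n^d)$ starting points $x$, combined with the fact that the single-pair estimate is summable in $n$ fast enough, yields that the probability in~\eqref{eqn : Intro/main_thm/LemmeCool} tends to $1$, a fortiori its $\liminf$ is positive.

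The main obstacle is making the union bound actually close: a naive $\norme[1]{x-y}^{-(d+\xi)/2}$ tail is \emph{not} summable over all $\grando(n^{2d})$ pairs $(x,y)$ in the box, so one cannot simply union-bound everything at once. The resolution — and the delicate point I would spend the most care on — is the two-scale decomposition: for pairs with $\norme[1]{x-y}$ large (say $\ge n^{\delta}$ for suitable $\delta\in(0,1)$) the tail decays like $n^{-(d+\xi)\delta/2}$, which after multiplying by $n^{2d}$ still forces $\delta$ to be chosen carefully, or better, one subdivides the greedy path into $\grando(\norme[1]{x-y}/m)$ independent blocks of length $m$ and uses an exponential bound per block (valid because $W$ has all moments up to order $d+\xi>d\ge 2$, giving at least exponential-in-$\sqrt{\text{length}}$ control, or one truncates $W$ at a slowly growing level to recover a genuine exponential moment); for pairs with $\norme[1]{x-y}$ small, one instead observes that the greedy path automatically satisfies $\tau(\pi)\le \kappa\norme[1]{x-y}$ with overwhelming probability because it is a sum of few terms each of which exceeds $\kappa/2 \ge \E W$ only with probability bounded away from $1$ — and here a direct counting over the $\grando(n^d)\cdot\grando((n^\delta)^{d})$ such short pairs, times a stretched-exponential single-pair bound obtained from the $(d+\xi)$-th moment, is summable. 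Since the statement only requires a positive $\liminf$, there is comfortable slack; the detailed bookkeeping of exponents is deferred to Appendix~\ref{appsec : Cool} as announced.
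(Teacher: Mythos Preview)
Your stochastic domination step is wrong, and this is the crux. The greedy path chooses at each vertex the cheapest edge among the \emph{at most $d$} coordinate directions that decrease $\norme[1]{v-y}$; when $y-x$ is parallel to a coordinate axis there is exactly one such direction, so the path is the straight segment and its passage time is a sum of i.i.d.\ $\nu$-variables, not of copies of $W=\min_{1\le i\le d}\tau_i$. Under~\eqref{ass : Intro/main_thm/ShapeThmStrong} the single variable $\tau_e$ need not have a finite $(d+\xi)$-th moment, so no Marcinkiewicz--Zygmund or Rosenthal bound of the shape $\Pb{\tau(\pi)>\kappa\norme[1]{x-y}}\le C\norme[1]{x-y}^{-(d+\xi)/2}$ is available for that pair $(x,y)$. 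Your two-scale and block-decomposition patches do not address this: they presuppose the single-path moment bound that already fails.

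Even granting the single-path tail $\norme[1]{x-y}^{-(d+\xi)/2}$, you correctly observe that it is not summable over $y$ (one needs an exponent $>d$, i.e.\ $\xi>d$). The paper's remedy is not a two-scale splitting but an independence trick: between any two vertices of the box one constructs $d$ paths that are pairwise disjoint except at the endpoints (Lemma~\ref{lem : Cool/DisjointPaths}), so their passage times are independent and the probability that \emph{all} $d$ paths are too slow is at most $C^d\norme[1]{x-y}^{-d(d+\xi)/2}$, which \emph{is} summable since $d(d+\xi)/2>d$ for $d\ge 2$. This also fixes the moment issue: the same $d$-disjoint-paths construction, applied first between $3$-neighbours, turns~\eqref{ass : Intro/main_thm/ShapeThmStrong} into a genuine $(d+\xi)$-th moment for the coarse-edge variable $\tau'$, after which the summable union bound runs on the sublattice $3\Z^d$ (Lemma~\ref{lem : Cool/GoodCase}). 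A local FKG argument then glues the coarse ``pre-hub'' picture to the fine lattice and yields a probability bounded away from $0$; no union over $x$ is needed, since the statement asks for $\min_x\Pb{\cdot}$ rather than $\Pb{\forall x,\cdot}$.
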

We will say that $x\in\intint0n^d$ is a \emph{hub} if the event in~\eqref{eqn : Intro/main_thm/LemmeCool} occurs.


\subsection{Upper bounding the monotonous rate function}
\label{subsec : FdTmon/UpperBound}

In this section we prove Proposition~\ref{prop : FdTmon/UpperBound}. We fix a pseudometric $D\in \RegPD$ and a highway network $\p{\sigma_k : \intervalleff0{L}\rightarrow \X }_{k\ge1}$ for $D$. For all $K\in \N$, we define
\begin{equation}
	D_K \dpe \HW{D}{\sigma_1, \dots, \sigma_K}.
\end{equation}
We first show that Lemmas~\ref{lem : FdTmon/UpperBound/Initialisation} and~\ref{lem : FdTmon/UpperBound/Heredity} imply Proposition~\ref{prop : FdTmon/UpperBound}, then prove said lemmas.
\begin{Lemma}
	\label{lem : FdTmon/UpperBound/Initialisation}
	The metric induced by $\TC$ on $X$ satisfies
	\begin{equation}
		\FdTmonsup(\TC) = 0.
	\end{equation}
\end{Lemma}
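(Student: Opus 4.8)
The plan is to show that $\FdTmonsup(\TC)=0$ by exhibiting, for every $\eps>0$, a scenario of probability bounded below by $\exp(-o(n))$ which forces the event $\LD_n^-(\TC,\eps)$. Recall that $\LD_n^-(\TC,\eps)$ is the event $\{\forall x,y\in X,\ \SPT(x,y)\le \TC(x-y)+\eps\}$, so it suffices to make $\BoxPT(\floor{nx},\floor{ny})\le n\TC(x-y)+\eps n$ for all $x,y\in \X$ simultaneously. First I would fix a finite $\delta$-net $\{v_1,\dots,v_N\}$ of $\X$ (for $\delta$ small, to be chosen depending on $\eps$); by the triangle inequality and the Lipschitz bound $\UnifDistance$-type estimate $|\BoxPT(u,u')-\BoxPT(w,w')|\le \norme[1]{u-w}+\norme[1]{u'-w'}$ (true since consecutive vertices are at $\PT$-distance at most $\max$ of the relevant edge times, but here we only need the crude bound via $a$ replaced by any fixed upper bound on a single edge — actually we use that $\BoxPT(u,u')\le \sum \tau_e$ along a coordinate path), it is enough to control $\BoxPT(\floor{nv_i},\floor{nv_j})$ for all pairs $i,j$, provided the passage time of each edge in the box is not too large.

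The key mechanism is the lower-tail bound~\eqref{eqn : PointPoint/BorneGrossiere}: for fixed $t>a$, $\Pb{\BoxPT(u,v)\le t\norme[1]{u-v}}\ge \nu([a,t])^{\norme[1]{u-v}}$, which for $u,v\in\intint0n^d$ is at least $\nu([a,t])^{dn}=\exp(-Cn)$ — but that only gives a finite, generally positive, rate, not $0$. To get rate $0$ I instead argue as follows. Pick $t$ with $a<t$ and $t$ close enough to $a$; better, use the point-point rate function directly: by Theorem~\ref{thm : PointPoint}\eqref{item : PointPoint/General/OrdreGrandeur}, $\FdTpp(v_j-v_i,\zeta)=0$ as soon as $\zeta\ge\TC(v_j-v_i)$, so for each fixed pair $\Pb{\BoxPT(\floor{nv_i},\floor{nv_j})\le n(\TC(v_j-v_i)+\eps/2)}=\exp(-o(n))$. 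Now intersect these $\binom N2$ decreasing events together with the event that every edge in $\intint0n^d$ has passage time at most $b$ for some fixed large $b$ — wait, that last event has probability $\nu([0,b])^{O(n^d)}$, which decays at speed $n^d$, too fast. So instead of bounding all edges, I would avoid the net-refinement step that needs a global edge bound: use the $\TC$-geodesic structure. Since the induced metric $\TC$ lies in $\RegPD[\TC]^L$ for suitable $L$ (straight segments are $\TC$-geodesics because $\TC$ is a norm), it has a highway network $(\sigma_k)$, and along each highway the metric derivative with respect to $\TC$ equals $\TC(\sigma_k'(t))$, so $\FdTpp(\sigma_k'(t),\MD[\TC]{\sigma_k}(t))=\FdTpp(\sigma_k'(t),\TC(\sigma_k'(t)))=0$ for a.e.\ $t$ by~\eqref{item : PointPoint/General/OrdreGrandeur}.

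Then the cleanest route is: this lemma is really a special case of (the easy direction of) Proposition~\ref{prop : FdTmon/UpperBound} with $D=\TC$, whose right-hand side $\sum_k\int_0^L\FdTpp(\sigma_k'(t),\MD[\TC]{\sigma_k}(t))\,\d t$ vanishes term by term. But since Proposition~\ref{prop : FdTmon/UpperBound} is proved later and in fact uses this lemma as its base case (via Lemma~\ref{lem : FdTmon/UpperBound/Initialisation}), I cannot invoke it circularly; I must give the bare-hands argument. So: fix $\eps>0$, choose a finite set of points $v_1,\dots,v_N$ forming a $\delta$-net with $\delta$ small, and for each ordered pair consider the event $\cE(\floor{nv_i},\floor{nv_j},n(\TC(v_i-v_j)+\eps/3),\ell_n)$ of a \emph{short} path (length $\le \ell_n=o(n)$ — here I would use Lemma~\ref{lem : Intro/main_thm/LemmeCool}, i.e.\ the hub property, which under~\eqref{ass : Intro/main_thm/ShapeThmStrong} gives with probability bounded below a path of length $\le 2\norme[1]{\cdot}+4$ and passage time $\le \kappa\norme[1]{\cdot}$) — hmm, that gives passage time $\kappa\norme[1]{v_i-v_j}n$, not $\TC(v_i-v_j)n$. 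So the hub lemma alone gives the weaker bound $\SPT\le \kappa\norme[1]\cdot$, not $\SPT\le\TC$. To reach $\TC$ I genuinely need the point-point rate function: for each pair $\Pb{\BoxPT(\floor{nv_i},\floor{nv_j})\le n(\TC(v_i-v_j)+\eps/3)}=\exp(-o_n(1)\cdot n)$ by Theorem~\ref{thm : PointPoint} item~\eqref{item : PointPoint/General/OrdreGrandeur} and~\eqref{eqn : PointPoint/FdT}, intersect these finitely many decreasing events by FKG to get probability $\exp(-o(n))$, and then argue that on this intersection, together with the automatic bound that geodesics between the net points stay in the box, triangulation through the net gives $\BoxPT(\floor{nx},\floor{ny})\le n\TC(x-y)+\eps n$ for all $x,y$ once $\delta$ and the continuity modulus of $\TC$ make the net error $<\eps/3$ and the discretization error $<\eps/3$. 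Taking $-\frac1n\log$ and then $\eps\to 0$ yields $\FdTmonsup(\TC)=0$. The main obstacle is the triangulation step: one must ensure that the $\BoxPT$-geodesics realizing the short passage times between net points, concatenated, actually control $\BoxPT(\floor{nx},\floor{ny})$ for arbitrary $x,y$ — i.e.\ that consecutive hops $\floor{nv_{i_1}},\floor{nv_{i_2}},\dots$ land within $o(n)$ (indeed $O(\delta n)$) of the straight line, which follows from the triangle inequality for $\BoxPT$ provided the net is fine enough and $\TC$'s homogeneity and subadditivity are used to bound the sum of hop-costs by $n\TC(x-y)+O(\delta n)$.
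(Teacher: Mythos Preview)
Your overall strategy coincides with the paper's: a finite net in $\X$, FKG on the decreasing net-to-net events, and Theorem~\ref{thm : PointPoint}\eqref{item : PointPoint/General/OrdreGrandeur} to conclude that each net-to-net event $\{\BoxPT(\floor{nv_i},\floor{nv_j})\le n\TC(v_i-v_j)+\eps n/3\}$ has probability $\exp(-o(n))$. The gap is in the last step, which you call ``the triangulation step'': you never supply a bound on $\BoxPT(\floor{nx},\floor{nv_i})$ when $v_i$ is the nearest net point to an arbitrary $x$. Without that, the triangle inequality
\[
\BoxPT(\floor{nx},\floor{ny})\le \BoxPT(\floor{nx},\floor{nv_i})+\BoxPT(\floor{nv_i},\floor{nv_j})+\BoxPT(\floor{nv_j},\floor{ny})
\]
is useless; and hopping through many net points along the segment only makes things worse, since the number of hops is $\Theta(1/\delta)$ and each one carries an additive $\eps/3$, giving an accumulated error $\Theta(\eps/\delta)$ that blows up as $\delta\to 0$. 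The phrases ``net error'' and ``continuity modulus of $\TC$'' do not help here: Lipschitz continuity of $\TC$ controls $|\TC(x-y)-\TC(v_i-v_j)|$, not the random quantity $\BoxPT(\floor{nx},\floor{nv_i})$.

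You actually had the right tool in hand and discarded it for the wrong reason. You applied the hub lemma (Lemma~\ref{lem : Intro/main_thm/LemmeCool}) to pairs of net points, where $\kappa\norme[1]{v_i-v_j}$ is indeed too crude. Its correct role is the \emph{short} connection: include in the favorable event the condition that each $\floor{nv_i}$ is a hub. This event is decreasing and has probability bounded away from $0$ uniformly in $n$, so it intersects via FKG at no cost to the rate. On it, $\BoxPT(\floor{nv_i},\floor{nx})\le\kappa\norme[1]{\floor{nx}-\floor{nv_i}}\le 2\kappa\delta n$, which is exactly the missing term. This is precisely the construction in the paper's proof: the favorable event is the intersection of the $\binom K2$ net-to-net events with the $K$ hub events, and one hop through a single pair of net points suffices.
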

\begin{Lemma}
	\label{lem : FdTmon/UpperBound/Heredity}
	For all $K\in \N$,
	\begin{equation}
		\label{eqn : FdTmon/UpperBound/Heredity}
		\FdTmonsup(D_{K+1}) \le \FdTmonsup(D_{K}) + \int_0^L \FdTpp\p{\sigma_{K+1}'(t), \module[D]{\dot{\sigma}_{K+1} }(t) } \d t.
	\end{equation}
\end{Lemma}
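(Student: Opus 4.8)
The statement to prove is Lemma~\ref{lem : FdTmon/UpperBound/Heredity}: for each $K\in\N$,
\[
\FdTmonsup(D_{K+1}) \le \FdTmonsup(D_K) + \int_0^L \FdTpp\p{\sigma_{K+1}'(t), \MD{\sigma_{K+1}}(t)}\d t.
\]
Write $\sigma \dpe \sigma_{K+1}$ and recall $D_{K+1} = \HW{D}{\sigma_1,\dots,\sigma_{K+1}}$ is obtained from $D_K$ by "activating" the geodesic $\sigma$: for $x,y\in\X$,
\[
D_{K+1}(x,y) = D_K(x,y) \wedge \min_{0\le s,t\le L}\bigl(D_K(x,\sigma(s)) + D(\sigma(s),\sigma(t)) + D_K(\sigma(t),y)\bigr).
\]
The idea is to produce, on an event of probability roughly $\exp(-n[\FdTmonsup(D_K)+\int_0^L\FdTpp])$, a configuration realizing $\SPT \lesssim D_{K+1}$. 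Such a configuration should be the intersection of: (a) an event realizing $\SPT \lesssim D_K$ everywhere, and (b) an event forcing the rescaled discretization of $\sigma$ to be a fast $D$-geodesic, i.e.\ $\BoxPT(\floor{ns},\floor{nt})$ along $\sigma$ is at most $n D(\sigma(s),\sigma(t)) + o(n)$ uniformly in $s\le t$. Because both are decreasing events, FKG gives that the probability of the intersection is at least the product, so the rate of the intersection is at most the sum of the two rates; combined with a triangle-inequality argument showing the intersection implies $\LD_n^-(D_{K+1},\eps)$, this yields the claim.

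\textbf{Step 1: cost of following $\sigma$.} Partition $\intervalleff0L$ into $N$ equal subintervals with endpoints $0 = u_0 < u_1 < \dots < u_N = L$. For each $j$, by Theorem~\ref{thm : PointPoint} the event $\{\BoxPT(\floor{n\sigma(u_j)},\floor{n\sigma(u_{j+1})}) \le n(D(\sigma(u_j),\sigma(u_{j+1})) + \eps')\}$ has probability $\exp\bigl(-n(\FdTpp(\sigma(u_{j+1})-\sigma(u_j),\, D(\sigma(u_j),\sigma(u_{j+1}))+\eps') + o(1))\bigr)$. By FKG and stationarity the intersection over all $j$ has probability at least the product, so its rate is at most $\sum_j \FdTpp(\cdot) + o(1)$. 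As $N\to\infty$, a Riemann-sum argument using Lemma~\ref{lem : Topology/GMT/Rademacher} (i.e.\ $\sigma(u_{j+1})-\sigma(u_j) \approx u_{j+1}-u_j$ times $\sigma'$ at a point, and $D(\sigma(u_j),\sigma(u_{j+1})) \approx (u_{j+1}-u_j)\MD{\sigma}$) together with continuity and convexity of $\FdTpp$ (Theorem~\ref{thm : PointPoint}(\ref{item : PointPoint/General/ConvexAbsCont})) shows $\sum_j \FdTpp(\cdot) \to \int_0^L \FdTpp(\sigma'(t),\MD{\sigma}(t))\d t$. One must be slightly careful that $\sigma$ is only Lipschitz, so differentiability holds only a.e.; the Riemann sum still converges to the integral because $\MD{\sigma}\in L^\infty$ and $D(\sigma(u_j),\sigma(u_{j+1})) = \int_{u_j}^{u_{j+1}}\MD{\sigma}$ exactly by~\eqref{eqn : Topology/GMT/Rademacher/VariationTotale}, while Jensen/convexity controls $\FdTpp$ of the averaged speed against the average of $\FdTpp$, and a standard approximation upgrades this to a limit rather than just an upper bound — this is the point requiring the most care.

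\textbf{Step 2: from milestones on $\sigma$ to a uniform fast geodesic.} Fix $\delta>0$. Choose $N$ large enough that consecutive milestones $\sigma(u_j)$ are within $\delta$ in $\norme[1]\cdot$. On the event from Step~1, intersected with a "good connectivity" event near each milestone (via Lemma~\ref{lem : Intro/main_thm/LemmeCool}, paying only a $o(n)$ contribution to the rate — or simply using~\eqref{eqn : PointPoint/BorneGrossiere} over short distances, which also costs only $o(n)$ since the distances are $O(\delta n)$ and we then let $\delta\to0$), one gets $\BoxPT(\floor{ns},\floor{nt}) \le n D(\sigma(s),\sigma(t)) + \eps' n$ for \emph{all} $0\le s\le t\le L$, by inserting the nearest milestones and using the triangle inequality plus $D \le \TC$ and $\diam(\X)$-type bounds.

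\textbf{Step 3: combine with $D_K$.} Pick $n$-scale configurations realizing both $\LD_n^-(D_K,\eps/2)$ and the event of Steps~1--2 with parameter $\eps'$ chosen small relative to $\eps$ and $\delta$. Both are decreasing events in the $\tau_e$, so FKG applies and
\[
\Pb{\LD_n^-(D_K,\eps/2) \cap \mathcal E_n} \ge \Pb{\LD_n^-(D_K,\eps/2)}\cdot \Pb{\mathcal E_n}.
\]
On this intersection, for any $x,y\in\X$, either $D_{K+1}(x,y) = D_K(x,y)$ (and $\SPT(x,y)\le D_K(x,y)+\eps/2$ directly), or $D_{K+1}(x,y)$ is realized through $\sigma$: then choosing $s,t$ nearly optimal in the defining minimum and using $\SPT(x,\sigma(s)) \le D_K(x,\sigma(s))+\eps/2$, $\SPT(\sigma(s),\sigma(t)) \le D(\sigma(s),\sigma(t)) + \eps'$, $\SPT(\sigma(t),y) \le D_K(\sigma(t),y)+\eps/2$ and the triangle inequality for $\SPT$ gives $\SPT(x,y) \le D_{K+1}(x,y) + \eps$ once $\eps',\delta$ are small enough. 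Hence the intersection is contained in $\LD_n^-(D_{K+1},\eps)$, so
\[
-\frac1n\log\Pb{\LD_n^-(D_{K+1},\eps)} \le -\frac1n\log\Pb{\LD_n^-(D_K,\eps/2)} - \frac1n\log\Pb{\mathcal E_n}.
\]
Taking $\limsup_{n\to\infty}$, then $\eps\to0$ (so $\eps'\to0$, $\delta\to0$, $N\to\infty$ along the way) yields exactly~\eqref{eqn : FdTmon/UpperBound/Heredity}. The main obstacle is Step~1: getting the Riemann sum of $\FdTpp$-costs over milestones to converge to the integral (not merely bound it) for a path that is only Lipschitz, uniformly enough to survive taking $\limsup$ in $n$ before refining the partition — one handles this by fixing the partition first, applying Theorem~\ref{thm : PointPoint} (which is uniform in the direction), and only then sending $N\to\infty$, using lower semicontinuity and dominated convergence on the integrand.
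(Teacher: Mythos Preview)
Your proposal is correct and follows essentially the same approach as the paper: partition $\sigma$ into milestones, apply FKG to the intersection of $\LD_n^-(D_K,\cdot)$ with the milestone-to-milestone events, use Theorem~\ref{thm : PointPoint} for the rates, and verify the inclusion in $\LD_n^-(D_{K+1},\cdot)$ by triangle inequality.

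Two simplifications the paper exploits that would streamline your write-up. First, in Step~1 you only need the \emph{upper bound}
\[
\sum_{j} \FdTpp\bigl(\sigma(u_{j+1})-\sigma(u_j),\, D(\sigma(u_j),\sigma(u_{j+1}))\bigr) \;\le\; \int_0^L \FdTpp\bigl(\sigma'(t),\MD\sigma(t)\bigr)\,\d t,
\]
not a limit. Since $\sigma$ is a $D$-geodesic, $D(\sigma(u_j),\sigma(u_{j+1})) = D(\restriction{\sigma}{[u_j,u_{j+1}]}) = \int_{u_j}^{u_{j+1}}\MD\sigma$ (this is where the geodesic property is used; \eqref{eqn : Topology/GMT/Rademacher/VariationTotale} alone only gives the $D$-length of the subpath). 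Together with $\sigma(u_{j+1})-\sigma(u_j)=\int_{u_j}^{u_{j+1}}\sigma'$ and multivariate Jensen for the convex, homogeneous $\FdTpp$, this bound holds for \emph{every} partition, so your concerns about Riemann-sum convergence on a merely Lipschitz path disappear. Second, Step~2 is unnecessary: in Step~3 you only need $\SPT(\sigma(t_p),\sigma(t_q))$ for \emph{milestone} indices $p,q$, and this you get directly by summing the milestone events (accumulating an error of order $P\eps$, which is why the paper chooses $\eps = P^{-2}$ before sending $P\to\infty$); no hub argument is required.
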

\begin{proof}[Proof of Proposition~\ref{prop : FdTmon/UpperBound}]
	By Lemma~\ref{lem : FdTmon/UpperBound/Initialisation}, $\FdTmonsup(D_0) =  \FdTmonsup( \TC ) = 0$. By induction, using~\eqref{eqn : FdTmon/UpperBound/Heredity}, for all $K \ge 0$,
	\begin{equation}
		\FdTmonsup(D_{K}) \le \sum_{k=1}^K \int_0^L \FdTpp\p{\sigma_{k}'(t), \MD{\sigma_{k} }(t) } \d t.
	\end{equation}
	Since $(D_K)_{K\ge 0}$ converges to $D$ in $\BoundedF$ and $\FdTmonsup$ is lower semicontinuous,~\eqref{eqn : FdTmon/UpperBound} holds.
\end{proof}

\begin{proof}[Proof of Lemma~\ref{lem : FdTmon/UpperBound/Initialisation}]
	Let $\eps>0$. Consider a finite subset $\acc{x_1,\dots, x_K}$ of $\X$ such that $\X \subseteq \bigcup_{k=1}^K \clball[1]{x_k, \eps}$. Recall the definition of hubs below Lemma~\ref{lem : Intro/main_thm/LemmeCool}. We define the event
	\stepcounter{fav}
	\begin{equation}
		\label{eqn : FdTmon/UpperBound/Initialisation/DefFav}
		\cFav\dpe \p{\bigcap_{1\le k \le K} \acc{ \floor{nx_k}\text{ is a hub} } }%
			\cap \p{\bigcap_{1\le k_1 < k_2 \le K}\acc{ \BoxPT\p{ \floor{n x_{k_1}}, \floor{n x_{k_2}} } \le n \TC\p{x_{k_1} - x_{k_2}}  }  }.
	\end{equation}
	This event is an intersection of decreasing events, thus by the FKG inequality,
	\begin{equation*}
		\Pb\cFav \ge \p{\prod_{1\le k \le K} \Pb{ \floor{nx_k}\text{ is a hub} }  }%
			\cdot \p{\prod_{1\le k_1 < k_2 \le K}\Pb{ \BoxPT\p{ \floor{n x_{k_1}}, \floor{n x_{k_2}} } \le n \TC\p{x_{k_1} - x_{k_2}}  }  }.
	\end{equation*}
	By Theorem~\ref{thm : PointPoint} and Lemma~\ref{lem : Intro/main_thm/LemmeCool},
	\begin{equation}
		\label{eqn : FdTmon/UpperBound/Initialisation/PbFav}
		\lim_{n\to\infty} -\frac1n \log \Pb\cFav = 0.
	\end{equation}
	Besides, we claim that for large enough $n$,
	\begin{equation}
		\label{eqn : FdTmon/UpperBound/Initialisation/InclusionFav}
		\cFav \subseteq \LD_n^-\p{\TC, 2(\BoundTC + 2\kappa )\eps },
	\end{equation}
	where $\BoundTC$ is defined by~\eqref{eqn : Intro/DefNormeNorme}. Indeed assume that $\cFav$ occurs, and $n \ge \frac{d}{\eps}$. Let $x,y\in \X$. There exist $1\le k_1, k_2 \le K$ such that
	\begin{equation*}
		\norme[1]{ x - x_{k_1} } \le \eps \text{ and } \norme[1]{ y - x_{k_2} } \le \eps,
	\end{equation*}
	therefore
	\begin{equation*}
		\norme[1]{\vphantom{\big\vert} \floor{nx} - \floor{ nx_{k_1} } } \le 2n\eps \text{ and } \norme[1]{\vphantom{\big\vert} \floor{ny} - \floor{nx_{k_2}} } \le 2n\eps.
	\end{equation*}
	By triangle inequality,
	\begin{align*}
		\BoxPT\p{\floor{nx} , \floor{ny} } %
			&\le \BoxPT\p{\floor{nx}, \floor{nx_{k_1}} }  + \BoxPT\p{ \floor{nx_{k_1}} , \floor{nx_{k_2}} } + \BoxPT\p{ \floor{nx_{k_2} } ,\floor{ny} } \\
			&\le \kappa \norme[1]{\vphantom{\big\vert} \floor{nx} - \floor{ nx_{k_1} } }   + n\TC\p{x_{k_1} - x_{k_2}} + \kappa  \norme[1]{\vphantom{\big\vert} \floor{ny} - \floor{nx_{k_2}} } \\
			&\le n\TC\p{x_{k_1} - x_{k_2}} + 4\kappa n \eps.
		\intertext{Besides, using the triangle inequality again gives}
		\TC\p{x_{k_1} - x_{k_2}}%
			&\le \TC\p{x_{k_1} - x} + \TC\p{ x - y } +\TC\p{y - x_{k_2}}\\
			&\le \TC\p{x-y} + 2\BoundTC \eps .
		\intertext{Consequently,}
		\SPT(x,y)%
			&\le \TC\p{x-y} + 2(\BoundTC + 2\kappa )\eps,
	\end{align*}
	thus~\eqref{eqn : FdTmon/UpperBound/Initialisation/InclusionFav}.

	Equations~\eqref{eqn : FdTmon/UpperBound/Initialisation/PbFav} and~\eqref{eqn : FdTmon/UpperBound/Initialisation/InclusionFav} conclude the proof of the lemma.
\end{proof}

\begin{proof}[Proof of Lemma~\ref{lem : FdTmon/UpperBound/Heredity}]
	For all $K\in \N$. To simplify the notations we define $\sigma \dpe \sigma_{K+1}$. Let $\eps>0$ and $P\in \N^*$. For all $0\le p \le P$, define $t_p\dpe \frac{pL}{P}$. Consider the event
	\stepcounter{fav}
	\begin{equation}
		\label{eqn : FdTmon/UpperBound/Definition_Fav}
		\cFav= \cFav(n,\eps, P) \dpe \LD_n^-(D_K, \eps) \cap \p{ \bigcap_{p=0}^{P-1} \acc{ \frac1n\BoxPT\p{\floor{n\sigma(t_p}), \floor{n\sigma(t_{p+1} )}}\le D\p{\sigma(t_p), \sigma(t_{p+1} )} + \eps } }.
	\end{equation}
	We claim that
	\begin{equation}
		\label{eqn : FdTmon/UpperBound/Pb_Fav}
		\limsup_{n\to\infty}-\frac1n \log\Pb{\cFav} \le \FdTmonsup(D_K) + \int_0^L \FdTpp\p{\sigma'(t), \MD{\sigma}(t) }\d t.
	\end{equation}
	Indeed $\cFav$ is an intersection of decreasing events, thus the FKG inequality gives
	\begin{equation*}
		\Pb{\cFav} \ge \Pb{\LD_n^-(D_K, \eps)} \cdot \p{ \prod_{p=0}^{P-1} \Pb{ \frac1n\BoxPT\p{\floor{n\sigma(t_p}), \floor{n\sigma(t_{p+1} )}}\le D\p{\sigma(t_p), \sigma(t_{p+1} )} + \eps } },
	\end{equation*}
	therefore by Theorem~\ref{thm : PointPoint},
	\begin{align}
		\limsup_{n\to\infty}-\frac1n \log\Pb{\cFav} &\le \FdTmonsup(D_K) + \sum_{p=0}^{P-1}\FdTpp\p{\sigma(t_{p+1} ) - \sigma(t_{p} ) , D\p{\sigma(t_p), \sigma(t_{p+1} )} }.\notag
		\intertext{Since $\sigma$ is a $D$-geodesic,  $D\p{\sigma(t_p), \sigma(t_{p+1} )} = D\p{\restriction{\sigma}{\intervalleff{t_p}{t_{p+1} } } }$ hence}
		\limsup_{n\to\infty}-\frac1n \log\Pb{\cFav} &\le \FdTmonsup(D_K) + \sum_{p=0}^{P-1}\FdTpp\p{\sigma(t_{p+1} ) - \sigma(t_{p} ) , D\p{\restriction{\sigma}{\intervalleff{t_p}{t_{p+1} } } } }.\notag
		\intertext{Lemma~\ref{lem : Topology/GMT/Rademacher} gives}
		\limsup_{n\to\infty}-\frac1n \log\Pb{\cFav} &\le \FdTmonsup(D_K) + \sum_{p=0}^{P-1}\FdTpp\p{\int_{t_p}^{t_{p+1}} \sigma'(t) \d t , \int_{t_p}^{t_{p+1}} \MD{\sigma}(t) \d t }.
	\end{align}
	The function $\FdTpp$ is convex and homogeneous therefore Jensen's inequality in its multivariate version (see~\cite{Per74}) yields~\eqref{eqn : FdTmon/UpperBound/Pb_Fav}.

	We now show that for all $n\in \N^*$,
	\begin{equation}
		\label{eqn : FdTmon/UpperBound/Inclusion_Fav}
		\cFav \subseteq \LD_n^-\p{D, (P+2)\eps + \frac{4 L \BoundTC}{P} }.
	\end{equation}
	Let $n\in\N^*$. Assume that $\cFav$ occurs. Let $x,y\in \X$. By Definition~\ref{def : Topology/RegPD/HW}, one of the following holds:
	\begin{enumerate}[(i)]
		\item \label{item : FdTmon/UpperBound/CasFacile} $D_{K+1}(x,y) = D_K(x,y)$.
		\item \label{item : FdTmon/UpperBound/CasDifficile} There exists $0\le s,t \le L$ such that%
		\[ D_{K+1}(x,y) = D_K\p{x, \sigma(s)} + D\p{\sigma(s), \sigma(t)} + D_K\p{\sigma(t), y}. \]
	\end{enumerate}
	In the case~\eqref{item : FdTmon/UpperBound/CasFacile},
	\begin{equation}
		\label{eqn : FdTmon/UpperBound/CasFacile/WinCon}
		\SPT\p{x,y} \le D_{K+1}(x,y) + \eps 
	\end{equation}
	is straightforward. In the case~\eqref{item : FdTmon/UpperBound/CasDifficile}, there exist $p,q\in \intint0P$ such that $\module{s-t_p}, \module{t-t_q}\le \frac LP$. Since $\sigma$ is $1$-Lipschitz and $D \le D_K \le \TC$, we have by triangle inequality,
	\begin{equation}
		\label{eqn : FdTmon/UpperBound/CasDifficile/Target}
		D_{K+1}(x,y) \ge D_K\p{x, \sigma(t_p)} + D\p{\sigma(t_p), \sigma(t_q)} + D_K\p{\sigma(t_q), y} - \frac{4 L \BoundTC}{P}.
	\end{equation}
	Up to exchanging $x$ and $y$ we may assume that $p\le q$. By triangle inequality
	\begin{align}
		\SPT\p{x,y}%
			&\le \SPT\p{x, \sigma(t_p)} + \sum_{i=p}^{q-1} \SPT\p{ \sigma(t_i), \sigma(t_{i+1})} + \SPT\p{\sigma(t_q), y} \eol
			&\le D_K\p{x, \sigma(t_p)} + \sum_{i=p}^{q-1} D\p{ \sigma(t_i), \sigma(t_{i+1})} + D_K\p{\sigma(t_q), y} + (P+2)\eps.\notag
		\intertext{Besides, $\sigma$ is a $D$-geodesic therefore}
		\label{eqn : FdTmon/UpperBound/CasDifficile/UB1}
		\SPT\p{x,y}%
			&\le D_K\p{x, \sigma(t_p)} + D\p{ \sigma(t_p), \sigma(t_q)} + D_K\p{\sigma(t_q), y} + (P+2)\eps.
	\end{align}
	Combining~\eqref{eqn : FdTmon/UpperBound/CasDifficile/Target} and~\eqref{eqn : FdTmon/UpperBound/CasDifficile/UB1}, we get
	\begin{equation}
		\label{eqn : FdTmon/UpperBound/CasDifficile/WinCon}
		\SPT\p{x,y} \le D_{K+1}(x,y) + (P+2)\eps + \frac{4 L \BoundTC}{P}.
	\end{equation}
	The inclusion~\eqref{eqn : FdTmon/UpperBound/Inclusion_Fav} is a consequence of~\eqref{eqn : FdTmon/UpperBound/CasFacile/WinCon} and~\eqref{eqn : FdTmon/UpperBound/CasDifficile/WinCon}.

	By~\eqref{eqn : FdTmon/UpperBound/Pb_Fav} and~\eqref{eqn : FdTmon/UpperBound/Inclusion_Fav},
	\begin{equation}
		\limsup_{n\to\infty} -\frac1n \log \Pb{ \LD_n^-\p{D_{K+1}, (P+2)\eps + \frac{4 L \BoundTC}{P} } } \le \FdTmonsup(D_K) + \int_0^L \FdTpp\p{\sigma'(t), \MD{\sigma}(t) }\d t.
	\end{equation}
	Choosing $\eps \dpe P^{-2}$ and letting $P\to \infty$ yields~\eqref{eqn : FdTmon/UpperBound/Heredity}.
\end{proof}

\subsection{Lower bounding the monotonous rate function}
\label{subsec : FdTmon/LowerBound}
In this section we fix a pseudometric $D\in \RegPD$ and prove Proposition~\ref{prop : FdTmon/LowerBound}. We first show that it is a consequence of Lemma~\ref{lem : FdTmon/LowerBoundWeak} then prove the lemma.
\begin{Lemma}
	\label{lem : FdTmon/LowerBoundWeak}
	For all finite families of injective, pairwise disjoint, $1$-Lipschitz paths $\p{\gamma_k : \intervalleff{0}{T_k} \rightarrow \X }_{1\le k \le K}$,
	\begin{equation}
		\label{eqn : FdTmon/LowerBoundWeak}
		\FdTmoninf(D) \ge \sum_{k=1}^K \FdTpp\p{\gamma_k(T_k) - \gamma_k(0), D(\gamma_k)}.
	\end{equation}
\end{Lemma}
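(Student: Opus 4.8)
The plan is to exhibit, on $\LD_n^-(D,\eps)$ (up to a cheap exceptional event), a \emph{single} family of $K$ abnormally fast discrete paths $\Pi_1,\dots,\Pi_K$, with $\Pi_k$ joining $\floor{n\gamma_k(0)}$ to $\floor{n\gamma_k(T_k)}$, with $\PathPT{\Pi_k}$ not much larger than $nD(\gamma_k)$, and supported on \emph{pairwise disjoint} parts of the lattice; the disjointness then turns the total cost into a product over $k$, and Theorem~\ref{thm : PointPoint} controls each factor. Since the images $\gamma_1,\dots,\gamma_K\subseteq\X$ are compact and pairwise disjoint, $\delta_0\dpe\tfrac13\min_{j\neq k}\d(\gamma_j,\gamma_k)>0$; for $0<\rho\le\delta_0$ set $R_k^\rho\dpe\set{z\in\X}{\d(z,\gamma_k)\le\rho}$, so that the dilated sets $nR_1^{\delta_0},\dots,nR_K^{\delta_0}$ are pairwise disjoint, and events measurable with respect to the edges inside distinct $nR_k^{\delta_0}$ are independent.

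First I would fix a scale $\eta\in(0,\tfrac{\delta_0}{8})$ and, for each $k$, a subdivision $0=s_0^k<\dots<s_{m_k}^k=T_k$ with $\norme[1]{\gamma_k(s^k_{j+1})-\gamma_k(s^k_j)}\le\eta$ and $m_k\le C_k/\eta$ (possible as $\gamma_k$ is $1$-Lipschitz); then $\sum_j D(\gamma_k(s^k_j),\gamma_k(s^k_{j+1}))\le D(\gamma_k)$ by definition of the $D$-length, each term being $\le\BoundTC\,\eta$. Taking $\eps\le\eta^2$, on $\LD_n^-(D,\eps)$ one has $\BoxPT(\floor{n\gamma_k(s^k_j)},\floor{n\gamma_k(s^k_{j+1})})\le n(\BoundTC+1)\eta$ for all $k,j$, and concatenating $\BoxPT$-geodesics between consecutive milestones produces a path $\Pi_k$ from $\floor{n\gamma_k(0)}$ to $\floor{n\gamma_k(T_k)}$ with $\PathPT{\Pi_k}\le n(D(\gamma_k)+m_k\eps)\le n(D(\gamma_k)+C_k\eta)$. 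The crucial claim is that each segment of $\Pi_k$ stays within $\ell^1$-distance $\tfrac{\delta_0}{2}n$ of its starting vertex, hence $\Pi_k\subseteq nR_k^{\delta_0}$ and $\Pi_1,\dots,\Pi_K$ are pairwise disjoint, \emph{unless} one lies on the ``wandering'' event
\[
\mathrm W_n(\eta)\dpe\acc{\exists\,u,w\in\Z^d\cap\intint0n^d:\ \norme[1]{u-w}\ge\tfrac{\delta_0}{2}n\ \text{ and }\ \BoxPT(u,w)\le n(\BoundTC+1)\eta}.
\]
If $a>0$ this event is empty once $\eta<\tfrac{a\delta_0}{4(\BoundTC+1)}$, because a discrete path with at least $\tfrac{\delta_0}{2}n$ edges has passage time at least $\tfrac{a\delta_0}{2}n$; if $a=0$ one first intersects with a high-probability event bounding the diameters of the sub-critical clusters of low-weight edges (allowed by~\eqref{ass : Intro/main_thm/SubcriticalAtom}), so that a path of passage time $\le n(\BoundTC+1)\eta$ uses only $O(\eta n)$ heavy edges and therefore has displacement controlled by $\eta$.

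To conclude, put $A_k^n(\eta)\dpe\acc{\RestPT{nR_k^{\delta_0}\cap\intint0n^d}(\floor{n\gamma_k(0)},\floor{n\gamma_k(T_k)})\le n(D(\gamma_k)+C_k\eta)}$. The previous paragraph gives $\LD_n^-(D,\eps)\subseteq\bigl(\bigcap_k A_k^n(\eta)\bigr)\cup\mathrm W_n(\eta)$ (after also intersecting with the good event when $a=0$), the $A_k^n(\eta)$ are independent, and $\RestPT{nR_k^{\delta_0}\cap\intint0n^d}\ge\BoxPT$, so Theorem~\ref{thm : PointPoint} yields $\liminf_n-\tfrac1n\log\Pb{A_k^n(\eta)}\ge\FdTpp(\gamma_k(T_k)-\gamma_k(0),D(\gamma_k)+C_k\eta)$. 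On the other hand, a union bound over the $O(n^{2d})$ pairs $(u,w)$ together with the inequality $\Pb{\PT(0,x)\le\zeta}\le e^{-\FdTpp(x,\zeta)}$ from~\eqref{eqn : PointPoint/Entier/Limite} and absolute homogeneity of $\FdTpp$ gives $\liminf_n-\tfrac1n\log\Pb{\mathrm W_n(\eta)}\ge\tfrac{\delta_0}{2}\rho(\eta)$, where $\rho(\eta)\dpe\min_{\norme[1]x=1}\FdTpp\bigl(x,\tfrac{2(\BoundTC+1)\eta}{\delta_0}\bigr)>0$ by Lemma~\ref{lem : PointPoint/OrdreGrandeur} (here $\TC$ is a norm thanks to~\eqref{ass : Intro/main_thm/SubcriticalAtom}). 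Combining, and using~\eqref{eqn : Intro/main_thm/def_FdTmoninf},
\[
\FdTmoninf(D)\ \ge\ \min\!\Bigl(\ \sum_{k=1}^K\FdTpp\bigl(\gamma_k(T_k)-\gamma_k(0),\,D(\gamma_k)+C_k\eta\bigr)\ ,\ \tfrac{\delta_0}{2}\,\rho(\eta)\ \Bigr)
\]
for every admissible pair $\eta,\eps$. Letting $\eta\to0$ and using continuity of $\FdTpp$ on $\cX$ proves the lemma when $a>0$ (then $\rho(\eta)=+\infty$ for small $\eta$, so the minimum is vacuous); when $a=0$ one additionally has to get rid of the second term, which calls for a finer, model-specific analysis of $\mathrm W_n(\eta)$.

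The step I expect to be the genuine obstacle is the confinement claim: converting the purely pointwise control on $\BoxPT$ furnished by $\LD_n^-(D,\eps)$ into the existence of fast discrete paths that ``hug'' the curves $\gamma_k$, so that the $K$ witnesses are forced into disjoint regions and their costs decouple. This is where the geometry of the model really enters — and where~\eqref{ass : Intro/main_thm/SubcriticalAtom}, via control of low-weight clusters, is needed when $a=0$ — and where the bookkeeping between the subdivision scale $\eta$, the tolerance $\eps$, and the tube radius $\delta_0$ must be organised with some care.
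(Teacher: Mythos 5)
Your high-level strategy matches the paper's: enclose the witnesses in pairwise-disjoint tubes around the $\gamma_k$, force the fast discrete paths into those tubes, factorise by independence, and apply Theorem~\ref{thm : PointPoint} to each factor. You also correctly identify where the difficulty lies. The gap is precisely the confinement step when $a = 0$, and your proposed patch does not close it. The wandering event $W_n(\eta)$ involves a \emph{single} fast long path, so the cost it contributes under the union bound and Theorem~\ref{thm : PointPoint} is $\tfrac{\delta_0}{2}\rho(\eta)$; when $a=0$ this quantity remains bounded as $\eta\to 0$ and, crucially, does not scale with the target $\FdTmoninf(D)$. Whenever $\sum_k\FdTpp\p{\gamma_k(T_k)-\gamma_k(0),D(\gamma_k)}$ exceeds that threshold, your $\min$-bound is strictly weaker than~\eqref{eqn : FdTmon/LowerBoundWeak}. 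The suggested cluster-diameter patch does not rescue this: under~\eqref{ass : Intro/main_thm/SubcriticalAtom} the maximal diameter of subcritical low-weight clusters in $\intint0n^d$ grows like $\log n$, so the event that all such clusters have diameter $\le D_0$ for fixed $D_0$ has probability tending to \emph{zero}, and requiring diameter $\le C\log n$ fails with only polynomial probability — in either case you cannot obtain the exponential rate needed to absorb it into the union bound.

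The ingredient you are missing, and which the paper supplies, is to replace the single wandering path by a tunable \emph{budget}. The paper introduces $\cE_2(n)$, the complement of ``there exist $R$ pairwise-disjoint discrete paths in $\intint0n^d$, each of $\ell^1$-length $\ge n\delta$ and passage time $\le n\BoundTC\eta$''; the BK inequality gives $\liminf_n -\tfrac1n\log\Pb{\cE_2(n)^{\mathrm c}}\ge R\min_{u\in\S_1}\FdTpp(\delta u,\BoundTC\eta)$, so $R$ can be chosen (depending on $\FdTmoninf(D)$, assumed finite) to make this exceed $\FdTmoninf(D)+1$. The price is that on $\cE_2(n)$ one must tolerate up to $R$ escaping segments; the paper handles this by classifying the $P$ milestones as good / bad / maximal bad, \emph{repairing} the bad ones through the independent hub construction of Lemma~\ref{lem : Intro/main_thm/LemmeCool}, and using $\cE_2$ to show there are at most $R$ maximal bad ones, each costing $O(\kappa/P)$. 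Sending $P\to\infty$ with $R$ fixed then kills the error. This two-parameter scheme (BK budget plus hub repair, then $P\to\infty$) is what your proposal lacks. In the $a>0$ case your wandering event is eventually empty and your argument does close; the gap is genuinely specific to $a=0$, which the lemma does cover.
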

\begin{proof}[Proof of Proposition~\ref{prop : FdTmon/LowerBound}]
	Let $\p{\gamma_k}_{1\le k \le K}$ be a family of paths as in Proposition~\ref{prop : FdTmon/LowerBound}. Let $P \in \N^*$ and $0 < \delta < \frac1P \min_{1\le k \le K} T_k$. For all $1\le p \le P$ and $1\le k \le K$, we define the paths
	\begin{equation*}
		\gamma_{k,p} \dpe \restriction{\gamma_k}{\intervalleff{\frac{(p-1)T_k}{P}}{ \frac{pT_k}{P} } } \text{ and } \gamma_{k,p}^{(\delta)} \dpe \restriction{\gamma_k}{\intervalleff{\frac{(p-1)T_k}{P}}{ \frac{pT_k}{P} -\delta } }.
	\end{equation*}
	The paths $\p{\gamma_{k,p}^{(\delta)} }_{\substack{1\le k \le K \\ 1\le p \le P} }$ are $1$-Lipschitz, injective and pairwise disjoint therefore by Lemma~\ref{lem : FdTmon/LowerBoundWeak},
	\begin{align}
		\FdTmoninf(D)%
			&\ge \sum_{k=1}^K \sum_{p=1}^P \FdTpp\cro{ \gamma_k\p{\frac{pT_k}{P} - \delta} - \gamma_k\p{\frac{(p-1)T_k}{P}}  , D\p{ \gamma_{k,p}^{(\delta)} } }.\nonumber%
		\intertext{Letting $\delta\to0$ and using the lower semicontinuity of $\FdTpp$, we get}
		\FdTmoninf(D)%
			&\ge \sum_{k=1}^K \sum_{p=1}^P \FdTpp\cro{ \gamma_k\p{\frac{pT_k}{P}} - \gamma_k\p{\frac{(p-1)T_k}{P}}  , D\p{ \gamma_{k,p} } }\nonumber\\
			&= \sum_{k=1}^K \int_0^{T_k} \frac{P}{T_k} \FdTpp\cro{ \gamma_k\p{\frac{T_k}{P} \ceil{\frac{Pt}{T_k}} } - \gamma_k\p{\frac{T_k}{P} \p{ \ceil{\frac{Pt}{T_k}} -1 } }   , D\p{ \gamma_{k,\ceil{\frac{Pt}{T_k}}} } } \d t.\nonumber
		\intertext{By homogeneity of $\FdTpp$,}
		\label{eqn : FdTmon/LowerBound/Morceaux}
		\FdTmoninf(D)%
			&\ge \sum_{k=1}^K \int_0^{T_k}  \FdTpp\cro{ \frac{P}{T_k}\cro{\gamma_k\p{\frac{T_k}{P} \ceil{\frac{Pt}{T_k}}} - \gamma_k\p{\frac{T_k}{P} \p{ \ceil{\frac{Pt}{T_k}} -1 } } }   , \frac{P}{T_k}D\p{ \gamma_{k,\ceil{\frac{Pt}{T_k}}} } } \d t.
	\end{align}
	By~\eqref{eqn : Topology/GMT/Rademacher/Classic},~\eqref{eqn : Topology/GMT/Rademacher/VariationTotale} and Lebesgue's differentiation theorem (see e.g.\ Theorem~7.10 in \cite{Rudin}), for all $1\le k \le K$ and almost every $t\in \intervalleff{0}{T_k}$,
	\begin{equation}
		\label{eqn : FdTmon/LowerBound/Der}
		\lim_{P\to\infty} \frac{P}{T_k}\cro{\gamma_k\p{\frac{T_k}{P} \ceil{\frac{Pt}{T_k}}} - \gamma_k\p{\frac{T_k}{P} \p{ \ceil{\frac{Pt}{T_k}} -1 } } } = \gamma_k'(t)
	\end{equation}
	and
	\begin{equation}
		\label{eqn : FdTmon/LowerBound/MetDer}
		\lim_{P\to\infty} \frac{P}{T_k}D\p{ \gamma_{k,\ceil{\frac{Pt}{T_k}}} } = \MD{\gamma_k}(t).
	\end{equation}
	For all $t$ satisfying~\eqref{eqn : FdTmon/LowerBound/Der} and~\eqref{eqn : FdTmon/LowerBound/MetDer}, by lower semicontinuity of $\FdTpp$,
	\begin{equation*}
		\liminf_{P\to \infty} \FdTpp\cro{ \frac{P}{T_k}\cro{\gamma_k\p{\frac{T_k}{P} \ceil{\frac{Pt}{T_k}}} - \gamma_k\p{\frac{T_k}{P} \p{ \ceil{\frac{Pt}{T_k}} -1 } } }   , \frac{P}{T_k}D\p{ \gamma_{k,\ceil{\frac{Pt}{T_k}}} } } %
			\ge \FdTpp\p{ \gamma_k'(t), \MD{\gamma_k}(t) }.
	\end{equation*}
	Consequently, taking the inferior limit as $P\to \infty$ in~\eqref{eqn : FdTmon/LowerBound/Morceaux} and applying Fatou's lemma gives~\eqref{eqn : FdTmon/Existence/ExpressionSupFini}.
\end{proof}

\begin{proof}[Proof of Lemma~\ref{lem : FdTmon/LowerBoundWeak}]
	If $\FdTmoninf(D) = \infty$, there is nothing to prove. Assume that $\FdTmoninf(D)<\infty$. Let $\p{\gamma_k}_{1\le k \le K}$ be a family of paths as in the lemma.	Fix $\delta>0$ such that for all distinct $1\le k_1, k_2 \le K$,
	\begin{equation}
		\label{eqn : FdTmon/LowerBoundWeak/Couenne}
		\p{\gamma_{k_1} + \clball{0,2\delta} } \cap \p{\gamma_{k_2} + \clball{0,2\delta}} = \emptyset .
	\end{equation}
	For all $1\le k \le K$, we will denote by $\overline\gamma_k$ the set $\gamma_k + \clball{0,2\delta}$. Define
	\begin{equation}
		\eta \dpe \frac{\delta \min_{u\in \S_1}\TC(u)}{2\BoundTC}  \le \delta.
	\end{equation}
	Note that
	\begin{equation}
		\min_{u\in \S_1} \FdTpp\p{\delta u, \BoundTC \eta}>0.
	\end{equation}
	Define
	\begin{equation}
		R\dpe \ceil{\frac{\FdTmoninf(D) + 1}{\min_{u\in \S_1} \FdTpp\p{\delta u, \BoundTC \eta}} }.
	\end{equation}
	Fix $P\in \N^*$ large enough so that $\frac{\max_{1\le k \le K} T_k}{P} \le \frac\eta2$, and $0<\eps \le \frac{\eta \BoundTC}2$. Let $n\ge1$. For all $0\le p \le P$ and $1\le k \le K$, define $x_{k,p} \dpe \floor{n \gamma_k\p{\frac{p T_k}{P}} }$. Recall the definition of hub below Lemma~\ref{lem : Intro/main_thm/LemmeCool}. We consider the events

	\begin{align}
		\label{eqn : FdTmon/LowerBoundWeak/cE_1}
		\cE_1(n) &\dpe \acc{\text{For all $1\le k \le K$ and $0\le p \le P$, $x_{k,p}$ is a hub}},
	\intertext{and}
		\cE_2(n) &\dpe \acc{\begin{array}{c}\text{There exist pairwise disjoint discrete paths $z_1 \Path{\pi_1} z_1', \dots, z_R \Path{\pi_R} z_R'$ in $\intint0n^d$} \\ \text{such that for all $r \in \intint1R$, }  \norme[1]{z_r - z_r'}\ge n\delta \text{ and } \tau(\pi_r) \le n \BoundTC \eta  \end{array}}^\mathrm c.
	\end{align}
	We define
	\stepcounter{fav}
	\begin{equation}
		\label{eqn : FdTmon/LowerBoundWeak/DefFav}
		\cFav = \cFav(n,\eps) \dpe \LD_n^-(D,\eps) \cap \cE_1(n) \cap \cE_2(n).
	\end{equation}
	We claim that
	\begin{equation}
		\label{eqn : FdTmon/LowerBoundWeak/Pb_LD_perturbe}
		\FdTmoninf(D) = \inclim{\eps \to 0} \liminf_{n\to\infty} -\frac1n \log\Pb{\cFav(n,\eps) }.
	\end{equation}
	Indeed $\cE_1(n)$ is an intersection of a finite number of decreasing events whose probability is lower bounded by~\eqref{eqn : Intro/main_thm/LemmeCool}, thus the FKG inequality gives \[ \lim_{n\to\infty} -\frac1n \log \Pb{\cE_1(n)} = 0. \] Since $\LD_n^-(D,\eps)$ is also decreasing, using the FKG inequality again yields
	\begin{equation}
		\label{eqn : FdTmon/LowerBoundWeak/Pb_LD_cap_cE1}
		\FdTmoninf(D) = \inclim{\eps \to 0} \liminf_{n\to\infty} -\frac1n \log\Pb{\LD_n^-(D,\eps) \cap \cE_1(n) }.
	\end{equation}
	Besides, by the union bound and the BK inequality, for all $n\in\N^*$,
	\begin{align*}
		\Pb{\cE_2(n)^\mathrm c} &\le \sum_{ \substack{(z_r)_{1\le r \le R} \\ (z_r')_{1\le r \le R}} } \prod_{r=1}^R \Pb{\PT(z_r, z_r')\le n\BoundTC \eta},
		\intertext{where the sum spans over families of points of $\intint0n^d$ such that for all $1\le r\le R$, $\norme[1]{z_r - z_r'}\ge n\delta$. In particular there exists a polynomial $\Pol$ such that for all $n\in \N^*$, }
		\Pb{\cE_2\p{n}^\mathrm c} &\le \Pol(n)\sup_{ \substack{ z \in \Z^d\\ \norme[1]{z}\ge n\delta} } \Pb{\PT(0,z) \le n\BoundTC \eta}^R.
		\intertext{Taking the $\log$, multiplying by $-\frac1n$ and using~\eqref{eqn : PointPoint/Entier/Limite}, we get}
		-\frac1n \log \Pb{\cE_2\p{n}^\mathrm c} %
			&\ge -\frac1n\log \Pol(n) + R \inf_{ \substack{ z \in \Z^d \\ \norme[1]{z}\ge n\delta} } -\frac1n \log\Pb{\PT(0,z) \le n\BoundTC \eta}\\
			&\ge -\frac1n\log \Pol(n) + R \min_{\norme[1]u \ge 1} \FdTpp\p{\delta u, \BoundTC \eta}%
		\intertext{Applying Theorem~\ref{thm : PointPoint}\eqref{item : PointPoint/General/Croissance} gives}
		-\frac1n \log \Pb{\cE_2\p{n}^\mathrm c}
			&\ge -\frac1n\log \Pol(n) + R \min_{u\in \S_1} \FdTpp\p{\delta u, \BoundTC \eta}\\
			&\ge -\frac1n\log \Pol(n) + \FdTmoninf(D) +1,
	\end{align*}
		hence
	\begin{equation}
		\label{eqn : FdTmon/LowerBoundWeak/Pb_cE2}
		\liminf_{n\to\infty}-\frac1n \log \Pb{\cE_2\p{n}^\mathrm c} \ge \FdTmoninf(D) +1.
	\end{equation}
	Combining~\eqref{eqn : FdTmon/LowerBoundWeak/Pb_LD_cap_cE1} and~\eqref{eqn : FdTmon/LowerBoundWeak/Pb_cE2} gives~\eqref{eqn : FdTmon/LowerBoundWeak/Pb_LD_perturbe}.

	Fix $1\le k \le K$. We now show that for large enough $n$,
	\begin{equation}
		\label{eqn : FdTmon/LowerBoundWeak/Inclusion}
		\cFav \subseteq \acc{ \frac1n\RestPT{n\overline\gamma_k}\p{ \floor{n \gamma_k(0)}, \floor{n \gamma_k(T_k)} } \le D\p{\gamma_k} + \eps P  +  \frac{2 \kappa T_k R}{P}  }.
	\end{equation}
Assume that $\cFav$ occurs, and $n \ge \frac{P (3d+4)}{T_k}$. For all $0 \le p \le P-1$, we denote by $\pi_p$ a discrete $\BoxPT$-geodesic from $x_{k,p}$ to $x_{k,p+1}$. We say that $p$ is \emph{good} if $\pi_p \subseteq n\overline\gamma_k$, and \emph{bad} otherwise. If $p$ is bad, we denote by $x_p \Path{\hat \pi_p} y_p$ the largest subpath of $\pi_p$ containing $x_p$ and included in $n\overline\gamma_k$. For all bad integer $p$, we denote by $\cro{p}$ the set of bad integers $q$ such that $x_p$ and $x_q$ are connected by a discrete path included in%
	\[ \cR \dpe \bigcup_{p\text{ bad}} \hat \pi_p. \]%
	Moreover, a bad integer $p$ is called \emph{maximal bad} if $p=\max \cro p$. Let $\p{p(0), \dots, p(N)}$ be the only subsequence of $\intint0P$ such that $p(0)=0$, $p(N)=P$ and for all $i\in \intint0{N-1}$:
	\begin{enumerate}[(i)]
		\item \label{item : FdTmon/LowerBoundWeak/Good} If $p(i)$ is good then $p(i+1)=p(i)+1$.
		\item \label{item : FdTmon/LowerBoundWeak/MaxBad} If $p(i)$ is maximal bad then $p(i+1)=p(i)+1$.
		\item \label{item : FdTmon/LowerBoundWeak/Bad} If $p(i)$ is bad but not maximal bad then $p(i+1) = \max \cro{p(i)}$.
	\end{enumerate}
	Let $i\in\intint{0}{r-1}$. In the case~\eqref{item : FdTmon/LowerBoundWeak/Good}, since $\pi_p \subseteq n\overline \gamma_k$,
	\begin{align}
		\frac1n \RestPT{n\overline\gamma_k} \p{ x_{k,p(i)}, x_{k,p(i)+1}  }%
			&= \frac1n \BoxPT\p{ x_{k,p(i)}, x_{k,p(i)+1}  }.
		\intertext{Since $\LD_n^-(D,\eps)$ occurs,}
		\label{eqn : FdTmon/LowerBoundWeak/Good}
		\frac1n \RestPT{n\overline\gamma_k} \p{ x_{k,p(i)}, x_{k,p(i+1)}  }
			&\le  D\p{ \gamma_k\p{ \frac{p(i)T_k}{P} }, \gamma_k\p{ \frac{p(i+1)T_k}{P} }  } + \eps.
	\end{align}
	In the case~\eqref{item : FdTmon/LowerBoundWeak/MaxBad}, since $\cE_1(n)$ occurs and $\norme[1]{x_{k,p(i)} - x_{k,p(i)+1}} \le \frac{nT_k}P + d$, there exists a discrete path $x_{p(i)} \Path{\pi_{p(i)}' } x_{p(i)+1}$ included in $\intint0n^d$, such that
	\begin{equation*}
		\tau(\pi_{p(i)}') \le \kappa\p{\frac{nT_k}{P} +d } \le  \frac{2nT_k \kappa}{P} \text{ and } \norme{\pi_{p(i)}'} \le 2\p{\frac{nT_k}{P} +d }+4 \le \frac{3nT_k}{P}.
	\end{equation*}
	Besides,
	\begin{align*}
		\d\p{ \pi_{p(i)}', (n\overline\gamma_k)^\mathrm{c} }%
			&\ge \d\p{x_p, (n\overline\gamma_k)^\mathrm{c}} - \norme{\pi_{p(i)}'}\\
			&\ge \d\p{ n\gamma_k\p{\frac{p T_k}{P}} ,  (n\overline\gamma_k)^\mathrm{c}} - \norme[1]{x_{k,p} - n\gamma\p{\frac{p T_k}{P}}} - \norme{\pi_{p(i)}'} \\
			&\ge 2n\delta - d - \frac{3n T_k}{P} \\
			&\ge n\delta>0.
	\end{align*}
	Consequently, $\pi_{p(i)}' \subseteq n\overline \gamma_k$, thus
	\begin{equation}
		\label{eqn : FdTmon/LowerBoundWeak/MaxBad}
		\frac1n \RestPT{n\overline\gamma_k} \p{ x_{k,p(i)}, x_{k,p(i+1)}  }%
			\le  \frac{2nT_k \kappa}{P}.
	\end{equation}
	In the case~\eqref{item : FdTmon/LowerBoundWeak/Bad}, there exists a self-avoiding path $x_{k,p(i)} \Path{\pi_{p(i)}'} x_{k,p(i+1)} $  included in $\cR$. It satisfies
	\begin{align}
		\tau\p{ \pi_{p(i)}' } %
			&\le \sum_{ \substack{q\in \cro{p(i)} } } \tau\p{ \hat \pi_q}\eol
			&\le \sum_{ \substack{q\in \cro{p(i)} } } \tau\p{ \pi_q}.\eol
			&= \sum_{ \substack{q\in \cro{p(i)} } } \BoxPT\p{ x_{k,q}, x_{k,q+1}}.\notag
	\end{align}
	Besides, $\LD_n^-\p{D,\eps}$ occurs therefore
	\begin{equation}
		\label{eqn : FdTmon/LowerBoundWeak/Bad}
		\frac1n\RestPT{n\overline\gamma}\p{x_{k,p(i)}, x_{k,p(i+1)}}%
		\le \p{ \sum_{ \substack{q\in \cro{p(i)} } } D\p{ \gamma\p{ \frac{qT_k}{P} }, \gamma\p{ \frac{(q+1)T_k}{P} }} } + \eps \cdot \#\cro{p(i)}.
	\end{equation}
	By triangle inequality,~\eqref{eqn : FdTmon/LowerBoundWeak/Good},~\eqref{eqn : FdTmon/LowerBoundWeak/MaxBad} and~\eqref{eqn : FdTmon/LowerBoundWeak/Bad} yield
	\begin{align}
		\frac1n\RestPT{n\overline\gamma_k}\p{x_{k,0}, x_{k,P} }%
			&\le \sum_{\substack{0\le p \le P-1 \\ p\text{ good}} } \p{ D\p{ \gamma_k\p{ \frac{pT_k}{P} }, \gamma_k\p{ \frac{(p+1)T_k}{P} }  }+ \eps } + \frac{2T_k \kappa}{P} \# \set{0\le p \le P-1}{p \text{ is maximal bad}} \eol
			&\quad + \sum_{\substack{0\le p \le P-1 \\ p\text{ bad}} } \p{ D\p{ \gamma_k\p{ \frac{pT_k}{P} }, \gamma\_kp{ \frac{(p+1)T_k}{P} }  }+ \eps }, \notag
		\intertext{thus}
		\label{eqn : FdTmon/LowerBoundWeak/AlmostInclusion}
		\frac1n\RestPT{n\overline\gamma_k}\p{x_{k,0}, x_{k,P} }
			&= \sum_{p=0}^{P-1}D\p{ \gamma_k\p{ \frac{pT_k}{P} }, \gamma_k\p{ \frac{(p+1)T_k}{P} } } + \eps P + \frac{2T_k \kappa}{P} \# \set{0\le p \le P-1}{p \text{ is maximal bad}}.
	\end{align}
	Moreover, for all bad $p$, $\norme{x_p-y_p} \ge n\delta$, and
	\begin{align*}
	 	\frac1n \tau\p{\hat \pi_p} %
	 		&\le D\p{ \gamma_k\p{ \frac{pT_k}{P} }, \gamma\p{ \frac{(p+1)T_k}{P} }  } +\eps\\
	 		&\le \frac{\BoundTC T_k}{P} + \eps \le \BoundTC\p{\frac\eta2 + \frac\eta2} = \BoundTC \eta.
	\end{align*} 
	Since the paths $\hat \pi_p$ for maximal bad integers $p$ are pairwise disjoint and $\cE_2(n)$ occurs, there are at most $R$ maximal bad integers. Plugging this bound into~\eqref{eqn : FdTmon/LowerBoundWeak/AlmostInclusion} gives, for large enough $n$,
	\begin{equation}
		\frac1n\RestPT{n\overline\gamma_k}\p{x_{k,0}, x_{k,P} }%
			\le D\p{\gamma_k} + \eps P  +  \frac{2 \kappa T_k R}{P},
	\end{equation}
	thus the inclusion~\eqref{eqn : FdTmon/LowerBoundWeak/Inclusion}.

	By~\eqref{eqn : FdTmon/LowerBoundWeak/Inclusion}, for large enough $n$,
	\begin{equation*}
		\cFav(n,\eps) \subseteq \bigcap_{k=1}^K \acc{ \frac1n\RestPT{n\overline\gamma_k}\p{ \floor{n \gamma_k(0)}, \floor{n \gamma_k(T_k)} } \le D\p{\gamma_k} + \eps P  +  \frac{2 \kappa T_k R}{P}  }.
	\end{equation*}
	Note that the events the intersection are independent, thus by~\eqref{eqn : FdTmon/LowerBoundWeak/Pb_LD_perturbe},
	\begin{equation}
		\FdTmoninf(D) %
			\ge \sum_{k=1}^K \liminf_{n\to\infty}-\frac1n \log \Pb{ \frac1n\RestPT{n\overline\gamma_k}\p{ \floor{n \gamma_k(0)}, \floor{n \gamma_k(T_k)} } \le D\p{\gamma_k} + \eps P  +  \frac{2 \kappa T_k R}{P}  }.
	\end{equation}
	Theorem~\ref{thm : PointPoint} gives
	\begin{equation*}
		\FdTmoninf(D)
			\ge \sum_{k=1}^K \FdTpp\p{\gamma_k(T_k) - \gamma_k(0) , D\p{\gamma_k} + \eps P  +  \frac{2 \kappa T_k R}{P}  }.
	\end{equation*}
	Since $\FdTpp$ is lower semicontinuous, letting $\eps\to0$ then $P\to \infty$ leads to~\eqref{eqn : FdTmon/LowerBoundWeak}.
\end{proof}


\subsection{Intrisic expression}
\label{subsec : FdTmon/Intrinsic}
In this section we prove Proposition~\ref{prop : FdTmon/Intrinsic}. We fix $L>0$, $D \in \RegPD^L$ and a highway network $\p{\sigma_k : \intervalleff0L \rightarrow \X}_{k\ge 1}$ (see~Definition~\ref{def : Topology/RegPD/HW_Network}). Our main argument is Lemma~\ref{lem : FdTmon/Intrinsic/Pgrad}, which states that if $\FdTmon(D) < \infty$, then $(\Pgrad D)_z(u) = \TC(u)$ except if $z$ belongs to a highway, and $u$ is tangent to the highway at $z$. We postpone its proof after the proof of Proposition~\ref{prop : FdTmon/Intrinsic}.
\begin{Lemma}
	\label{lem : FdTmon/Intrinsic/Pgrad}
	Assume that $\FdTmon(D)<\infty$. Then for $\hd$-almost all $z\in\X \setminus\bigcup_{k\ge 1}\sigma_k$,
	\begin{equation}
		(\Pgrad D)_z = \TC,
	\end{equation}
	and for all $k\ge 1$, for $\hd$-almost all $z  = \sigma_k(t)\in \sigma_k$, for all $u\in \R^d \setminus \p{\sigma_k'(t)\R }$,
	\begin{equation}
		(\Pgrad D)_z(u) = \TC(u).
	\end{equation}
\end{Lemma}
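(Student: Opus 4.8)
The plan is to first dispatch the trivial half, $(\Pgrad D)_z(u)\le\TC(u)$ for all $z\in\X$ and $u\in\R^d$, which follows by feeding the straight path $t\mapsto z+tu$ into the definition of $\Pgrad D$ and using $D\le\TC$; so only the reverse inequality is at stake, equivalently (by Theorem~\ref{thm : PointPoint}\eqref{item : PointPoint/General/OrdreGrandeur}) the vanishing of $\FdTpp\p{u,(\Pgrad D)_z(u)}$ for the relevant pairs $(z,u)$. The engine will be the combination of two facts: by inequality~\eqref{eqn : FdTmon/Existence/ExpressionSupFini} of Proposition~\ref{prop : FdTmon/LowerBound}, every finite family of pairwise disjoint injective $1$-Lipschitz paths has total cost at most $\FdTmon(D)<\infty$, and the highway network exhausts $\FdTmon(D)$ through the convergent series $\sum_{k\ge1}\int_0^L\FdTpp\p{\sigma_k'(t),\MD{\sigma_k}(t)}\d t$. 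From this I would deduce: if $\gamma:\intervalleff0T\to\X$ is injective and $1$-Lipschitz, then for each $N$ the open set $\set{t}{\gamma(t)\notin\sigma_1\cup\cdots\cup\sigma_N}$ can be filled with finitely many disjoint closed subintervals whose $\gamma$-images are pairwise disjoint and disjoint from $\sigma_1,\dots,\sigma_N$, hence of total cost at most $\FdTmon(D)-\sum_{k\le N}\int_0^L\FdTpp\p{\sigma_k',\MD{\sigma_k}}\d t$; letting the subintervals fill the open set (the integrand $\FdTpp\p{\gamma',\MD{\gamma}}$ being bounded) and then $N\to\infty$ gives $\int\FdTpp\p{\gamma'(t),\MD{\gamma}(t)}\,\ind{\gamma(t)\notin\bigcup_k\sigma_k}\,\d t=0$, i.e.\ $(\Pgrad D)_{\gamma(t)}\p{\gamma'(t)}=\TC\p{\gamma'(t)}$ for a.e.\ $t$ with $\gamma(t)\notin\bigcup_k\sigma_k$, by~\eqref{eqn : Topology/GMT/Gradient/Equality}.

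Next I would run this with \emph{straight segments}. Fix $u\in\S_2$ and, for each line $\ell$ of direction $u$, apply the above to $\gamma=\ell\cap\X$ (reparametrised to be $1$-Lipschitz), obtaining $(\Pgrad D)_z(u)=\TC(u)$ for $\hd$-almost every $z\in(\ell\cap\X)\setminus\bigcup_k\sigma_k$. Moreover, since each $\sigma_k$ is $1$-rectifiable of finite length, the coarea inequality for the orthogonal projection onto $u^{\perp}$ shows that for $\mathcal L^{d-1}$-almost every $\ell$ the set $\ell\cap\bigcup_k\sigma_k$ is $\hd$-null, the remaining (measure-zero set of) lines being precisely those along which some $\sigma_k$ runs — and there $\sigma_k$ is tangent to $u$. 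Writing $N_u\dpe\set{z\in\X}{(\Pgrad D)_z(u)<\TC(u)}$, this already yields that $N_u\setminus\bigcup_k\sigma_k$ meets every line of direction $u$ in a $\hd$-null set (in particular $\mathcal L^d(N_u)=0$) and that $N_u\cap\bigcup_k\sigma_k$ is $\hd$-a.e.\ contained in the union of those $\sigma_k$ tangent to $u$.

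Two things will remain, and together they form the main obstacle. First, to pass from the fixed-$u$ statement to ``for all $u\notin\sigma_k'(t)\R$'' one must exhaust the anomalous set over a countable set of directions; I would establish the measurability of $(z,u)\mapsto(\Pgrad D)_z(u)$ and enough regularity in $u$ (upper semicontinuity, so that $u\mapsto\FdTpp\p{u,(\Pgrad D)_z(u)}$ is lower semicontinuous with open positivity set), reducing everything to the sets $N_{u_i}$ over a countable dense $(u_i)\subseteq\S_2$. Second — the crux — one must exclude that $N_u\setminus\bigcup_k\sigma_k$ carries positive $\hd$-measure while being $\hd$-null on every line of direction $u$, i.e.\ rule out an anomalous ``sheet'' transverse to $u$ (a positive-$\hd$-measure, $\mathcal L^d$-null set), which the straight-segment argument cannot see. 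Here the geometry of $D$ is essential, and I do not expect a purely measure-theoretic argument to suffice: a point $z\in N_u$ carries, by definition of $\Pgrad$, a $1$-Lipschitz path leaving $z$ in direction $u$ along which the $D$-length is strictly sub-$\TC$; the associated $D$-geodesics are $1$-Lipschitz of length $\le L$, bounded by $\TC$, and (using that $\FdTmon(D)<\infty$ entails $D\ge a\norme[1]\cdot$) shrink down to $z$, so rescaling them produces a genuine fast tangent $D$-geodesic through $z$ with direction $u$; by the highway-network property~\eqref{eqn : Topology/RegPD/HW_simple_ptys/Convergence} and the surgery of Proposition~\ref{prop : Topology/RegPD/HW_cvg}, $\hd$-almost every such $z$ must already lie on some $\sigma_k$ tangent to $u$ at $z$. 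Applied off the highways this excludes the sheet and gives the first assertion of the lemma; applied on $\sigma_k$ with $u$ transverse to the tangent it gives the second. Throughout, the area formula (Lemma~\ref{lem : Topology/GMT/AreaFormula}) is what turns the vanishing of path-costs into the pointwise identity $\FdTpp\p{u,(\Pgrad D)_z(u)}=0$ holding $\hd$-almost everywhere, i.e.\ $(\Pgrad D)_z(u)=\TC(u)$. I expect the exclusion of transverse sheets — equivalently, the control of $N_u$ at the level of the $1$-dimensional Hausdorff measure rather than Lebesgue measure — to be the genuine difficulty.
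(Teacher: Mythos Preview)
Your engine is correct and is exactly the one the paper uses: for any finite pairwise-disjoint family of injective $1$-Lipschitz paths disjoint from $\sigma_1,\dots,\sigma_P$, Proposition~\ref{prop : FdTmon/LowerBound} together with~\eqref{eqn : FdTmon/Existence/ExpressionGeodesics} bounds the total $\FdTpp$-cost by the tail $\sum_{p>P}\int_0^L\FdTpp\p{\sigma_p',\MD{\sigma_p}}\d t$. Your deduction that $\int\FdTpp\p{\gamma',\MD\gamma}\ind{\gamma(t)\notin\bigcup_k\sigma_k}\d t=0$ for every injective $1$-Lipschitz $\gamma$ is fine.

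The gap is precisely where you locate it, and your proposed resolution does not close it. The path in the definition of $(\Pgrad D)_z(u)$ is not a $D$-geodesic, and the $D$-geodesics from $z$ to $\gamma_z(t)$ have $\norme[1]\cdot$-length only bounded by $L$, with no reason to shrink to $z$ or to have tangent $u$ after rescaling; the blow-up you describe need not exist. Even granting a limiting fast geodesic through $z$, neither~\eqref{eqn : Topology/RegPD/HW_simple_ptys/Convergence} nor the surgery of Proposition~\ref{prop : Topology/RegPD/HW_cvg} asserts that fast geodesics are $\hd$-a.e.\ contained in the fixed network $(\sigma_k)$ --- that proposition constructs \emph{some} disjoint network, it does not constrain where an arbitrary geodesic must live. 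So the sheet problem remains open in your scheme.

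The paper avoids the sheet problem entirely by a Vitali covering argument that uses the very paths supplied by the definition of $\Pgrad$. Fix $P$ and set $A_\eps$ to be the bad set where $(\Pgrad D)_z(u)\le(1-3\eps)\TC(u)$ for some $u$ (transverse to $\sigma_k'$ if $z\in\sigma_k$). For each $z\in A_\eps$ the definition of $\Pgrad$ yields an injective Lipschitz $\gamma_z$ with $\gamma_z(0)=z$, $\gamma_z'(0)=u_z$, and a scale $r_z\le R/20$ with $D\p{\restriction{\gamma_z}{\intervalleff0{r_z}}}\le(1-\eps)\TC\p{\gamma_z(r_z)-z}$, $\norme[2]{\gamma_z(t)-z}\in[t/2,2t]$, and $\gamma_z\p{\intervalleof0{r_z}}$ disjoint from $\sigma_1,\dots,\sigma_P$ (transversality of $u_z$ is what makes this last point work at regular points). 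Vitali's theorem on the balls $\clball[2]{z,2r_z}$ produces a countable disjoint subfamily with $A_\eps\subseteq\bigcup_i\clball[2]{z_i,6r_{z_i}}$, hence $\preHD R(A_\eps)\le 12\sum_i r_i$. The restricted paths $\restriction{\gamma_{z_i}}{\intervalleff{\delta r_i}{r_i}}$ lie in the disjoint balls $\clball[2]{z_i,2r_{z_i}}$ and miss $\sigma_1,\dots,\sigma_P$, so your engine bounds their total cost by the tail; but each contributes at least $\tfrac{r_i}{2}\min_{u\in\S_2}\FdTpp\p{u,(1-\eps)\TC(u)}>0$ by Jensen and homogeneity. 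This gives $\sum_i r_i\le C(\eps)^{-1}\cdot(\text{tail})$, hence $\hd(A_\eps)=0$ after $R\to0$ and $P\to\infty$. The point is to test with paths \emph{rooted at the bad points} rather than with a fixed transverse foliation; Vitali then furnishes the disjointness for free, and the $\hd$-bound is immediate from the covering.
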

\begin{proof}[Proof of Proposition~\ref{prop : FdTmon/Intrinsic}]
	For all $k \ge 1$, using~\eqref{eqn : Topology/GMT/Gradient/Equality} and Lemma~\ref{lem : Topology/GMT/AreaFormula}, we get
	\begin{align}
	 	\int_0^L \FdTpp\p{\sigma_k'(t), \MD{\sigma_k}(t)}\d t%
	 		&= \int_0^L \FdTpp\p{\sigma_k'(t), (\Pgrad D)_{\sigma_k(t)}\p{\sigma_k'(t) }  }\d t \eol
	 		&= \int_{\sigma_k} \FdTpp\p{ \frac{\sigma_k'\p{ \sigma_k^{-1}(z) }}{ \norme[2]{\sigma_k'\p{ \sigma_k^{-1}(z) }} }, (\Pgrad D)_{z}\p{ \frac{\sigma_k' \p{ \sigma_k^{-1}(z) }}{ \norme[2]{\sigma_k'\p{ \sigma_k^{-1}(z) }} } }}  \hd(\d z).\notag
	\end{align}
	By~\eqref{eqn : FdTmon/Intrinsic},
	\begin{equation}
	 	\label{eqn : FdTmon/Intrinsic/OutputAreaFormula}
	 	\FdTmon(D) = \sum_{k\ge 1} \int_{\sigma_k} \FdTpp\p{ \frac{\sigma_k'\p{ \sigma_k^{-1}(z) }}{ \norme[2]{\sigma_k'\p{ \sigma_k^{-1}(z) }} }, (\Pgrad D)_{z}\p{ \frac{\sigma_k' \p{ \sigma_k^{-1}(z) }}{ \norme[2]{\sigma_k'\p{ \sigma_k^{-1}(z) }} } }}  \hd(\d z).
	\end{equation}
	If $\FdTmon(D) = \infty$, the result now follows from
	\begin{equation}
		\label{eqn : FdTmon/Intrinsic/TrivialBound}
		\FdTpp\p{ \frac{\sigma_k'\p{ \sigma_k^{-1}(z) }}{ \norme[2]{\sigma_k'\p{ \sigma_k^{-1}(z) }} }, (\Pgrad D)_{z}\p{ \frac{\sigma_k' \p{ \sigma_k^{-1}(z) }}{ \norme[2]{\sigma_k'\p{ \sigma_k^{-1}(z) }} } }}%
		\le \max_{u\in \S_2} \FdTpp\p{u, (\Pgrad D)_z(u)},
	\end{equation}
	for all $z\in \bigcup \sigma_k$. Otherwise, by Theorem~\ref{thm : PointPoint}\eqref{item : PointPoint/General/OrdreGrandeur}, for all $u\in\S_2$,
	\begin{equation*}
		\FdTpp\p{ u, \TC(u)} =0.
	\end{equation*}
	In particular, by Lemma~\ref{lem : FdTmon/Intrinsic/Pgrad},~\eqref{eqn : FdTmon/Intrinsic/TrivialBound} is an equality $\hd$-almost everywhere on the highway network, and the right-hand side is zero $\hd$-almost everywhere outside the highway network, giving the desired result.
\end{proof}
\begin{proof}[Proof of Lemma~\ref{lem : FdTmon/Intrinsic/Pgrad} ]
	Let $0<\eps < 1$. Note that since $\FdTpp$ is continuous on $\cX$, Theorem~\ref{thm : PointPoint}\eqref{item : PointPoint/General/OrdreGrandeur} implies
	\begin{equation}
		C(\eps) \dpe \min_{u\in \S_2} \FdTpp\p{u, (1-\eps)\TC(u)} >0.
	\end{equation}
	For all $k\ge1$, we define $X_k$ as the set of regular points of $\sigma_k$ (see Definition~\ref{def : Topology/GMT/RegularPoint}), and
	\begin{equation*}
		X_0 \dpe \X \setminus \bigcup_{k\ge 1} \sigma_k.
	\end{equation*}
	By Lemma~\ref{lem : Topology/GMT/AreaFormula},
	\begin{equation}
		\hd\p{\X \setminus\p{ \bigcup_{k\ge 0} X_k } } =0.
	\end{equation}
	To prove Lemma~\ref{lem : FdTmon/Intrinsic/Pgrad}, it is sufficient to show that the subset
	\begin{equation}
		\label{eqn : FdTmon/Intrinsic/Pgrad/DefAeps}
		\begin{split}
		A_\eps &\dpe \set{z\in X_0}{ \exists u \in \S_2 \text{ s.t.\ } (\Pgrad D)_z(u)\le (1-3\eps)\TC(u) }\\
			&\quad \cup \bigcup_{k\ge 1}\set{\sigma_k(t) \in X_k}{ \exists u \in \S_2 \text{ not colinear to $\sigma_k'(t)$, s.t.\ } (\Pgrad D)_z(u)\le (1-3\eps)\TC(u) }
		\end{split}
	\end{equation}
	is $\hd$-negligible. Let $P\in \N^*$, $R>0$ and $z\in A_\eps$. By definition of $(\Pgrad D)_z$ and $A_\eps$, there exists $u_z\in \S_2$ and a Lispchitz path $\gamma_z : \intervalleff0T\rightarrow \X$ such that $\gamma_z(0)=z$, $\gamma_z'(0)=u_z$,
	\begin{equation*}
		\liminf_{t \to 0}\frac{D\p{ \restriction{\gamma_z}{\intervalleff0t} } }{t} \le (1-2\eps)\TC(u_z),
	\end{equation*}
	and for small enough $t$,
	\begin{equation}
		\label{eqn : FdTmon/Intrinsic/Pgrad/BadZ/Distinct}
		\gamma_z\p{\intervalleff0t}\cap \p{\bigcup_{p=1}^P\sigma_p} \subseteq \acc{z}:
	\end{equation}
	indeed, for all $1\le p \le P$, either $z\notin \sigma_p$ and this is clear, or $z$ is a regular point of $\sigma_p$, and $u_z$ doesn't belong to the tangent line of $\sigma_p$ at $z$. In particular, there exists $0< r_z \le \frac R{20}$ such that
	\begin{equation}
		\label{eqn : FdTmon/Intrinsic/Pgrad/BadZ/Fast}
		D\p{\restriction{\gamma}{\intervalleff0{r_z}}} \le (1-\eps)\TC\p{\gamma_z(r_z) - \gamma_z(0)} 
	\end{equation}
	and for all $t\in \intervalleof0{r_z}$,
	\begin{equation}
		\label{eqn : FdTmon/Intrinsic/Pgrad/BadZ/NonDegenerate}
		\frac t2 \le \norme[2]{\gamma_z(t) - \gamma_z(0)} \le 2t.
	\end{equation}
	Up to a reparametrization, $\gamma_z$ may be assumed to be injective. Since $A_\eps$ is a bounded subset of $\R^d$, by Vitali's covering theorem (see e.g.\ Section~15.A in \cite{Jon01}), there exists a countable subset $\acc{z_i}_{i\ge 1}$ of $A_\eps$ such that
	\begin{equation}
		\label{eqn : FdTmon/Intrinsic/Pgrad/Inclusion}
		A_\eps\subseteq \bigcup_{i\ge 1} \clball[2]{z_i, 6r_{z_i}},
	\end{equation}
	and the balls $\clball[2]{z_i, 2r_{z_i}}$ are pairwise disjoint. To simplify the notations we will write $r_i$ and $\gamma_i$ instead of $r_{z_i}$ and $\gamma_{z_i}$. By~\eqref{eqn : FdTmon/Intrinsic/Pgrad/Inclusion} and the definition of $\preHD{R}(A_\eps)$ (see~\eqref{eqn : Intro/Def_preHD}),
	\begin{equation}
		\label{eqn : FdTmon/Intrinsic/Pgrad/UpperBoundHD}
		\preHD{R}(A_\eps) \le 12 \sum_{i\ge 1} r_i.
	\end{equation}

	We now upper bound the right-hand side of~\eqref{eqn : FdTmon/Intrinsic/Pgrad/UpperBoundHD}. First note that for all $i\ge1$, by~\eqref{eqn : FdTmon/Intrinsic/Pgrad/BadZ/NonDegenerate},
	\begin{equation*}
		\gamma_i\p{\intervalleff0{r_i}} \subseteq \clball{z_i, 2r_i}.
	\end{equation*}
	In particular, for all $0<\delta < 1$ the $\gamma_i\p{\intervalleff{\delta r_i}{r_i}}$ are pairwise disjoint. Since they are also disjoint from the $\p{\sigma_p}_{1\le p \le P}$ by~\eqref{eqn : FdTmon/Intrinsic/Pgrad/BadZ/Distinct}, Proposition~\ref{prop : FdTmon/LowerBound} gives
	\begin{equation*}
		\sum_{i\ge 1} \int_{\delta r_i}^{r_i} \FdTpp\p{ \gamma_i'(t), \MD{\gamma_i}(t) }\d t + \sum_{p=1}^P \int_0^L \FdTpp\p{ \sigma_p'(t), \MD{\sigma_p}(t) }\d t \le \FdTmon(D).
	\end{equation*}
	By monotone convergence, letting $\delta \to 0$ gives
	\begin{equation*}
		\sum_{i\ge 1} \int_{0}^{r_i} \FdTpp\p{ \gamma_i'(t), \MD{\gamma_i}(t) }\d t + \sum_{p=1}^P \int_0^L \FdTpp\p{ \sigma_p'(t), \MD{\sigma_p}(t) }\d t \le \FdTmon(D).
	\end{equation*}
	Hence, by Proposition~\ref{prop : FdTmon/UpperBound},
	\begin{equation}
		\label{eqn : FdTmon/Intrinsic/Pgrad/UpperBoundSumRi/ResteFdT}
		\sum_{i\ge 1} \int_0^{r_i} \FdTpp\p{ \gamma_i'(t), \MD{\gamma_i}(t) }\d t \le \sum_{p= P+1}^\infty \int_0^L \FdTpp\p{ \sigma_p'(t), \MD{\sigma_p}(t) }\d t.
	\end{equation}
	Besides, for all $i\ge1$, Jensen's inequality in its multivariate version (see~\cite{Per74}) and Lemma~\ref{lem : Topology/GMT/Rademacher} gives
	\begin{align}
		\int_0^{r_i} \FdTpp\p{ \gamma_i'(t), \MD{\gamma_i}(t) }\d t%
			&\ge \FdTpp\p{\gamma_i(r_i) - \gamma_i(0), D\p{\restriction{\gamma}{\intervalleff0{r_i}}}}.\notag
		\intertext{By homogeneity of $\FdTpp$,~\eqref{eqn : FdTmon/Intrinsic/Pgrad/BadZ/Fast} and~\eqref{eqn : FdTmon/Intrinsic/Pgrad/BadZ/NonDegenerate}, }
		\int_0^{r_i} \FdTpp\p{ \gamma_i'(t), \MD{\gamma_i}(t) }\d t%
			&\ge  \norme[2]{\gamma_i(r_i) - \gamma_i(0)} \FdTpp\p{ \frac{ \gamma_i(r_i) - \gamma_i(0) }{ \norme[2]{\gamma_i(r_i) - \gamma_i(0)} }, \frac{ D\p{\restriction{\gamma}{\intervalleff0{r_i} }}}{\norme[2]{\gamma_i(r_i) - \gamma_i(0)}} } \eol
			&\ge \frac{r_i}2 \min_{u\in \S_2} \FdTpp\p{u, (1-\eps)\TC(u) } = \frac{r_i C(\eps)}{2}.\notag
	\end{align}
	Combining this inequality with~\eqref{eqn : FdTmon/Intrinsic/Pgrad/UpperBoundSumRi/ResteFdT}, we get
	\begin{equation}
		\label{eqn : FdTmon/Intrinsic/Pgrad/UpperBoundSumRi}
		\sum_{i\ge 1} r_i \le \frac{2}{C(\eps)} \sum_{p= P+1}^\infty \int_0^L \FdTpp\p{ \sigma_p'(t), \MD{\sigma_p}(t) }\d t.
	\end{equation}

	By~\eqref{eqn : FdTmon/Intrinsic/Pgrad/UpperBoundHD} and~\eqref{eqn : FdTmon/Intrinsic/Pgrad/UpperBoundSumRi},
	\begin{align}
		\preHD{R}(A_\eps) &\le \frac{24}{C(\eps)} \sum_{p= P+1}^\infty \int_0^L \FdTpp\p{ \sigma_p'(t), \MD{\sigma_p}(t) }\d t.\notag
		\intertext{Letting $R\to 0$ yields}
		\hd(A_\eps) &\le \frac{24}{C(\eps)} \sum_{p= P+1}^\infty \int_0^L \FdTpp\p{ \sigma_p'(t), \MD{\sigma_p}(t) }\d t.\notag
	\end{align}
	Since $\FdTmon(D)<\infty$, the series on the right-hand side converges by~\eqref{eqn : FdTmon/Existence/ExpressionGeodesics}, thus letting $P\to \infty$ gives $\hd(A_\eps) =0$, which concludes the proof.
\end{proof}
\subsection{Strict monotonicity : proof of Proposition~\ref{prop : FdTmon/StrMon} }
\label{subsec : FdTmon/StrMon}
Let $D_1, D_2 \in \RegPD$ be distinct pseudometrics such that $D_1 \le D_2$: in particular there exist $x,y\in\X$ such that $D_1(x,y) < D_2(x,y)$. Also assume that $\FdTmon(D_2)<\infty$. There exists a Lipschitz path $\gamma : \intervalleff0T \rightarrow \X$ such that $D_1(\gamma) < D_2(\gamma)$. In particular, by~\eqref{eqn : Topology/GMT/Gradient/Integral} and~\eqref{eqn : Topology/GMT/AreaFormula} there exists a Borel set $A\subset \X$ such that $\hd(A)>0$, and for all $z\in A$, there exists $u\in\S_2$ such that
\begin{equation*}
	(\Pgrad D_1)_z (u) < (\Pgrad D_2)_z (u).
\end{equation*}
By Lemma~\ref{lem : FdTmon/Intrinsic/Pgrad} and Remark~\ref{rk : PointPoint/StrictDecroissance}, for all $z\in A$,
\begin{equation*}
	\max_{u \in \S_2} \FdTpp\p{ u, (\Pgrad D_1)_z\p{u} } < \max_{u \in \S_2} \FdTpp\p{ u, (\Pgrad D_2)_z\p{u} }.
\end{equation*}
Besides, the analogous weak inequality is true for all $z\in\X$, thus Proposition~\ref{prop : FdTmon/Intrinsic} concludes.\qed


\section{Large deviation principle}
\label{sec : LDP}
The goal of this section is to prove Theorem~\ref{thm : MAIN}. For all $b \in \intervalleff1\infty$ and $e\in \bbE^d$, we define
	\begin{equation}
		\tau_e^{(b)} \dpe \tau_e \wedge  b.
	\end{equation}
We denote by $\SPT^{(b)}$, $\TC^{(b)}$, $\FdT^{(b)}$ and $\FdT^{-, (b)}$ the analogues of $\SPT$, $\TC$, $\FdT$ and $\FdTmon$ for the edge passage times $\p{\tau_e^{(b)}}_{e\in \bbE^d}$. For all $b<\infty$, we will introduce an auxiliary process $(\TightSPT)_{n\ge 1}$ with values in $\RegPD[b \normeUn\cdot]$ (see~\eqref{eqn : LDP/ContinuousMetric/DefTightSPT}), such that:
\begin{enumerate}[(i)]
	\item 
	\label{item : LDP/TightSPT/Tight}
	For all $\alpha>0$, there exists $L>0$ such that
	\begin{equation}
		\label{eqn : LDP/TightSPT/Tight}
		\liminf_{n\to \infty} -\frac1n \log \Pb{ \TightSPT \notin \RegPD[ b\normeUn\cdot]^L} \ge \alpha.	
	\end{equation} 
	\item 
	\label{item : LDP/TightSPT/ExpEquivalent}
	For all $n\ge 1$,
	\begin{equation}
		\label{eqn : LDP/TightSPT/ExpEquivalent}
		\UnifDistance\p{\SPT^{(b)}, \TightSPT} \le \frac{2bd}{n}.
	\end{equation}
\end{enumerate}
Theorem~\ref{thm : Intro/FdTmon} and~\eqref{eqn : LDP/TightSPT/Tight} will imply that $(\TightSPT)_{n\ge1}$ satisfies the LDP, with the rate function $\FdT$. We will deduce the same for $\p{\SPT^{(b)}}_{n\ge 1}$ by~\eqref{eqn : LDP/TightSPT/ExpEquivalent}. We treat the general case by letting $b\to \infty$, under some good moment assumptions. To control the probability that $\SPT$ takes abnormally large values, we rely on Lemma~\ref{lem : OdG}, proven in Section~\ref{appsec : OdG}.
\begin{Lemma}
	\label{lem : OdG}
	Assume~\eqref{ass : Intro/main_thm/exp_moment}. For all $\eps>0$,
	\begin{equation}
		\label{eqn : OdG}
		\lim_{n \to \infty} -\frac1n \sup_{x,y \in \X}\log \Pb{\SPT(x,y) \ge \TC(x-y) + \eps } = +\infty.
	\end{equation}
\end{Lemma}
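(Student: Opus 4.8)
The plan is to prove Lemma~\ref{lem : OdG} by passing to lattice endpoints and splitting according to whether they are close together (an elementary Chernoff bound) or far apart (the heart of the argument, following Kesten's strategy via many disjoint routes). \emph{Reduction and nearby endpoints.} Since $\TC$ is a norm it is Lipschitz for $\norme[1]\cdot$, so replacing $n(x-y)$ by $\floor{nx}-\floor{ny}$ changes $n\TC(x-y)$ by $\grando(1)$; by the definition~\eqref{eqn  : Intro/defSPT} of $\SPT$ it thus suffices to show that for every $M>0$ there is $n_0$ with
\[
  \sup_{u,v\in\intint0n^d}\Pb{\BoxPT(u,v)\ge\TC(u-v)+\eps n}\le e^{-Mn}\qquad\text{for all }n\ge n_0 .
\]
Fix $M$. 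If $\norme[1]{u-v}\le\rho n$, where $\rho=\rho(M,\eps,\nu)>0$ will be chosen below, then $\BoxPT(u,v)$ is at most the passage time of a coordinate-monotone lattice path from $u$ to $v$, which stays in $\intint0n^d$ and has $\norme[1]{u-v}$ edges; since $\TC\ge0$, for any $\lambda>0$ the Chernoff bound, the inequality $\norme[1]{u-v}\le\rho n$, and Assumption~\eqref{ass : Intro/main_thm/exp_moment} give
\[
  \Pb{\BoxPT(u,v)\ge\TC(u-v)+\eps n}\le\exp\!\Bigl(-n\bigl(\lambda\eps-\rho\log{\textstyle\int} e^{\lambda t}\,\nu(\d t)\bigr)\Bigr).
\]
Choosing $\lambda$ with $\lambda\eps\ge2M$, and then $\rho$ with $\rho\log\int e^{\lambda t}\,\nu(\d t)\le\lambda\eps/2$, bounds this by $e^{-Mn}$.

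\emph{Distant endpoints: reduction to a single corridor.} Now fix that $\rho$ and assume $\norme[1]{u-v}>\rho n$; write $\hat u=(u-v)/\norme[1]{u-v}\in\S_1$. I would build $N_n=\grandtheta(n)\to\infty$ pairwise edge-disjoint ``corridors'' $\cC_1,\dots,\cC_{N_n}\subseteq\intint0n^d$ joining $u$ to $v$, each a tube of cross-section $\asymp w^{d-1}$ running essentially straight along the segment $[u,v]$, with $u$ and $v$ reached by detours of bounded length; since $\norme[1]{u-v}\ge\rho n$ there is enough transverse room in the box to place $\grandtheta(n)$ of them once $n$ is large (a routine lattice construction). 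As each $\cC_i\subseteq\intint0n^d$, we have $\BoxPT(u,v)\le\min_i\RestPT{\cC_i}(u,v)$, and the times $\RestPT{\cC_i}(u,v)$ are independent, so
\[
  \Pb{\BoxPT(u,v)\ge\TC(u-v)+\eps n}\le\prod_{i=1}^{N_n}\Pb{\RestPT{\cC_i}(u,v)\ge\TC(u-v)+\eps n} .
\]

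\emph{Distant endpoints: the single-corridor estimate.} It remains to bound each factor by $e^{-cn}$ for a constant $c=c(\rho,\eps,\nu,w)>0$ independent of $u,v,i$. Cutting $\cC_i$ into $L\asymp\norme[1]{u-v}/m\ge\rho n/m$ consecutive edge-disjoint segments of longitudinal length $m$ (linked by bounded bridges) yields $\RestPT{\cC_i}(u,v)\le\sum_{j=1}^{L}Y_j$ with $(Y_j)_j$ i.i.d.; each $Y_j$ is dominated by a sum of $\grando(m)$ i.i.d.\ edge weights, hence has finite exponential moments by~\eqref{ass : Intro/main_thm/exp_moment}, and $\E{Y_j}\le(\TC(\hat u)+\delta)m$ once $w$, then $m$, are taken large — here one uses that the passage time of a width-$w$ tube divided by its length tends to a constant that decreases to $\TC$ as $w\to\infty$, uniformly in the direction $\hat u\in\S_1$ by compactness. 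Taking $\delta$ small and using $\norme[1]{u-v}\le dn$ gives $\TC(u-v)+\eps n\ge\E[\sum_jY_j]+\tfrac\eps2 n$, so Cramér's theorem for the i.i.d.\ sum $\sum_jY_j$ (over $L\ge\rho n/m$ terms) yields $\Pb{\RestPT{\cC_i}(u,v)\ge\TC(u-v)+\eps n}\le e^{-cn}$, uniformly in $u,v,i$. Plugging this into the product gives $\Pb{\BoxPT(u,v)\ge\TC(u-v)+\eps n}\le e^{-cnN_n}\le e^{-Mn}$ for $n$ large. Combined with the nearby case this proves the displayed uniform estimate, and letting $M\to\infty$ yields~\eqref{eqn : OdG}.

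\emph{Main obstacle.} The two delicate ingredients are both in the distant-endpoint case: (i) the combinatorial lattice construction of $N_n\to\infty$ pairwise edge-disjoint, nearly straight corridors joining two prescribed points of the box, with the endpoints absorbed by bounded detours; and (ii) the uniform bound $\E{Y_j}\le(\TC(\hat u)+\delta)m$, which rests on the convergence of the finite-width tube (or box) time constant to $\TC$ as the width grows, uniformly over the direction $\hat u\in\S_1$.
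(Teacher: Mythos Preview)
Your overall strategy is Kesten's, and the reduction plus the nearby-endpoint Chernoff bound are fine. The genuine gap is in the distant-endpoint part, specifically the claim that you can build $N_n=\grandtheta(n)$ pairwise edge-disjoint corridors $\cC_i\subseteq\intint0n^d$ \emph{all joining $u$ to $v$}, with $u,v$ reached by detours of \emph{bounded} length, so that the $\RestPT{\cC_i}(u,v)$ are independent. This is combinatorially impossible: any vertex of $\Z^d$ has degree $2d$, so at most $2d$ edge-disjoint paths can issue from $u$; hence at most $2d$ edge-disjoint subgraphs can each contain a $u$--$v$ path. Equivalently, if the corridor entrances are to lie within a bounded ball around $u$, there are only $\grando(1)$ vertices available and you cannot accommodate $\grandtheta(n)$ disjoint entrances. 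With only a bounded number of corridors, the product bound collapses to $e^{-cn}$ for a \emph{fixed} $c>0$, which does not give~\eqref{eqn : OdG}.

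The paper repairs this exactly here: it first fixes $x,y\in\mathring\X$ and for each $n$ places a \emph{fixed} number $r$ of disjoint corridors whose entrances $v_i$ sit at distance at most $\sqrt n$ from $\floor{nx}$ (so there is unlimited room for any prescribed $r$ once $n$ is large). The detours are therefore of \emph{sublinear} length $\sqrt n$, and the Chernoff bound for a sum of $\sqrt n$ edge weights exceeding $\eps n$ is $e^{-\lambda\eps n+o(n)}$, superexponentially small. The inclusion~\eqref{eqn : OdG/BigInclusion} then gives $\liminf_n-\tfrac1n\log\Pb{\cdot}\ge r\alpha_m$, and one lets $r\to\infty$ \emph{after} $n\to\infty$. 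Uniformity in $x,y$ is obtained afterwards by a finite net argument, exactly as your nearby-endpoint step. Your single-corridor estimate via subdivision and Cram\'er is a legitimate alternative to the paper's use of Talagrand's inequality (both ultimately rest on the tube time constant approaching $\TC$ as the width grows), but it only yields a fixed exponential rate; the leverage to arbitrary $M$ has to come from the number of corridors, and for that the detours must be allowed to grow sublinearly with $n$.
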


\subsection{Geodesic lengths}
\label{subsec : LDP/GeoLen}
Lemma~\ref{lem : LDP/GeoLen} essentially states that it is very atypical for the random metric $\BoxPT$ to have geodesics of length $Ln$, with large $L$. It will be the key argument in the proof of~\eqref{eqn : LDP/TightSPT/Tight}. Note that the bound~\eqref{eqn : LDP/GeoLen} is uniform over all truncations of the passage times by $b\in \intervalleff1\infty$. For all $b\in \intervalleff1\infty$, $L>0$ and $n\ge 1$, we define the event
\begin{equation}
	\label{eqn : LDP/GeoLen/Event}
	\LongGeo_n^{(b)}(L) \dpe \acc{ \begin{array}{c} \text{There exists a discrete geodesic $\sigma\subseteq \intint0n^d$ }  \\  \text{for $\BoxPT^{(b)}$, such that }\norme{\sigma}\ge L n  \end{array} }.
\end{equation}
\begin{Lemma}
	\label{lem : LDP/GeoLen}
	For all $\alpha>0$, there exists $L>0$ such that for all $b\in \intervalleff1\infty$,
	\begin{equation}
		\label{eqn : LDP/GeoLen}
		\liminf_{n\to\infty} -\frac1n \log \Pb{ \LongGeo_n^{(b)}(L) } \ge \alpha.
	\end{equation}
\end{Lemma}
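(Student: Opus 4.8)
The plan is to show that a long geodesic forces, at the prescribed exponential cost in $n$, one of two rare events: either some pair of vertices of $\intint0n^d$ has an atypically large $\BoxPT^{(b)}$‑distance, or there is an atypically long \emph{self‑avoiding} path consisting mostly of edges with small passage time — and the second event is ruled out at \emph{any} prescribed exponential rate by taking $L$ large, thanks to \eqref{ass : Intro/main_thm/SubcriticalAtom}. All estimates will be uniform in $b\in\intervalleff1\infty$, and we use that discrete geodesics are self‑avoiding.

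\textbf{Typical upper bound and reduction to a fast path.} Fix a constant $C_0$ to be chosen. For each $u,v\in\intint0n^d$ fix a monotone lattice path $P_{u,v}\subseteq\intint0n^d$ from $u$ to $v$ with at most $dn$ edges; then $\BoxPT^{(b)}(u,v)\le\tau^{(b)}(P_{u,v})\le\sum_{e\in P_{u,v}}\tau_e$, a sum of at most $dn$ i.i.d.\ copies of $\tau$. Under \eqref{ass : Intro/main_thm/exp_moment} a Chernoff bound gives $\Pb{\sum_{i=1}^{dn}\tau_i>C_0 n}\le e^{-dn\,\Lambda^*(C_0/d)}$, with $\Lambda^*$ the Cram\'er rate function of $\tau$ and $\Lambda^*(t)\to\infty$ as $t\to\infty$. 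Hence, setting $\cA_n\dpe\acc{\BoxPT^{(b)}(u,v)\le C_0 n\text{ for all }u,v\in\intint0n^d}$, a union bound over the $(n+1)^{2d}$ pairs yields $\liminf_n-\frac1n\log\Pb{\cA_n^{\mathrm c}}\ge d\,\Lambda^*(C_0/d)$, uniformly in $b$; choose $C_0$ so that this is $\ge\alpha$ (alternatively invoke Lemma~\ref{lem : OdG} with $\eps=1$ and $C_0=\BoundTC d+1$). Next, since $\nu(\acc0)<\pc$ and $\nu([0,\delta])\downarrow\nu(\acc0)$ as $\delta\downarrow0$, fix $\delta\in\intervalleoo01$ with $\nu([0,\delta])<\pc$; call $e$ \emph{fast} if $\tau_e\le\delta$. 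For $b\ge1>\delta$ a non‑fast edge has $\tau_e^{(b)}\ge\delta$, so any self‑avoiding $\pi$ with $\tau^{(b)}(\pi)\le C_0 n$ has at most $C_0 n/\delta$ non‑fast edges. Thus on $\LongGeo_n^{(b)}(L)\cap\cA_n$, if $\sigma$ is the witnessing geodesic with endpoints $u,v$ and $r\dpe\norme\sigma\ge Ln$, then $\tau^{(b)}(\sigma)=\BoxPT^{(b)}(u,v)\le C_0 n\le (C_0/(\delta L))\,r$, so $\sigma$ has at least $r/2$ fast edges as soon as $L\ge 2C_0/\delta$.

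\textbf{The percolation estimate (main obstacle).} It remains to prove that there exist $\psi_0>0$ and $C_3<\infty$, independent of $b$, such that for every vertex $w$ and every $r\ge1$,
\begin{equation*}
\Pb{\exists\ \text{self-avoiding}\ \pi\ \text{from}\ w,\ \norme\pi=r,\ \#\acc{e\in\pi:\tau_e\le\delta}\ge r/2}\le C_3\, e^{-\psi_0 r}.
\end{equation*}
This is where \eqref{ass : Intro/main_thm/SubcriticalAtom} enters: the fast edges form Bernoulli bond percolation at density $\nu([0,\delta])<\pc$, hence subcritical. A naive union bound over self‑avoiding paths fails (one only controls $\nu([0,\delta])<\pc$, not $<1/(16d^2)$), so one renormalizes: coarse‑grain $\Z^d$ into cubes of a large side $K$, declare a cube \emph{bad} if a fast path crosses its tripled copy from one face to the opposite one; by subcriticality $\Pb{\text{bad}}\to0$ as $K\to\infty$, and as badness is a finite‑range event of small probability the bad cubes are stochastically dominated by a subcritical Bernoulli site percolation. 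A self‑avoiding path accumulating $r/2$ fast edges must run along a cluster of bad cubes of size of order $r/K^d$, so a standard lattice‑animal count combined with the exponential decay of subcritical clusters yields the displayed bound, with $\psi_0$ depending only on $K$ and $\delta$. I expect assembling this renormalization cleanly — in particular verifying that the bad cubes met along $\sigma$ do form a connected cluster of the correct order — to be the main technical point; it is of the same nature as the estimates behind Lemma~\ref{lem : Intro/main_thm/LemmeCool} and Lemma~2.20 in \cite{50yFPP}, which may be cited.

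\textbf{Assembly.} Combining the reduction with the percolation estimate and summing over the at most $(n+1)^d$ starting vertices $w\in\intint0n^d$ and over $r\ge Ln$,
\begin{equation*}
\Pb{\LongGeo_n^{(b)}(L)\cap\cA_n}\le (n+1)^d\sum_{r\ge Ln}C_3\,e^{-\psi_0 r}\le C_4\,(n+1)^d e^{-\psi_0 Ln},
\end{equation*}
so $\liminf_n-\frac1n\log\Pb{\LongGeo_n^{(b)}(L)\cap\cA_n}\ge\psi_0 L$. Fix $L\dpe\max\p{\alpha/\psi_0,\ 2C_0/\delta}$. From $\Pb{\LongGeo_n^{(b)}(L)}\le\Pb{\cA_n^{\mathrm c}}+\Pb{\LongGeo_n^{(b)}(L)\cap\cA_n}$ and $-\frac1n\log(a+a')\ge\min\p{-\frac1n\log a,-\frac1n\log a'}-\frac{\log2}n$ we obtain $\liminf_n-\frac1n\log\Pb{\LongGeo_n^{(b)}(L)}\ge\min(\alpha,\psi_0 L)\ge\alpha$. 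Every estimate above is uniform in $b\in\intervalleff1\infty$, which is \eqref{eqn : LDP/GeoLen}.
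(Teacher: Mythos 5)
Your overall structure matches the paper's: decompose $\LongGeo_n^{(b)}(L)$ into the event that some pair of vertices of $\intint0n^d$ has atypically large passage time (super-exponentially unlikely by Lemma~\ref{lem : OdG}) and the event that a long self-avoiding path has small truncated passage time, show the latter decays at rate growing linearly in $L$, then take $L$ large. The reduction, the uniformity in $b\in\intervalleff1\infty$ via the observation that truncating at $b\ge1$ cannot make a non-fast edge cheap, and the final assembly are all correct.

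The genuine gap is precisely the step you flag as the main obstacle. The statement you want — that the probability a self-avoiding path of length $r$ from a fixed vertex has at least $r/2$ edges with $\tau_e\le\delta$ decays like $e^{-\psi_0 r}$ — is true, but it is exactly the content of Proposition~5.8 in Kesten's lecture notes~\cite{KestenStFlour}, which the paper simply cites (after a union bound over starting points). Your renormalization sketch does not close as written: declaring a $K$-cube bad when some fast path crosses its tripled copy face-to-face does not force a path that accumulates $\ge r/2$ fast edges to traverse order $r/K^d$ bad cubes. A self-avoiding path can snake inside a cube and pick up of order $K^d$ fast edges from small, disconnected fast clusters, none of which produces a crossing; so "many fast edges in $Q$" does not imply "$Q$ is bad" under your definition. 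Repairing this requires a finer notion of goodness (controlling the diameter of the fast clusters inside the box) plus an excursion-counting argument linking fast edges on $\pi$ to crossings — which is essentially Kesten's proof, so there is no gain in re-deriving it. Also, the analogues you point to, Lemma~\ref{lem : Intro/main_thm/LemmeCool} and Lemma~2.20 in \cite{50yFPP}, go in the \emph{opposite} direction (existence of cheap paths), so they are not a useful template here.

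Two smaller remarks. First, once the percolation estimate is available at a single threshold, you do not need to track $\psi_0$'s dependence on $K,\delta$ to get a rate proportional to $L$: the paper obtains the factor $\lfloor L\rfloor$ by the scaling observation that the event "$\exists$ self-avoiding $\pi\subseteq\intint0n^d$ with $\norme\pi\ge nL$ and $\tau^{(1)}(\pi)\le LC_1n$" is contained in the analogous event in the box $\intint0{nL}^d$ with threshold $C_1\cdot nL$, whose rate is $L\beta$. Second, your "fast edge count" reformulation is logically equivalent to the paper's bound on $\tau^{(1)}(\pi)$, but the $\tau^{(1)}$ phrasing is what lets one invoke Kesten's proposition directly.
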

\begin{proof}
	For all $L, C>0$ and $n\ge 1$ we define the event
	\begin{equation}
		\cE_n(L, C) \dpe \acc{\begin{array}{c} \text{There exists a self-avoiding discrete path $\pi\subseteq \intint0n^d$ }  \\  \text{such that }\norme{\pi}\ge nL  \text{ and } \tau^{(1)}(\pi) \le nC  \end{array}}.
	\end{equation}
	We claim that there exists $\beta>0$ such that for large enough $L$,
	\begin{equation}
		\label{eqn : LDP/GeoLen/OutputKesten}
		\liminf_{n\to\infty } -\frac1n \log \Pb{ \cE_n(L, d\BoundTC +1 )} \ge \floor L\beta,
	\end{equation}
	By Proposition~5.8 in \cite{KestenStFlour} (and a union bound on the possible starting point of a discrete path in $\intint0n^d$) there exists a constant $\Cl{GEO}>0$ such that
	\begin{equation}
		\beta \dpe \liminf_{n\to\infty}-\frac1n \log\Pb{  \cE_n(1, \Cr{GEO})  } > 0.
	\end{equation}
	Let $L \ge \frac{d\BoundTC +1}{\Cr{GEO}}$ be an integer. We have
	\begin{align*}
		\Pb{ \cE_n(L, d\BoundTC +1)} %
			&\le \Pb{ \cE_n(L, L\Cr{GEO} )}\\
			&\le \Pb{ \cE_{nL}(1, \Cr{GEO} )}.
	\end{align*}
	Consequently,
	\begin{equation*}
		- \frac1n \log \Pb{ \cE_n(L, d\BoundTC +1 )} \ge L \cdot \p{ -\frac1{nL} \log\Pb{\cE_{nL}(1, \Cr{GEO} ) } }.
	\end{equation*}
	Letting $n\to\infty$ gives~\eqref{eqn : LDP/GeoLen/OutputKesten}.
	
	By Lemma~\ref{lem : OdG},
	\begin{equation}
		\label{eqn : LDP/GeoLen/OdG}
		\lim_{n\to\infty}-\frac1n \log \sup_{x,y\in \intint0n^d} \Pb{ \BoxPT(x,y) \ge n(d\BoundTC + 1 ) } = \infty.
	\end{equation}
	By union bound and~\eqref{eqn : LDP/GeoLen/OdG},
	\begin{equation}
		\label{eqn : LDP/GeoLen/OdG_Global}
		\lim_{n\to\infty} - \frac1n \log \Pb{ \exists x,y \in \intint0n^d,\quad  \BoxPT(x,y) \ge n(d\BoundTC + 1 ) } = \infty.
	\end{equation}

	Let $L>0$ be such that~\eqref{eqn : LDP/GeoLen/OutputKesten} holds and $b\in \intervalleff1\infty$. We have the inclusion
	\begin{equation*}
		\LongGeo_n^{(b)}(L) \subseteq \acc{ \exists x,y \in \intint0n^d,\quad  \BoxPT(x,y) \ge n(d\BoundTC + 1 ) } \cup \cE_n(L, d\BoundTC+1),
	\end{equation*}
	therefore by~\eqref{eqn : LDP/GeoLen/OutputKesten} and~\eqref{eqn : LDP/GeoLen/OdG_Global},
	\begin{equation*}
		\liminf_{n\to\infty}-\frac1n \log \Pb{ \LongGeo_n^{(b)}(L) } \ge \floor L \beta,
	\end{equation*}
	which concludes the proof.
\end{proof}

\subsection{The continuous metric}
\label{subsec : LDP/ContinuousMetric}
In this section we fix $b\in \intervallefo1\infty$ and define $\TightSPT$. We prove that it is a good approximation of $\SPT^{(b)}$ and it follows the LDP with the rate function $\FdT^{(b)}$.

\begin{Definition}
	\label{def : LDP/ContinuousMetric}
	Let $n \ge 1$. We extend $\BoxPT^{(b)}$ to $\intervalleff0n^d$ as follows. For all edges $(x^-, x^+), (y^-,y^+) \in \edges{\intint0n^d}$ and all $x\in \intervalleff{x^-}{x^+}$, $y\in \intervalleff{y^-}{y^+}$, we define
	\begin{equation}
		\TightBoxPT(x,y) \dpe \min_{\substack{x' \in \acc{x^-,x^+} \\ y' \in \acc{y^-,y^+}  } } \p{ \norme[1]{x-x'}\tau_{(x^-, x^+)} + \TightBoxPT(x',y') + \norme[1]{y-y'}\tau_{(y^-, y^+)} }.
	\end{equation}
	We then define, for all $x,y\in \intervalleff0n^d$,
	\begin{equation}
		\label{eqn : LDP/ContinuousMetric/DefTightPT}
		\TightBoxPT(x,y) \dpe \p{ b\norme[1]{x-y} } \wedge  \min_{x', y'} \p{ b\norme[1]{x-x'} + \TightBoxPT(x',y') + b\norme[1]{y' - y}  },
	\end{equation}
	where the minimum is taken on all pairs of points, belonging to a pair of (maybe equal) edges in $\edges{\intint0n^d}$. For all $x,y\in \X$, we define
	\begin{equation}
		\label{eqn : LDP/ContinuousMetric/DefTightSPT}
		\TightSPT(x,y) \dpe \frac1n \TightBoxPT(nx,ny).
	\end{equation}
\end{Definition}

\begin{Lemma}
	\label{lem : LDP/ContinuousMetric}
	The process $(\TightSPT)_{n\ge 1}$ takes values in $\RegPD[b \normeUn\cdot]$ and for all $x,y \in \frac1n\intint0n^d$,
	\begin{equation}
		\label{eqn : LDP/ContinuousMetric/Extension}
		\TightSPT(x,y) = \SPT^{(b)}(x,y).
	\end{equation}
	Moreover, for all $\alpha>0$, there exists $L>0$ that does not depend on $b$, such that
	\begin{equation}
		\label{eqn : LDP/ContinuousMetric/Tightness}
		\liminf_{n\to \infty} -\frac1n \log \Pb{ \TightSPT \notin \RegPD[ b\normeUn\cdot]^L} \ge \alpha.	
	\end{equation}
\end{Lemma}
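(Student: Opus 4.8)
The plan is to establish the three assertions of the lemma in turn, the tightness estimate being the substantial one.

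I would first handle the pseudometric structure, the domination $\TightSPT(x,y)\le b\normeUn{x-y}$, and the extension identity~\eqref{eqn : LDP/ContinuousMetric/Extension} by a direct inspection of Definition~\ref{def : LDP/ContinuousMetric}. Symmetry is built in; the triangle inequality holds because $\TightBoxPT(x,y)$ is, by construction, the infimum of the costs of ``admissible descriptions'' of a route from $x$ to $y$ --- a straight segment in $\intervalleff0n^d$ priced by $b\normeUn\cdot$, a piece of an edge priced by its passage time, the discrete metric $\BoxPT^{(b)}$ between lattice points --- and this class is stable under concatenation at an intermediate point. Domination is the first term in~\eqref{eqn : LDP/ContinuousMetric/DefTightPT}. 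For~\eqref{eqn : LDP/ContinuousMetric/Extension}, fix $x,y\in\frac1n\intint0n^d$ and write $u=nx$, $v=ny$: the trivial choices $x'=u$, $y'=v$ in~\eqref{eqn : LDP/ContinuousMetric/DefTightPT} and $x''=u$, $y''=v$ in the first display of Definition~\ref{def : LDP/ContinuousMetric} give $\TightBoxPT(u,v)\le\BoxPT^{(b)}(u,v)$, while for the reverse inequality one checks that no admissible description visiting edge interiors or the bulk can beat $\BoxPT^{(b)}$ between two vertices, using that $\BoxPT^{(b)}(w,w')\le b\normeUn{w-w'}$ for all lattice $w,w'$ (a coordinate-monotone lattice path) together with the triangle inequality for $\BoxPT^{(b)}$.

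Next, for membership in $\RegPD[b\normeUn\cdot]$ and for~\eqref{eqn : LDP/ContinuousMetric/Tightness}, I would analyse $\TightSPT$-geodesics. The key structural point to prove is that a $\TightSPT$-geodesic between any two points of $\X$ may be taken of the form: a straight bulk segment from $x$ to a point on an edge, a piece of that edge to a lattice point, a (rescaled) self-avoiding discrete $\BoxPT^{(b)}$-geodesic between two lattice points of $\intint0n^d$, a piece of an edge, and a final straight bulk segment to $y$; when one of the first two terms in~\eqref{eqn : LDP/ContinuousMetric/DefTightPT} is attained this collapses accordingly (e.g.\ to the straight segment $[x,y]$ when the $b\normeUn\cdot$ term wins). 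One must verify that each piece is a $\TightSPT$-geodesic between its own endpoints and that the $\TightSPT$-lengths add up along the concatenation to $\TightSPT(x,y)$, so that the concatenation is a $\TightSPT$-geodesic in the strict sense of~\eqref{eqn : Intro/DefDLength} and Definition~\ref{def : Intro/main_thm/adm_metrics}. Granting this, the $\normeUn$-length of the geodesic is at most $2d + \norme\pi/n + 2/n$, where $d$ bounds the $\normeUn$-diameter of $\X$ (absorbing the two bulk segments after dividing by $n$), $2/n$ accounts for the two edge pieces, and $\pi$ is the discrete geodesic; since $\pi$ may be taken self-avoiding, $\norme\pi\le(n+1)^d$, which already gives $\TightSPT\in\RegPD[b\normeUn\cdot]$.

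For~\eqref{eqn : LDP/ContinuousMetric/Tightness} I would then refine this on the complement of the event $\LongGeo_n^{(b)}(L_0)$ of~\eqref{eqn : LDP/GeoLen/Event}: there, every discrete $\BoxPT^{(b)}$-geodesic between lattice points of $\intint0n^d$ has fewer than $L_0 n$ edges, so the construction above produces, for every $x,y\in\X$, a $\TightSPT$-geodesic of $\normeUn$-length at most $2d + L_0 + 2/n \le L_0 + 2d + 2$; setting $L\dpe L_0+2d+2$ we obtain $\{\TightSPT\notin\RegPD[b\normeUn\cdot]^L\}\subseteq\LongGeo_n^{(b)}(L_0)$. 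Given $\alpha>0$, Lemma~\ref{lem : LDP/GeoLen} furnishes an $L_0$ --- crucially independent of $b$ --- with $\liminf_{n\to\infty}-\frac1n\log\Pb{\LongGeo_n^{(b)}(L_0)}\ge\alpha$, and then $L$ is independent of $b$ and satisfies~\eqref{eqn : LDP/ContinuousMetric/Tightness}. The main obstacle is the geodesic analysis of the previous paragraph: confirming the claimed decomposition of a shortest path and, above all, that the concatenation is a genuine $\TightSPT$-geodesic, which forces one to control precisely how the bulk $b\normeUn\cdot$-metric, the edge passage times and $\BoxPT^{(b)}$ fit together.
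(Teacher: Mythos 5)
Your proof is correct and follows the same route as the paper: after the routine verification of the pseudometric properties, the domination by $b\normeUn\cdot$, and the extension identity, you use the same five-piece decomposition of a $\TightSPT$-geodesic (bulk segment, edge piece, rescaled discrete geodesic, edge piece, bulk segment), bound its $\normeUn$-length by $2d+\norme{\sigma_3}/n+2/n$, and appeal to Lemma~\ref{lem : LDP/GeoLen} via the inclusion $\{\TightSPT\notin\RegPD[b\normeUn\cdot]^L\}\subseteq\LongGeo_n^{(b)}(L_0)$ with an $L$ independent of $b$. The one step the paper asserts as immediate and you flag as the "main obstacle"---that the concatenation is a genuine $\TightSPT$-geodesic with additive lengths---is indeed the point requiring care, but your plan to verify it directly from Definition~\ref{def : LDP/ContinuousMetric} is exactly what the paper does.
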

\begin{proof}
	Let $n\ge 1$. It is clear that $\TightSPT$ is a pseudometric on $\X$, is upper bounded by $b\norme[1]\cdot$ and satisfies~\eqref{eqn : LDP/ContinuousMetric/Extension}. Let $x,y\in \intervalleff0n^d$. It follows from the definition of $\TightBoxPT$ that there exists a $\TightBoxPT$-geodesic $\sigma$ from $nx$ to $ny$ that admits the decomposition
	\begin{equation}
		nx\Path{\sigma_1} x' \Path{\sigma_2} x'' \Path{\sigma_3} y'' \Path{\sigma_4} y' \Path{\sigma_5} ny,
	\end{equation}
	where:
	\begin{itemize}
		\item the points $x'$ and $y'$ belong to edges in $\edges{\intint0n^d}$, and the points $x'', y''$ belong to $\intint0n^d$,
		\item the paths $\sigma_1$, $\sigma_2$, $\sigma_4$ and $\sigma_5$ are straight lines,
		\item the path $\sigma_3$ is a discrete geodesic for $\TightBoxPT$ between $x''$ and $y''$
	\end{itemize}
	(some $\sigma_i$ may be trivial).
	Moreover, $\sigma_2$ and $\sigma_4$ are subsets of edges in $\edges{\intint0n^d}$, thus
	\begin{equation}
		\norme[1]{\sigma} \le \norme{\sigma_3} + 2dn + 2. 	
	\end{equation}
	Fix $\alpha>0$ and let $L>0$ be the number provided by Lemma~\ref{eqn : LDP/GeoLen}. Since $t\mapsto \frac1t\sigma(t)$ is a $\TightSPT$-geodesic, we have
	\begin{equation*}
		\liminf_{n\to \infty} -\frac1n \log \Pb{ \TightSPT \notin \RegPD[ b\normeUn\cdot]^{L+2d + 2/n} } \ge \alpha,	
	\end{equation*}
	thus
	\begin{equation*}
		\liminf_{n\to \infty} -\frac1n \log \Pb{ \TightSPT \notin \RegPD[ b\normeUn\cdot]^{L+2d +1} } \ge \alpha,	
	\end{equation*}
	which concludes the proof.
\end{proof}
\begin{Lemma}
	\label{lem : LDP/ContinuousMetric/ExpEquiv}
	For all $n\ge 1$, almost surely
	\begin{equation}
		\label{eqn : LDP/ContinuousMetric/ExpEquiv}
		\UnifDistance\p{\SPT^{(b)}, \TightSPT} \le \frac{2b d}{n}.
	\end{equation}
	In particular, replacing $\SPT^{(b)}$ by $\TightSPT$ in the definition of $\FdTsup^{(b)}$ (see~\eqref{eqn : Intro/def_FdTsup}), $\FdTinf^{(b)}$ (see~\eqref{eqn : Intro/def_FdTinf}) and $\FdT^{-, (b)}$ (see~\eqref{eqn : Intro/main_thm/def_FdTmonsup} and~\eqref{eqn : Intro/main_thm/def_FdTmoninf} defines the same functions. 
\end{Lemma}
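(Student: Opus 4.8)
The plan is to deduce the deterministic estimate~\eqref{eqn : LDP/ContinuousMetric/ExpEquiv} from the three facts about $\TightSPT$ already recorded in Lemma~\ref{lem : LDP/ContinuousMetric}: that $\TightSPT$ is a pseudometric on $\X$, that it is dominated by $b\normeUn\cdot$, and that it coincides with $\SPT^{(b)}$ on $\frac1n\intint0n^d$. Fix $n\ge1$ and $x,y\in\X$, and set $\bar x\dpe \frac1n\floor{nx}$ and $\bar y\dpe\frac1n\floor{ny}$, which lie in $\frac1n\intint0n^d$. Since $nx-\floor{nx}$ has all components in $\intervallefo01$, we have $\normeUn{x-\bar x}=\frac1n\normeUn{nx-\floor{nx}}\le\frac dn$, and likewise $\normeUn{y-\bar y}\le\frac dn$. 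Moreover $n\bar x=\floor{nx}\in\Z^d$, so $\floor{n\bar x}=\floor{nx}$ and similarly $\floor{n\bar y}=\floor{ny}$; hence, by the definition of $\SPT^{(b)}$, one has $\SPT^{(b)}(x,y)=\SPT^{(b)}(\bar x,\bar y)$, which in turn equals $\TightSPT(\bar x,\bar y)$ by~\eqref{eqn : LDP/ContinuousMetric/Extension}.

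Applying the triangle inequality for the pseudometric $\TightSPT$ and the domination $\TightSPT\le b\normeUn\cdot$, I then get
\begin{align*}
	\TightSPT(x,y) &\le \TightSPT(x,\bar x)+\TightSPT(\bar x,\bar y)+\TightSPT(\bar y,y) \le \SPT^{(b)}(x,y)+\frac{2bd}{n},\\
	\SPT^{(b)}(x,y) &= \TightSPT(\bar x,\bar y) \le \TightSPT(\bar x,x)+\TightSPT(x,y)+\TightSPT(y,\bar y) \le \TightSPT(x,y)+\frac{2bd}{n}.
\end{align*}
Thus $\module{\SPT^{(b)}(x,y)-\TightSPT(x,y)}\le\frac{2bd}{n}$ for all $x,y\in\X$, and taking the maximum over $\X^2$ yields~\eqref{eqn : LDP/ContinuousMetric/ExpEquiv}.

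For the last assertion, write $\delta_n\dpe\frac{2bd}{n}$ and fix $D\in\BoundedF$ and $\eps>0$. By~\eqref{eqn : LDP/ContinuousMetric/ExpEquiv}, for every $n$ with $\delta_n<\eps$,
\[ \acc{\UnifDistance(\SPT^{(b)},D)\le\eps-\delta_n}\subseteq\acc{\UnifDistance(\TightSPT,D)\le\eps}\subseteq\acc{\UnifDistance(\SPT^{(b)},D)\le\eps+\delta_n}, \]
and, because~\eqref{eqn : LDP/ContinuousMetric/ExpEquiv} is a pointwise bound, the analogous inclusions hold with the events $\LD_n^-(D,\cdot)$ in place of the uniform balls. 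Passing to probabilities, then to $-\frac1n\log(\cdot)$ (which reverses the inclusions), then taking $\limsup_{n\to\infty}$ and $\liminf_{n\to\infty}$ — after bounding $\eps-\delta_n$ below by a fixed $\eps'<\eps$ and $\eps+\delta_n$ above by a fixed $\eps''>\eps$, which is legitimate for $n$ large — and finally letting $\eps\to0$ with $\eps',\eps''\to0$, one obtains that replacing $\SPT^{(b)}$ by $\TightSPT$ leaves $\FdTsup^{(b)}$, $\FdTinf^{(b)}$ and $\FdT^{-,(b)}$ unchanged. The only point requiring a little care in the whole argument is respecting the order of limits (first $n\to\infty$, then $\eps\to0$) in this squeezing step; apart from that, given Lemma~\ref{lem : LDP/ContinuousMetric}, the proof is immediate.
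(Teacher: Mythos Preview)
Your proof is correct and follows exactly the same approach as the paper's one-line argument: the two pseudometrics agree on $\p{\frac1n\intint0n^d}^2$, and the bound follows from the triangle inequality for $\TightSPT$ together with the domination $\TightSPT\le b\normeUn\cdot$. You have simply written out in full the details the paper leaves implicit, including the routine squeezing argument for the rate functions.
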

\begin{proof}
	The pseudometrics $\SPT^{(b)}$ and $\TightSPT$ coincide on $\p{ \frac 1n\intint0n^d }^2$, thus the almost sure bound follows by triangle inequality.
\end{proof}
\begin{Lemma}
	\label{lem : LDP/ContinuousMetric/LDP}
	The process $(\TightSPT)_{n\ge 1}$ follows the LDP with the good rate function
	\begin{align}
		\FdT^{(b)} : \BoundedF &\longrightarrow \intervalleff0\infty \eol
			D &\longmapsto \begin{cases} \infty \quad &\text{if } D\notin \RegPD[\TC^{(b)}], \\ \FdT^{-, (b)}(D) \quad &\text{if } D\in \RegPD[\TC^{(b)}]. \end{cases}
	\end{align}
\end{Lemma}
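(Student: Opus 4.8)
The plan is to combine the generic large‑deviation tool of Lemma~\ref{lem : Intro/sketch/UB_LB} with the monotone estimate of Theorem~\ref{thm : Intro/FdTmon}, both read off for the truncated weights $\tau_e^{(b)}=\tau_e\wedge b$ (which are bounded and have the same atom at $0$ as $\nu$, so all standing assumptions, and in particular~\eqref{ass : Intro/main_thm/exp_moment}, hold for them). I would apply Lemma~\ref{lem : Intro/sketch/UB_LB} with $\bbX=(\BoundedF,\UnifDistance)$ and $X_n=\TightSPT$; by Lemma~\ref{lem : LDP/ContinuousMetric/ExpEquiv} its functions $\overline I$, $\underline I$ are exactly $\FdTsup^{(b)}$, $\FdTinf^{(b)}$, and that same lemma lets one replace $\SPT^{(b)}$ by $\TightSPT$ when invoking Theorem~\ref{thm : Intro/FdTmon}. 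As $(\TightSPT)_{n\ge1}$ is exponentially tight — by Lemma~\ref{lem : LDP/ContinuousMetric} together with the compactness of the sets $\RegPD[b\normeUn\cdot]^L$ (Proposition~\ref{prop : Topology/RegPD/Compact}) — it suffices, by the standard upgrading of a weak LDP to a full LDP with a good rate function (see \cite{LDTA}), to establish the weak LDP, i.e.\ that $\FdTsup^{(b)}=\FdTinf^{(b)}=\FdT^{(b)}$ on $\BoundedF$; that they are rate functions is Lemma~\ref{lem : Intro/sketch/UB_LB}\eqref{item : Intro/sketch/UB_LB/rate_function}.

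I would first dispose of $D\notin\RegPD[\TC^{(b)}]$, showing $\FdTinf^{(b)}(D)=\infty$ (hence $\FdTsup^{(b)}(D)=\infty=\FdT^{(b)}(D)$). If $\FdTinf^{(b)}(D)<\infty$, pick $\alpha>2\FdTinf^{(b)}(D)$ and $L=L(\alpha)$ from Lemma~\ref{lem : LDP/ContinuousMetric}: along a subsequence $\Pb{\UnifDistance(\TightSPT,D)\le\eps}$ exceeds $\Pb{\TightSPT\notin\RegPD[b\normeUn\cdot]^L}$, so $D$ lies within $\eps$ of $\RegPD[b\normeUn\cdot]^L$ for every $\eps$, whence $D\in\RegPD[b\normeUn\cdot]$ by compactness. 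But $D\in\RegPD[b\normeUn\cdot]\setminus\RegPD[\TC^{(b)}]$ means $D(x_0,y_0)>\TC^{(b)}(x_0-y_0)$ for some $x_0,y_0$, so for small $\eps$ the event $\{\UnifDistance(\TightSPT,D)\le\eps\}$ forces $\SPT^{(b)}$ between suitable nearby rational points to exceed $\TC^{(b)}+\delta$ for some $\delta>0$ (using $\UnifDistance(\TightSPT,\SPT^{(b)})\le 2bd/n$ and the uniform Lipschitz bound $\TightSPT\le b\normeUn\cdot$), an event whose probability decays faster than every $e^{-Cn}$ by Lemma~\ref{lem : OdG} applied to $\nu^{(b)}$; this contradicts $\FdTinf^{(b)}(D)<\infty$.

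It then remains to prove, for $D\in\RegPD[\TC^{(b)}]$, that $\FdTinf^{(b)}(D)=\FdTsup^{(b)}(D)=\FdT^{-,(b)}(D)$. The bound $\FdTsup^{(b)}\ge\FdTinf^{(b)}\ge\FdT^{-,(b)}$ is immediate from $\LD_n(D,\eps)\subseteq\LD_n^-(D,\eps)$ and Theorem~\ref{thm : Intro/FdTmon}. For the reverse inequality I may assume $\FdT^{-,(b)}(D)<\infty$ and must lower bound $\Pb{\LD_n(D,\eps)}$. Taking a finite $\rho$-net $N$ of $\X^2$ and using that $\TightSPT$ is uniformly Lipschitz, one has $\LD_n(D,\eps)\supseteq\LD_n^-(D,\eps/10)\setminus\bigcup_{(x,y)\in N}\acc{\TightSPT(x,y)\le D(x,y)-\eps/2}$, hence
\[
\Pb{\LD_n(D,\eps)}\ \ge\ \Pb{\LD_n^-(D,\eps/10)}-\sum_{(x,y)\in N}\Pb{\LD_n^-(D,\eps/10)\cap\acc{\TightSPT(x,y)\le D(x,y)-\eps/2}}.
\]
The first term is at least $\exp(-n(\FdT^{-,(b)}(D)+o_n(1)))$ by Theorem~\ref{thm : Intro/FdTmon}. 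For each summand: on that event there is a fast discrete path realizing $\TightSPT(x,y)\le D(x,y)-\eps/2$ which — discarding the exponentially unlikely event that $\BoxPT^{(b)}$ has long geodesics (Lemma~\ref{lem : LDP/GeoLen}) — has $\normeUn\cdot$-length $O(n)$, so at scale $\eps'n$ it follows one of finitely many deterministic polygonal paths, with one of finitely many coarse-grained passage-time profiles along it. Each such choice produces an auxiliary pseudometric $D'\in\RegPD[\TC^{(b)}]$, namely $D$ with the local cost discounted along that polygonal path according to the profile, satisfying $D'\le D$, $D'(x,y)<D(x,y)$, and with the corresponding sub-event contained in $\LD_n^-(D',C\eps')$. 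Since $D'\ne D$ and $\FdT^{-,(b)}(D)<\infty$, Proposition~\ref{prop : FdTmon/StrMon} gives $\FdT^{-,(b)}(D')>\FdT^{-,(b)}(D)$; there are finitely many such $D'$, so $c:=\min_{D'}\FdT^{-,(b)}(D')-\FdT^{-,(b)}(D)>0$, and choosing $\eps'$ small Theorem~\ref{thm : Intro/FdTmon} yields $\Pb{\LD_n^-(D',C\eps')}\le\exp(-n(\FdT^{-,(b)}(D)+c-o_n(1)))$. Summing over the finite net, this correction is exponentially negligible compared with the first term, so $\limsup_n-\frac1n\log\Pb{\LD_n(D,\eps)}\le\FdT^{-,(b)}(D)$ for every $\eps$, hence $\FdTsup^{(b)}(D)\le\FdT^{-,(b)}(D)$. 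This last step is where the real difficulty lies — everything preceding it (exponential tightness, the generic bounds of Lemma~\ref{lem : Intro/sketch/UB_LB}, and the super-exponential decay at speed $n$ of the upper tail of $\SPT^{(b)}$) being routine — and the delicate point is the bookkeeping of the auxiliary metrics $D'$: that the finitely many discounted-corridor pseudometrics attached to the coarse path/profile data truly belong to $\RegPD[\TC^{(b)}]$, so that Proposition~\ref{prop : FdTmon/StrMon} and Theorem~\ref{thm : Intro/FdTmon} apply, and truly absorb the corresponding sub-events up to an $O(\eps')$ error.
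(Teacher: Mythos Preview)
Your overall structure—weak LDP via Lemma~\ref{lem : Intro/sketch/UB_LB}, then upgrade by exponential tightness—and your treatment of the cases $D\notin\RegPD[b\normeUn\cdot]$ and $D\in\RegPD[b\normeUn\cdot]\setminus\RegPD[\TC^{(b)}]$ coincide with the paper's. The divergence, and the only real content, is in Case~3 ($D\in\RegPD[\TC^{(b)}]$), where you aim to show $\FdTsup^{(b)}(D)\le\FdT^{-,(b)}(D)$.

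Your Case~3 argument has a genuine circularity. You coarse-grain at scale $\eps'$, obtain a finite family of discounted metrics $D'$ depending on $\eps'$, define $c=\min_{D'}\FdT^{-,(b)}(D')-\FdT^{-,(b)}(D)>0$, and then ``choose $\eps'$ small'' so that $\Pb{\LD_n^-(D',C\eps')}\lesssim e^{-n(\FdT^{-,(b)}(D')-o(1))}$. But $c$ already depends on $\eps'$, and the bound on $\Pb{\LD_n^-(D',C\eps')}$ from Theorem~\ref{thm : Intro/FdTmon} is only available in the limit $\eps'\to0$ \emph{for a fixed $D'$}; you have not shown that the required smallness of $\eps'$ can be chosen uniformly over a family of $D'$ that itself varies with $\eps'$. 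The additional bookkeeping you flag—that the discounted-corridor $D'$ actually lie in $\RegPD[\TC^{(b)}]$ and that the sub-events are truly contained in $\LD_n^-(D',C\eps')$—is also left open.

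The paper avoids all of this by a direct compactness argument that you have all the ingredients for but do not use. One fixes $L$ with~\eqref{eqn : LDP/ContinuousMetric/Tightness}, sets
\[
K_{\eps,\delta}\dpe\set{D'\in\RegPD[b\normeUn\cdot]^L}{D'\le D+\delta,\ \exists\,x,y:\ D'(x,y)\le D(x,y)-\eps},
\]
and observes that on $\widetilde\LD_n^{-,(b)}(D,\delta)\setminus\widetilde\LD_n^{(b)}(D,\eps)$ one has $\TightSPT\in K_{\eps,\delta}$ (up to the exponentially negligible event $\TightSPT\notin\RegPD[b\normeUn\cdot]^L$). Now the already established inequality $\FdTinf^{(b)}\ge\FdT^{-,(b)}$ and Lemma~\ref{lem : Intro/sketch/UB_LB}\eqref{item : Intro/sketch/UB_LB/LB} give
\[
\liminf_n-\tfrac1n\log\Pb{\TightSPT\in K_{\eps,\delta}}\ \ge\ \min_{D'\in K_{\eps,\delta}}\FdT^{-,(b)}(D'),
\]
and letting $\delta\to0$ together with lower semicontinuity yields the right-hand side $\ge\min_{D'\in K_{\eps,0}}\FdT^{-,(b)}(D')$, which exceeds $\FdT^{-,(b)}(D)$ by Proposition~\ref{prop : FdTmon/StrMon} (note $D'\le D\le\TC^{(b)}$ forces $D'\in\RegPD[\TC^{(b)}]$ automatically). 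Hence $\Pb{\widetilde\LD_n^{-,(b)}(D,\delta)}$ and $\Pb{\widetilde\LD_n^{(b)}(D,\eps)}$ agree at the exponential scale for small $\delta$. No auxiliary metrics, no coarse-graining, and no uniformity in $\eps'$ are needed: the compact-set lower bound of Lemma~\ref{lem : Intro/sketch/UB_LB} does the work in one stroke. If you try to repair the circularity in your scheme by noting that all your $D'$ land in a fixed compact set independent of $\eps'$, you are led exactly to this argument—so the coarse-graining detour buys nothing.
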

\begin{proof}
	Let $D\in \BoundedF$. We claim that $(\TightSPT)_{n\ge 1}$ follows the weak LDP with the rate function $\FdT^{(b)}$, i.e.\
	\begin{equation}
		\label{eqn : LDP/ContinuousMetric/LDP/weakLDP}
		\FdTinf^{(b)}(D) = \FdTsup^{(b)}(D) = \FdT^{(b)}(D).
	\end{equation}
	For all $\eps>0$, we define the events
	\begin{align}
		\widetilde{\LD}_n^{(b)}(D,\eps) &\dpe \acc{\UnifDistance\p{\TightSPT , D}\le \eps }
		\intertext{and}
		\widetilde{\LD}_n^{-, (b)}(D,\eps) &\dpe \acc{\forall x,y\in X, \quad \TightSPT\p{x,y} \le D(x,y) + \eps },
	\end{align}
	i.e.\ the analogues of $\LD_n(D,\eps)$ and $\LD_n^-(D,\eps)$ with $\SPT$ replaced by $\TightSPT$ (see~\eqref{eqn : Intro/def_LDn-} and~\eqref{eqn : Intro/def_LDn}). We treat three cases differently.

	\emph{Case 1:} Assume that $D \notin \RegPD[b \normeUn\cdot]$. Let $L>0$. Since $\RegPD[b \normeUn\cdot]^L$ is compact, there exists $\eps >0$ such that
	\begin{equation*}
		\widetilde\LD_n^{(b)}(D,\eps) \subseteq \acc{ \TightSPT \notin \RegPD[ b\normeUn\cdot]^{L} }.
	\end{equation*}
	In particular,
	\begin{equation*}
		\FdTinf^{(b)}(D) \ge \sup_{L>0} \liminf_{n\to \infty} -\frac1n \log \Pb{ \TightSPT \notin \RegPD[ b\normeUn\cdot]^L}.
	\end{equation*}
	By~\eqref{eqn : LDP/ContinuousMetric/Tightness}, $\FdTinf^{(b)}(D) = \infty$, thus~\eqref{eqn : LDP/ContinuousMetric/LDP/weakLDP}.

	\emph{Case 2:} Assume that $D\in \RegPD[b \normeUn\cdot] \setminus \RegPD[\TC^{(b)}]$. Then there exists $x,y\in \X$ such that $D(x,y) > \TC^{(b)}(x-y)$. There exists $\eps>0$ be such that
	\begin{equation*}
		\widetilde \LD_n(D,\eps) \subseteq \acc{ \TightSPT(x,y) \ge \TC^{(b)}(x-y) + \eps }.
	\end{equation*}
	In particular, by Lemma~\ref{lem : OdG}, $\FdTinf^{(b)}(D) =\infty$, thus~\eqref{eqn : LDP/ContinuousMetric/LDP/weakLDP}.

	\emph{Case 3:} Assume that $D\in\RegPD[\TC^{(b)}]$. The inequality $\FdTinf^{(b)}(D) \ge \FdT^{-,(b)}(D) = \FdT^{(b)}(D)$ is clear. If $\FdT^{-,(b)}(D) =\infty$, then~\eqref{eqn : LDP/ContinuousMetric/LDP/weakLDP} is proven. Otherwise by Lemma~\ref{lem : LDP/ContinuousMetric} there exists $L>0$ such that
	\begin{equation}
		\label{eqn : LDP/ContinuousMetric/LDP/CasDur1}
		\liminf_{n\to \infty} -\frac1n \log \Pb{ \TightSPT \notin \RegPD[ b\normeUn\cdot]^L} > \FdT^{-, (b)}(D) .	
	\end{equation}
	Fix $\eps>0$. Let $\delta \in \intervalleoo0\eps$. Consider the compact sets
	\begin{align*}
		K_\eps &\dpe \set{ D' \in \RegPD[b\normeUn\cdot]^L }{D' \le D,\text{ and there exists $x,y\in \X$ such that } D'(x,y) \le D(x,y)-\eps  }
		\intertext{and}
		K_{\eps, \delta} &\dpe \set{ D' \in \RegPD[b\normeUn\cdot]^L }{D' \le D+\delta,\text{ and there exists $x,y\in \X$ such that } D'(x,y) \le D(x,y)-\eps  }
	\end{align*}
	By Proposition~\ref{prop : FdTmon/StrMon}, compactness and lower semicontinuity of $\FdT^{-,(b)}$,
	\begin{equation}
		\label{eqn : LDP/ContinuousMetric/LDP/CasDur2}
		\min_{D'\in K_\eps} \FdT^{-, (b)}(D') > \FdT^{-,(b)}(D).
	\end{equation}
	Note that by~\eqref{eqn : Intro/sketch/UB_LB/LB},
	\begin{align}
		\liminf_{n\to\infty} -\frac1n\log \Pb{\TightSPT \in K_{\eps, \delta} } %
			&\ge \min_{D'\in K_{\eps, \delta}} \FdTinf^{(b)}(D')\eol
			&\ge \min_{D'\in K_{\eps, \delta}} \FdT^{-,(b)}(D').\nonumber
		\intertext{Using compactness and lower semicontinuity again yields}
		\inclim{\delta \to 0} \liminf_{n\to\infty} -\frac1n\log \Pb{\TightSPT \in K_{\eps, \delta} }
			&\ge \min_{D'\in K_{\eps}} \FdT^{-,(b)}(D').\nonumber
		\intertext{Consequently, by~\eqref{eqn : LDP/ContinuousMetric/LDP/CasDur2}, for small enough $\delta \in \intervalleoo0\eps$,}
		\liminf_{n\to\infty} -\frac1n\log \Pb{\TightSPT \in K_{\eps, \delta} }
			&> \FdT^{-,(b)}(D).\nonumber
	\end{align}
	This implies that for small enough $\delta \in \intervalleoo0\eps$,
	\begin{align*}
		\limsup_{n \to \infty}-\frac1n \log \Pb{\widetilde \LD_n^-(D,\delta)}%
			&= \limsup_{n \to \infty}-\frac1n \log \Pb{\widetilde \LD_n^-(D,\delta) \cap \acc{\TightSPT \notin K_{\eps, \delta} }  }\\
			&\ge \limsup_{n \to \infty}-\frac1n \log \Pb{ \widetilde \LD_n(D,\eps)  }.
	\end{align*}
	Letting $\delta \to 0$ then $\eps \to 0$ leads to $\FdT^{-,(b)}(D) \ge \FdT^{(b)}(D) $, which concludes the proof of~\eqref{eqn : LDP/ContinuousMetric/LDP/weakLDP}.

	Equation~\eqref{eqn : LDP/ContinuousMetric/LDP/weakLDP} and Lemma~\ref{lem : Intro/sketch/UB_LB} imply that $(\TightSPT)_{n\ge 1}$ follows the weak LDP, with the rate function $\FdT^{(b)}$. Furthermore, Proposition~\ref{prop : Topology/RegPD/Compact} and~\eqref{eqn : LDP/ContinuousMetric/Tightness} implies that $(\TightSPT)_{n\ge 1}$ is exponentially tight, meaning that for all $\alpha>0$, there exists a compact $K\subseteq \BoundedF$ such that
	\begin{equation}
		\liminf_{n\to \infty} -\frac1n \log \Pb{\TightSPT \notin K } > \infty.
	\end{equation}
	Consequently, by Lemma~1.2.18 in \cite{LDTA}, the process $(\TightSPT)_{n\ge1}$ follows the LDP with the good rate function $\FdT^{(b)}$.
\end{proof}
\subsection{LDP for the classic passage time}
\label{subsec : LDP/Classic}
We now prove that under the Assumptions~\eqref{ass : Intro/main_thm/SubcriticalAtom} and~\eqref{ass : Intro/main_thm/exp_moment}, $(\SPT)_{n \ge 1}$ follows the LDP with the rate function $\FdT$. For the truncated rescaled metric $\SPT^{(b)}$, it is a consequence of Lemmas~\ref{lem : LDP/ContinuousMetric/ExpEquiv} and~\ref{lem : LDP/ContinuousMetric/LDP}. For the untruncated one, we use a general large deviation theory result providing a LDP for a process whenever it is in some sense the limit of processes which all follow LDPs. Definition~\ref{def : LDP/ExpEquiv} and Theorem~\ref{thm : LDP/ExpEquiv_LDP} are a reformulation of Definitions~4.2.10, 4.2.14 and Theorems~4.2.13, 4.2.16 in \cite{LDTA} for our framework.
\begin{Definition}
	\label{def : LDP/ExpEquiv}
	Let $(\bbX, \d_\bbX)$ be a metric space. All the random variables mentioned in this definition are assumed to be defined on the probability space.
	\begin{enumerate}[(i)]
		\item We say that two processes $(X_n)_{n\ge 1}$ and $(Y_n)_{n\ge 1}$ on $\bbX$ are \emph{exponentially equivalent} if for all $\eps>0$ and $n\ge 1$, the event $\acc{ \d_\bbX\p{X_n, Y_n } \ge \eps  }$ is measurable, and with fixed $\eps>0$, \begin{equation}
			\lim_{n\to \infty} -\frac1n \log\Pb{ \d_\bbX\p{X_n, Y_n } \ge \eps  } = \infty.
		\end{equation}
		\item We say that a family of processes $\p{ (X_n^{(b)})_{n \ge 1} , b\in\intervallefo0\infty }$ on $\bbX$ is an \emph{exponentially good approximation} of $\p{X_n}_{n\ge 1}$ as $b\to \infty$ if for all $\eps>0$, $b\ge 1$ and $n\ge 1$, the event $\acc{ \d_\bbX\p{X_n^{(b)}, X_n } \ge \eps  }$ is measurable, and with fixed $\eps>0$,
		\begin{equation}
			\lim_{b\to \infty} \liminf_{n\to\infty} -\frac1n \log\Pb{ \d_\bbX\p{X_n^{(b)}, X_n } \ge \eps  } = \infty.
		\end{equation}
	\end{enumerate}
\end{Definition}
\begin{Theorem}
	\label{thm : LDP/ExpEquiv_LDP}
	Let $(\bbX, \d_\bbX)$ be a metric space.
	\begin{enumerate}[(i)]
		\item If the processes $(X_n)_{n\ge 1}$ and $(Y_n)_{n\ge 1}$ on $\bbX$ are exponentially equivalent and $(X_n)_{n\ge 1}$ follows a LDP with a good rate function, then $(Y_n)_{n\ge 1}$ also follows a LDP, with the same rate function.
		\item \label{item : LDP/ExpEquiv_LDP/EGA}Assume that the processes $(X_n^{(b)})_{n \ge 1}$ on $\bbX$ are exponentially good approximations of $\p{X_n}_{n\ge 1}$ as $b\to \infty$, and for all $b \ge 1$, $(X_n^{(b)})_{n \ge 1}$ follows the LDP with a rate function $I^{(b)}$. Then $(X_n)_{n\ge 1}$ follows the weak LDP with the rate function 
		\begin{equation}
			I : x \longmapsto \inclim{\eps \to 0} \liminf_{b\to\infty} \inf_{\hat x \in \ball[\bbX]{x, \eps} } I^{(b)}(\hat x). 	
		\end{equation}
		If furthermore $I$ is a good rate function and satisfies, for all closed sets $F$,
		\begin{equation}
			\inf_{x\in F}I(x) \le \limsup_{b\to\infty} \inf_{x\in F} I^{(b)}(x),
		\end{equation}
		then $(X_n)_{n\ge 1}$ follows the LDP with the rate function $I$.
	\end{enumerate}
\end{Theorem}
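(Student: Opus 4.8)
The plan is to obtain both items as immediate transcriptions of the general large deviation machinery of Dembo--Zeitouni. Once Definition~\ref{def : LDP/ExpEquiv} is unwound, ``exponentially equivalent'' is precisely \cite{LDTA}, Definition~4.2.10 at speed $a_n=n$, and ``exponentially good approximation'' is \cite{LDTA}, Definition~4.2.14; the measurability requirements we impose are exactly those demanded there. Item~(i) is then \cite{LDTA}, Theorem~4.2.13, and item~(ii) is \cite{LDTA}, Theorem~4.2.16, with the weak LDP coming from part~(a) and the upgrade to the full LDP from part~(b), our closed-set inequality being their condition~(4.2.24). No hypothesis of our statement is weaker than theirs, so nothing new has to be proven; it remains only to recall the mechanism.

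For~(i): suppose $(X_n)_{n\ge1}$ satisfies the LDP with good rate function $I$. Fix a closed set $F$ and $\eps>0$, and set $F^\eps\dpe\set{x\in\bbX}{\d_\bbX(x,F)<\eps}$. From the inclusion $\acc{Y_n\in F}\subseteq\acc{X_n\in\overline{F^\eps}}\cup\acc{\d_\bbX(X_n,Y_n)\ge\eps}$, the elementary bound $-\tfrac1n\log(u_n+v_n)\ge\min\p{-\tfrac1n\log u_n,-\tfrac1n\log v_n}-\tfrac{\log2}{n}$, the LDP upper bound for $(X_n)$, and $-\tfrac1n\log\Pb{\d_\bbX(X_n,Y_n)\ge\eps}\to\infty$, one gets
\begin{equation*}
	\liminf_{n\to\infty}-\frac1n\log\Pb{Y_n\in F}\ge\inf_{\overline{F^\eps}}I.
\end{equation*}
Since $\bigcap_{\eps>0}\overline{F^\eps}=F$ and $I$ is good, letting $\eps\to0$ gives $\liminf_n-\tfrac1n\log\Pb{Y_n\in F}\ge\inf_FI$. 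The lower bound is the mirror argument with the inner neighbourhood $G_\eps\dpe\set{x\in\bbX}{\ball[\bbX]{x,\eps}\subseteq G}$: from $\acc{X_n\in G_\eps}\subseteq\acc{Y_n\in G}\cup\acc{\d_\bbX(X_n,Y_n)\ge\eps}$, the LDP lower bound for $(X_n)$, and $\bigcup_{\eps>0}G_\eps=G$, one obtains $\limsup_n-\tfrac1n\log\Pb{Y_n\in G}\le\inf_GI$; here goodness is not needed. Together these are the LDP for $(Y_n)$.

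Item~(ii) runs on the same two-event trick, but with $X_n^{(b)}$ in place of $X_n$ and the limit $b\to\infty$ taken after $n\to\infty$ so as to kill the error term $\liminf_n-\tfrac1n\log\Pb{\d_\bbX(X_n^{(b)},X_n)\ge\eps}$. Because the LDP upper bound for $(X_n^{(b)})$ over a general closed set need not survive the approximation, one only recovers the upper bound over \emph{compact} sets, hence a weak LDP for $(X_n)$; the rate function is then forced to be $I(x)=\inclim{\eps\to0}\liminf_{b\to\infty}\inf_{\ball[\bbX]{x,\eps}}I^{(b)}(\hat x)$, which is lower semicontinuous by construction. Upgrading the weak LDP to the full LDP needs exponential tightness of $(X_n)$, and this is exactly what the extra inequality $\inf_FI\le\limsup_{b\to\infty}\inf_FI^{(b)}$, combined with goodness of $I$, supplies (one transfers, uniformly in $b$, the exponential tightness built into each LDP). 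The only delicate point — and the nearest thing to an obstacle — is the bookkeeping of the three nested limits $n\to\infty$, then $b\to\infty$, then $\eps\to0$, together with the repeated use of goodness to commute an infimum over a monotone family of closed sets with the limit in $\eps$; beyond this there is nothing to do but faithfully follow \cite{LDTA}.
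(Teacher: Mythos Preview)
Your proposal is correct and matches the paper's approach: the paper does not prove this theorem at all but simply states that it is a reformulation of Definitions~4.2.10, 4.2.14 and Theorems~4.2.13, 4.2.16 in \cite{LDTA}, which is exactly your reduction. Your sketch of the underlying mechanism is accurate and adds detail beyond what the paper provides.
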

\begin{Lemma}
	\label{lem : LDP/Classic/EGA}
	Assume~\eqref{ass : Intro/main_thm/SubcriticalAtom} and~\eqref{ass : Intro/main_thm/exp_moment}. The processes $(\SPT^{(b)})_{n\ge 1}$ are exponentially good approximations of $(\SPT)_{n\ge 1}$ as $b\to\infty$.
\end{Lemma}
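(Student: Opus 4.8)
The plan is to show that for every fixed $\eps>0$,
\begin{equation}
	\lim_{b\to\infty}\liminf_{n\to\infty}-\frac1n\log\Pb{\UnifDistance\p{\SPT^{(b)},\SPT}\ge\eps}=+\infty,
\end{equation}
which is exactly the statement that $(\SPT^{(b)})_{n\ge1}$ is an exponentially good approximation of $(\SPT)_{n\ge1}$. First I would observe that, since $\tau_e^{(b)}\le\tau_e$, one always has $\SPT^{(b)}\le\SPT$ pointwise, so $\UnifDistance(\SPT^{(b)},\SPT)=\max_{x,y\in\X}\p{\SPT(x,y)-\SPT^{(b)}(x,y)}$ and in particular the event $\acc{\UnifDistance(\SPT^{(b)},\SPT)\ge\eps}$ is measurable. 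The key point is to control from above $\SPT(x,y)$ uniformly in $x,y$, and from below $\SPT^{(b)}(x,y)$, on a single high-probability event.

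The main step is an upper bound: I would use Lemma~\ref{lem : OdG} (valid under~\eqref{ass : Intro/main_thm/exp_moment}) to get that, for any $\eps'>0$,
\begin{equation}
	\lim_{n\to\infty}-\frac1n\sup_{x,y\in\X}\log\Pb{\SPT(x,y)\ge\TC(x-y)+\eps'}=+\infty,
\end{equation}
and by a union bound over the polynomially many pairs $(\floor{nx},\floor{ny})\in\intint0n^d$, together with the triangle inequality to pass from lattice points to all of $\X^2$ (controlling the boundary terms by $\TC(\cdot)\le\BoundTC\norme[1]\cdot$), I get
\begin{equation}
	\lim_{n\to\infty}-\frac1n\log\Pb{\exists x,y\in\X:\ \SPT(x,y)\ge\TC(x-y)+\eps'}=+\infty.
\end{equation}
On the complementary event, every $\SPT(x,y)$ is at most $\TC(x-y)+\eps'$. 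Next I would invoke the convergence $\TC^{(b)}\to\TC$ as $b\to\infty$ (Garet--Marchand--Procaccia--Théret, already cited in the proof of Lemma~\ref{lem : PointPoint/OrdreGrandeur}): since $\TC$ and $\TC^{(b)}$ are norms equivalent to $\norme[1]\cdot$, this convergence is uniform on $\S_1$, so there is $b_0$ such that for all $b\ge b_0$ and all $x,y$, $\TC(x-y)\le\TC^{(b)}(x-y)+\eps'$. Then, using Lemma~\ref{lem : OdG} applied to the truncated environment (for fixed $b$, Assumption~\eqref{ass : Intro/main_thm/exp_moment} holds trivially for bounded variables, so Lemma~\ref{lem : OdG} applies to $\SPT^{(b)}$ and $\TC^{(b)}$), I get on a further high-probability event that $\SPT^{(b)}(x,y)\ge\TC^{(b)}(x-y)-\eps'$ for all $x,y$. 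Combining the three estimates on the intersection of the corresponding events yields $\SPT(x,y)-\SPT^{(b)}(x,y)\le 3\eps'$ for all $x,y$, i.e.\ $\UnifDistance(\SPT^{(b)},\SPT)\le3\eps'$; choosing $\eps'=\eps/3$ and taking $-\frac1n\log$ of the complementary events finishes the argument, since each contributing term tends to $+\infty$ (the two $\SPT$/$\SPT^{(b)}$ large-value probabilities decay faster than $\exp(-cn)$ for every $c$, and the $\TC^{(b)}$-vs-$\TC$ comparison is deterministic once $b\ge b_0$).

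The main obstacle is ensuring the large-deviation upper bounds for abnormally large passage times are \emph{uniform in $b$} in the right way: one wants the decay rate of $\Pb{\SPT^{(b)}(x,y)\ge\TC^{(b)}(x-y)-\eps}$ not to degenerate as $b\to\infty$. This is delicate because $\TC^{(b)}$ itself moves with $b$; the clean fix is to not ask for uniformity in $b$ at all but to take the limits in the correct order — first $n\to\infty$ with $b$ fixed (here Lemma~\ref{lem : OdG} for the fixed truncated model gives decay faster than any $\exp(-cn)$), then $b\to\infty$ — and to absorb the $b$-dependence entirely into the deterministic comparison $\TC^{(b)}\to\TC$. With that ordering the estimate becomes routine, and the lemma follows.
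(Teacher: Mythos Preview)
Your argument has a fatal gap in step 3. You claim that Lemma~\ref{lem : OdG}, applied to the truncated environment, gives $\SPT^{(b)}(x,y)\ge\TC^{(b)}(x-y)-\eps'$ for all $x,y$ on a high-probability event whose complement decays faster than $\exp(-\alpha n)$ for every $\alpha>0$. But Lemma~\ref{lem : OdG} is an \emph{upper-tail} estimate: it controls $\Pb{\SPT\ge\TC+\eps}$, not $\Pb{\SPT\le\TC-\eps}$. The event $\acc{\exists x,y:\SPT^{(b)}(x,y)<\TC^{(b)}(x-y)-\eps'}$ is a lower-tail deviation, and its probability decays only at speed $n$ with a \emph{finite} rate --- this is precisely the content of Theorem~\ref{thm : PointPoint} and the whole point of the paper. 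So the complement of your ``high-probability event'' in step 3 contributes a term of order $\exp(-cn)$ for some fixed $c<\infty$, and $\liminf_{n\to\infty}-\frac1n\log\Pb{\UnifDistance(\SPT^{(b)},\SPT)\ge\eps}$ stays bounded; you cannot send it to $+\infty$ by letting $b\to\infty$ because the lower-tail rate for $\SPT^{(b)}$ does not blow up.

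The paper's proof avoids this trap by never trying to sandwich both metrics around their respective time constants. Instead it compares $\SPT$ and $\SPT^{(b)}$ \emph{directly along a geodesic} for the truncated metric: if $\UnifDistance(\SPT^{(b)},\SPT)\ge\eps$, then either some $\BoxPT^{(b)}$-geodesic has length $\ge Ln$ (controlled by Lemma~\ref{lem : LDP/GeoLen}), or there is a self-avoiding path of length $\le Ln$ on which $\tau(\pi)-\tau^{(b)}(\pi)\ge\eps n$. The latter is a greedy lattice animal estimate (Dembo--Gandolfi--Kesten), and its rate tends to $\infty$ as $b\to\infty$ because the weights $(\tau_e-b)^+$ become small under~\eqref{ass : Intro/main_thm/exp_moment}. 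This is the missing idea in your approach.
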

\begin{proof}
	Fix $\eps>0$, $\alpha >0$. Let $L>0$ be the number given by Lemma~\ref{lem : LDP/GeoLen}. For all $n\ge 1$ and $b\ge 1$, define
	\begin{equation}
		\LDGreedy_n(b) \dpe \acc{ \begin{array}{c} \text{There exists a self-avoiding discrete path $\pi \subseteq \intint0n^d$}, \\ \text{such that $\norme{\pi}\le Ln$ and $\tau(\pi) - \tau^{(b)}(\pi) \ge 2d\eps n$}
		\end{array}}.
	\end{equation}
	Note that for all $n\ge 1$ and $b\ge 1$, we have
	\begin{equation}
		\label{eqn : LDP/Classic/EGA/Reunion}
		\acc{\UnifDistance(\SPT^{(b)}, \SPT ) \ge 2d\eps} \subseteq \LongGeo_n^{(b)}(L) \cup \LDGreedy_n(b).
	\end{equation}
	By union bound and the estimate~\eqref{eqn : LDP/GeoLen}, it is thus sufficient to prove that for large enough $b$,
	\begin{equation}
		\label{eqn : LDP/Classic/EGA/Pb_LDGreedy}
		\liminf_{n\to\infty}-\frac1n\log\Pb{ \LDGreedy_n(b) } \ge \alpha,
	\end{equation}
	Lemma~4.2 in Dembo-Gandolfi-Kesten (2001) \cite{Dem01} provides an answer for a similar problem, in the framework of the so-called \emph{greedy lattice animals}: they give an estimate for the upper-tail of the mass gathered by an animal of size $n$ containing the origin, when the masses are scattered on the vertices of $\Z^d$. An animal there is defined as a finite connected subset of $\Z^d$. In particular, self-avoiding paths are animals. In order to adapt their result to our framework, let us introduce for all $v\in \Z^d$ and $z\in \acc{\pm \base i}_{1\le i \le d}$ the random variable
	\begin{equation}
		M_v(z) \dpe \p{ \tau_{(v, v+z)} - b }^+,
	\end{equation}
	as well as
	\begin{equation}
		M_v \dpe \sum_{z\in \acc{\pm \base i}_{1\le i \le d}} M_v(z).
	\end{equation}
	For all $z$, the variables $\p{M_v(z)}_{v\in \Z^d}$ are i.i.d.\ and for all discrete path $\pi$,
	\begin{equation}
		\label{eqn : LDP/Classic/EGA/LienVerticesEdges}
		\tau(\pi) - \tau^{(b)}(\pi) \le \sum_{v \in \pi} M_v.
	\end{equation}
	By Lemma~4.2 in \cite{Dem01} and~\eqref{ass : Intro/main_thm/exp_moment}, for large enough $b$, for all $z\in \acc{\pm \base i}_{1\le i \le d}$,
	\begin{equation*}
		\liminf_{n\to \infty}-\frac1n \log \Pb{ \begin{array}{c} \text{There exists a self-avoiding discrete path $\pi \subseteq \Z^d$}, \\ \text{such that $0\in \pi$, $\norme{\pi}\le Ln$ and $\sum_{v \in \pi} M_v(z) \ge \eps n$} 			
		\end{array}} \ge \alpha.
	\end{equation*}
	Consequently, the union bound and~\eqref{eqn : LDP/Classic/EGA/LienVerticesEdges} give~\eqref{eqn : LDP/Classic/EGA/Pb_LDGreedy}.
\end{proof}

\begin{proof}[Proof of Theorem~\ref{thm : MAIN}]
	Let $b<\infty$. By~\eqref{eqn : LDP/ContinuousMetric/ExpEquiv} the processes $(\TightSPT)_{n\ge 1}$ and $(\SPT^{(b)})_{n\ge 1}$ are exponentially equivalent. Consequently, by Theorem~\ref{thm : LDP/ExpEquiv_LDP}, $(\SPT^{(b)})_{n\ge 1}$ follows the LDP with the rate function $\FdT^{(b)}$.

	By Theorem~\ref{thm : LDP/ExpEquiv_LDP}\eqref{item : LDP/ExpEquiv_LDP/EGA} and Lemma~\ref{lem : LDP/Classic/EGA}, $\p{\SPT}_{n\ge 1}$ follows the weak LDP with the rate function
	\begin{align}
		\FdT^{(\infty)} : \BoundedF &\longrightarrow \intervalleff0\infty \eol
			D &\longmapsto \sup_{\delta > 0} \liminf_{b\to\infty} \inf_{\hat D \in \ball[\infty]{D,\delta} } \FdT^{(b)}(\hat D),
	\end{align}
	where $\ball[\infty]{D,\delta}$ denotes the open ball of center $D$ and radius $\delta$, for $\UnifDistance$. To prove the LDP with this rate function it is sufficient by the second part of Theorem~\ref{thm : LDP/ExpEquiv_LDP}\eqref{item : LDP/ExpEquiv_LDP/EGA} to show to that $\FdT^{(\infty)}$ is a good rate function, and for all closed sets $F\subseteq \BoundedF$,
	\begin{equation}
		\label{eqn : LDP/Classic/CritereLDP}
		\inf_{D\in F} \FdT^{(\infty)}(D) \le \limsup_{b\to \infty} \inf_{D \in F} \FdT^{(b)}(D).
	\end{equation}
	By~\eqref{eqn : LDP/ContinuousMetric/Tightness}, using the same argument as in the proof of Lemma~\ref{lem : LDP/ContinuousMetric/LDP}, Case 1, one show that for all $\alpha>0$ there exists $L>0$ such that for all $b\in \intervalleff1\infty$,
	\begin{equation}
		\label{eqn : LDP/Classic/Tightness}
		\inf_{D \in \BoundedF \setminus \RegPD^L} \FdT^{(b)}(D) \ge \alpha.
	\end{equation}
	Consequently, $\FdT^{(\infty)}$ is a good rate function and~\eqref{eqn : LDP/Classic/CritereLDP} only needs to be proven for compact $F$. Let $F\subseteq \BoundedF$ be a compact set. For all $b\ge 1$, since $\FdT^{(b)}$ is lower semicontinuous, it admits a minimizer $D_b$ on $F$. By compactness there exists a sequence $(b_k)_{k\ge 1}$ that diverges to $\infty$ and $D\in F$ such that
	\begin{equation*}
		\lim_{k\to \infty} D_{b_k} = D.
	\end{equation*}
	In particular, for all $\delta>0$, for large enough $k$,
	\begin{equation*}
		\UnifDistance\p{D, D_{b_k}} < \delta,
	\end{equation*}
	thus
	\begin{align*}
		\FdT^{(\infty)}(D) %
			&\le \liminf_{k\to \infty} \FdT^{(b_k)}\p{ D_{b_k} } \\
			&=\liminf_{k\to \infty} \inf_{\hat D \in F} \FdT^{(b_k)}\p{ \hat D }\\
			&\le \limsup_{b\to\infty} \inf_{\hat D \in F} \FdT^{(b)}\p{ \hat D },
	\end{align*}
	thus~\eqref{eqn : LDP/Classic/CritereLDP}.

	We now prove that $\FdT^{(\infty)} = \FdT$. If $D\in \BoundedF \setminus \RegPD$, then~\eqref{eqn : LDP/Classic/Tightness} implies $\FdT^{(\infty)}(D) = \infty = \FdT(D)$. Let $D \in \RegPD^L$. To show the inequality $\FdT^{(\infty)}(D) \ge \FdTmon(D)$, notice that by an elementary inclusion, for all $\eps>0$,
	\begin{equation*}
		\limsup_{n\to\infty} -\frac1n  \log \Pb{\LD_n(D,\eps)} \ge \limsup_{n\to\infty} -\frac1n  \log \Pb{\LD_n^-(D,\eps)}.
	\end{equation*}
	Moreover, since $D$ is in the interior of $\ball[\infty]{D,\eps}$ and $\SPT$ follows the LDP with the rate function $\FdT^{(\infty)}$, the left-hand side may be upper bounded by $\FdT^{(\infty)}(D)$. Letting $\eps\to 0$ gives $\FdT^{(\infty)}(D) \ge \FdTmon(D)$.

	Let us prove the converse inequality. If $\FdTmon(D)=\infty$, there is nothing to do. Assume the contrary. In particular there exists $L>0$ such that
	\begin{equation}
		\label{eqn : LDP/Classic/EncoreTightness}
		\inf_{\hat D \in \BoundedF \setminus \RegPD^L} \FdT^{(\infty)}(\hat D) > \FdTmon(D).
	\end{equation}
	For all $\eps \ge 0$ we consider the closed set
	\begin{align}
		F_\eps &\dpe \set{\hat D \in \BoundedF}{\forall x,y\in \X,\; \hat D(x,y) \le D(x,y) + \eps },
	\end{align}
	and denote by $K_\eps$ the compact set $F_\eps \cap \RegPD^L$. Since $\p{\SPT}_{n\ge 1}$ follows the LDP with the rate function $\FdT^{(\infty)}$, for all $\eps>0$,
	\begin{align*}
		\liminf_{n\to\infty}-\frac 1n\log\Pb{ \LD_n^-(D, \eps)}%
			&= \liminf_{n\to\infty}-\frac 1n\log\Pb{ \SPT\in F_\eps }\\
			&\ge \inf_{\hat D \in F_\eps} \FdT^{(\infty)}(\hat D).
		\intertext{By~\eqref{eqn : LDP/Classic/EncoreTightness},}
		\liminf_{n\to\infty}-\frac 1n\log\Pb{ \LD_n^-(D, \eps)}%
			&\ge \min_{\hat D \in K_\eps} \FdT^{(\infty)}(\hat D).
		\intertext{By compactness there exists a sequence $(\eps_k)_{k\ge 1}$ decreasing to $0$ and a converging sequence of metrics $(D_k)_{k\ge 1}$ such that for all $k\ge 1$ , $D_k \in K_{\eps_k}$ and realizes the minimum above. The limit of $(D_k)_{k\ge 1}$ belongs to $\bigcap_{\eps>0} K_\eps = K_0$. Letting $k\to\infty$ gives, by lower semicontinuity of $\FdT^{(\infty)}$ ,}
		\FdTmon(D)%
			&\ge \min_{\hat D \in K_0}\FdT^{(\infty)}(\hat D).
	\end{align*}
	However, for all $\hat D \in K_0\setminus\acc{D}$, by Proposition~\ref{prop : FdTmon/StrMon},
	\begin{equation*}
		\FdT^{(\infty)}(\hat D) \ge \FdTmon(\hat D) >  \FdTmon(D). 
	\end{equation*}
	Consequently, $\FdTmon(D) \ge \FdT^{(\infty)}(D)$, which concludes the proof of $\FdT = \FdT^{(\infty)}$.
\end{proof}


\appendix
\section{Probability that a vertex is a hub}
\label{appsec : Cool}

In this section we prove Lemma~\ref{lem : Intro/main_thm/LemmeCool}. Our arguments are adapted from Cox and Durrett's proof of the shape theorem (see Theorem~3.3 in \cite{ShapeThm}). We will use Lemma~\ref{lem : Cool/DisjointPaths} twice in order to dominate the passage time between two points by $d$ independent sums of variables with distribution $\nu$. The first step is to prove a similar result under a stronger moment condition, when the edge passage times may have short range dependence. Under the assumption $\E{\tau_e^{d+\xi}}<\infty$,~\eqref{eqn : Intro/main_thm/LemmeCool} holds by standard estimates on i.i.d.\ sums (see Lemma~\ref{lem : Cool/GoodCase}). To conclude under~\eqref{ass : Intro/main_thm/ShapeThmStrong}, we need an extra step consisting in bounding the passage time between two neighbors in $3\Z^d$ by the minimum of the passage times along the $d$ paths provided by Lemma~\ref{lem : Cool/DisjointPaths}. At the scale of those "long edges", Lemma~\ref{lem : Cool/GoodCase} can be applied.

\begin{Lemma}
	\label{lem : Cool/DisjointPaths}
	Fix $n \in \N^*$ and distinct vertices $x,y\in \intint0n^d$. There exist at least $d$ pairwise disjoint except at their endpoints discrete paths $\pi_1, \dots, \pi_d$ from $x$ to $y$, with length $\norme[1]{x-y}$ or $\norme[1]{x-y}+2$.
\end{Lemma}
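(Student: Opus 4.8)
The plan is to exhibit the $d$ paths explicitly. Write $w = y - x$ and let $m$ be the number of indices $i$ with $w_i \neq 0$; since $x \ne y$ we have $1 \le m \le d$. Permuting coordinates is an automorphism of $\Z^d$ that maps $\intint0n^d$ to itself, so I may assume $w_i \neq 0$ for $i \le m$ and $w_i = 0$ for $i > m$. First I build a \emph{primary batch} of $m$ paths, each of length $\norme[1]{x-y} = \sum_i \module{w_i}$, lying in $\intint0n^d$: for $a \in \intint1m$, let $\pi_a$ be the monotone staircase from $x$ to $y$ that takes all its unit steps in coordinate $a$ first, then all those in coordinate $a+1$, and so on cyclically through $a-1$ (when $m = 1$ this is just the straight segment). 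Each $\pi_a$ has every coordinate of every vertex lying between $x_i$ and $y_i$, hence stays in the box.

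The key step is to check that these $m$ paths are pairwise internally disjoint (trivial for $m = 1$, so assume $m \ge 2$). Suppose $v$ is an interior vertex of both $\pi_a$ and $\pi_b$ with $a \ne b$. Let $D(v) = \set{ i \le m }{ v_i - x_i = w_i }$ and $I(v) = \set{ i \le m }{ 0 < \module{v_i - x_i} < \module{w_i} }$ (these are well defined since along a monotone staircase $v_i - x_i$ has the same sign as $w_i$). Because $\pi_a$ is a staircase, any of its interior vertices has $\#I(v) \le 1$, has $D(v)$ equal to a cyclic prefix of $(1, \dots, m)$ starting at $a$, and, if $I(v)$ is nonempty, has its single element equal to the coordinate immediately following $D(v)$ in that cyclic order; likewise for $\pi_b$ with $b$ in place of $a$. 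A nonempty proper cyclic interval of $\intint1m$ has a unique starting index, so $D(v)$ must be either empty or all of $\intint1m$. If $D(v) = \intint1m$, then $v = y$; if $D(v) = \emptyset$, then either $v = x$, or $I(v)$ is a singleton forced to equal $\acc{a}$ and $\acc{b}$ simultaneously. All three possibilities contradict $a \ne b$ and $v$ being interior, so no such $v$ exists.

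Finally, for each $j \in \intint{m+1}d$ I add a \emph{detour} path of length $\norme[1]{x-y} + 2$: pick $\eps_j \in \acc{-1, +1}$ with $x_j + \eps_j \in \intint0n$ (take $+1$ unless $x_j = n$, in which case $-1$; possible since $n \ge 1$), and let $\pi_j$ run $x \to x + \eps_j e_j \to \bigl(\text{the translate by } \eps_j e_j \text{ of } \pi_1\bigr) \to y + \eps_j e_j \to y$. Since $\pi_1$ never moves in coordinate $j > m$, this is a simple discrete path of length $1 + \norme[1]{x-y} + 1$; it stays in the box because $y_j = x_j$ (as $w_j = 0$), so $x_j + \eps_j = y_j + \eps_j \in \intint0n$, while the remaining coordinates behave as in $\pi_1$; and every interior vertex of $\pi_j$ has $j$-th coordinate $x_j + \eps_j \ne x_j$. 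To conclude I check pairwise internal disjointness of all $d$ paths: within the primary batch it is the step above; a primary path $\pi_a$ and a detour $\pi_j$ are disjoint since every vertex of $\pi_a$ has $j$-th coordinate $x_j$ whereas the interior of $\pi_j$ does not; and two detours $\pi_j, \pi_k$ with $m < j < k$ are disjoint since the interior of $\pi_j$ has $k$-th coordinate $x_k$ while the interior of $\pi_k$ does not. The only substantial obstacle is the internal disjointness of the cyclic staircases in the primary batch; the coordinate permutation and the choice of the signs $\eps_j$ are exactly what keeps every path inside $\intint0n^d$.
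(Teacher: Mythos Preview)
Your proof is correct and follows essentially the same construction as the paper: cyclic staircase paths in the ``active'' coordinates (where $x_i\neq y_i$) together with detour paths obtained by translating a base geodesic in each ``inactive'' coordinate direction. The only differences are cosmetic: the paper first reduces by symmetry to the case $x_i=y_i\neq n$ (so that the detour shift is always $+e_i$) and states the construction without verifying internal disjointness, whereas you keep the signs general via $\eps_j$ and supply the disjointness argument for the cyclic staircases explicitly.
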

\begin{proof}
	Write $x=(x_1, \dots, x_d)$ and $y=(y_1, \dots, y_d)$. Without loss of generality, we can assume the existence $i_0 \in \intint0{d}$, such that
	\begin{itemize}
		\item For all $i\in \intint1{i_0}$, $x_i = y_i \neq n$.
		\item For all $i\in \intint{i_0+1}{d}$, $x_i < y_i$.
	\end{itemize}
	Let $\p{x= z(0), \dots, z(r)= y}$ be any path from $x$ to $y$ with length $\norme[1]{x-y}$. For all $i\in \intint1{i_0}$, we define $\pi_i$ the path $(x, z(0)+\base i, z(1)+ \base i, \dots , z(r)+ \base i , y)$. For all $i\in \intint{i_0+1}{d}$, we define $\pi_i$ as the only path from $x$ to $y$ which is the concatenation of $d-i_0$ straight lines of respective directions $\p{\base i, \base{i+1},\dots, \base{d},\base{i_0 +1}, \dots, \base{i-1} }$.
\end{proof}

\begin{Lemma}
    \label{lem : Cool/GoodCase}
    Let $(\tau'_e)_{e \in \bbE^d}$ be a family of identically distributed random variables. Assume that there exists $\xi>0$ such that $\E{(\tau_e')^{d+\xi}} < \infty$, and for all $e\in \bbE^d$, $\tau_e'$ is independent of the family passage times along edges having no common endpoint with $e$. Then there exists a constant $\kappa<\infty$ such that 
    \begin{equation}
        \label{eqn : Cool/GoodCase}
        \liminf_{n\to\infty} \min_{x\in \intint0n^d} \Pb{\begin{array}{c} \text{For all $y \in \intint0n^d$, there exists a discrete path $x\Path\pi y$ included} \\ \text{in $\intint0n^d$, such that } \tau'(\pi) \le \kappa \norme[1]{x-y} \text{ and } \norme{\pi} \le \norme[1]{x-y} +2\end{array}} >0.
    \end{equation}
\end{Lemma}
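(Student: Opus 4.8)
The plan is to fix the starting vertex $x\in\intint0n^d$ and control, uniformly in $n$ and $x$, the probability that the path required in~\eqref{eqn : Cool/GoodCase} fails to exist for \emph{some} target $y\in\intint0n^d$. For each such $y$, Lemma~\ref{lem : Cool/DisjointPaths} provides $d$ discrete paths $\pi_1,\dots,\pi_d$ from $x$ to $y$, included in $\intint0n^d$, pairwise disjoint except at their endpoints, and of length at most $\norme[1]{x-y}+2$. Write $m\dpe\norme[1]{x-y}$ and call $\pi_i$ \emph{bad} if $\tau'(\pi_i)>\kappa m$. Since each $\pi_i$ already meets the length and inclusion requirements of~\eqref{eqn : Cool/GoodCase}, the event that no admissible path from $x$ to $y$ exists is contained in $\bigcap_{i=1}^d\acc{\pi_i\text{ bad}}$. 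Hence it suffices to choose $\kappa$, depending only on $d$, $\xi$ and the law of $\tau'$, so that $\sum_{y\in\intint0n^d}\Pb{\bigcap_{i=1}^d\acc{\pi_i\text{ bad}}}\le\frac12$ for every $n$: the complementary event then has probability at least $\frac12$, and taking the minimum over $x$ and the $\liminf$ over $n$ finishes the proof. (The case $m=0$ is trivial via the empty path.)

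To bound $\Pb{\bigcap_i\acc{\pi_i\text{ bad}}}$, decompose $\tau'(\pi_i)$ into the values along its first edge, its last edge, and its \emph{middle part} $\pi_i^{\mathrm{mid}}$ (the edges touching neither $x$ nor $y$). If $\pi_i$ is bad then one of its two boundary edges $e$ satisfies $\tau'_e>\kappa m/4$, or else $\tau'(\pi_i^{\mathrm{mid}})>\kappa m/2$; using the elementary inclusion $\bigcap_i(A_i\cup B_i)\subseteq(\bigcap_iA_i)\cup(\bigcup_iB_i)$ together with the fact that each boundary edge carries the law of $\tau'$ (and that there are only $2d$ of them), we get
\begin{equation*}
	\Pb{\bigcap_{i=1}^d\acc{\pi_i\text{ bad}}}\le 2d\,\Pb{\tau'>\tfrac{\kappa m}{4}}+\Pb{\bigcap_{i=1}^d\acc{\tau'(\pi_i^{\mathrm{mid}})>\tfrac{\kappa m}{2}}}.
\end{equation*}
By Markov's inequality the first term is at most $2d\,\E{(\tau')^{d+\xi}}(\kappa m/4)^{-(d+\xi)}$, which, summed over the $O(m^{d-1})$ targets $y$ at $L^1$-distance $m$ from $x$ and then over $m\ge1$, contributes at most a constant times $\kappa^{-(d+\xi)}$, the series converging since $d+\xi>d$.

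For the second term, exploit that the middle parts $\pi_1^{\mathrm{mid}},\dots,\pi_d^{\mathrm{mid}}$ are pairwise \emph{vertex}-disjoint. Split each $\pi_i^{\mathrm{mid}}$ into its odd-position and even-position edges; each colour class consists of pairwise non-adjacent edges, and any set obtained by picking one colour class from each $\pi_i^{\mathrm{mid}}$ (with distinct $i$) is again a set of pairwise non-adjacent edges, so by the finite range of dependence the corresponding edge passage times form an i.i.d.\ family with law $\tau'$; in particular the $d$ colour half-sums are mutually independent. Decomposing $\acc{\tau'(\pi_i^{\mathrm{mid}})>\kappa m/2}$ according to which colour carries at least half the mass and summing over the $2^d$ choices yields
\begin{equation*}
	\Pb{\bigcap_{i=1}^d\acc{\tau'(\pi_i^{\mathrm{mid}})>\tfrac{\kappa m}{2}}}\le 2^d\p{\sup_{k\le m}\Pb{U_k>\tfrac{\kappa m}{4}}}^{d},
\end{equation*}
with $U_k$ a sum of $k$ i.i.d.\ copies of $\tau'$. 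Since $d+\xi\ge2$, Rosenthal's inequality gives $\E{\module{U_k-k\E{\tau'}}^{d+\xi}}\le C\,m^{(d+\xi)/2}$ for $k\le m$, so taking $\kappa>8\,\E{\tau'}$ and applying Markov's inequality, $\Pb{U_k>\kappa m/4}\le C'\kappa^{-(d+\xi)}m^{-(d+\xi)/2}$. Hence the last display is $\le C''\kappa^{-d(d+\xi)}m^{-d(d+\xi)/2}$, and summing against the $O(m^{d-1})$ targets at distance $m$ gives a convergent series in $m$ precisely because $d(d+\xi)/2-(d-1)>1$, i.e.\ $d+\xi>2$. Collecting both contributions, $\sum_{y\in\intint0n^d}\Pb{\bigcap_i\acc{\pi_i\text{ bad}}}$ is bounded by a fixed function of $\kappa$ tending to $0$ as $\kappa\to\infty$, uniformly in $n$ and $x$; choosing $\kappa$ large makes it $\le\frac12$, as required.

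The main obstacle is obtaining a tail bound for the passage time of a \emph{single} path that decays fast enough in $\norme[1]{x-y}$: a plain Markov/Minkowski estimate on a sum of $\sim m$ terms gives no decay at all, so one must use the law-of-large-numbers cancellation (a Rosenthal- or Marcinkiewicz--Zygmund-type moment inequality) and then raise the resulting bound to the $d$-th power via independence of the disjoint middle parts in order to beat the $O(m^{d-1})$ growth of the number of targets. This is exactly where the moment exponent $d+\xi$, strictly above $d$ (equivalently, a Rosenthal exponent strictly above $2$), is indispensable; the boundary edges at $x$ and $y$ — which obstruct a direct global independence statement — are handled by isolating those $2d$ edges and by the odd/even colouring of the middle parts.
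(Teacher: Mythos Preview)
Your proof is correct and follows essentially the same route as the paper's: invoke the $d$ near-geodesic paths from Lemma~\ref{lem : Cool/DisjointPaths}, peel off the boundary edges at $x$ and $y$, control each middle part via an odd/even colouring together with Rosenthal's inequality, and then take a $d$-th power using disjointness before summing over targets. The only notable difference is organisational: the paper fixes $\kappa=\E{\tau'}+3$ and claims independence of the $d$ middle passage times directly from vertex-disjointness, whereas you let $\kappa\to\infty$ and sum over the $2^d$ colour choices so that the edge family is genuinely pairwise non-adjacent and the stated hypothesis applies verbatim --- a slightly cleaner justification of the product step.
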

\begin{proof}
	Let $x,y \in \intint0n^d$ be distinct vertices and $\gamma_1, \dots, \gamma_d$ be the paths given by Lemma~\ref{lem : Cool/DisjointPaths}, with removed endpoints. The $\gamma_i$ have no common vertex, hence their passage times are independent. Fix $i\in\intint1d$. We denote by $\gamma_i^{(1)}$ and $\gamma_i^{(2)}$ the set of edges along $\gamma_i$, with odd indices and even indices respectively. By union bound and Markov's inequality,
	\begin{align}
		&\Pb{\tau'(\gamma_i) - \E{\tau'(\gamma_i)} \ge \norme[1]{x-y} }\eol
			&\quad \le \Pb{\tau'(\gamma_i^{(1)}) - \E{\tau'(\gamma_i^{(1)})} \ge \frac12\norme[1]{x-y} } + \Pb{\tau'(\gamma_i^{(2)}) - \E{\tau'(\gamma_i^{(2)})} \ge \frac12\norme[1]{x-y} }\eol
			&\quad \le \frac{2^{d+\xi}}{\norme[1]{x-y}^{d+\xi} }\E{ \module{\tau'(\gamma_i^{(1)}) - \E{\tau'(\gamma_i^{(1)})}}^{d+\xi}   + \module{\tau'(\gamma_i^{(2)}) - \E{\tau'(\gamma_i^{(2)})}}^{d+\xi}  }.\label{eqn : AvantRosenthal}
	\end{align}
	Since $\tau'(\gamma_i^{(1)})$ is sum of independent variables distributed as $\tau'$, by Rosenthal's inequality (see Theorem~3 in~\cite{Ros70}) there exists a constant $\Cl{preROSENTHAL}$, depending only on $d+\xi$, such that
	\begin{equation}
		\begin{split}
		\E{ \module{\tau'(\gamma_i^{(1)}) - \tau'(\gamma_i^{(1)}) }^{d+ \xi} }^{1/(d+\xi)} &\le \Cr{preROSENTHAL} \max \biggl( \p{ \#\gamma_i^{(1)} \E{\module{\tau' - \E{\tau'} }^{d+\xi} } }^{1/(d+\xi)},\\
		&\quad \p{ \#\gamma_i^{(1)}\E{\module{\tau' - \E{\tau'} }^{2} }   }^{1/2}
		\biggr).
		\end{split}
	\end{equation}
	Consequently, there exists a constant $\Cl{ROSENTHAL}$, depending only on $d+\xi$ and the distribution of $\tau'$, such that
	\begin{equation*}
		\E{ \module{\tau'(\gamma_i^{(1)}) - \tau'(\gamma_i^{(1)}) }^{d+ \xi} } \le \Cr{ROSENTHAL}\norme[1]{x-y}^{(d+\xi)/2}.
	\end{equation*}
	The same goes for the paths $\gamma_i^{(2)}$. Applying this bound to~\eqref{eqn : AvantRosenthal} yields
	\begin{align}
		\Pb{\tau'(\gamma_i) - \E{\tau'(\gamma_i)} \ge \norme[1]{x-y} }%
			&\le \frac{ \Cr{ROSENTHAL} 2^{d+\xi+ 1} }{\norme[1]{x-y}^{(d+\xi)/2} }.\notag
		\intertext{Consequently, plugging in the inequality $\norme{\gamma_i}\le \norme[1]{x-y} $ yields }
		\label{eqn : Cool/Goodcase/BoundPath}
		\Pb{\tau'(\gamma_i) \ge \p{\E{\tau'_e} + 1}\norme[1]{x-y} }%
			&\le \frac{ \Cr{ROSENTHAL}2^{d+\xi+ 1} }{\norme[1]{x-y}^{(d+\xi)/2} }.
	\end{align}
	Since the variables $\tau'(\gamma_i)$ for $i\in \intint1d$ are independent,
	\begin{equation}
		\label{eqn : Cool/Goodcase/Bound1}
		\Pb{\bigcap_{i=1}^d \acc{\tau'(\gamma_i) \ge \p{\E{\tau'_e} + 1}\norme[1]{x-y} }}%
		 \le \frac{ \Cr{ROSENTHAL}^d 2^{d(d+\xi+ 1)} }{\norme[1]{x-y}^{(d^2+d\xi)/2} }.
	\end{equation}
	Besides, by union bound and Markov's inequality,
	\begin{equation}
		\label{eqn : Cool/Goodcase/Bound2}
		\Pb{\max_{z \in \acc{\pm \base i}_{1\le i \le d} }\p{ \tau_{(x, x+z)}\vee \tau_{(y,y+z)} }\ge \norme[1]{x-y} } \le \frac{4d\E{(\tau_e')^{d+\xi}} }{\norme[1]{x-y}^{d+\xi} }.
	\end{equation}

	The right-hand sides of~\eqref{eqn : Cool/Goodcase/Bound1} and~\eqref{eqn : Cool/Goodcase/Bound2} are summable over $y\in \Z^d \setminus\acc x$, and only depends on $x-y$. Consequently, there exists an integer $r\ge 1$ and a constant $\Cl{BOREL-CANTELLI}>0$ such that for all $n\ge 1$ and $x\in\intint0n^d$,
	\begin{align*}
		&\sum_{\substack{y\in \intint0n^d \\ \norme[1]{x-y}\ge r } } \biggl[ \Pb{\bigcap_{i=1}^d \acc{\tau'(\gamma_i) \ge \p{\E{\tau'_e} + 1}\norme[1]{x-y} }} \\
		&\quad + \Pb{\max_{z \in \acc{\pm \base i}_{1\le i \le d} }\p{ \tau_{(x, x+z)}\vee \tau_{(y,y+z)} }\ge \norme[1]{x-y} } \biggl] \le 1- \Cr{BOREL-CANTELLI}.
	\end{align*}
	Using the union bound again, we get that for all $n\ge 1$ and $x\in\intint0n^d$,
	\begin{equation*}
		\Pb{\begin{array}{c} \text{For all $y \in \intint0n^d$ such that $\norme[1]{x-y}\ge r$, there exists a discrete path $x\Path\pi y$ included} \\ \text{in $\intint0n^d$, such that } \tau'(\pi) \le (\E{\tau'_e}+3) \norme[1]{x-y} \text{ and } \norme{\pi} \le \norme[1]{x-y} +2\end{array}} \ge \Cr{BOREL-CANTELLI}.
	\end{equation*}
	In particular,~\eqref{eqn : Cool/GoodCase} holds for $\kappa = \E{\tau'_e}+3$. 
\end{proof}
\begin{proof}[Proof of Lemma~\ref{lem : Intro/main_thm/LemmeCool}]
	Assume~\eqref{ass : Intro/main_thm/ShapeThmStrong}. For all $x,y \in \intint0n^d$, we write $x\sim y$ if $x-y \in 3\Z^d$. We will denote by $\cro x$ the equivalence class of $x$ for $\sim$ in $\intint0n^d$. If $x-y \in \acc{\pm 3 \base i}_{1\le i \le d}$, we say that $x$ and $y$ are $3$-neighbours and define $\tau'(x,y)$ as the minimum of the passage times along the $d$ paths between $x$ and $y$ provided by~Lemma~\ref{lem : Cool/DisjointPaths}. Following the proof of Lemma~3.1 in~\cite{ShapeThm}, one shows that for all such $x,y$, $\E{\tau'(x,y)^{d+\xi}}<\infty$. Up to translation by $3v - \floor{3v}$ and rescaling by a factor $3$, for all equivalence classes $\cro v$ of $\sim$ the family $\p{\tau'(x,y)}$, spanning over pairs of $3$-neighbours $(x,y)\in \cro v^2$ satisfies the hypothesis of Lemma~\ref{lem : Cool/GoodCase}. We say that a vertex $x \in \intint0n^d$ is a \emph{pre-hub} if it satisfies the event in~\eqref{eqn : Cool/GoodCase} for the lattice structure on $\cro x$ induced by $3$-neighbours, and the passage times $\tau'$. If $x$ is a pre-hub, then for all $y \in \cro x$, there exists a discrete path $x\Path\pi y$ included in $\intint0n^d$, such that \[ \tau(\pi) \le \kappa \norme[1]{x-y} \text{ and } \norme{\pi} \le \frac53\norme[1]{x-y} + \frac{10}{3} \le 2\norme[1]{x-y} + 4. \] Given $x\in \intint0n^d$, consider the event
	\begin{equation}
		\cE(n,x) \dpe \p{ \bigcap_{x' \in (x + \intervalleff{-2}{2}^d)\cap\intint0n^d}\acc{x' \text{ is a pre-hub}} }%
			\cap\p{  \bigcap_{e \in \edges{(x + \intervalleff{-2}{2}^d)\cap\intint0n^d } } \acc{ \tau_e \le \kappa} }.
	\end{equation}
	It is an intersection of decreasing events, therefore by the FKG inequality and Lemma~\ref{lem : Cool/GoodCase},
	\begin{equation}
		\liminf_{n\to \infty} \min_{x\in \intint0n^d} \Pb{ \cE(n,x) } > 0.
	\end{equation}
	Let $x\in \intint0n^d$. We are left to show that on the event $\cE(n,x)$, $x$ is a hub. Assume that $\cE(n,x)$ occurs and let $y\in \intint0n^d$. There exists $x'\in (x + \intervalleff{-2}{2}^d)\cap\intint0n^d$ such that $x' \sim y$ and $\norme[1]{x-y} = \norme[1]{x-x'} + \norme[1]{x'-y}$. There exists a path $x \Path{\pi_1} x'$ with length $\norme[1]{x-x'}$, such that $\tau(\pi_1) \le \kappa\norme[1]{x-x'}$. Moreover, since $x'$ is a pre-hub, there exists a discrete path $x'\Path{\pi_2} y$ included in $\intint0n^d$, such that $\tau(\pi_2) \le \kappa \norme[1]{x'-y} \text{ and } \norme{\pi} \le 2\norme[1]{x'-y} + 4$. The concatenation of $\pi_1$ and $\pi_2$ has the desired properties. 
\end{proof}


\section{Bound for upper-tail large deviations}
\label{appsec : OdG}
In this section we prove Lemma~\ref{lem : OdG}, which states that under Assumption~\eqref{ass : Intro/main_thm/exp_moment}, the order of the upper-tail large deviation probability for the point-point time is lower than $\exp\p{-\alpha n}$, for all $\alpha>0$. We essentially follow Kesten's proof (\cite{KestenStFlour}, Theorem~5.9) of the special case $x=0$, $y=\base 1$: for large $n$, we consider a large number of pairwise disjoint corridors, with fixed but large width $m$ which are close to the segment $\intervalleff{nx}{ny}$. On the large deviation event $\acc{\SPT(x,y) \ge \TC(x-y) + \eps }$, either the passage times across all corridors or the passage times from $nx$ and $ny$ to the entrances of each are abnormally large. The probability of each scenario is bounded by $\exp\p{-\alpha n}$, with arbitrary $\alpha>0$.
\begin{proof}
	We first fix distinct $x,y\in \mathring \X$ and $\zeta > \TC(x-y)$ then prove the following weaker version of the result:
	\begin{equation}
		\label{eqn : OdG/Weak}
		\lim_{n \to \infty} -\frac1n\log \Pb{\SPT(x,y) \ge \zeta } = +\infty.
	\end{equation}
	For all integers $n,m \ge 1$ and $v \in \Z^d$, we introduce the subset
	\begin{equation}
		\Cylinder(v,m,n) \dpe \set{ z + s(y-x) }{z\in \clball[2]{v,m},\; s\in \intervalleff0{n}  }.
	\end{equation}
	Let $\eps >0$ be such that $\zeta - \TC(x-y) \ge 4\eps$. By adapting the argument at the bottom of p.\ 198 in \cite{KestenStFlour}, one shows the existence of $m\ge1$ such that for all $n\ge m$,
	\begin{equation*}
		\frac{ \E{\RestPT{\Cylinder(0,m,n)}\p{ 0 , \floor{n(x-y)} } } }{n} \le \TC(x-y) + \eps.
	\end{equation*}
	In particular, by Talagrand's inequality (see e.g.\ \cite{50yFPP}, Theorem 3.13),
	\begin{equation}
		\label{eqn : OdG/Talagrand}
		\alpha_m \dpe \liminf_{n\to \infty} -\frac1{n} \log \Pb{ \RestPT{\Cylinder(0,m,n)}\p{ 0 , \floor{n(x-y)} } \ge  n\p{ \TC(x-y) + 2\eps } } > 0.
	\end{equation}
	Let us fix an intger $r$. For large enough $n$, there exist $v_1, \dots, v_r \in \intint0n^d$, such that:
	\begin{itemize}
		\item The sets $\Cylinder\p{ v_i,m,n }$, for $i\in \intint1r$ are pairwise disjoint and included in $\intint0n^d$.
		\item For all $i\in \intint1r$,
		\begin{equation}
			\label{eqn : OdG/Taille_raccords}
			\norme[1]{\vphantom{\big\vert} \floor{n x} - v_i} \le \sqrt n \text{ and } \norme[1]{\vphantom{\big\vert} \floor{ny} - \p{ v_i + \floor{n(x-y)} } } \le \sqrt n.
		\end{equation}
	\end{itemize}
	By triangle inequality, we have the inclusion
	\begin{equation}
		\label{eqn : OdG/BigInclusion}
		\begin{split}
			\acc{ \SPT\p{ x,y } \ge \zeta }%
				&\subseteq \p{\bigcup_{i=1}^r \acc{ \BoxPT\p{\floor{nx} , v_i} > \eps n } } \\
				&\quad \cup \p{\bigcup_{i=1}^r \acc{ \BoxPT\p{\floor{ny} , \p{ v_i + \floor{n(x-y)} } }  > \eps n } } \\
				&\quad \cup \p{\bigcap_{i=1}^r \acc{ \BoxPT[\Cylinder(v_i,m,n)]\p{ v_i , v_i + \floor{n(x-y)} } \ge  \p{ \TC(x-y) + 2\eps }n } }.
		\end{split}
	\end{equation}
	Let $i\in \intint1r$. We claim that
	\begin{align}
		\label{eqn : OdG/Pb_Mauvais_Raccord1}
		\lim_{n\to\infty}-\frac1{n} \log \Pb{ \BoxPT[n]\p{\floor{nx} , v_i} > \eps n } &= \infty.
		\intertext{and}
		\label{eqn : OdG/Pb_Mauvais_Raccord2}
		\lim_{n\to\infty}-\frac1{n} \log \Pb{ \BoxPT[n]\p{\floor{ny} , \p{ v_i + \floor{n(x-y)} } }  > \eps n } &= \infty.
	\end{align}
	Indeed for all $\lambda>0$, using Chernoff's inequality to bound the passage time along an oriented path from $\floor{nx}$ and $v_i$ and applying~\eqref{eqn : OdG/Taille_raccords} provides
	\begin{equation*}
		\Pb{ \BoxPT\p{\floor{nx} , v_i} > \eps n } \le \exp\p{-\lambda \eps n} \E{\exp(\lambda \tau_e)}^{\sqrt n},
	\end{equation*}
	therefore
	\begin{equation*}
		\liminf_{n\to\infty}-\frac1{n} \log \Pb{ \BoxPT\p{\floor{nx} , v_i} > \eps n }%
			\ge \lambda \eps.
	\end{equation*}
	By~\eqref{ass : Intro/main_thm/exp_moment} this bound holds for all $\lambda>0$, therefore we get~\eqref{eqn : OdG/Pb_Mauvais_Raccord1}. Equation~\eqref{eqn : OdG/Pb_Mauvais_Raccord2} is proven similarly. Besides, by independence and stationarity,
	\begin{equation}
		\label{eqn : OdG/Pb_Corridor}
		\liminf_{n\to \infty}-\frac1n \log \Pb{\bigcap_{i=1}^r \acc{ \RestPT{ \Cylinder(v_i,m,n)}\p{ v_i , v_i + \floor{n(x-y)} } \ge  \p{ \TC(x-y) + 2\eps }n } } \ge \alpha_m r.
	\end{equation}
	Using the union bound on~\eqref{eqn : OdG/BigInclusion} and applying Equations~\eqref{eqn : OdG/Pb_Mauvais_Raccord1},~\eqref{eqn : OdG/Pb_Mauvais_Raccord2},~\eqref{eqn : OdG/Pb_Corridor} give
	\begin{equation*}
		\liminf_{n\to\infty}-\frac1n \log \Pb{ \SPT\p{x,y} \ge \zeta} \ge \alpha_m r.
	\end{equation*}
	Letting $r\to\infty$ yields~\eqref{eqn : OdG/Weak}.

	We now turn to the proof of~\eqref{eqn : OdG}. Fix $\eps>0$ and $\alpha >0$. We define
	\begin{equation}
		\label{eqn : OdG/Uniform/ChoixLambdaDelta}
		\lambda \dpe \frac{\alpha +1 }{\eps} \text{ and } \delta \dpe \frac{\lambda \eps - \alpha}{2 \log \E{\exp\p{\lambda \tau} } }\wedge \frac12. 	
	\end{equation} Let $(x_i)_{1\le i \le r}$ be a finite family of points in $\mathring \X$ such that $\X \subseteq \bigcup_{i=1}^r \clball[1]{x_i, \delta}$. Let $x,y\in\X$. There exists $i,j\in\intint1r$ such that for all $n \ge 1/\delta$,
	\begin{equation}
		\label{eqn : OdG/Uniform/Net}
		\max \p{ \norme[1]{\vphantom{\big \vert} \floor{nx} - \floor{n x_i}}, \norme[1]{\vphantom{\big \vert} \floor{ny} - \floor{n x_j}} } \le 2\delta n .
	\end{equation}
	By triangle inequality,
	\begin{equation}
		\label{eqn : OdG/Uniform/BigReunion}
		\begin{split}
		\acc{\SPT(x,y)\ge \TC(x-y)  + \p{3+2\BoundTC }\eps }%
			&\subseteq \acc{\SPT(x,x_i)> \eps} \cup \acc{\SPT(y,x_j)>  \eps} \\
			&\quad \cup \acc{\SPT(x_i,x_j) \ge \TC(x_i-x_j) + \eps }.
		\end{split}
	\end{equation}
	Using Chernoff's bound and applying~\eqref{eqn : OdG/Uniform/Net} gives
	\begin{equation*}
		\Pb{ \BoxPT[n]\p{nx , n x_i} > \eps n} \le \exp\p{ -\eps \lambda n}\E{\exp(\lambda \tau_e)}^{2\delta n},
	\end{equation*}
	thus, by definition of $\lambda$ and $\delta$,
	\begin{align*}
		-\frac1{n} \log \Pb{ \BoxPT[n]\p{nx , n x_i} > \eps n} &\ge \alpha. \intertext{Similarly, }%
		-\frac1{n} \log \Pb{ \BoxPT[n]\p{ny , n x_j }   > \eps n } &\ge \alpha.
	\end{align*}
	Consequently, applying the union bound to~\eqref{eqn : OdG/Uniform/BigReunion} and plugging in these two inequalities, we get
	\begin{equation}
	\begin{split}
		&-\frac1n \log \Pb{\SPT(x,y)\ge \TC(x-y)  + \p{3+2\BoundTC }\eps } \\
		&\quad\ge \alpha \wedge \min_{1\le i,j, \le r} \cro{ \frac1n\log\Pb{ \SPT(x_i, x_j) \ge \TC(x_i - x_j) + \eps } } - \frac{\log 2}{n} .
	\end{split}
	\end{equation}
	Letting $n\to \infty$ and using~\eqref{eqn : OdG/Weak} gives
	\begin{equation}
		\liminf_{n\to\infty}-\frac1n \log \Pb{\SPT(x,y)\ge \TC(x-y)  + \p{3+2\BoundTC }\eps } \ge \alpha.
	\end{equation}
	That bound holds for all $\alpha>0$, hence~\eqref{eqn : OdG}.
\end{proof}

\newpage
\bibliographystyle{plainurl}
\bibliography{biblio}
\end{document}